%
%
%
\documentclass[oneside]{amsart}
\usepackage{enumerate,amssymb}
\newtheorem{thm}{Theorem}[section]
\newtheorem{coro}[thm]{Corollary}
\newtheorem{prop}[thm]{Proposition}
\newtheorem{lem}[thm]{Lemma}
\theoremstyle{definition}
\newtheorem{defn}[thm]{Definition}
%
\newcommand{\Rset}{\mathbb{R}}
\newcommand{\Nset}{\omega}
\newcommand{\Cset}{2^\Nset}
\newcommand{\CCset}{2^{<\Nset}}
\newcommand{\Pset}{\Nset^\Nset}
\newcommand{\UPset}{\Nset^{\uparrow\Nset}}

\newcommand{\eps}{\varepsilon}
\newcommand{\del}{\delta}
\newcommand{\subs}{\subseteq}
\newcommand{\sups}{\supseteq}
\newcommand{\clos}[1]{\overline{#1}}
\newcommand{\upto}{{\nearrow}}

\newcommand{\concat}{^\smallfrown\hspace{-0.3ex}}
\newcommand{\rest}{{\restriction}}
\renewcommand{\leq}{\leqslant}
\renewcommand{\geq}{\geqslant}

\newcommand{\abs}[1]{\lvert#1\rvert}
\newcommand{\seq}[1]{\langle#1\rangle}
\newcommand{\seqeps}{\seq{\eps_n}}
\newcommand{\Seqeps}{\seqeps\in(0,\infty)^\Nset}
\newcommand{\si}{$\sigma$\nobreakdash-}
\newcommand{\hm}{\mathcal H}
\newcommand{\uhm}{\overline{\mathcal H}{}}
\newcommand{\gbox}{\boldsymbol{\nu}}
\newcommand{\ubox}{\gbox}
\newcommand{\lbox}{\underline{\gbox}}
\newcommand{\dbox}{\underrightarrow{\gbox}\mkern-3.2mu}

\DeclareMathOperator{\id}{id}

\DeclareMathOperator{\hdim}{\dim_{\mathsf{H}}}
\DeclareMathOperator{\uhdim}{\overline{\dim}_{\mathsf{H}}}

\DeclareMathOperator{\lbdim}{\underline{\dim}_{\mathsf{B}}}
\DeclareMathOperator{\ubdim}{\overline{\dim}_{\mathsf{B}}}

\DeclareMathOperator{\dpdim}{\underrightarrow{\dim}_{\mathsf{P}}}
\DeclareMathOperator{\pdim}{\dim_{\mathsf{P}}}
\DeclareMathOperator{\updim}{\overline{\dim}_{\mathsf{P}}}

\DeclareMathOperator{\add}{\mathsf{add}}

\DeclareMathOperator{\dom}{dom}
\DeclareMathOperator{\diam}{diam}

\newcommand{\NN}{\mathcal N}
\newcommand{\NNs}{\mathcal N_\sigma}
\newcommand{\MM}{\mathcal M}
\newcommand{\EE}{\mathcal E}

\newcommand{\CC}{\mathcal C}
\newcommand{\HH}{\mathbb H}

\newcommand{\smz}{$\mathcal{S}\mkern-1mu\mathfrak{mz}$}

\newcommand{\mc}[1]{\mathcal{#1}}

\newcommand{\CCC}{\boldsymbol{\mc C}}
\newcommand{\upnull}{$\overline{\boldsymbol{\mc P}}$-{\rm null}}
\renewcommand{\upnull}{$\boldsymbol{\mc P}$-{\rm null}}

\newcommand{\dpnull}{$\underrightarrow{\boldsymbol{\mc P}}$-{\rm null}}
\newcommand{\Dpnull}{$\boldsymbol{\underset{\rightarrow}{\mc P}}$-{\rm null}}
\newcommand{\uhnull}{$\overline{\boldsymbol{\mc H}}$-{\rm null}}
\newcommand{\hnull}{$\boldsymbol{\mc H}$-{\rm null}}

\newcommand{\Tprime}{\ensuremath{\mathrm{(T')}}}

\newcommand{\fmany}{\forall^\infty}

\newenvironment{enum}{\begin{enumerate}[\rm(i)]}{\end{enumerate}}
\newenvironment{itemyze}%
{\begin{list}{\textbullet}{\setlength{\labelwidth}{1ex}\setlength{\leftmargin}{1.1em}}}%
{\end{list}}

\newcommand{\Implies}{\ensuremath{\Rightarrow}}
\begin{document}
\title
[Small sets of reals through the prism of fractal dimensions]
{Small sets of reals\\ through the prism of fractal dimensions}
\author{Ond\v rej Zindulka}
\address
{Department of Mathematics\\
Faculty of Civil Engineering\\
Czech Technical University\\
Th\'akurova 7\\
160 00 Prague 6\\
Czech Republic}
\email{zindulka@mat.fsv.cvut.cz}
\urladdr{http://mat.fsv.cvut.cz/zindulka}
\subjclass[2000]{03E05,03E20,28A78}
\keywords{meager-additive, null-additive, strong measure zero, Hausdorff dimension,
packing dimension}
\thanks{Work on this project was conducted during the author's sabbatical stay
at the Instituto de matem\'aticas, Unidad Morelia,
Universidad Nacional Auton\'oma de M\'exico supported by CONACyT gratnt no.~125108.
The project was also supported by
The Czech Republic Ministry of Education, Youth and Sport,
research project BA MSM 210000010}
\begin{abstract}
A separable metric space $X$ is an \hnull{} set if any uniformly continuous
image of $X$ has Hausdorff dimension zero. \uhnull{}, \dpnull{} and \upnull{} sets
are defined likewise, with other fractal dimensions in place of Hausdorff dimension.
We investigate these sets and show that in $\Cset$ they coincide,
respectively, with strongly null, meager-additive, \Tprime{} and
null-additive sets. Some consequences:
A subset of $\Cset$ is meager-additive if and only if it is
$\EE$-additive; if $f:\Cset\to\Cset$ is continuous and $X$ is meager-additive,
then so is $f(X)$, and likewise for null-additive and \Tprime{} sets.
\end{abstract}
\maketitle
\section{Introduction}
\label{sec:intro}

\subsection*{Strong measure zero and Hausdorff dimension}

By the definition due to Borel, a metric space $X$ has
\emph{strong measure zero} (\smz) if for any sequence $\seq{\eps_n}$
of positive numbers there is a cover $\{U_n\}$ of $X$
such that $\diam U_n\leq\eps_n$ for all $n$.
By the famous Galvin--Mycielski--Solovay Theorem~\cite{GMS},
a subset $X$ of the line has \smz{} if and
only if there is no meager set $M$ such that $X+M=\Rset$.
The same theorem holds for subsets of the Cantor set $\Cset$,
as proved e.g.~in\cite[1.14]{MR2177439}.

It is almost obvious that a \smz{} space has Hausdorff
dimension zero. Since \smz{} is preserved by uniformly continuous
mappings, it follows that any uniformly continuous image of a \smz{}
space has Hausdorff dimension zero. It is not difficult to prove that
the latter property actually characterizes \smz{}.
To be more precise, denote $\hdim$ Hausdorff dimension and
say that a metric space $X$ is \hnull{} if $\hdim f(X)=0$ for each uniformly
continuous mapping of $X$ into another metric space.
Then a metric space is \smz{} if and only if it is \hnull{}, and
thus Galvin--Mycielski--Solovay Theorem for $\Cset$ can be phrased
``$X\subs\Cset$ is \hnull{} if and only if there is no meager set such that
$X+M=\Cset$.''
The essence of this theorem can be traced back to Besicovitch
papers~\cite{MR1555386,MR1555389}.

In summary, we thus have three essentially different descriptions of \smz{} sets
in $\Cset$: ``combinatorial'' (Borel's definition), ``fractal'' (by Hausdorff dimension)
and ``algebraic'' (there is no meager set such that $X+M=\Cset$).

\subsection*{Small spaces from other fractal dimensions}

One may, just for curiosity, investigate spaces that are defined by the same pattern
as \hnull{} spaces, replacing Hausdorff dimension with some other fractal dimension.
For instance, for packing dimension $\pdim$: Call $X$ to be \upnull{} if
$\pdim f(X)=0$ for each uniformly continuous mapping of $X$ into another metric space.
Besides \hnull{} and \upnull{} spaces we consider also \uhnull{} and \dpnull{}
spaces arising from the so called upper Hausdorff dimension and directed
lower packing dimension, respectively. The detailed exposition
of all four dimensions and the fractal measures behind them is provided below.

Let us point out that all of these small sets are consistently countable:
Recall the \emph{Borel Conjecture}. It is the statement ``\emph{Every \smz{} set
is countable}''.
As proved by Laver~\cite{MR0422027}, Borel Conjecture is consistent. As proved in
Theorem~\ref{hnullsmz}, every \hnull{} space is \smz{}. Thus it is
consistent that \hnull{} sets, and \emph{a fortiori} \uhnull{}, \dpnull{} and
\upnull{} sets are countable. On the other hand, under the Continuum Hypothesis
there are uncountable \upnull{} sets.

\subsection*{Meager-additive sets and the like}

Let $\Cset$ denote the usual Cantor cube. The coordinatewise addition
makes $\Cset$ a compact topological group. Denote its Haar measure $\mu$;
it is the usual product measure. Provide $\Cset$ with the usual least
difference metric.

There are three common \si ideals on $\Cset$: $\MM$, the ideal of
meager sets; $\NN$, the ideal of $\mu$-null sets; and $\EE$, the ideal generated by
$\mu$-null $F_\sigma$-sets.

Given two sets $A,B\subs\Cset$, their sum is defined by $A+B=\{a+b:a\in A,b\in B\}$.
Recall that, given an ideal $\mc J$ on $\Cset$, a set $X\subs\Cset$
is termed \emph{$\mc J$-additive} if $X+J\in\mc J$ for all $J\in \mc J$.
Thus we have notions of \emph{$\NN$-additive}, \emph{$\MM$-additive} and
\emph{$\EE$-additive} sets.
Say that $X\subs\Cset$ is \emph{strongly null} if there is no set $M\in\MM$
such that $X+M=\Cset$.

These notions (except perhaps $\EE$-additive) received a lot of attention.
Shelah~\cite{MR1324470} provided several
combinatorial characterizations of $\NN$-additive and $\MM$-additive sets
and proved that every
$\NN$-additive set is $\MM$-additive, see also~\cite{MR1350295}.
Yet another related notion, \Tprime{}-sets, was introduced and investigated by
Nowik and Weiss~\cite{MR1905154}. They proved, in particular, the implications
$\NN$-additive$\Rightarrow\Tprime\Rightarrow\MM$-additive.

\subsection*{The match}
The goal of the present paper is to prove that the five notions of the last
paragraph match the notions based on fractal dimensions introduced in the
next to last paragraph in a perhaps unexpected manner:
\begin{thm}\label{bigthm}
For subsets of $\Cset$, the following diagram holds.
\end{thm}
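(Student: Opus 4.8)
The plan is to read the diagram as the superposition of two chains of implications fastened together by four horizontal equivalences. All implications displayed in the diagram will follow at once either from the elementary inequalities among the four dimensions $\hdim$, $\uhdim$, $\dpdim$, $\pdim$ --- a dimension that vanishes on every uniformly continuous image \emph{a fortiori} makes every smaller one vanish there too --- or from the already known chain ``$\NN$-additive $\Rightarrow\Tprime\Rightarrow\MM$-additive $\Rightarrow$ strongly null'', the first two arrows being due to Nowik--Weiss~\cite{MR1905154} and the last one because $\Cset$ is not meager in itself. Everything therefore reduces to the four horizontal equivalences: $X$ is \hnull{} if and only if $X$ is strongly null; $X$ is \uhnull{} if and only if $X$ is $\MM$-additive; $X$ is \dpnull{} if and only if $X$ has property \Tprime{}; and $X$ is \upnull{} if and only if $X$ is $\NN$-additive. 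The first of these is precisely Theorem~\ref{hnullsmz}; should the node ``$\EE$-additive'' also appear in the diagram, it is handled together with $\MM$-additivity, its combinatorial description being only a minor variant of the one for meager-additivity.

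For each of the three remaining equivalences I would argue by a common template (write $\mathcal D$ for the dimension under consideration). Each of $\MM$-additivity, \Tprime{} and $\NN$-additivity carries a combinatorial characterization --- Shelah's~\cite{MR1324470} for the two additivities, Nowik--Weiss's~\cite{MR1905154} for \Tprime{} --- which, for an arbitrary partition of $\Nset$ into consecutive finite intervals, demands a system of admissible ``patches'' on those intervals whose cardinalities are permitted to grow at a prescribed rate, the three rates being of three distinct strengths (the strongest being Shelah's ``at most $n$ patches on the $n$-th interval'' for $\NN$-additivity). To prove the direction ``$\mathcal D$-null $\Rightarrow$ additivity'', I start from a prescribed interval partition, manufacture from it a single Lipschitz self-map $\varphi$ of $\Cset$ that dilates the $n$-th interval at exactly the matching rate, and observe that if some $x\in X$ violated the combinatorial condition on infinitely many intervals, then $\varphi(X)$ would carry a positive amount of the fractal premeasure behind $\mathcal D$, contradicting $\mathcal D(\varphi(X))=0$; the admissible patch system for $X$ is then read off from an optimal cover, respectively packing, of $\varphi(X)$. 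For the converse, given a uniformly continuous $f\colon X\to Y$ into a metric space and an $\eps>0$, I feed the modulus of uniform continuity of $f$ into the combinatorial characterization, extract the patch system, and reassemble it --- keeping track of multiplicities in the case of $\uhdim$ and of packing numbers in the case of $\pdim$ --- into a cover of $f(X)$ witnessing $\mathcal D(f(X))=0$.

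The crux, and the step I expect to be the main obstacle, is the precise calibration between the growth rate permitted for the patch systems and the exponent $s$ appearing in the sums $\sum_i(\diam U_i)^s$ that define the fractal premeasures --- in particular, pinning Shelah's ``at most $n$ patches on the $n$-th interval'' to packing dimension \emph{zero} rather than merely to finite packing dimension. This is exactly the reason the two intermediate notions have to be present: \Tprime{} lies strictly between $\NN$- and $\MM$-additivity, the directed lower packing dimension $\dpdim$ is the fractal invariant tuned to detect precisely that gap, and the three calibrations must be arranged so as to land on the three designated dimensions simultaneously. Once these transfer lemmas are in place, assembling the diagram from the four equivalences and the two implication chains is routine.
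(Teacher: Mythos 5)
Your top-level reduction is exactly the paper's: the upper row follows from the chain of inequalities~\eqref{basicinequality} between the four dimensions, the lower row is classical (or, as the paper arranges things, follows for free once the columns are established), and everything rests on the four vertical equivalences. One small omission already here: the \hnull{} column is \emph{not} ``precisely Theorem~\ref{hnullsmz}''; that theorem gives \hnull{}$\Leftrightarrow$\smz{}, and you still need the Galvin--Mycielski--Solovay theorem to pass from \smz{} to strongly null.

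The genuine gap is in the uniform ``patch system'' template you propose for the remaining columns. First, your hard direction is sketched as a per-point contradiction (``if some $x\in X$ violated the combinatorial condition on infinitely many intervals, then $\varphi(X)$ would carry positive premeasure''), but a single point never carries positive premeasure, and the combinatorial conditions are global: they demand one patch system --- or one witness --- working for all of $X$ simultaneously, so the failure of the condition is not localized at a point. Second, and more seriously, Shelah's characterization of $\MM$-additivity is not of the same shape as those of $\NN$-additivity and \Tprime{}: it asserts the existence of a single $y\in\Cset$ with which every $x\in X$ agrees on \emph{some} interval inside each block (an ``$\exists k$ in the block'' uniformization), not a cardinality bound on admissible patches per interval. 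The paper never derives this condition directly from \uhnull{}ness. Instead, \uhnull{}$\Implies\MM$-additive is routed through $\EE$-additivity (via the product theorem~\ref{prodUhnull} and the Lipschitzness of addition), then to ``sharply $\EE$-additive'' using the Bartoszy\'nski--Shelah base of $\EE$ (Lemmas~\ref{2.6.3} and~\ref{Einc3}), then through Pawlikowski's theorem and a Baire-category argument to ``sharply null'' and ``sharply $\MM$-additive''. None of this is ``a minor variant of the combinatorial description'': the equivalence $\MM$-additive$\,\Leftrightarrow\,\EE$-additive that you wave away is itself one of the results the diagram encodes, and your template provides no route to it. By contrast, the calibration issue you single out as the crux (matching the growth rates $n$, $g(n)$ to dimension \emph{zero}) is the easy part; it is absorbed by the freedom to choose the interval partition $f$ \emph{after} the Hausdorff function $h$, as in Proposition~\ref{MeShelah} and Theorems~\ref{mainNN} and~\ref{mainTT}.
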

\begin{center}\renewcommand{\arraystretch}{1.4}
\newcommand{\lra}{\Longrightarrow}
\newcommand{\uda}{\Updownarrow}
\begin{tabular}{ccccccc}
\upnull        & $\lra$ & \dpnull & $\lra$ & \uhnull        & $\lra$ & \hnull\\
$\uda$         &        & $\uda$  &        & $\uda$         &        &$\uda$\\
$\NN$-additive & $\lra$ & \Tprime & $\lra$ & $\MM$-additive & $\lra$ & strongly null\\
               &        &         &        &$\uda$\\
               &        &         &        &$\EE$-additive
\end{tabular}
\end{center}
%
The upper line implications follow trivially from definitions and the
chain~\eqref{basicinequality} of inequalities between the respective fractal
dimensions. Thus once the vertical equivalences are proved, the diagram is settled.
The equivalences are subject to theorems~\ref{mainSN},
\ref{mainME}, \ref{mainNN} and \ref{mainTT} proven below.


\smallskip
The paper is organized in eleven sections. First ten sections form three parts.
The preliminary part consists of sections~\ref{sec:intro}--\ref{sec:measures}.
In section~\ref{sec:null} the four notions of smallness based on fractal dimensions
are introduced and
section~\ref{sec:measures} reviews the fractal measures behind the four dimensions.
In the second part consisting of sections~\ref{sec:hnull}--\ref{sec:dpnull}
elementary properties of the four types of smallness are established within the
framework of separable metric spaces.
In the third part consisting of sections~\ref{sec:meager}--\ref{sec:TT}
we investigate further properties of the
four kinds of small sets within the Cantor set $\Cset$ and in particular we prove
the vertical equivalences from the above diagram. In the concluding section
we provide some comments and list several open problems.

Some common notation used throughout the paper includes $\abs{A}$ for
the cardinality of a set $A$, $\Nset$ for the set of natural numbers,
$[\Nset]^\Nset$ for the collection of infinite
subsets of $\Nset$,
and $\UPset$ for the family of nondecreasing
unbounded sequences of natural numbers.

\section{Sets of small fractal dimension}
\label{sec:null}
We first briefly describe the four kinds of fractal dimensions under
consideration. More details and references are provided  in the next section.
Let $X$ be a metric space.

\subsection*{Hausdorff dimensions}
Hausdorff dimension is well-known. We shall denote it $\hdim X$.
The following modification of Hausdorff dimension, called the
\emph{upper Hausdorff dimension} can be derived from the Hausdorff dimension
as follows:
Let $\displaystyle X^\star$ denote the completion of $X$ and define
$$
  \uhdim X=\inf\{\hdim K:\text{$K$ is \si compact, }X\subs K\subs X^\star\}.
$$
\subsection*{Packing dimensions}
The \emph{covering number function} $N_X(\del)$ of a nonempty metric space $X$ is defined
as the minimal number of sets of diameter at most $\del$ needed to cover $X$.
The \emph{upper} and \emph{lower box dimensions}
of $X$ are defined, respectively, by
$\ubdim X =\varlimsup_{\del\to 0}\frac{\log N_X(\del)}{\abs{\log \del}}$
and
$\lbdim X =\varliminf_{\del\to 0}\frac{\log N_X(\del)}{\abs{\log \del}}$.
The \emph{upper packing dimension} of $X$ is defined by
$$
  \updim X=
  \inf\bigl\{\sup_n\ubdim X_n:\{X_n\}\text{ is a cover of $X$}\bigr\}.
$$
The following dimension, akin to the so called lower packing dimension,
occurs naturally in the investigation
of cartesian products of fractal sets, see~\cite{ZinPack}.
Write $X_n\upto X$ to denote that $\{X_n\}$ is an increasing
sequence of sets with union $X$.
$$
  \dpdim X=\inf\bigl\{\sup_n\lbdim X_n:X_n\upto X\bigr\}.
$$
The following chain of inequalities holds for any space $X$,
see~\eqref{basicIneq}
\begin{equation}\label{basicinequality}
  \hdim X\leq\uhdim X\leq\dpdim X\leq\updim X
\end{equation}
with examples showing that each of the inequalities may be strict.

\subsection*{Small sets from fractal dimensions}
Using a common pattern we define four notions of small sets
arising from the four fractal dimensions. Say that $f$ is a mapping on
a metric space $X$ if $f:X\to Y$, where $Y$ is a metric space.

\begin{defn}
Let $X$ be a separable metric space. Define $X$ to be
\begin{itemyze}
\item
\upnull{} if $\updim f(X)=0$ for each uniformly continuous mapping $f$ on $X$,
\item
\dpnull{} if $\dpdim f(X)=0$ for each uniformly continuous mapping $f$ on $X$,
\item
\uhnull{} if $\uhdim f(X)=0$ for each uniformly continuous mapping $f$ on $X$,
\item
\hnull{} if $\hdim f(X)=0$ for each uniformly continuous mapping $f$ on $X$.
\end{itemyze}
\end{defn}
The inequalities \eqref{basicinequality} yield the upper line of the Theorem~\ref{bigthm} diagram:
\begin{equation}\label{basicinequality2}
  \text{\upnull}\implies
  \text{\dpnull}\implies
  \text{\uhnull}\implies
  \text{\hnull}
\end{equation}
It is straightforward from the definitions that all of the four properties are preserved by
uniformly continuous mappings:
\begin{prop}
A uniformly continuous image of a \upnull{} set is \upnull{}.
Analogous statements hold for \dpnull{}, \uhnull{}
and \hnull{} sets.
\end{prop}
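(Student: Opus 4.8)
The plan is to reduce all four assertions to a single observation: a composition of uniformly continuous maps is uniformly continuous, together with the fact that the class of separable metric spaces is closed under continuous images and passage to subspaces. So this is essentially immediate from the definitions, as already remarked, and I would present it uniformly for one dimension, say $\updim$, noting that the other three cases are literally the same.

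Concretely, suppose $X$ is \upnull{} and $g\colon X\to Z$ is uniformly continuous, $Z$ a metric space. First I would observe that $g(X)$, with the subspace metric inherited from $Z$, is again a separable metric space: the $g$-image of a countable dense subset of $X$ is dense in $g(X)$. Hence it is meaningful to ask whether $g(X)$ is \upnull{}. To check this, let $f\colon g(X)\to Y$ be an arbitrary uniformly continuous mapping into a metric space $Y$. The corestriction $g\colon X\to g(X)$ is uniformly continuous (the only thing needing the $\eps$–$\del$ definition, and it is trivial), hence so is $f\circ g\colon X\to Y$. Since $X$ is \upnull{}, $\updim\bigl((f\circ g)(X)\bigr)=0$; but $(f\circ g)(X)=f\bigl(g(X)\bigr)$, so $\updim f\bigl(g(X)\bigr)=0$. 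As $f$ was arbitrary, $g(X)$ is \upnull{}.

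The arguments for \dpnull{}, \uhnull{} and \hnull{} are verbatim the same with $\updim$ replaced by $\dpdim$, $\uhdim$, $\hdim$ respectively; no property of the particular dimension is used beyond the common shape of the defining condition. There is no genuine obstacle in this proposition: the entire content is the stability of uniform continuity under composition and corestriction, and the remark that continuous images of separable metric spaces are separable metric.
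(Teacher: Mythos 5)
Your proof is correct and is exactly the argument the paper intends: the paper states this proposition without proof, calling it "straightforward from the definitions," and the content is precisely the closure of uniform continuity under composition (plus the routine remark on separability) that you spell out.
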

Each of the four notions is \si additive, i.e.~%
for any $X$ the \upnull{} subsets of $X$ form a \si additive ideal
and likewise for \dpnull{}, \uhnull{} and \hnull.
This is an obvious consequence
of the countable stability of the corresponding dimensions, except for
\dpnull{}, for which it is nontrivial and will be proved in Corollary~\ref{dpnullideal}.

\section{Review of fractal measures}
\label{sec:measures}
Before getting any further we have to review fractal measures
that are behind the four fractal dimensions.

Let $X$ be a space and $d$
its metric. If $A\subs X$, then $dA$ denotes the diameter of $A$
and if $\mc A$ is a family of subsets of $X$, then $d\mc A=\sup_{A\in\mc A} dA$.
A closed ball of radius $r$ centered in $x$ is denoted $B(x,r)$.

Let $\HH$ denote the set of all functions $h:[0,\infty)\to[0,\infty)$
that are nondecreasing, right-continuous,
and satisfy $h(r)=0$ iff $r=0$.
Elements of $\HH$ are called \emph{Hausdorff functions}.
The following is the common ordering of $\HH$:
$$
  g\prec h\quad \overset{\mathrm{def}}{\equiv}
  \quad\lim_{r\to0+}\frac{h(r)}{g(r)}=0.
$$
Given $s>0$, we shall write $h\prec s$ to abbreviate that $h\prec g_s$, where
$g_s(r)=r^s$.
Notice that for any sequence of $h_n\in\HH$ there is
$h\in\HH$ such that $h\prec h_n$ for all $n$.

Let $\tau$ be a pre-measure, i.e.~a monotone $[0,\infty]$-valued set function
such that $\tau(\emptyset)=0$.
We shall denote $\NN(\tau)=\{A\in\dom\tau:\tau(A)=0\}$ the family of
negligible sets, and in case $\tau$ is not \si subadditive, $\NNs(\tau)$
denotes the \si ideal generated by $\NN(\tau)$.

The following operation
known as Munroe's \emph{Method I construction} assigns to any pre-measure $\tau$
the maximal \si additive measure majorized by $\tau$:
$$
  \tau^\mathrm{I}(E)=\inf\left\{\sum_{n\in\Nset}\tau(E_n):
  E\subs\bigcup_{n\in\Nset}E_n\right\}.
$$
\subsection*{Hausdorff measure}
If $\del>0$, a cover $\mc A$ of a set $E\subs X$ is termed a \emph{$\del$-cover} if $d\mc A\leq\del$.
Fix $h\in\HH$. The \emph{$h$-dimensional Hausdorff measure} $\hm^h(E)$ of
a set $E$ in a space $X$ is defined thus:
For each $\delta>0$ set
$$
  \hm^h_\delta(E)=
  \inf\left\{\sum_{n\in\Nset}h(dE_n):
  \text{$\{E_n\}$ is a countable $\delta$-cover of $E$}\right\}
$$
and put
$
  \hm^h(E)=\sup_{\delta>0}\hm^h_\delta(E).
$
Properties of Hausdorff measures are well-known. We point out that $\hm^h$
(or rather its restriction to Borel sets)
is a $G_\del$-regular Borel measure and the following facts.
Recall that a countable cover of a set $E$ is termed a $\lambda$-cover
if every point of $E$ is contained in infinitely many $U_n$'s.
Reference:~\cite{MR0281862}.
\begin{prop}\label{lambda}
$\hm^h(E)=0$ if and only if $E$ admits a countable
$\lambda$-cover $\{E_n\}$ such that $\sum_{n\in\Nset}h(dE_n)<\infty$.
\end{prop}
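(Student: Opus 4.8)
The plan is to prove both implications directly from the definition of $\hm^h$, using only elementary properties of Hausdorff functions.

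For the ``if'' direction, suppose $\{E_n\}$ is a countable $\lambda$-cover of $E$ with $\sum_n h(dE_n)<\infty$. First I would observe that $dE_n\to0$: indeed $h(dE_n)\to0$ as the general term of a convergent series, and if some subsequence satisfied $dE_{n_k}\geq c>0$, then monotonicity of $h$ would give $h(dE_{n_k})\geq h(c)>0$ (here $h(c)>0$ because $h\in\HH$), a contradiction. Now fix $\delta>0$ and choose $N$ so large that $dE_n\leq\delta$ for all $n\geq N$. Since every point of $E$ lies in infinitely many of the $E_n$, it lies in some $E_n$ with $n\geq N$; hence $\{E_n:n\geq N\}$ is still a cover of $E$, and it is a $\delta$-cover. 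Therefore $\hm^h_\delta(E)\leq\sum_{n\geq N}h(dE_n)$, and as $N\to\infty$ the right-hand side tends to $0$, being the tail of a convergent series. Thus $\hm^h_\delta(E)=0$ for every $\delta>0$, i.e. $\hm^h(E)=0$.

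For the ``only if'' direction, assume $\hm^h(E)=0$, so that $\hm^h_{2^{-k}}(E)=0$ for every $k\in\Nset$. For each $k$ pick a countable $2^{-k}$-cover $\{E^k_n:n\in\Nset\}$ of $E$ with $\sum_n h(dE^k_n)<2^{-k}$. Enumerate the doubly indexed family $\{E^k_n:k,n\in\Nset\}$ as $\{E_m:m\in\Nset\}$. Then
$$
  \sum_m h(dE_m)=\sum_{k\in\Nset}\sum_{n\in\Nset}h(dE^k_n)<\sum_{k\in\Nset}2^{-k}=1<\infty.
$$
Moreover, for every $x\in E$ and every $k$ there is some $n$ with $x\in E^k_n$; since the enumeration assigns distinct indices $m$ to the distinct pairs $(k,n)$, it follows that $x$ belongs to infinitely many $E_m$. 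Hence $\{E_m\}$ is a $\lambda$-cover of $E$ with finite total $h$-mass, as required.

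I do not expect a genuine obstacle here beyond bookkeeping: the two points that need a word of care are the implication $h(dE_n)\to0\Rightarrow dE_n\to0$ in the first part (which is where the hypothesis $h\in\HH$, and in particular that $h(r)=0$ only for $r=0$, is used) and the observation in the second part that amalgamating the scale-$2^{-k}$ covers produces a family in which each point of $E$ is covered at least once per scale, hence infinitely often.
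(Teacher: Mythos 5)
Your proof is correct, and it is the standard argument: the paper itself gives no proof of this proposition, deferring to Rogers' \emph{Hausdorff measures}, and your two directions (passing to tails of the $\lambda$-cover to get $\delta$-covers, and amalgamating $2^{-k}$-covers indexed by pairs so that each point is hit once per scale) are exactly the expected ones. The only blemish is cosmetic: $\sum_{k\in\Nset}2^{-k}=2$ rather than $1$, which of course does not affect finiteness.
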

\begin{lem}\label{lemHaus}
\begin{enum}
\item If $\hm^h(X)<\infty$ and $h\prec g$, then $\hm^g(X)=0$.
\item If $\hm^h(X)=0$, then there is $g\prec h$ such that $\hm^g(X)=0$.
\end{enum}
\end{lem}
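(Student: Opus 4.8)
The plan is to handle the two parts separately. Part~(i) is a short estimate: fix $\eps>0$. The hypothesis $h\prec g$ means $g(r)/h(r)\to0$ as $r\to0+$, so there is $\del_0>0$ with $g(r)\le\eps\,h(r)$ for all $r\le\del_0$. Then for every $\del\le\del_0$ and every countable $\del$-cover $\{E_n\}$ of $X$ we have $\sum_n g(dE_n)\le\eps\sum_n h(dE_n)$, hence $\hm^g_\del(X)\le\eps\,\hm^h_\del(X)\le\eps\,\hm^h(X)$. Since $\del\mapsto\hm^g_\del(X)$ is non-increasing, this bound persists for all $\del>0$, so $\hm^g(X)=\sup_{\del>0}\hm^g_\del(X)\le\eps\,\hm^h(X)$; as $\hm^h(X)<\infty$ and $\eps$ was arbitrary, $\hm^g(X)=0$.

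For part~(ii) I would use the $\lambda$-cover characterization, Proposition~\ref{lambda}. Fix a countable $\lambda$-cover $\{U_n\}$ of $X$ with $\sum_n h(dU_n)<\infty$; since $h$ vanishes only at $0$, $dU_n\to0$, so after reindexing assume $dU_1\ge dU_2\ge\cdots$. Using right-continuity of $h$, pick a \emph{strictly} decreasing sequence $\rho_n\ge dU_n$ with $\rho_n\to0$ and $h(\rho_n)\le h(dU_n)+2^{-n}$, and put $c_n=h(\rho_n)$; then $\seq{c_n}$ is non-increasing, positive and summable. The aim is to produce $g\in\HH$ that dwarfs $h$ near $0$ yet charges $\{U_n\}$ only finitely, and for this it suffices to find a \emph{non-increasing} positive summable sequence $\seq{\gamma_n}$ with $\gamma_n/c_n\to\infty$: then let $g$ be the step function with $g(0)=0$, $g\equiv\gamma_n$ on $[\rho_{n+1},\rho_n)$, and $g\equiv\gamma_1$ on $[\rho_1,\infty)$. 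Monotonicity of $\seq{\gamma_n}$ makes $g$ nondecreasing and right-continuous, and $g(r)>0$ for $r>0$, so $g\in\HH$. For $r\in[\rho_{n+1},\rho_n)$ we have $h(r)\le c_n$ and $g(r)=\gamma_n$, so $h(r)/g(r)\le c_n/\gamma_n\to0$; thus $g\prec h$. Moreover $g(dU_n)\le g(\rho_n)=\gamma_{n-1}$ (put $\gamma_0:=\gamma_1$), so $\sum_n g(dU_n)\le\gamma_1+\sum_n\gamma_n<\infty$, and since $\{U_n\}$ is still a $\lambda$-cover, Proposition~\ref{lambda} gives $\hm^g(X)=0$.

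The real content is the construction of $\seq{\gamma_n}$, and this is the step I expect to be the obstacle, because two demands collide: for $g$ built as above to lie in $\HH$ one needs $\seq{\gamma_n}$ non-increasing, while $g\prec h$ needs $\gamma_n/c_n\to\infty$, and a naive ``slow-down'' of the convergent series $\sum c_n$ need not be monotone. I would resolve this by an Abel--Dini-type trick trimmed to be monotone: put $R_n=\sum_{m\ge n}c_m$ and $d_n=c_n/\sqrt{R_n}$; from $d_n=(R_n-R_{n+1})/\sqrt{R_n}\le2(\sqrt{R_n}-\sqrt{R_{n+1}})$ the series telescopes, $\sum_n d_n\le2\sqrt{R_1}<\infty$, while $d_n/c_n=1/\sqrt{R_n}\to\infty$. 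As $\seq{d_n}$ may fail to be non-increasing (it can rise across intervals where $h$ is flat), set $\gamma_n=\min_{k\le n}d_k$; then $\seq{\gamma_n}$ is non-increasing and positive, $\sum_n\gamma_n\le\sum_n d_n<\infty$, and splitting the minimum at the first index past which $1/\sqrt{R_k}\ge M$ — and using $c_n\to0$ together with the monotonicity of $\seq{c_n}$ — shows $\gamma_n\ge M c_n$ for all large $n$, i.e.\ $\gamma_n/c_n\to\infty$. This completes the plan; the only delicate point, as noted, is squaring $g\in\HH$ with $g\prec h$, which the $\min$ over the Abel--Dini sequence accomplishes.
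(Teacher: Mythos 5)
Your proof is correct. The paper itself gives no argument for this lemma --- it is listed among the ``well-known'' facts about Hausdorff measures with a reference to Rogers --- so there is nothing to compare against; your write-up is a valid self-contained proof. Part~(i) is the standard estimate and is fine. In part~(ii) the route through Proposition~\ref{lambda} is exactly the right one (a plain cover with small $g$-sum would only bound $\hm^g_\del$ for large $\del$; the $\lambda$-cover is what lets you conclude $\hm^g(X)=0$), and you correctly identify the only delicate point: producing a summable sequence $\seq{\gamma_n}$ that is simultaneously non-increasing (so that the step function $g$ is nondecreasing, hence in $\HH$) and satisfies $\gamma_n/c_n\to\infty$ (so that $g\prec h$). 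The Abel--Dini sequence $d_n=c_n/\sqrt{R_n}$ followed by the running minimum $\gamma_n=\min_{k\le n}d_k$ does both: summability survives the minimum, and since $\min_{k<N}d_k$ is a fixed positive constant while $Mc_n\to0$, the minimum is eventually realized on indices $k\ge N$ where $d_k\ge Mc_k\ge Mc_n$, giving $\gamma_n\ge Mc_n$ for large $n$. The remaining bookkeeping (right-continuity of $h$ to get $\rho_n$ with $c_n=h(\rho_n)$ summable, $g(dU_n)\le\gamma_{n-1}$, and $h(r)/g(r)\le c_n/\gamma_n$ on $[\rho_{n+1},\rho_n)$) all checks out, including the degenerate cases $dU_n=0$.
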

%
\subsection*{Upper Hausdorff measure}
The following variation of $\hm^h$ plays an important role
in our considerations.
It is defined thus:
For each $\delta>0$ set
$$
  \uhm^h_\delta(E)=
  \inf\left\{\sum_{n=0}^N h(dE_n):
  \text{$\{E_n:n\leq N\}$ is a finite $\delta$-cover of $E$}\right\}
$$
and put
$
  \uhm_0^h(E)=\sup_{\delta>0}\uhm^h_\delta(E),
$
so that the only difference from $\hm^h$ is that only finite covers are taken in account.
This also makes $\uhm_0^h$ finitely additive, but not \si additive.
Put $\uhm^h(E)=(\uhm_0^h)^\mathrm{I}(E)$.

We list some properties of $\uhm_0^h$ and $\uhm^h$.
Some of them will be utilized below and some are provided just to shed more light on the
notion of upper Hausdorff measure.
The routine proofs are omitted.
\begin{lem}\label{lem1}
\begin{enum}
\item If $\uhm_0^h(E)<\infty$, then $E$ is totally bounded.
\item $\uhm_0^h(E)=\uhm_0^h(\clos E)$.
\item $\uhm_0^h(E)=\hm^h(E)$ if $E$ is compact.
\item If $X$ is complete, $E\subs X$ and $E\in\NNs(\uhm_0^h)$, then there is a \si compact set
$K\sups E$ such that $\hm^h(K)=0$.
\item If $X$ is complete and $E\subs X$, then
$\uhm^h(E)=\inf\{\hm^h(K):\text{$K\sups E$ is \si compact}\}$.
\item In particular $\uhm^h(E)=\hm^h(E)$ if $E$ is \si compact.
\item If $g\prec h$ and $\uhm^g(E)<\infty$, then $E\in\NNs(\uhm_0^h)$.
\end{enum}
\end{lem}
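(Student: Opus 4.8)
The final statement to prove is Lemma~\ref{lem1}, consisting of parts (i)--(vii) about $\uhm_0^h$ and $\uhm^h$. Let me sketch proofs for each part.

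\medskip

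\textbf{Plan for the proof of Lemma~\ref{lem1}.}

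\emph{Part (i).} If $\uhm_0^h(E)<\infty$, fix any $\delta>0$; then $\uhm_0^h_\delta(E)<\infty$, so there is a \emph{finite} $\delta$-cover of $E$. Since $\delta$ was arbitrary, $E$ is totally bounded. (One must be slightly careful: the definition of $\uhm^h_\delta$ via an infimum over finite $\delta$-covers implicitly requires at least one finite $\delta$-cover to exist, otherwise the infimum is $+\infty$; finiteness of $\uhm_0^h(E)=\sup_\delta\uhm^h_\delta(E)$ forces such covers to exist for every $\delta$.)

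\emph{Part (ii).} Since $h$ is nondecreasing and $d E_n = d\clos{E_n}$, any finite $\delta$-cover $\{E_n\}$ of $E$ gives a finite $\delta$-cover $\{\clos{E_n}\}$ of $\clos E$ with the same sum $\sum h(dE_n)$. Conversely a cover of $\clos E$ covers $E$. Hence $\uhm^h_\delta(E)=\uhm^h_\delta(\clos E)$ for each $\delta$, and the equality passes to the supremum.

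\emph{Part (iii).} If $E$ is compact, any countable $\delta$-cover by \emph{open} sets (one may fatten the sets of a countable $\delta$-cover slightly, using right-continuity of $h$, to make them open while controlling the sum) has a finite subcover; thus finite and countable covers give the same infimum, so $\uhm_0^h(E)=\hm^h(E)$. The mild technical point is the passage from arbitrary countable covers to open ones without increasing $\sum h(dE_n)$ by more than $\eps$; right-continuity of $h$ is exactly what makes this work.

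\emph{Parts (iv), (v), (vi).} These are the substantive parts and the main obstacle. By definition $\NNs(\uhm_0^h)$ consists of countable unions of sets in $\NN(\uhm_0^h)$, so it suffices to treat a single set $E$ with $\uhm_0^h(E)=0$ and then take countable unions. For such $E$: for each $k$ there is a finite cover $\{E^k_n:n\le N_k\}$ of $E$ with $d\{E^k_n\}\le 2^{-k}$ and $\sum_n h(dE^k_n)<2^{-k}$. Replacing each $E^k_n$ by its closure (using (ii)), we get finite covers by closed sets. Put $K=\bigcap_k\bigcup_{n\le N_k}\clos{E^k_n}$. Wait --- we need $K\sups E$, so instead set $K_k=\bigcup_{n\le N_k}\clos{E^k_n}$, which is closed and, being a finite union of totally bounded sets in the complete space $X$, is compact; then $\bigcap_k K_k$ need not contain $E$. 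The correct construction: take $K=\bigcap_{m}\bigl(\bigcup_{k\ge m}K_k\bigr)$? That is not obviously compact either. The standard trick is: $K=\bigcap_m \clos{\bigcup_{k\ge m}\bigcup_{n\le N_k}\clos{E^k_n}}$ --- still not contained in $E$... The right statement is that $E\subs\bigcup_{n\le N_k}\clos{E^k_n}$ for every $k$, so $E\subs\bigcap_k\bigcup_{n\le N_k}\clos{E^k_n}=:K$; each $\bigcup_{n\le N_k}\clos{E^k_n}$ is compact (finite union of totally bounded closed subsets of a complete space), so $K$, being a closed subset of a compact set, is compact, and $\hm^h(K)\le\hm^h_{2^{-k}}(K)\le\sum_n h(dE^k_n)<2^{-k}$ for every $k$, so $\hm^h(K)=0$. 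This proves (iv) for a single null set; for $E\in\NNs(\uhm_0^h)$, write $E=\bigcup_j E_j$ with each $\uhm_0^h(E_j)=0$, obtain compact $K_j\sups E_j$ with $\hm^h(K_j)=0$, and take $K=\bigcup_j K_j$, which is \si compact with $\hm^h(K)=0$. Part (v) then follows: one inequality is immediate since $\uhm^h=(\uhm_0^h)^{\mathrm I}$ is majorized by $\hm^h$ on \si compact sets via (vi)/(iii), giving $\uhm^h(E)\le\inf\{\hm^h(K):K\sups E\text{ \si compact}\}$; for the reverse, if $\uhm^h(E)<c$ there is a countable cover $E\subs\bigcup_j E_j$ with $\sum_j\uhm_0^h(E_j)<c$, hence (applying the single-set argument with the budget $\uhm_0^h(E_j)+\eps 2^{-j}$ in place of $0$) compact $K_j\sups E_j$ with $\hm^h(K_j)\le\uhm_0^h(E_j)+\eps 2^{-j}$, and $K=\bigcup K_j$ is \si compact with $\hm^h(K)<c+\eps$. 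Part (vi) is the special case $E$ \si compact combined with (iii) and countable additivity of $\hm^h$.

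\emph{Part (vii).} If $g\prec h$ and $\uhm^g(E)<\infty$, then by (v) there is a \si compact $K\sups E$ with $\hm^g(K)<\infty$, hence $K=\bigcup_j K_j$ with each $K_j$ compact and $\hm^g(K_j)<\infty$; by Lemma~\ref{lemHaus}(i) (applied with the roles $g\prec h$) each $\hm^h(K_j)=0$, so $\hm^h(K)=0$, and then (iii) gives $\uhm_0^h(K_j)=\hm^h(K_j)=0$, whence $E\subs K=\bigcup_j K_j\in\NNs(\uhm_0^h)$.

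\medskip

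The main obstacle is getting the compactness bookkeeping in (iv)--(v) exactly right: the temptation is to intersect the layers $K_k$, but what one actually needs is that $E$ sits inside \emph{each} finite closed cover, so $E\subs\bigcap_k K_k$ automatically, and $\bigcap_k K_k$ is compact because it is a closed subset of the compact set $K_0$. Once that is seen, everything else is routine, using right-continuity of $h$ in (iii) and Lemma~\ref{lemHaus}(i) in (vii).
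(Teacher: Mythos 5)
The paper gives no proof of this lemma --- it is stated with ``the routine proofs are omitted'' --- so your attempt can only be judged on its own terms. Parts (i)--(iii) are correct, the construction in (iv) is correct, your ``$\geq$'' half of (v) (cover $E$ by sets $B_n$ with $\sum_n\uhm_0^h(B_n)<c$, replace each by its closure, which is compact by (i) and completeness, and apply (ii) and (iii)) is exactly right, and (vii) is fine modulo the remarks below. Two slips in (iv): the layers $K_k=\bigcup_{n\le N_k}\clos{E^k_n}$ need \emph{not} be compact, since a closed set of small diameter in a complete space need not be totally bounded; what saves you is that $K=\bigcap_k K_k$ is itself totally bounded (for every $k$ it is covered by finitely many sets of diameter $\le 2^{-k}$) and closed in $X$, hence compact. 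Also $\hm^h(K)\le\hm^h_{2^{-k}}(K)$ is reversed; the correct chain is $\hm^h_\delta(K)\le\hm^h_{2^{-k}}(K)<2^{-k}$ for all $k$ with $2^{-k}\le\delta$, giving $\hm^h_\delta(K)=0$ for every $\delta$. (Incidentally, $K=\clos E$ would do the whole job of (iv) for a single null set, via (i)--(iii).)

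The genuine gap is the ``$\le$'' half of (v), which is precisely (vi): you need $\uhm^h(K)\le\hm^h(K)$ for every \si compact $K$, and you declare it ``immediate via (vi)/(iii)'' --- circular, since (vi) is what is being proved, and (iii) only covers compact $K$. The passage to countable unions is not free: writing $K=\bigcup_j C_j$ with $C_j$ compact increasing and covering $K$ by $C_1, C_2\setminus C_1,\dots$ fails, because $\uhm_0^h(C_{j+1}\setminus C_j)=\hm^h\bigl(\clos{C_{j+1}\setminus C_j}\bigr)$ can vastly exceed $\hm^h(C_{j+1}\setminus C_j)$ (take $C_1$ a fat Cantor set inside $C_2=[0,1]$ with $h(r)=r$: the difference has small measure but dense, hence full, closure). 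A correct route is to check that $\uhm_0^h$ is superadditive on positively separated sets, so that $\uhm^h=(\uhm_0^h)^{\mathrm I}$ is a metric, hence Borel, outer measure, and then $\uhm^h(K)=\lim_j\uhm^h(C_j)=\lim_j\hm^h(C_j)=\hm^h(K)$ by continuity from below; alternatively decompose $K$ into disjoint pieces bounded by annuli $\{x:a<d(x,C_j\setminus C_{j-1}^{\,c})\le b\}$ with $\hm^h$-null boundaries. Finally, your (vii) routes through (v), which assumes $X$ complete while (vii) does not; either pass to the completion (harmless, as $\uhm_0^h$ does not see the ambient space) or argue directly: if $\uhm_0^g(B)<\infty$ and $g\prec h$, then $\uhm^h_\delta(B)\le\sup_{0<r\le\delta}\frac{h(r)}{g(r)}\cdot\bigl(\uhm_0^g(B)+1\bigr)\to0$ as $\delta\to0$, and since $\uhm^h_\delta$ is nondecreasing as $\delta$ decreases this forces $\uhm_0^h(B)=0$; applying this to the members of a Method I cover witnessing $\uhm^g(E)<\infty$ gives $E\in\NNs(\uhm_0^h)$ with no completeness and no appeal to (v).
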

We shall need the counterpart of~\ref{lambda} for $\uhm^h$ at several occasions.
\begin{defn}
Let $\seq{U_n}$ be a cover of a set $X$.
Recall that $\seq{U_n}$ is called
a $\gamma$-cover if each $x\in X$ belongs to all but finitely
many $U_n$.

Recall that a cover $\seq{U_n}$ of a set $X$ is called
\emph{$\gamma$-groupable}
if there is a partition $\Nset=I_0\cup I_1\cup I_2\cup\dots$
into finite sets (or intervals, which makes no difference) such that
the sequence $\seq{\bigcup_{n\in I_j}U_n:j\in\Nset}$ is a $\gamma$-cover.
The finite families $\{U_n:n\in I_j\}$
will be occasionally termed \emph{witnessing groups}.
\end{defn}

\begin{lem}\label{gammagr}
$E\in\NNs(\uhm_0^h)$ if and only if $E$ has a $\gamma$-groupable
cover $\seq{U_n}$ such that $\sum_{n\in\Nset}h(dU_n)<\infty$.
\end{lem}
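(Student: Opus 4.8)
The plan is to prove Lemma~\ref{gammagr} by reducing it to Proposition~\ref{lambda} and Lemma~\ref{lem1}, exploiting the tight relationship between $\gamma$-groupable covers and $\lambda$-covers. The key observation is that a $\gamma$-groupable cover, after replacing each witnessing group $\{U_n:n\in I_j\}$ by its union $V_j=\bigcup_{n\in I_j}U_n$, yields a sequence $\seq{V_j}$ that is a $\gamma$-cover of $E$; and a $\gamma$-cover is in particular a $\lambda$-cover (each point lies in all but finitely many $V_j$, hence in infinitely many). Conversely, a $\lambda$-cover can be ``thinned and regrouped'' into a $\gamma$-groupable cover by a counting argument. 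So the two conditions differ only by the passage between $\hm^h$ (countable covers) and $\uhm_0^h$ (finite covers), which is exactly what Method~I does.

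For the forward direction, suppose $E\in\NNs(\uhm_0^h)$, i.e.\ $E=\bigcup_k E_k$ with $\uhm_0^h(E_k)=0$ for each $k$. For each $k$ and each $m$, since $\uhm_0^h(E_k)=0$ we get a \emph{finite} $2^{-m}$-cover $\mc F_{k,m}$ of $E_k$ with $\sum_{U\in\mc F_{k,m}}h(dU)<2^{-k-m}$. Enumerate all the sets $\bigcup\{\mc F_{k,m}:k\leq m\}$ into a single sequence $\seq{U_n}$ with the blocks $\mc F_{k,m}$ serving as the finite index sets $I_j$ of a partition of $\Nset$; the diameters go to $0$ and $\sum_n h(dU_n)\leq\sum_{k\leq m}2^{-k-m}<\infty$. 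To see $\gamma$-groupability: a fixed $x\in E$ lies in some $E_{k_0}$, hence for every $m\geq k_0$ it lies in some member of $\mc F_{k_0,m}$, so it lies in the union of infinitely many witnessing groups; reindexing the partition so that for each $k$ the blocks $\mc F_{k,m}$ ($m\geq k$) are interleaved appropriately makes $x$ lie in \emph{all but finitely many} of the grouped unions. (One clean way: relabel the groups by $m$ alone, letting the $m$-th group be $\bigcup_{k\leq m}\mc F_{k,m}$; then $x\in E_{k_0}$ is covered by the $m$-th group for all $m\geq k_0$, which is exactly the $\gamma$-condition, and $\sum_m\sum_{k\leq m}2^{-k-m}<\infty$ still holds.)

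For the converse, suppose $E$ has a $\gamma$-groupable cover $\seq{U_n}$ with witnessing groups $\{U_n:n\in I_j\}$ and $\sum_n h(dU_n)<\infty$. Set $V_j=\bigcup_{n\in I_j}U_n$, so $\seq{V_j}$ is a $\gamma$-cover of $E$ and $\sum_j h(dV_j)\leq\sum_j\sum_{n\in I_j}h(dU_n)=\sum_n h(dU_n)<\infty$ since $h$ is subadditive on the relevant scale --- wait, $h$ need not be subadditive, so instead I use $h(dV_j)\leq\sum_{n\in I_j}h(dU_n)$ only when the right side dominates, which need not hold. The correct fix: since $\sum_n h(dU_n)<\infty$, the tail sums vanish, so $dU_n\to0$ and for large $j$ all of $dU_n$ with $n\in I_j$ are small; replace each witnessing group by the single set $V_j$ but estimate via $h(dV_j)\le\sum_{n\in I_j}h(dU_n)$ \emph{after} passing to a subcover where each $x$ that is in the $j$-th $\gamma$-group is charged once --- more simply, for each $k$ let $W_k=\{x\in E: x\in V_j\text{ for all }j\geq k\}$, so $E=\bigcup_k W_k$; for fixed $k$ and any $\delta$, choosing $J$ large enough that $dV_j\leq\delta$ for $j\geq J$ (true since $dU_n\to 0$) the family $\{V_j:j\geq\max(k,J)\}$ is a finite-free but we want finite covers: take $\{V_j: J\le j\le J'\}$ covering $W_k$ up to... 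Here is the main obstacle and its resolution.

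The main obstacle is precisely this converse passage from a $\gamma$-cover with summable $h$-values to showing each piece $W_k$ has $\uhm_0^h$-measure zero using only \emph{finite} covers. The resolution: fix $k$ and $\delta>0$. Choose $N$ so large that $\sum_{j\geq N}h(dV_j)<\delta$ \emph{and} $dV_j\leq\delta$ for all $j\geq N$ (both possible: the first since the series converges, the second since convergence forces $h(dV_j)\to0$ hence $dV_j\to0$ by $h\in\HH$). Now $W_k':=\{x\in E:x\in V_j\ \forall j\geq k\}$ is covered by $\{V_j:N\leq j\}$, but that is infinite; however $\{V_j:k\le j<N\}\cup\{V_j : j\ge N\}$ --- the point is that $W_k'\subseteq\bigcap_{j\ge k}V_j\subseteq V_N$, a \emph{single} set of diameter $\le\delta$, giving $\uhm_0^{h,\delta}(W_k')\le h(dV_N)\le h(\delta)\to 0$ as $\delta\to0$, so $\uhm_0^h(W_k')=0$; and $E=\bigcup_k W_k'$ since $\seq{V_j}$ is a $\gamma$-cover. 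Thus $E\in\NNs(\uhm_0^h)$, completing the proof. (The forward direction's summability bookkeeping and this single-set domination $W_k'\subseteq V_N$ are the two nonobvious points; everything else is routine, and Lemma~\ref{lem1}(i),(ii) guarantee the sets involved are totally bounded so diameters behave well.)
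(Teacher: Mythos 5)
Your forward direction is correct and is essentially the paper's argument (the paper takes an increasing sequence $E_n\upto E$ with one finite cover per $n$; your diagonal grouping $\mc G_m=\bigcup_{k\leq m}\mc F_{k,m}$ achieves the same thing with slightly more bookkeeping). The converse, however, has a genuine gap. Your ``resolution'' rests on the claim that $dV_j\to 0$, where $V_j=\bigcup_{n\in I_j}U_n$ is the union of the $j$-th witnessing group, ``true since $dU_n\to 0$.'' This is false: a union of finitely many sets of small diameter can have large diameter, and in the present situation it necessarily does. Indeed, since $\seq{V_j}$ is a $\gamma$-cover of $E$, every finite subset of $E$ is contained in $V_j$ for all large $j$, so $\limsup_j dV_j\geq dE$; thus $dV_j\to0$ can only happen when $E$ is a single point. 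Consequently $\{V_N\}$ is not a $\delta$-cover of $W_k'$ for small $\delta$, the bound $\uhm^h_{0,\delta}(W_k')\leq h(dV_N)\leq h(\delta)$ collapses, and for the same reason your earlier implicit claim that $\sum_j h(dV_j)<\infty$ is unjustified (as you yourself noticed, $h(dV_j)\leq\sum_{n\in I_j}h(dU_n)$ need not hold).

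The repair is the one you brushed past when you worried about ``finite covers'': do not replace the $j$-th group by its union. The set $E_k=\bigcap_{j\geq k}V_j$ is covered, for \emph{each} $j\geq k$, by the finite family $\{U_n:n\in I_j\}$ itself. Since $\sum_n h(dU_n)<\infty$ and the $I_j$ partition $\Nset$ into finite sets, $\sum_{n\in I_j}h(dU_n)\to0$ as $j\to\infty$, hence also $\sup_{n\in I_j}dU_n\to0$. So for any $\delta>0$ one can pick $j\geq k$ with all $dU_n\leq\delta$ ($n\in I_j$) and $\sum_{n\in I_j}h(dU_n)$ arbitrarily small, giving $\uhm^h_{0,\delta}(E_k)$ arbitrarily small for every $\delta$, i.e.\ $\uhm_0^h(E_k)=0$; and $E\subs\bigcup_k E_k$ by the $\gamma$-cover property. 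This is exactly the paper's proof of the converse.
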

\begin{proof}
\Implies: Let $E_n\upto E$, $\uhm^g_0(E_n)=0$.
For each $n$ let $\mc G_n$ be a finite cover of $E_n$ such that
$\sum_{G\in\mc G_n}g(dG)<2^{-n}$. Put $\mc G=\bigcup_n\mc G_n$.
The witnessing groups are $\mc G_n$.

$\Leftarrow$: Let $\mc G_j$ be the witnessing groups.
Put $E_k=\bigcap_{j\geq k}\bigcup\mc G_j$. Fix $k$. The set $E_k$ is covered
by each $\mc G_j$, $j\geq k$, and $\sum_{G\in\mc G_j}g(dG)$ is as
small as needed if $j$ is large enough. Hence $\uhm^g_0(E_k)=0$.
\end{proof}

\subsection*{Box measures}
We could develop the theory of \upnull{} and \dpnull{} spaces from
packing measures. But since they are rather unpleasant to work with,
we make use of the following variations instead. They are directly related to the
above definitions of packing dimensions and are easier to work with. Given $h\in\HH$, set
$$
  \ubox_0^h(E)=\limsup_{r\to0}N_E(r)\cdot h(r).
$$
The \emph{$h$-dimensional box measure} of
$E\subs X$ is defined by $\ubox^h(E)=(\ubox_0^h)^\mathrm{I}(E)$.
\begin{lem}\label{lemP}
\begin{enum}
\item If $\ubox_0^h(E)<\infty$, then $E$ is totally bounded.
\item $\ubox_0^h(E)=\ubox_0^h(\clos E)$.
\item If $X$ is complete, $E\subs X$ and $E\in\NNs(\ubox_0^h)$, then there is a \si compact set
$K\sups E$ such that $\ubox^h(K)=0$.
\item If $g\prec h$ and $\ubox^g(E)<\infty$, then $E\in\NNs(\ubox_0^h)$.
\item If $E\in\NNs(\ubox_0^h)$, then there is $g\prec h$ such that $E\in\NNs(\ubox_0^g)$.
\end{enum}
\end{lem}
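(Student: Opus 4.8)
The plan is to dispatch (i)--(iv) directly from the definitions --- they are the $\ubox_0^h$-analogues of the corresponding items of Lemma~\ref{lem1} --- and to concentrate on (v), where the real work lies.

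\emph{(i)--(iv).} If $\ubox_0^h(E)<\infty$, then $N_E(r)h(r)\le C$ for some $C$ and all small $r$, whence $N_E(r)\le C/h(r)<\infty$ for such $r$; since $r\mapsto N_E(r)$ is non-increasing, $N_E(r)<\infty$ for every $r>0$, i.e.\ $E$ is totally bounded, giving (i). For (ii), a set has the same diameter as its closure, so $\{\clos{A_i}\}$ is an $r$-cover of $\clos E$ whenever $\{A_i\}$ is one of $E$, and conversely any $r$-cover of $\clos E$ covers $E$; hence $N_E(r)=N_{\clos E}(r)$ for all $r$ and the two $\limsup$s coincide. For (iii), write $E=\bigcup_nE_n$ with $\ubox_0^h(E_n)=0$; by (ii) $\ubox_0^h(\clos{E_n})=0$, so by (i) each $\clos{E_n}$ is totally bounded, hence compact as $X$ is complete; then $K=\bigcup_n\clos{E_n}$ is \si compact, contains $E$, and $\ubox^h(K)=(\ubox_0^h)^{\mathrm I}(K)\le\sum_n\ubox_0^h(\clos{E_n})=0$ by Method~I. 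For (iv), if $\ubox^g(E)<\infty$ then Method~I yields a countable cover $\{E_n\}$ of $E$ with $\ubox_0^g(E_n)<\infty$ for each $n$; since $g\prec h$ means $h(r)/g(r)\to0$ as $r\to0$, we get $N_{E_n}(r)h(r)=N_{E_n}(r)g(r)\cdot h(r)/g(r)\to0$, so $\ubox_0^h(E_n)=0$, and monotonicity of $\ubox_0^h$ gives $E=\bigcup_n(E\cap E_n)\in\NNs(\ubox_0^h)$.

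\emph{(v).} Here the plan is a diagonalisation producing a single Hausdorff function $g\prec h$ that outgrows $h$ just slowly enough. Since $\ubox_0^h$ is finitely subadditive ($N_{A\cup B}(r)\le N_A(r)+N_B(r)$), one may assume $E=\bigcup_nE_n$ with the $E_n$ increasing and $\ubox_0^h(E_n)=0$; by (i) each $E_n$ is totally bounded, so $N_{E_n}(r)<\infty$ for all $r$, and $N_{E_n}(r)h(r)\to0$. First pick $1=\rho_0>\rho_1>\rho_2>\dots\to0$ with $N_{E_k}(r)h(r)\le4^{-k}$ for $0<r\le\rho_k$; by the increasingness of the $E_n$ the same bound holds for $N_{E_j}$ with $j\le k$. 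Then one builds $g\in\HH$ whose gain $g(r)/h(r)$ is of order $2^{k}$ on the block $(\rho_{k+1},\rho_k]$: this forces $g(r)/h(r)\to\infty$, i.e.\ $g\prec h$, while on that block $N_{E_n}(r)g(r)$ is at most roughly $2^{k}\,N_{E_n}(r)h(r)\le 2^{k}4^{-k}=2^{-k}$ whenever $n\le k$, so $N_{E_n}(r)g(r)\to0$ for every fixed $n$; hence $\ubox_0^g(E_n)=0$ for all $n$ and $E\in\NNs(\ubox_0^g)$.

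The one genuinely delicate point --- and the main obstacle I anticipate --- is to carry this out while keeping $g$ a \emph{bona fide} Hausdorff function, that is, nondecreasing and right-continuous: the naive recipe $g=2^kh$ on $(\rho_{k+1},\rho_k]$ may fail to be monotone at a junction $\rho_k$ where $h$ happens to be continuous. This is to be circumvented in the standard way --- for instance by interpolating, say by setting $g(r)=\int_{(0,r]}\Lambda\,dh$ for a non-increasing gain $\Lambda(r)\to\infty$ chosen (diagonally over $n$) slowly enough that the integral converges and still controls the covering sums, or by replacing the rough gauge with a nondecreasing right-continuous minorant that retains $g/h\to\infty$. The bookkeeping is routine once the scales $\rho_k$ are chosen with enough slack, and it runs parallel to the proof of Lemma~\ref{lemHaus}(ii).
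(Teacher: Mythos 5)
Your argument is correct, and it is exactly the kind of routine verification the paper omits (Lemma~\ref{lemP} is stated without proof); items (i)--(iv) are complete as written, and for (v) your block-diagonalisation with gain $2^k$ against decay $4^{-k}$ is the standard argument. The one point you leave as a sketch --- regularising $g$ --- does go through painlessly: strengthen the choice of scales so that $h(\rho_{k+1})\leq\tfrac12h(\rho_k)$ and set $g(r)=\sum_{j\in\Nset}h(\min(r,\rho_j))$, which is a genuine Hausdorff function (a uniformly convergent sum of nondecreasing right-continuous functions vanishing only at $0$) satisfying, for $r\in(\rho_{k+1},\rho_k]$, the two-sided estimate $(k+1)h(r)\leq g(r)\leq(k+1)h(r)+2h(\rho_{k+1})\leq(k+3)h(r)$; the lower bound gives $g\prec h$ and the upper bound gives $N_{E_n}(r)g(r)\leq(k+3)4^{-k}\to0$ for $n\leq k$, as you need.
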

The set functions
$\hm^h,\uhm^h$ and $\ubox^h$ are are Borel outer measures, i.e.~Borel-regular
outer measures whose restrictions
to the algebra of Borel sets are Borel measures.
The set function that we shall introduce now is not
really a measure, as it is defined from the lower box contents
that is not finitely subadditive. Let
$$
  \lbox_0^h(E)=\liminf_{r\to0}N_E(r)\cdot h(r).
$$
Write $E_n\upto E$ to denote that $\{E_n\}$ is an increasing
sequence of sets with union $E$. Define
$$
  \dbox^h(E)=\inf\left\{\sup_{n\in\Nset}\lbox_0^h(E_n):E_n\upto E\right\}
$$
This is a ``directed'' variation of Method I construction.
\begin{lem}\label{lemDP}
\begin{enum}
\item If $\lbox_0^h(E)<\infty$, then $E$ is totally bounded.
\item $\lbox_0^h(E)=\lbox_0^h(\clos E)$.
\item If $g\prec h$ and $\dbox^g(E)<\infty$, then $E\in\NNs(\dbox_0^h)$.
\item If $E\in\NNs(\lbox_0^h)$, then there is $g\prec h$ such that $E\in\NNs(\lbox_0^g)$.
\end{enum}
\end{lem}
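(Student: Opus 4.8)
The four items are parallel to those in Lemmas~\ref{lem1} and~\ref{lemP}, so the plan is to imitate those proofs, with the main subtlety in (iii) because $\dbox^h$ is built from a directed (increasing-sequence) version of Method~I rather than from the usual countable-cover version.

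For (i) and (ii), I would argue directly from the definition of $\lbox_0^h$. If $\lbox_0^h(E)<\infty$, then there are arbitrarily small $r>0$ with $N_E(r)<\infty$; in particular $N_E(r)<\infty$ for some $r$, which is exactly total boundedness of $E$. For (ii) the point is that $N_E(r)=N_{\clos E}(r)$ for every $r>0$: any $r$-cover of $E$ by finitely many sets can be replaced by the cover consisting of their closures, which still have diameter $\le r$ (diameters are unchanged under closure) and cover $\clos E$; conversely a cover of $\clos E$ covers $E$. Taking $\liminf_{r\to0}N_E(r)h(r)$ then gives $\lbox_0^h(E)=\lbox_0^h(\clos E)$.

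For (iii): assume $g\prec h$ and $\dbox^g(E)<\infty$. By definition of $\dbox^g$ there is an increasing sequence $E_n\upto E$ with $\sup_n\lbox_0^g(E_n)=:M<\infty$. Fix $n$ and write $c=\lbox_0^g(E_n)\le M$. Then along a sequence $r\to0$ we have $N_{E_n}(r)\,g(r)\to c$, so $N_{E_n}(r)\le (c+1)/g(r)$ for all small $r$ in that sequence; hence $N_{E_n}(r)\,h(r)\le (c+1)\,h(r)/g(r)$ for those $r$, and since $h(r)/g(r)\to0$ as $r\to0$ (that is what $g\prec h$ means), we get $\lbox_0^h(E_n)=\liminf_{r\to0}N_{E_n}(r)h(r)=0$. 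Thus $E_n\in\NN(\lbox_0^h)$ for every $n$, and since $E=\bigcup_n E_n$ with $E_n$ increasing, $E\in\NNs(\lbox_0^h)$ (indeed $E$ is witnessed by the very sequence $E_n\upto E$). This is the step I expect to carry the weight: one has to be careful that the $\liminf$ may be realized along different sequences of radii for different $n$, but since we only need each individual $\lbox_0^h(E_n)$ to vanish, working one $n$ at a time is enough, and no uniformity across $n$ is required.

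For (iv): suppose $E\in\NNs(\lbox_0^h)$, so $E=\bigcup_n E_n$ with $\lbox_0^h(E_n)=0$ for each $n$; replacing $E_n$ by $\bigcup_{k\le n}E_k$ we may assume $E_n\upto E$ and still $\lbox_0^h(E_n)=0$ (a finite union of sets of $\lbox_0^h$-content zero has content zero, since $N_{A\cup B}(r)\le N_A(r)+N_B(r)$). For each $n$, $\lbox_0^h(E_n)=0$ means there is a sequence $r_k^{(n)}\downarrow0$ with $N_{E_n}(r_k^{(n)})\,h(r_k^{(n)})\to0$. A standard diagonal construction now produces a single $g\in\HH$ with $g\prec h$ such that $\lbox_0^g(E_n)<\infty$ for all $n$ simultaneously — this is precisely the kind of argument used to prove Lemma~\ref{lemHaus}(ii) and Lemmas~\ref{lem1}(vii)/\ref{lemP}(v), choosing $g$ to grow just slowly enough relative to $h$ on the radii $r_k^{(n)}$ so that $N_{E_n}(r_k^{(n)})\,g(r_k^{(n)})$ stays bounded. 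Then $\sup_n\lbox_0^g(E_n)$ need not be finite, but that is not claimed; what we get is $\lbox_0^g(E_n)<\infty$ for each $n$, and (more carefully, by making the bound for $E_n$ tend to $0$ as $n\to\infty$ is \emph{not} possible in general, so) one instead observes that $E_n\in\NN(\lbox_0^g)$ can be arranged directly: since $\lbox_0^h(E_n)=0$, pick $g$ with $g/h\to\infty$ slowly enough that still $N_{E_n}(r_k^{(n)})g(r_k^{(n)})\to0$ along the chosen radii, so $\lbox_0^g(E_n)=0$, giving $E\in\NNs(\lbox_0^g)$. The only real obstacle is the bookkeeping in this diagonalization, which is routine and of the same flavor as the already-omitted proofs of the analogous clauses for $\hm^h$, $\uhm_0^h$ and $\ubox_0^h$; I would therefore present it briefly and refer to those.
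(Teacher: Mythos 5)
The paper omits this proof as ``routine,'' so there is no official argument to compare against; the question is only whether your argument is sound. Items (ii) and (iii) are fine, and (iii) correctly isolates the point that matters: because only a $\liminf$ is involved, it suffices to kill each $\lbox_0^h(E_n)$ separately along its own sequence of radii, one $n$ at a time, and the conclusion lands in the $\sigma$-ideal $\NNs(\lbox_0^h)$. In (i), tighten the last clause: $N_E(r)<\infty$ for a \emph{single} $r$ is not total boundedness; you need it for every $r>0$, which does follow from your ``arbitrarily small $r$'' observation together with the monotonicity of $N_E$.

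The genuine error is in (iv): the claim that a finite union of sets of $\lbox_0^h$-content zero has content zero, justified by $N_{A\cup B}(r)\leq N_A(r)+N_B(r)$, is false. A $\liminf$ is superadditive, not subadditive: $A$ may have small covering numbers only along one sequence of scales and $B$ only along a disjoint one, so that $N_{A\cup B}(r)h(r)\geq\max\bigl(N_A(r),N_B(r)\bigr)h(r)$ stays bounded away from zero for \emph{all} $r$. (Concretely, in $\Cset$ take $A=C_I$ and $B=C_J$ with $I,J$ complementary unions of rapidly lengthening blocks and $h(r)=\sqrt r$.) This non-subadditivity is exactly why the paper works with $\NNs(\lbox_0^h)$ rather than $\NN(\lbox_0^h)$. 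Fortunately the step is also unnecessary: membership in $\NNs$ only requires a countable decomposition into null pieces, not an increasing one, so delete the reduction and keep $E=\bigcup_nE_n$ as given. The remaining diagonalization is the right idea, but state it with the quantifiers in a safe order by building the radii and $g$ together rather than fixing all radii first: enumerate stages so that each $n$ recurs infinitely often; at stage $i$, devoted to $n_i$, choose $\rho_i<\rho_{i-1}$ with $h(\rho_i)\leq\tfrac14h(\rho_{i-1})$ and $N_{E_{n_i}}(\rho_i)h(\rho_i)\leq4^{-i}$ (possible since the $\liminf$ is $0$, so good radii exist at every scale); then put $g(r)=\max\bigl(2^ih(r),\,2^{i+1}h(\rho_{i+1})\bigr)$ for $r\in(\rho_{i+1},\rho_i]$. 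One checks $g\in\HH$, $h/g\leq2^{-i}\to0$ (so $g\prec h$), and $N_{E_n}(\rho_i)g(\rho_i)\leq2^{-i}$ along the stages devoted to $n$, whence $\lbox_0^g(E_n)=0$ for every $n$ and $E\in\NNs(\lbox_0^g)$.
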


In the common case when $h(r)=r^s$ for some
$s>0$ we write $\hm^s$ for $\hm^h$, and the same license is used for all
pre-measures and measures under consideration.

It is easy to check that
\begin{equation}\label{basicIneq}
  \hm^g\leq\uhm^g\leq\dbox^g\leq\ubox^g
\end{equation}
and that the three measures $\uhm^g$, $\dbox^g$ and $\ubox^g$ satisfy the following
continuity property.
\begin{lem} \label{IncreasingSetsLemma}
If $\uhm^g(X)<s$, then there is a sequence $X_n\upto X$ such that
$\sup\uhm_0^g(X_n)<s$.
Analogous statements hold for $\ubox^g$ and $\dbox^g$.
\end{lem}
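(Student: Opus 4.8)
The plan is to unwind the Method~I construction behind $\uhm^g$ and $\ubox^g$, and to read the $\dbox^g$ case straight off its definition.

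Consider first $\uhm^g$. Since $\uhm^g=(\uhm_0^g)^{\mathrm I}$ and $\uhm^g(X)<s$, there is by definition of the Method~I construction a countable cover $\{E_n\}_{n\in\Nset}$ of $X$ with $\sum_{n\in\Nset}\uhm_0^g(E_n)<s$. I would then put $X_n=X\cap\bigcup_{k\le n}E_k$, so that $X_n\upto X$. The key point is that $\uhm_0^g$ is monotone and finitely subadditive: the union of finite $\del$-covers of finitely many sets is a finite $\del$-cover of their union, so $\uhm_{0,\del}^g$ is finitely subadditive for each $\del$, and taking the supremum over $\del$ preserves this. Hence $\uhm_0^g(X_n)\le\sum_{k\le n}\uhm_0^g(E_k)\le\sum_{k\in\Nset}\uhm_0^g(E_k)<s$, and therefore $\sup_n\uhm_0^g(X_n)<s$.

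The argument for $\ubox^g$ is word for word the same, the only change being the justification of finite subadditivity of $\ubox_0^g$: from $N_{A\cup B}(r)\le N_A(r)+N_B(r)$ and subadditivity of $\limsup$ one gets $\ubox_0^g(A\cup B)\le\ubox_0^g(A)+\ubox_0^g(B)$, and the same cutoff $X_n=X\cap\bigcup_{k\le n}E_k$ does the job. For $\dbox^g$ there is nothing to prove beyond quoting the definition: $\dbox^g(X)=\inf\{\sup_n\lbox_0^g(X_n):X_n\upto X\}<s$ already furnishes an increasing sequence $X_n\upto X$ with $\sup_n\lbox_0^g(X_n)<s$.

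I do not expect a genuine obstacle here; the only thing requiring any care is the finite subadditivity of the pre-measures $\uhm_0^g$ and $\ubox_0^g$, and that is routine. One remark worth making in the write-up: the cutoff trick should \emph{not} be attempted for $\dbox^g$ directly, since $\lbox_0^g$ need not be finitely subadditive — but this is exactly the reason the directed Method~I construction was set up in terms of increasing sequences in the first place, which is why that case is immediate.
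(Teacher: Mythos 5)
Your proof is correct, and since the paper states this lemma without proof (treating it as routine), your argument is exactly the intended one: unwind the Method~I infimum, take the increasing partial unions, and use monotonicity together with finite subadditivity of $\uhm_0^g$ (resp.\ $\ubox_0^g$), while the $\dbox^g$ case is immediate from its definition as a directed infimum. Your closing remark about why the cutoff trick must not be applied to $\lbox_0^g$ is also exactly the right point to flag.
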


\subsection*{Cartesian products}
Given two metric spaces $X_1$ and $X_2$ with respective metrics
$d_1$ and $d_2$, provide the cartesian product $X_1\times X_2$
with the maximum metric
$$
  d\bigl((x_1,x_2),(y_1,y_2)\bigr)=\max(d_1(x_1,y_1),d_2(x_2,y_2)).
$$
A Hausdorff function $h$ is of \emph{finite order}
(or \emph{blanketed} or satisfies \emph{doubling condition}) if
$\limsup_{r\to0}\frac{h(2r)}{h(r)}<\infty$.
\begin{lem}\label{howroyd}
Let $X,Y$ be metric spaces and $h,g$ Hausdorff functions. Then
\begin{enum}
\item $\ubox^{hg}(X\times Y)\leq\ubox^h(X)\,\ubox^g(Y)$,
\item $\hm^{hg}(X\times Y)\leq\hm^h(X)\,\ubox^g(Y)$,
\end{enum}
provided the rightmost products are not $0\cdot\infty$ or $\infty\cdot0$.

If $h,g$ are of finite order, then
\begin{enum}\setcounter{enumi}{2}
\item $\hm^h(X)\,\hm^g(Y)\leq\hm^{hg}(X\times Y)$,
\item $\uhm^h(X)\,\uhm^g(Y)\leq\uhm^{hg}(X\times Y)$,
\end{enum}
and there is a constant $c>0$ depending only on $g$ and $h$ such that
\begin{enum}\setcounter{enumi}{4}
\item $\ubox^h(X)\,\dbox^g(Y)\leq c\,\ubox^{hg}(X\times Y)$,
\item $\dbox^h(X)\,\dbox^g(Y)\leq c\,\dbox^{hg}(X\times Y)$.
\end{enum}
\end{lem}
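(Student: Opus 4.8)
The plan is to prove each inequality first for the order-zero contents $\ubox_0^h$, $\uhm_0^h$, $\lbox_0^h$ and the Hausdorff $\delta$-contents, and then to lift it through the Method~I construction (respectively through the directed construction defining $\dbox$). Two elementary facts about the maximum metric on $X\times Y$ carry all the geometry: a closed ball splits as a product, $B\bigl((x,y),r\bigr)=B(x,r)\times B(y,r)$, and covering numbers are submultiplicative, $N_{X\times Y}(r)\leq N_X(r)\,N_Y(r)$, because the pairwise products of an $r$-cover of $X$ with an $r$-cover of $Y$ form an $r$-cover of $X\times Y$. The finite-order hypothesis enters only to pass between a diameter bound $h(dA)$ for a set $A\subseteq B(x,r)$ and the radius bound $h(r)$, at the cost of a multiplicative constant depending solely on the order of $h$; in items (i)--(iv) this constant can in the end be normalized away, while in (v) and (vi) it survives as the constant $c$.

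For the upper estimates (i) and (ii) I argue by covers. Submultiplicativity of $N$ together with $\limsup_{r\to0}\varphi(r)\psi(r)\leq\bigl(\limsup_{r\to0}\varphi\bigr)\bigl(\limsup_{r\to0}\psi\bigr)$ for nonnegative $\varphi,\psi$ gives $\ubox_0^{hg}(A\times B)\leq\ubox_0^h(A)\,\ubox_0^g(B)$; applying Method~I to the two factors separately, $X=\bigcup_nA_n$, $Y=\bigcup_mB_m$, $X\times Y=\bigcup_{n,m}A_n\times B_m$, and invoking the identity $\sum_{n,m}\ubox_0^h(A_n)\ubox_0^g(B_m)=\bigl(\sum_n\ubox_0^h(A_n)\bigr)\bigl(\sum_m\ubox_0^g(B_m)\bigr)$ (whose failure is exactly the excluded case $0\cdot\infty$), one obtains (i) after taking infima. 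For (ii) one uses the same idea but tailors the cover of $Y$ to each element of the cover of $X$: given a $\delta$-cover $\{E_n\}$ of $X$, cover $Y$ by at most $N_Y(dE_n)$ sets of diameter $\leq dE_n$; the products form a $\delta$-cover of $X\times Y$ whose $hg$-sum is at most $\sum_nh(dE_n)N_Y(dE_n)g(dE_n)\leq\bigl(\sup_{r\leq\delta}N_Y(r)g(r)\bigr)\sum_nh(dE_n)$, whence $\hm^{hg}(X\times Y)\leq\hm^h(X)\,\ubox_0^g(Y)$, and Method~I on the $Y$-factor yields (ii).

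For the lower estimates (iii) and (iv) I use a mass-distribution (Frostman) argument. Reducing to the case of \si compact $X,Y$ and, via the existence of compact subsets of finite positive Hausdorff measure, further to finite positive masses, I pick finite Borel measures $\mu$ on $X$ and $\nu$ on $Y$ with $\mu(A)\leq h(dA)$, $\nu(A)\leq g(dA)$ for all $A$ and with $\mu(X),\nu(Y)$ as close to $\hm^h(X),\hm^g(Y)$ as desired (this being the classical content of the product estimates for Hausdorff measure, which I would either cite or reprove along these lines). Since any $C\subseteq X\times Y$ satisfies $C\subseteq\pi_X(C)\times\pi_Y(C)$ with $d\pi_X(C),d\pi_Y(C)\leq dC$, the product $\mu\times\nu$ satisfies $(\mu\times\nu)(C)\leq h(dC)g(dC)$, so summing over any countable cover of $X\times Y$ gives $\hm^{hg}(X\times Y)\geq(\mu\times\nu)(X\times Y)=\mu(X)\nu(Y)$, and letting the masses grow proves (iii). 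For (iv) I exploit Lemma~\ref{lem1}(v): given any \si compact $M\supseteq X\times Y$, each slice $M_x=\{y:(x,y)\in M\}$ is \si compact and contains $Y$, hence $\hm^g(M_x)\geq\uhm^g(Y)$; feeding this, together with a Frostman measure for $\uhm^h$ on a \si compact set containing $X$, into a Fubini-type estimate bounds $\hm^{hg}(M)$ from below by $\uhm^h(X)\,\uhm^g(Y)$, and the infimum over $M$ provided by Lemma~\ref{lem1}(v) gives (iv).

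The remaining items (v) and (vi) run the same mass-distribution idea transposed to the box setting, with increasing exhaustions $X_n\upto X$, $Y_n\upto Y$ playing the role that \si compact supersets play for $\uhm$, and with the constant $c$ emerging from the doubling condition when a cover element of diameter $dE$ is absorbed into the ball of radius $dE$ about one of its points. For (vi) one starts from an arbitrary exhaustion $Z_k\upto X\times Y$, passes to the projected exhaustions $\pi_X(Z_k)\upto X$ and $\pi_Y(Z_k)\upto Y$, uses the definition of $\dbox$ to choose $k$ with $\lbox_0^h(\pi_X(Z_k))$ and $\lbox_0^g(\pi_Y(Z_k))$ simultaneously close to $\dbox^h(X)$ and $\dbox^g(Y)$, and then a mass-distribution estimate along a common scale of radii yields $\lbox_0^{hg}(Z_k)\geq c^{-1}\dbox^h(X)\dbox^g(Y)$; (v) is the hybrid mixing the upper limit in the $X$-factor with the lower limit in the $Y$-factor. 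I expect the genuinely delicate points to be (iv)--(vi): a \si compact superset of $X\times Y$, or an exhaustion of it, need not respect the product structure, so the slicing and projection steps, together with the bookkeeping of the doubling constant (including checking that it disappears from (iii) and (iv)), are where the real work lies; (i)--(iii) are in essence the classical product estimates for Hausdorff and box measures.
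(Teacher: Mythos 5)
Your treatment of (i) and (ii) matches the paper's: submultiplicativity of covering numbers plus Method I for (i), and a cover of $Y$ tailored to each element $E_n$ of a cover of $X$ for (ii). For (iii) and (iv) the paper does not argue from scratch but cites Howroyd: (iii) directly, and (iv) via the Fubini-type inequality $\int_X\hm^g(E\cap\{x\}\times Y)\,\mathrm{d}\hm^h(x)\leq\hm^{gh}(E)$ applied to a \si compact $E\sups X\times Y$, whose slices over points of $X$ contain $Y$. Your Frostman route is the classical alternative, but your opening reduction ``to \si compact $X,Y$'' carrying compact pieces of positive finite measure is not available for arbitrary metric spaces (the subset theorem needs analyticity), and in (iv) a Frostman measure living on a \si compact hull $K\sups X$ may give zero mass to $X$ itself, which is the only set of points whose slices are known to contain $Y$; so some care is needed to end up with $\uhm^h(X)$ rather than $\hm^h(X)$ on the left. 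These points are repairable along the lines of the weighted-Hausdorff-measure proofs that the paper cites.

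The genuine gap is in (v) and (vi). Given an exhaustion $Z_k\upto X\times Y$, you propose to choose $k$ with $\lbox_0^h(\pi_X(Z_k))$ and $\lbox_0^g(\pi_Y(Z_k))$ large and then deduce a lower bound on $\lbox_0^{hg}(Z_k)$. That deduction fails: since $Z_k\subs\pi_X(Z_k)\times\pi_Y(Z_k)$, monotonicity gives an \emph{upper} bound for $\lbox_0^{hg}(Z_k)$ in terms of the product of its projections, not a lower one, and an individual $Z_k$ can be very thin while having full projections --- the diagonal of $[0,1]^2$ has both projections of lower box content about $1$ for $h(r)=g(r)=r$, yet $N_Z(r)\cdot r^2\to0$. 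No mass-distribution estimate on $Z_k$ can be manufactured from the projections alone. The truth of (v)--(vi) rests on the fact that the \emph{whole} exhaustion must eventually fill the product, and the argument (carried out in \cite{ZinPack}, which the paper invokes rather than reproves) slices instead of projecting: for each $x$ the fibres $\{y:(x,y)\in Z_k\}$ increase to $Y$, so the sets of those $x$ whose fibre is already uniformly fat at a common scale themselves form an exhaustion of $X$, and one multiplies a counting bound for such a set of base points with the fibrewise bound. The constant $c$ arises from replacing the covering numbers $N$ by packing numbers $C$ (using $C_X(\del)\leq N_X(\del)\leq C_X(\del/2)$, legitimate up to a bounded factor precisely because $h,g$ are of finite order), so that well-separated points in the base and in the fibres yield well-separated points of $X\times Y$; your remark about absorbing a set into a ball about one of its points is the same doubling device, but it does not by itself repair the projection step.
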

\begin{proof}[Proof in outline]
(i) easily follows from the definitions and the obvious inequality
$N_{X\times Y}(\del)\leq N_X(\del)N_Y(\del)$.

(ii) It is clearly enough to prove that $\hm^{hg}(X\times Y)\leq\hm^h(X)\,\ubox_0^g(Y)$.
Fix $\eps,\del>0$ and find a $\del$-cover $\{E_n\}$ of $X$ such that $\sum_nh(dE_n)<\hm^h(X)+\eps$.
For each $n$ let $\mc B_n$ be an $dE_n$-cover of $Y$ such that $\abs{\mc B_n}=N_Y(dE_n)$.
If $\del$ is small enough, we thus have $\abs{\mc B_n}g(dE_n)<\ubox_0^g(Y)+\eps$.
Consider the family $\mc A=\{E_n\times B:n\in\Nset,B\in\mc B_n\}$. It is clearly a $\del$-cover
of $X\times Y$ and routine calculation shows that
$\sum_{A\in\mc A}h(dA)g(dA)\leq(\ubox_0^g(Y)+\eps)(\hm^h(X)+\eps)$.
Thus $\hm_\del^{hg}(X\times Y)\leq(\ubox_0^g(Y)+\eps)(\hm^h(X)+\eps)$. Let $\del\to0$
and $\eps\to0$ to get (ii).

(iii) comes from~\cite{MR1362951}.
(iv) is easily derived from the following
generalization of (iii) that can be found in~\cite{howroydPhD}, see also~\cite{MR1362951,Kelly}:
If $E\subs X\times Y$ and $g,h$ are of finite order, then
$\int_X\hm^g(E\cap\{x\}\times Y)\mathrm{d}\hm^h(x)\leq\hm^{gh}(E)$.

(v) and (vi): Let $C_X(\del)$ be the maximal number of points in $X$ that are pairwise more than
$\del$ apart. This is a variation of the covering number function $N_X(\del)$
and it is obvious that $C_X(\del)\leq N_X(\del)\leq C_X(\del/2)$.
So if $\tau^g$ and $\underrightarrow{\tau^g}$ are the set functions
that obtain the same way as $\ubox^g$ and $\dbox^g$, respectively,
from $C_X$ in place of $N_X$, it is clear that
$\tau^g\leq\ubox^g\leq\tau^{g_2}$, where $g_2(r)=g(2r)$, and likewise
$\underrightarrow{\tau^g}\leq\dbox^g\leq\underrightarrow{\tau}^{g_2}$.
Thus if $g,h$ are of finite order, there is a constant $q$ such that
$\tau^g\leq\ubox^g\leq q\tau^g$ and
$\underrightarrow{\tau^g}\leq\dbox^g\leq q\underrightarrow{\tau^g}$.
As proved in~\cite{ZinPack},
$\tau^h(X)\,\underrightarrow{\tau^g}(Y)\leq\tau^{hg}(X\times Y)$ and
$\underrightarrow{\tau^h}(X)\,\underrightarrow{\tau^g}(Y)
\leq\underrightarrow\tau^{hg}(X\times Y)$.
Hence (v) and (vi) hold with $c=1/q^2$.
\end{proof}
\subsection*{Uniformly continuous and Lipschitz images}
The following lemma on Lipschitz images and its counterpart for uniformly continuous mappings
are well-known for Hausdorff measures, see e.g.~\cite[Theorem 29]{MR0281862}.
\begin{lem}\label{lipschitz}
Let $f:X\to Y$ be a Lipschitz mapping with Lipschitz constant $L$.
Then $\hm^s(f(X))\leq L^s\hm^s(X)$ for any $s>0$.
Analogous statements hold also for $\uhm^s,\ubox^s$ and $\dbox^s$.
\end{lem}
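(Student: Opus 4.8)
The plan is to reduce everything to the single elementary observation that a Lipschitz map with constant $L$ sends a set of diameter $\leq r$ to a set of diameter $\leq Lr$, hence sends a $\del$-cover of $X$ to an $L\del$-cover of $f(X)$, and in particular satisfies $N_{f(X)}(Lr)\leq N_X(r)$ for every $r>0$ (the empty case being trivial).

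First, for $\hm^s$: given $\eps>0$ and $\del>0$, pick a countable $\del$-cover $\{E_n\}$ of $X$ with $\sum_n(dE_n)^s<\hm^s_\del(X)+\eps$. Then $\{f(E_n)\}$ is an $L\del$-cover of $f(X)$ with $df(E_n)\leq L\,dE_n$, so $\hm^s_{L\del}(f(X))\leq\sum_n(df(E_n))^s\leq L^s\sum_n(dE_n)^s<L^s(\hm^s_\del(X)+\eps)$; letting $\del\to0$ and $\eps\to0$ gives $\hm^s(f(X))\leq L^s\hm^s(X)$. For the pre-measures $\uhm_0^s$ and $\ubox_0^s$ the same idea applies: the cover argument with \emph{finite} covers gives $\uhm_0^s(f(E))\leq L^s\uhm_0^s(E)$ for every $E\subs X$, and reparametrizing the defining limsup by $\rho=Lr$ together with $N_{f(E)}(Lr)\leq N_E(r)$ gives $\ubox_0^s(f(E))=\limsup_{\rho\to0}N_{f(E)}(\rho)\rho^s\leq L^s\limsup_{r\to0}N_E(r)r^s=L^s\ubox_0^s(E)$. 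Since $\uhm^s$ and $\ubox^s$ arise from their pre-measures by Munroe's Method~I and $f\bigl(\bigcup_nE_n\bigr)=\bigcup_nf(E_n)$, the estimate passes to the Method~I measures: for any cover $\{E_n\}$ of $E$ one has $\uhm^s(f(E))\leq\sum_n\uhm_0^s(f(E_n))\leq L^s\sum_n\uhm_0^s(E_n)$, and taking the infimum over covers yields $\uhm^s(f(E))\leq L^s\uhm^s(E)$; the same works verbatim for $\ubox^s$.

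Finally, for $\dbox^s$: the liminf version of the covering-number estimate gives $\lbox_0^s(f(E))\leq L^s\lbox_0^s(E)$, and since images preserve increasing unions, $E_n\upto E$ implies $f(E_n)\upto f(E)$; hence $\dbox^s(f(X))\leq\sup_n\lbox_0^s(f(X_n))\leq L^s\sup_n\lbox_0^s(X_n)$ for every increasing sequence $X_n\upto X$, and taking the infimum over such sequences gives $\dbox^s(f(X))\leq L^s\dbox^s(X)$.

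I do not expect a genuine obstacle here; the proof is a bookkeeping exercise. The only points deserving a word of care are that the covering number function is defined only for nonempty sets, and that the passages through the Method~I construction and through the directed construction use, respectively, that taking images commutes with countable unions and with increasing unions. The uniformly continuous counterpart referred to in the sentence preceding the lemma follows the identical pattern, replacing the implication ``$dE_n\leq\del\Rightarrow df(E_n)\leq L\del$'' by the modulus-of-continuity estimate ``$dE_n\leq\del\Rightarrow df(E_n)\leq\omega_f(\del)$'' with $\omega_f(\del)\to0$ as $\del\to0$.
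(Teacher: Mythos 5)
Your proof is correct, and the paper itself gives no proof of this lemma (it is stated as well-known, with a reference to Rogers); your cover-pushforward argument, together with the reparametrization of the covering-number limits and the observation that images commute with countable and increasing unions, is exactly the standard argument being invoked.
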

\begin{lem}\label{lipschitz2}
Let $f:(X,d_X)\to (Y,d_Y)$ be a uniformly continuous mapping. Suppose $g\in\HH$
and that $f$ satisfies the condition
\begin{equation}\label{lip2}
  d_Y(f(x),f(y))\leq g(d_X(x,y)),\quad x,y\in X.
\end{equation}
Then $\hm^h(f(X))\leq \hm^{h{\circ}g}(X)$ for any $h\in\HH$.
Analogous statements hold also for
$\uhm^h$, $\ubox^h$ and $\dbox^h$.
\end{lem}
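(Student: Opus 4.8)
\textbf{Proof plan for Lemma~\ref{lipschitz2}.}
The plan is to imitate the classical argument for Hausdorff measures but to track the effect of the modulus of continuity $g$ through the Method~I construction. First I would handle the base pre-measures. For the Hausdorff case, fix $h\in\HH$ and $\del>0$. Given any $\del$-cover $\{E_n\}$ of $X$, condition~\eqref{lip2} gives $d_Y(f(E_n))\leq g(dE_n)$, so $\{f(E_n)\}$ is a $g(\del)$-cover of $f(X)$; since $g$ is nondecreasing and $g(r)\to 0$ as $r\to 0$ (because $g\in\HH$), $g(\del)\to0$ as $\del\to0$. Moreover $h\bigl(d_Y(f(E_n))\bigr)\leq h\bigl(g(dE_n)\bigr)=(h\circ g)(dE_n)$ because $h$ is nondecreasing. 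Taking infima over $\del$-covers yields $\hm^h_{g(\del)}(f(X))\leq\hm^{h\circ g}_\del(X)$, and letting $\del\to0$ gives $\hm^h(f(X))\leq\hm^{h\circ g}(X)$.

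The same one-line estimate $h\bigl(d_Y(f(E_n))\bigr)\leq(h\circ g)(dE_n)$ applies verbatim to finite covers, which gives $\uhm^h_{0,g(\del)}(f(X))\leq\uhm^{h\circ g}_{0,\del}(X)$ and hence $\uhm^h_0(f(X))\leq\uhm^{h\circ g}_0(X)$ for the upper Hausdorff content; the Method~I construction is monotone in the underlying pre-measure, so applying $(\,\cdot\,)^\mathrm I$ to both sides preserves the inequality and yields $\uhm^h(f(X))\leq\uhm^{h\circ g}(X)$. For the box contents I would instead relate the covering number functions: for $r>0$ a cover of $X$ by $N_X(r)$ sets of diameter $\leq r$ maps under $f$ to a cover of $f(X)$ by at most $N_X(r)$ sets of diameter $\leq g(r)$, so $N_{f(X)}(g(r))\leq N_X(r)$. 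Hence
\[
  N_{f(X)}(g(r))\cdot h(g(r))\leq N_X(r)\cdot (h\circ g)(r),
\]
and since $g(r)\to0$ as $r\to0$, passing to $\limsup_{r\to0}$ gives $\ubox^h_0(f(X))\leq\ubox^{h\circ g}_0(X)$, while passing to $\liminf$ gives $\lbox^h_0(f(X))\leq\lbox^{h\circ g}_0(X)$. (One subtlety: the substitution $\del=g(r)$ need not hit every small positive value, but since $\limsup$ and $\liminf$ as $\del\to0$ along the range of $g$ dominate, respectively, are dominated by the corresponding limits along all $\del\to0$ only in the directions we need, monotonicity of $N$ and of $h$ makes the displayed inequality the only thing used.) Applying $(\,\cdot\,)^\mathrm I$ to the $\ubox_0$ estimate gives $\ubox^h(f(X))\leq\ubox^{h\circ g}(X)$, and applying the directed construction $\dbox$ to the $\lbox_0$ estimate — noting that if $X_n\upto X$ then $f(X_n)\upto f(X)$, so any increasing exhaustion of $X$ pushes forward to one of $f(X)$ — gives $\dbox^h(f(X))\leq\dbox^{h\circ g}(X)$.

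The only genuinely delicate point is the box-measure case: the Method~I construction $\ubox^h=(\ubox_0^h)^\mathrm I$ is built from the \emph{content} $\ubox_0^h$, not from a $\del$-graded family, so I must be careful that $\ubox^h_0$ is genuinely monotone under the push-forward, which is exactly what the displayed inequality on covering numbers delivers; once that is in hand, monotonicity of $(\,\cdot\,)^\mathrm I$ and of the directed construction finishes everything. Everything else is the routine observation that $g\in\HH$ forces $g(r)\to0$, so that ``$\del\to0$'' is not disturbed by composing with $g$, together with monotonicity of $h$. I would state the Hausdorff case in full and remark that the other three are obtained by the identical substitution, replacing ``$\del$-cover'' by ``finite $\del$-cover'' or by the covering number inequality $N_{f(X)}(g(r))\leq N_X(r)$ as appropriate.
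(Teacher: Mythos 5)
Your argument for $\hm^h$, $\uhm^h$ and $\dbox^h$ is the standard push-forward-of-covers proof and is correct; the paper itself omits the proof of this lemma, calling it well known and citing Rogers, and this is exactly the intended route. For those three cases the range-of-$g$ issue you flag in your parenthesis is genuinely harmless, but for a structural reason worth stating: in each of them the substitution $\del=g(r)$ only has to produce \emph{some} sequence of scales tending to $0$ (the parameter $\del$ in $\hm^h_\del$ and $\uhm^h_\del$ enters monotonically into the final $\sup_{\del}$, and a $\liminf$ is bounded above by exhibiting a single good sequence), and $g(r)\to0$ supplies that.

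The upper box case is where your parenthetical does not hold up. There you must bound $N_{f(X)}(\rho)\,h(\rho)$ for \emph{every} small $\rho$, including $\rho$ lying in a gap of the range of $g$. A function $g\in\HH$ is only required to be right-continuous, so it may have left jumps: if $g(r)<\rho$ for all $r<r_0$ while $g(r_0)>\rho$, then the covering-number estimate $N_{f(X)}(g(r))\leq N_X(r)$ is only usable with some $r<r_0$, yielding $N_{f(X)}(\rho)h(\rho)\leq N_X(r)h(\rho)$, and $h(\rho)$ may exceed $h(g(r))$ by an arbitrarily large factor when $h$ is not of finite order. So the assertion that the $\limsup$ along the range of $g$ dominates the full $\limsup$ is false in general, and $\ubox_0^h(f(X))\leq\ubox_0^{h\circ g}(X)$ does not follow from the covering-number inequality alone. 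The repair is to assume (or arrange) that $g$ is continuous: then the range of $g$ on $(0,\del]$ is all of $(0,g(\del)]$, every small $\rho$ is of the form $g(r)$ with $r\to0$ as $\rho\to0$, and your Method~I step finishes the $\ubox^h$ case. This costs nothing in practice --- a uniformly continuous map always admits a continuous, even concave, modulus, and every $g$ to which the paper applies this lemma is constructed continuous --- but it does mean the $\ubox^h$ clause, as you have argued it, is only established for continuous $g$ (equivalently, with $\ubox^{h\circ g}$ replaced by $\ubox^{h\circ \tilde g}$ for a continuous majorant $\tilde g\geq g$), and you should say so explicitly.
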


\subsection*{Dimensions}
Recall that the \emph{Hausdorff dimension} of $X$ is defined by
$$
  \hdim X=\sup\{s>0:\hm^s(X)=\infty\}=\inf\{s>0:\hm^s(X)=0\}.
$$

The \emph{upper Hausdorff dimension} arising from the upper Hausdorff measure
is defined by the same pattern:
$$
  \uhdim X=\sup\{s>0:\uhm^s(X)=\infty\}=\inf\{s>0:\uhm^s(X)=0\}.
$$
It is clear that $\hdim X\leq\uhdim X$.
It follows from Lemma~\ref{lem1}(v) that if $X$ is a complete metric space and $E\subs X$,
then $\uhdim E=\inf\{\hdim K:K\supseteq E\text{ is \si compact}\}$.
In particular, if $X$ is \si compact, then $\hdim X=\uhdim X$.

The \emph{packing dimension} obtains by the same pattern from $\ubox^s$:
$$
  \pdim E=\inf\{s>0:\ubox^s(E)=0\}=\sup\{s>0:\ubox^s(E)=\infty\}.
$$

The \emph{lower directed packing dimension} related to $\dbox^s$ is defined by
$$
  \dpdim X=\sup\{s>0:\dbox^s(X)=\infty\}=\inf\{s>0:\dbox^s(X)=0\}.
$$
The chain of inequalities~\eqref{basicIneq} yields~\eqref{basicinequality}.
\subsection*{The measures on the Cantor cube}
For $p\in2^{<\Nset}$ we denote $[p]=\{x\in\Cset:p\subs x\}$.
Metrize $\Cset$ as follows: Given $x\neq y\in\Cset$,
set $n(x,y)=\min\{i\in\Nset:x(i)\neq y(i)\}$ and define
$d(x,y)=2^{-n(x,y)}$.
This is a variant of the usual least difference metric on $\Cset$. In particular, the topology
induced by $d$ coincides with that of $\Cset$.

Routine proofs show that in this metric, $\hm^1$ coincides on Borel sets with the usual product
measure, i.e.~the Haar measure of the compact group $\Cset$, and that
$$
  \hm^1(\Cset)=\uhm^1(\Cset)=\ubox^1(\Cset)=\dbox^1(\Cset)=1.
$$

Besides the \si ideal $\EE$ generated by closed null sets we also introduce
the following family of highly regular compact subsets of $\Cset$.
For each $I\in[\Nset]^\Nset$ put
$C_I=\{x\in\Cset:x\rest I\equiv0\}$
and define $\CC=\{C_I:I\in[\Nset]^\Nset\}$.
\begin{lem}\label{EC}
\begin{enum}
\item $E\in\EE$ if and only if there is $h\prec 1$ such that $E\in\NNs(\ubox^h_0)$,
\item $\CC\subs\EE$,
\item for each $h\prec 1$ there is $C\in\CC$ such that $\hm^h(C)>0$.
\end{enum}
\end{lem}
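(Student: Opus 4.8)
The plan is to prove the three items essentially in sequence, using the explicit coordinate structure of $\Cset$ and the metric $d(x,y)=2^{-n(x,y)}$, under which a ball $B(x,2^{-n})$ is exactly the basic clopen set $[x\rest n]$, so $N_{[p]}(2^{-n})=2^{n-\abs p}$ for $n\geq\abs p$ and the covering number function is governed by how the set meets the coordinate ``tree'' $\CCset$.

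\emph{Item (i).} For the forward direction, start with a closed null set $F$; by compactness and the product-measure computation $\hm^1=\mu$ on Borel sets, the density $\mu(F\cap[p])/\mu([p])$ tends to $0$, so one can choose a function $\varphi\in\UPset$ for which, writing $F_n$ for the ``level-$n$ approximation'' of $F$ (the union of basic clopen sets of length $n$ meeting $F$), the count of length-$n$ clopen pieces needed to cover $F$ is at most $2^{n-\varphi(n)}$ for all $n$. Then define $h\in\HH$ by $h(2^{-n})=2^{-\varphi(n)}$ (interpolated right-continuously); $\varphi\in\UPset$ gives $h\prec 1$, and $\ubox_0^h(F)=\limsup_n N_F(2^{-n})h(2^{-n})\leq\limsup_n 2^{n-\varphi(n)}2^{-\varphi(n)}$ — here I need to be a little careful and actually arrange $N_F(2^{-n})\leq 2^{n-\varphi(n)}$ with enough room (replace $\varphi$ by $\varphi/2$ or similar) so that the product is bounded, giving $F\in\NN(\ubox_0^h)\subs\NNs(\ubox_0^h)$. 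Since $\EE$ is generated by such $F$ and, by Lemma~\ref{lemP} together with the remark that for a sequence $h_n\prec 1$ there is a single $h\prec1$ below all of them, $\NNs(\ubox_0^h)$ for varying $h\prec1$ is closed under countable unions in the appropriate sense, every $E\in\EE$ lands in some $\NNs(\ubox_0^h)$ with $h\prec1$. Conversely, if $E\in\NNs(\ubox_0^h)$ with $h\prec1$, write $E=\bigcup_n E_n$ with $\ubox_0^h(E_n)=0$; by Lemma~\ref{lemP}(ii) we may replace $E_n$ by $\clos{E_n}$, which is compact, and $N_{\clos{E_n}}(2^{-k})h(2^{-k})\to0$ forces $N_{\clos{E_n}}(2^{-k})$ to be $o(2^{k})$ since $h(2^{-k})2^{k}\to\infty$; hence $\mu(\clos{E_n})=\lim_k N_{\clos{E_n}}(2^{-k})2^{-k}=0$, so $\clos{E_n}$ is a closed null set and $E\in\EE$.

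\emph{Item (ii).} Fix $I\in[\Nset]^\Nset$, say $I=\{i_0<i_1<\dots\}$. Then $C_I$ is compact and $\mu(C_I)\leq 2^{-k}$ for every $k$ (coordinates $i_0,\dots,i_{k-1}$ are forced to be $0$), so $\mu(C_I)=0$ and $C_I$ is a closed null set, hence $C_I\in\EE$; thus $\CC\subs\EE$. (Alternatively, this drops out of item (i): choosing $\varphi(n)=\abs{I\cap n}$ exhibits $C_I\in\NNs(\ubox_0^h)$ for a suitable $h\prec1$, but the direct argument is shorter.)

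\emph{Item (iii).} Given $h\prec1$, I want $C=C_I\in\CC$ with $\hm^h(C)>0$; this is the one genuinely delicate point. The idea is to choose $I$ sparse enough relative to $h$: pick $I=\{i_0<i_1<\dots\}$ and note that $C_I$ is isometric — up to the scaling built into $d$ — to a Cantor set in which, at the $k$-th ``free level'', the basic pieces have diameter $2^{-i_k}$ (where $i_k$ is the $k$-th free coordinate) and there are $2^{k}$ of them, each of measure-ratio... more precisely $C_I$ carries a natural probability measure $\nu$ giving each length-$i_{k}$ basic piece meeting $C_I$ mass $2^{-k}$. To get $\hm^h(C_I)>0$ I apply the standard mass distribution principle: it suffices that $\nu(B(x,r))\leq c\,h(r)$ for all small $r$, i.e. $2^{-k}\leq c\,h(2^{-i_k})$ for all $k$, equivalently $h(2^{-i_k})\geq 2^{-k}/c$. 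Since $h\prec1$ means $h(2^{-n})/2^{-n}\to\infty$, i.e. $h(2^{-n})2^{n}\to\infty$, for each $k$ one can choose $i_k$ large (and $>i_{k-1}$) with $h(2^{-i_k})\geq 2^{-k}$ — indeed $h(2^{-n})\geq 2^{-k}$ holds for all large $n$ once we also use monotonicity of $h$ the other way, but the cleanest route is: since $h$ is nondecreasing with $h(r)>0$ for $r>0$, $h(2^{-n})$ is a positive nonincreasing sequence, so simply pick $i_k$ with $h(2^{-i_k})\geq h(2^{-i_{k-1}})/2$... — here I need to assemble the inequality $\nu(B(x,r))\leq C h(r)$ for \emph{all} $r\in(2^{-i_{k+1}},2^{-i_k}]$, not just $r=2^{-i_k}$, which works because on that range $\nu(B(x,r))=2^{-k}$ is constant while $h(r)\geq h(2^{-i_{k+1}})$, so the choice should be driven by $h$ at the \emph{next} free level. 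The main obstacle is getting this bookkeeping exactly right: one must recursively choose $i_{k+1}$ large enough that $h(2^{-i_{k+1}})\geq 2^{-(k+1)}$ — possible precisely because $h\prec1$ forces $h(2^{-n})2^n\to\infty$ hence $h(2^{-n})$ cannot decay as fast as $2^{-n}$, but a priori it could still decay faster than $2^{-k}$ as a function of the \emph{index} $k$ if the $i_k$ grow too slowly; the fix is that we are \emph{free} to make the $i_k$ grow as fast as we like, and $h(2^{-n})\to0$ only, never reaching $0$, so for each target $2^{-(k+1)}>0$ there is an $n$ with $h(2^{-n})\geq 2^{-(k+1)}$... which fails if $\inf_n h(2^{-n})=0$ at rate — wait, no: $h(2^{-n})>0$ for every fixed $n$, and we only need \emph{one} $n=i_{k+1}$ with $h(2^{-i_{k+1}})\geq 2^{-(k+1)}$, and since $2^{-(k+1)}\to0$ while for \emph{each} individual $n$ the value $h(2^{-n})$ is a fixed positive number, we can take $i_{k+1}$ to be the old $i_k$ incremented only if needed; more robustly, $h\prec1$ gives $n_0$ with $h(2^{-n})\geq 2^{-n/1}\cdot$(large), so certainly $h(2^{-n})\geq 2^{-n}$, hence choosing $i_k$ by $2^{-i_k}\leq$ (recursively small) we can even get $h(2^{-i_k})\geq 2^{-i_k}\geq 2^{-k}$ once $i_k\leq k$ — impossible for large $k$, so the honest statement is just: recursively, having chosen $i_0<\dots<i_k$, use that $h(2^{-i_k})>0$ is a fixed positive number to pick $i_{k+1}>i_k$ with $2^{-(k+1)}\leq h(2^{-i_k})$ automatically for $k$ past the point where $2^{-(k+1)}\leq h(2^{-i_{\text{some fixed index}}})$, and since $h$ is nonincreasing in $n$ one takes the constant tail — I will present the clean recursion in the final proof. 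Once the mass distribution inequality $\nu(B(x,r))\leq 2 h(r)$ holds for all small $r$, the standard lemma yields $\hm^h(C_I)\geq 1/2>0$, completing (iii).
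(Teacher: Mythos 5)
Your items (i) and (ii) are essentially sound and follow the paper's line. For (i) the paper takes a cleaner route in the forward direction: it shows that every closed null set $F$ already satisfies $\ubox_0^1(F)=0$ (refine a finite cover of small total measure to a single level $n$ of the tree, so that $2^{-n}N_F(2^{-n})<\eps$ for all large $n$), concludes $E\in\NNs(\ubox_0^1)$ for every $E\in\EE$, and only then applies Lemma~\ref{lemP}(v) once to produce a single $h\prec1$. Your version builds a tailored $h_n\prec1$ for each closed null piece and must then merge countably many such functions into one $h$ that is still $\prec1$ and lies \emph{below} all $h_n$; this is doable by a minimum-type diagonalization, but note that the remark you invoke (``for any sequence $h_n$ there is $h$ with $h\prec h_n$ for all $n$'') produces a function \emph{dominating} the $h_n$ in the relevant sense, which is the wrong direction for your purpose. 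The backward direction of (i) and item (ii) are fine.

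Item (iii) has a genuine gap, sitting exactly at the point you flagged as delicate. With the paper's definition $C_I=\{x\in\Cset:x\rest I\equiv0\}$ the coordinates in $I$ are \emph{forced}, not free: at level $i_k$ there are $2^{i_k-k}$ cylinders meeting $C_I$, not $2^k$, so the measure $\nu$ you describe (mass $2^{-k}$ per level-$i_k$ piece) is not a probability measure on $C_I$. The correct natural measure gives a level-$n$ piece mass $2^{-(n-\abs{n\cap I})}$, and the mass-distribution condition then reads $2^{\abs{n\cap I}}\leq c\,h(2^{-n})\,2^{n}$; since $h\prec1$ means $h(2^{-n})2^{n}\to\infty$, this is satisfied by \emph{any sufficiently sparse infinite} $I$, and the tension you wrestle with disappears — this is the paper's argument. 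In your (implicitly reversed) parametrization, where $i_k$ is the $k$-th \emph{free} coordinate, the condition $h(2^{-i_k})\geq 2^{-k}/c$ forces $i_k$ to be \emph{not too large}, and the recursion you gesture at, followed greedily, makes the free set cofinite — then the forced set is finite, the resulting $C$ is a clopen set of positive measure, it is not in $\CC$, and it contradicts your own item (ii). To salvage that route one must show that $m_k:=\max\{n:h(2^{-n})\geq2^{-k}/c\}$ satisfies $m_k-k\to\infty$ (this is precisely where $h\prec1$ enters; for $h(r)=r$ one gets $m_k=k$ and the construction is impossible), so that the free set can be chosen co-infinite. You never carry this out and explicitly defer the recursion, so as written item (iii) is not proved.
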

\begin{proof}
(i) According to Lemma~\ref{lemP}(v) it is enough to prove that if $E\subs\Cset$
is a closed set and $\hm^1(E)=0$, then $\ubox^1_0(E)=0$.
Let $\eps>0$ be arbitrary. As $E$ is compact and $\hm^1(E)=0$, there
is a finite cover $\mc A$ of $E$ such that $\sum_{A\in\mc A}dA<\eps$.
Since any subset of $\Cset$ is a subset of a cylinder with the same diameter,
we may assume all $A\in\mc A$ are cylinders, so let $\mc A=\{[p_1],[p_2],\dots,[p_k]\}$.
Let $n\in\Nset$ be arbitrary subject to $n\geq\max\{\abs{p_1},\abs{p_2},\dots,\abs{p_k}\}$.
Let $\mc B=\{p\in2^n:\exists i\leq k(p_i\subs p)\}$.
It is clear that $\mc B$ is a $2^{-n}$-cover of $E$. Therefore $N_E(2^{-n})\leq\abs{\mc B}$.
It is also clear that
$\sum_{A\in\mc A}dA=\sum_{B\in\mc B}dB=2^{-n}\cdot\abs{\mc B}$.
Consequently $2^{-n}\cdot N_E(2^{-n})<\eps$.
Since this is true for all $n\in\Nset$ large enough, we get $\ubox_0^1(E)\leq\eps$.
Letting $\eps\to 0$ yields $\ubox_0^1(E)=0$.

(ii) Let $I\in[\Nset]^\Nset$. For each $n\in\Nset$,
the family $\{[p]:p\in C_I\rest n\}$ is obviously a $2^{-n}$-cover
of $C_I$ of cardinality $2^{\abs{n\setminus I}}$. Therefore
$\hm^1_{2^{-n}}(C_I)\leq2^{\abs{n\setminus I}}2^{-n} =2^{-\abs{n\cap I}}$.
Hence $\hm^1(C_I)\leq\lim_{n\to\infty}2^{-\abs{n\cap I}}=0$.

(iii)
Using Lemma~\ref{lemHaus}(i) it is enough to find $C_I$ such that $\hm^h(C_I)\geq 1$.
$h\prec 1$ yields  $\frac{h(2^{-n})}{2^{-n}}\to\infty$. Therefore there is
$I\in[\Nset]^\Nset$ sparse enough to satisfy
$2^{\abs{n\cap I}}\leq\frac{h(2^{-n})}{2^{-n}}$,
i.e.~$2^{-\abs{n\setminus I}}\leq h(2^{-n})$ for all $n\in\Nset$.
Consider the product measure on $C_I$ given as follows: If $p\in2^n$ and
$[p]\cap C_I\neq\emptyset$, put
$\lambda([p]\cap C_I)=2^{-\abs{n\setminus I}}$.
Straightforward calculation shows that $h(dE)\geq\lambda(E)$ for each $E\subs C_I$.
Hence $\sum_n h(dE_n)\geq\sum_n\lambda(E_n)\geq\lambda(C_I)=1$
for each cover $\{E_n\}$ of $C_I$ and $\hm^h(C_I)\geq 1$ follows.
\end{proof}

\section{\hnull{} spaces}
\label{sec:hnull}
We first establish a couple of characterizations of \hnull{} spaces in terms of
Hausdorff measures and dimensions.
\begin{thm}\label{basicHnull}
Let $X$ be a metric space. The following are equivalent.
\begin{enum}
\item $X$ is \hnull,
\item $\hdim f(X,\rho)=0$ for each uniformly equivalent metric on $X$,
\item $\hdim f(X)<\infty$ for each uniformly continuous $f:X\to Y$ into another
metric space,
\item $\hm^h(X)=0$ for each $h\in\HH$,
\item $\hm^h(X)<\infty$ for each $h\in\HH$.
\end{enum}
\end{thm}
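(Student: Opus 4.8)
\emph{Proof plan.} The plan is to prove the following implications. Trivially (i)$\Rightarrow$(iii) and (iv)$\Rightarrow$(v). Also (i)$\Rightarrow$(ii), because if $\rho$ is uniformly equivalent to $d$ then $\id\colon(X,d)\to(X,\rho)$ is uniformly continuous, so $\hdim(X,\rho)=0$ by (i). For (v)$\Rightarrow$(iv): given $h\in\HH$ choose $g\in\HH$ with $g\prec h$; then $\hm^g(X)<\infty$ by (v), so Lemma~\ref{lemHaus}(i) gives $\hm^h(X)=0$. For (iv)$\Rightarrow$(i): if $f\colon X\to Y$ is uniformly continuous, its modulus of continuity yields $g\in\HH$ with $d_Y(f(x),f(y))\le g(d_X(x,y))$, and then for each $s>0$, writing $g_s(r)=r^s$, we have $g_s\circ g\in\HH$, so $\hm^s(f(X))\le\hm^{g_s\circ g}(X)=0$ by Lemma~\ref{lipschitz2}; hence $\hdim f(X)=0$. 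So it remains to show that when (iv) fails, so do (ii) and (iii).

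The heart of the argument is the claim: \emph{if $\hm^h(X)>0$ for some $h\in\HH$, then $X$ carries a metric $\rho$, uniformly equivalent to $d$, with $\hdim(X,\rho)\ge1$.} I would prove it by a reparametrization $\rho=\psi\circ d$. Rescale so that $\diam_d X\le1$ and $h\le1$ on $[0,1]$; using $h(r)\to0$ as $r\to0$, choose $\eta_1>\eta_2>\dots\to0$ with $h<2^{-k}$ on $(0,\eta_k]$ and $\eta_{k+1}\le\eta_k/2$, and let $\psi\colon[0,1]\to[0,1]$ be continuous, increasing and piecewise linear with $\psi(0)=0$, $\psi(\eta_k)=2^{-(k-1)}$, and $\psi\equiv1$ on $[\eta_1,1]$. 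The inequalities $\eta_{k+1}\le\eta_k/2$ make $r\mapsto\psi(r)/r$ nonincreasing, so $\psi$ is subadditive and $\rho:=\psi\circ d$ is a metric; since $\psi$ is a homeomorphism of $[0,\eta_1]$ onto $[0,1]$ fixing $0$, this $\rho$ is uniformly equivalent to $d$. By construction $\psi\ge h$ on $[0,\eta_1]$, and for sets of small $\rho$-diameter $\diam_\rho A=\psi(\diam_d A)$; comparing covers gives $\hm^1_\delta(X,\rho)\ge\hm^h_{\psi^{-1}(\delta)}(X)$ for small $\delta$, whence $\hm^1(X,\rho)\ge\hm^h(X)>0$ and $\hdim(X,\rho)\ge1$.

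Granting the claim, assume (iv) fails and fix such a $\rho$. For $n\ge1$ put $\rho_n=\rho^{1/n}$; this is again a metric, $\id\colon(X,d)\to(X,\rho_n)$ is uniformly continuous (it factors through $(X,\rho)$ followed by the H\"older map $t\mapsto t^{1/n}$), and since $\hm^s(X,\rho_n)=\hm^{s/n}(X,\rho)$ we get $\hdim(X,\rho_n)=n\,\hdim(X,\rho)\ge n$. Taking $n=1$ shows (ii) fails. For (iii): let $F\colon X\to\prod_{n\ge1}(X,\rho_n)$, $F(x)=(x,x,\dots)$, where the product carries a product metric; $F$ is uniformly continuous (at a given precision only finitely many coordinates matter, and each coordinate map is uniformly continuous), and each coordinate projection restricted to $F(X)$ is Lipschitz onto $(X,\rho_n)$, so $\hdim F(X)\ge\hdim(X,\rho_n)\ge n$ for all $n$, i.e.\ $\hdim F(X)=\infty$; thus (iii) fails. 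Combining with the first paragraph, (i), (ii), (iv), (v) are mutually equivalent, and each is equivalent to (iii).

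I expect the reparametrization claim to be the one genuine obstacle: the task is to produce a \emph{subadditive} change of metric that is at least as large as the prescribed Hausdorff function $h$ near $0$ — enough to force positive Hausdorff dimension — while remaining continuous and vanishing at $0$ so that uniform equivalence is preserved. Everything else (reading off the Hausdorff modulus from uniform continuity, the cover estimate $\hm^1(X,\rho)\ge\hm^h(X)$, the uniform continuity of the diagonal map $F$, and the applications of Lemmas~\ref{lemHaus} and~\ref{lipschitz2}) is routine.
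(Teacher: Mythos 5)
Your proof is correct, but it takes a genuinely different route through the hard implications than the paper does. The paper handles (ii)\Implies(iv) and (iii)\Implies(iv) simultaneously and directly: given $h\in\HH$ it chooses one reparametrizing Hausdorff function $g$ with $g\prec h^{1/n}$ for every $n$, sets $\rho=g\circ d$, and notes that either (ii) or (iii) gives $\hdim(X,\rho)<\infty$ (since $\id:(X,d)\to(X,\rho)$ is uniformly continuous), hence $\hm^n(X,\rho)=0$ for some $n$, hence $\hm^{h\circ g^{-1}}(X,\rho)=0$ by Lemma~\ref{lemHaus}(i), hence $\hm^h(X,d)=0$ by Lemma~\ref{lipschitz2} applied to $g^{-1}$. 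You argue the contrapositive in two stages instead: a piecewise-linear subadditive majorant $\psi\ge h$ near $0$ produces a uniformly equivalent metric of dimension at least $1$ (refuting (ii)), and then snowflaking plus the diagonal map into a countable product produces a single uniformly continuous image of infinite dimension (refuting (iii)). Both arguments pivot on the same point --- a subadditive change of metric adapted to $h$ --- but the paper's choice of a $g$ dominating all the $h^{1/n}$ lets one metric do all the work and avoids the product construction entirely, while your version is more hands-on and buys an explicit witness for the failure of (iii). Your $\psi$-construction checks out: $\eta_{k+1}\le\eta_k/2$ does make $r\mapsto\psi(r)/r$ nonincreasing on each linear piece (the extended chord has nonnegative intercept at $0$), hence globally, which gives subadditivity; and $\psi\ge h$ on $(0,\eta_1]$ yields $\hm^1(X,\rho)\ge\hm^h(X)>0$ as claimed. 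Your closing of the cycle also differs slightly --- you prove (v)\Implies(iv) directly by picking $g\prec h$ and invoking Lemma~\ref{lemHaus}(i), whereas the paper goes (v)\Implies(i) via Lemma~\ref{lipschitz2} --- but both are valid.
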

\begin{proof}
Denote by $d$ the metric of $X$.
(i)\Implies(ii) and (i)\Implies(iii) are trivial.
We first prove simultaneously (ii)\Implies(iv) and (iii)\Implies(iv).
Let $h\in\HH$. Choose a convex Hausdorff function $g$ such that $g\prec h^{1/n}$ for each
positive $n\in\Nset$.
The properties of $g$ ensure that
$\rho(x,y)=g(d(x,y))$
is a uniformly equivalent metric on $X$. The identity
map $\id_X:(X,d)\to(X,\rho)$ is of course uniformly continuous.
Thus if either (ii) or (iii) holds, then $\hdim(X,\rho)<\infty$,
hence there is $n$ such that $\hm^n(X,\rho)=0$.
The choice of $g$ ensures $h{\circ}g^{-1}\succ n$.
Thus $\hm^{h{\circ}g^{-1}}(X,\rho)=0$ by Lemma~\ref{lemHaus}(i).
Also $d(x,y)\leq g^{-1}(\rho(x,y))$. Hence Lemma \ref{lipschitz2} yields
$\hm^h(X,d)\leq\hm^{h{\circ}g^{-1}}(X,\rho)=0$.

(iv)\Implies(v) is trivial.

(v)\Implies(i):
Let $f:X\to (Y,\rho)$ be uniformly continuous.
There is a function $g\in\HH$ such that
$\rho(fx,fy)\leq g(d(x,y))$ for all $x,y\in X$.
Fix $s>0$ and put $h(r)=r^s$. By assumption $\hm^{h{\circ}g}(X)<\infty$.
Apply Lemma~\ref{lipschitz2} to conclude that $\hm^s(fX)=\hm^h(fX)\leq\hm^{h{\circ}g}(X)<\infty$.
In particular $\hdim f(X)\leq s$.
As $s>0$ was arbitrary, it follows that $\hdim f(X)=0$.
\end{proof}
Let $X$ be a space and let $\seq{U_n}$ be a sequence of
subsets of $X$. Given a sequence $\Seqeps$ of positive real numbers,
$\seq{U_n}$ is termed $\seqeps$-fine if $dU_n\leq\eps_n$
holds for all $n$.
Recall once again that $X$ has strong measure zero (\smz{})
if for any $\Seqeps$ there is an $\seqeps$-fine cover of $X$.
\begin{lem}\label{seqep2}
For each $\Seqeps$ there exists $g\in\HH$ such that: If $\hm^g(X)=0$,
then $X$ admits an $\seqeps$-fine $\lambda$-cover.
\end{lem}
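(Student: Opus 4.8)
First I would normalise the sequence. Replacing each $\eps_n$ by $\min\{\eps_0,\dots,\eps_n,\frac1{n+1}\}$ only makes the terms smaller, and any $\seqeps$-fine $\lambda$-cover for the smaller sequence is a fortiori one for the original; so I may assume that $\seqeps$ is nonincreasing and $\eps_n\to0$. Then for $\delta>0$ the number $M(\delta)=\abs{\{n:\eps_n\geq\delta\}}$ is finite, increases to $\infty$ as $\delta\to0^+$, and, $\seqeps$ being nonincreasing, $\{m:\eps_m\geq\delta\}$ equals the initial segment $\{0,1,\dots,M(\delta)-1\}$. Fix $k_0$ with $M(2^{-k_0})\geq4$. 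Since $\delta\mapsto4/M(\delta)$ is nondecreasing on $(0,2^{-k_0}]$ and tends to $0$ as $\delta\to0^+$, one readily finds $g\in\HH$ with $g(\delta)\geq4/M(\delta)$ there (its values elsewhere are irrelevant); this $g$ depends only on $\seqeps$.

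Now suppose $\hm^g(X)=0$, equivalently $\hm^g_\delta(X)=0$ for every $\delta>0$. I would manufacture a suitable $\lambda$-cover by hand: for each $k\geq k_0$ choose a countable $2^{-k}$-cover $\mc A_k$ of $X$ with $\sum_{A\in\mc A_k}g(dA)<2^{-k}$, and let the $\lambda$-cover be the indexed family $\seq{E_{(k,A)}:k\geq k_0,\ A\in\mc A_k}$ (with $E_{(k,A)}=A$, copies from different layers kept distinct). It is a $\lambda$-cover because every point of $X$ meets every layer $\mc A_k$. Crucially its total weight is bounded independently of $X$,
$$
  \sum_{(k,A)}g(dE_{(k,A)})=\sum_{k\geq k_0}\ \sum_{A\in\mc A_k}g(dA)<\sum_{k\geq k_0}2^{-k}\leq1 ,
$$
so for every $\delta\in(0,2^{-k_0}]$ the number of members of diameter $\geq\delta$ is $<1/g(\delta)\leq M(\delta)/4$; moreover every member has diameter $\leq2^{-k_0}$.

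It remains to redistribute the members $E_{(k,A)}$ into ``slots'' $m\in\Nset$ so that the member placed in slot $m$ has diameter $\leq\eps_m$ and distinct members occupy distinct slots; then $U_m:=$ the member placed in slot $m$ (or $\emptyset$ if none) gives an $\seqeps$-fine cover which is a $\lambda$-cover, since every $E_{(k,A)}$ reappears as some $U_m$ and every $x\in X$ thus lies in members coming from infinitely many layers. I would inject the (at most countably many) members of diameter $0$ into the odd slots, which is harmless as $0\leq\eps_m$ always; and I would list the members of positive diameter as $s_1,s_2,\dots$ with diameters $d_1\geq d_2\geq\dots\to0$ and place $s_\ell$ greedily into the least unused \emph{even} slot that is $\leq M(d_\ell)-1$, i.e.\ into a slot $m$ with $\eps_m\geq d_\ell$. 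The greedy step never fails: the slots occupied by $s_1,\dots,s_{\ell-1}$ are even and $\leq M(d_j)-1\leq M(d_\ell)-1$, so there are $\ell-1$ of them, while $\ell$ is at most the number of members of diameter $\geq d_\ell$, hence $\ell-1<M(d_\ell)/4$, and $\{0,\dots,M(d_\ell)-1\}$ contains at least $M(d_\ell)/2$ even slots. Undoing the normalisation then finishes the proof.

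The step I expect to be the crux is obtaining a $\lambda$-cover of \emph{bounded} $g$-weight: Proposition~\ref{lambda} by itself yields only one of finite, uncontrolled weight, which is useless for the counting estimate that caps how many members can be ``large''. Building the cover from the vanishing of the $\hm^g_{2^{-k}}(X)$ with geometrically decaying weights is exactly what forces a weight bound independent of $X$. A secondary, purely technical nuisance is that degenerate singletons of the cover cannot in general be enlarged inside $X$, which is why they are parked in a reserved sparse block of slots rather than folded into the main counting.
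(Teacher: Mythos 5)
Your proof is correct and is essentially the paper's argument: you choose $g$ so that (in effect) $g(\eps_n)\geq 1/n$ --- your condition $g(\delta)\geq 4/M(\delta)$ is the same requirement phrased through the inverse function $M$ --- you extract a $\lambda$-cover of total $g$-weight less than $1$, and you use the resulting cap on how many members can have diameter $\geq\delta$ to match members to the $\eps_n$'s. The paper performs that last matching by reordering the cover by decreasing diameter and observing $h(dG_n)\leq\frac1n\sum_m h(dG_m)<\frac1n\leq h(\eps_n)$; your greedy even/odd slot assignment is a more fastidious rendering of the same pigeonhole step, which also takes explicit care of the diameter-zero members and of producing a weight-$<1$ $\lambda$-cover, points the paper dispatches by assuming $X$ has no isolated points and by a casual ``reordering we may assume''.
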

\begin{proof}
Assume without loss of generality that $X$ has no isolated points.
Let$\seq{\eps_n}\in(0,\infty)^\Nset$.
Choose $h\in\HH$ such that $h(\eps_n)\geq\frac1n$ for all $n\in\Nset$.
Suppose $\hm^h(X)=0$. Lemma~\ref{lambda} yields a $\lambda$-cover $\{G_n\}$
such that $\sum_nh(dG_n)<\infty$.
Reordering we may assume $\sum_nh(dG_n)<1$ and $dG_0\geq dG_1\geq dG_2\geq\dots$.
Therefore
$$
 h(dG_n)\leq\frac1n\, nh(dG_n)\leq\frac1n\sum_nh(dG_n)<\frac1n\leq h(\eps_n)
$$
and since $h$ is nondecreasing, we get $dG_n\leq\eps_n$.
\end{proof}
\begin{thm}\label{hnullsmz}
A metric space $X$ is \hnull{} if and only if it is \smz{}.
\end{thm}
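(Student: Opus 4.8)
The plan is to prove both implications, using the characterizations of \hnull{} spaces already established in Theorem~\ref{basicHnull}. The direction ``\smz{} \Implies{} \hnull'' is the easy one: strong measure zero is preserved under uniformly continuous maps (given a uniformly continuous $f:X\to Y$ and a sequence $\seqeps$, pull back a suitable $\seq{\del_n}$ through the modulus of continuity to get a cover of $X$, whose $f$-image is $\seqeps$-fine), so it suffices to observe that an \smz{} metric space has Hausdorff dimension zero. This last fact is immediate: given $s>0$ and $\del>0$, choose $\seqeps$ with $\sum_n \eps_n^s < \infty$ and all $\eps_n \le \del$; an $\seqeps$-fine cover $\seq{U_n}$ then witnesses $\hm^s_\del(X) \le \sum_n (dU_n)^s \le \sum_n \eps_n^s$, and letting the tail bound go to $0$ gives $\hm^s(X)=0$, hence $\hdim X \le s$ for all $s>0$.

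For the converse ``\hnull{} \Implies{} \smz'', I would fix an arbitrary sequence $\Seqeps$ and produce an $\seqeps$-fine cover of $X$. Apply Lemma~\ref{seqep2} to obtain a Hausdorff function $g \in \HH$ with the property that $\hm^g(X)=0$ forces the existence of an $\seqeps$-fine $\lambda$-cover of $X$. Since $X$ is \hnull, Theorem~\ref{basicHnull}(iv) gives $\hm^h(X)=0$ for \emph{every} $h \in \HH$, in particular for this $g$. Therefore $X$ admits an $\seqeps$-fine $\lambda$-cover, which is a fortiori an $\seqeps$-fine cover. As $\Seqeps$ was arbitrary, $X$ is \smz.

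The two genuinely substantive inputs are Lemma~\ref{seqep2} and the equivalence (i)$\Leftrightarrow$(iv) of Theorem~\ref{basicHnull}, both already available in the excerpt, so there is no serious obstacle left in the argument itself; the proof is essentially a two-line deduction from those results plus the elementary observation about \smz{} and Hausdorff dimension. If anything, the only point requiring a word of care is the reduction in Lemma~\ref{seqep2} that $X$ may be assumed to have no isolated points (isolated points are harmless since any single point can be absorbed into a cover element of arbitrarily small positive diameter, or one passes to a uniformly equivalent metric), but this is handled inside that lemma and need not be revisited here.
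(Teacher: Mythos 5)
Your proof is correct. The direction \hnull{}\ \Implies{} \smz{} is exactly the paper's argument: apply Lemma~\ref{seqep2} to the given $\seqeps$, use Theorem~\ref{basicHnull}(iv) to get $\hm^g(X)=0$ for the resulting $g$, and extract the $\seqeps$-fine $\lambda$-cover. For the converse you take a mildly different route from the paper. The paper fixes an arbitrary $h\in\HH$ and $\del>0$, chooses $\eps_n<\del$ with $h(\eps_n)\leq 2^{-n}$, observes that an $\seqeps$-fine cover witnesses $\hm^h_\del(X)\leq 1$, and concludes that $\hm^h(X)<\infty$ for every $h\in\HH$, which is condition (v) of Theorem~\ref{basicHnull}; the passage to uniformly continuous images is thereby delegated entirely to the already-proved implication (v)\Implies(i). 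You instead argue at the level of covers: \smz{} is preserved by uniformly continuous maps (pulling the $\eps_n$ back through the modulus of continuity), and a \smz{} space has Hausdorff dimension zero, so every uniformly continuous image has $\hdim=0$. The two arguments encode the same transfer through the modulus of continuity; yours is self-contained and matches the sketch in the paper's introduction, while the paper's is shorter because it reuses the measure-theoretic machinery of Theorem~\ref{basicHnull} (where the image step is handled once, via Lemma~\ref{lipschitz2}). There is no gap in either direction; your closing remark about isolated points concerns only the internal reduction in Lemma~\ref{seqep2} and does not affect the theorem.
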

\begin{proof}
The forward implication follows at once from the above lemma.
To prove the reverse one, let $h\in\HH$, fix $\del>0$ and choose $\eps_n<\del$ to satisfy
$h(\eps_n)\leq 2^{-n}$. The $\seq{\eps_n}$-fine cover of $X$ witnesses $\hm_\del^h(X)\leq 1$.
Therefore $\hm^h(X)\leq1$, which is by Theorem~\ref{basicHnull}(v) enough.
\end{proof}
\hnull{}$\,=\,$\smz{} sets are characterized by behavior of cartesian products.
\begin{thm}\label{prodHnull}
The following are equivalent.
\begin{enum}
\item $X$ is \hnull{},
\item $\hm^h(X\times Y)=0$ whenever $h\in\HH$ and $\uhm_0^h(Y)=0$,
\item $\hm^h(X\times Y)=0$ whenever $h\in\HH$, $Y$
is \si compact and $\hm^h(Y)=0$,
\item $\hm^1(X\times E)=0$ whenever $E\in\EE$,
\item $\hm^1(X\times C)=0$ whenever $C\in\CC$.
\end{enum}
\end{thm}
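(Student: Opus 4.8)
The plan is to establish the cycle (i)\Implies(ii)\Implies(iii)\Implies(iv)\Implies(v)\Implies(i), using the product estimates of Lemma~\ref{howroyd}, the structural description of $\EE$ and $\CC$ from Lemma~\ref{EC}, and the characterization of \hnull{} via $\hm^h$ from Theorem~\ref{basicHnull}. The two genuinely substantive implications are (i)\Implies(ii) and (v)\Implies(i); the remaining ones are either monotonicity or direct consequences of the lemmas already proved.

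First I would prove (i)\Implies(ii). Assume $X$ is \hnull{} and that $\uhm_0^h(Y)=0$ for some $h\in\HH$. By Lemma~\ref{lem1}(i), $Y$ is totally bounded, so its completion $Y^\star$ is compact and $\uhm_0^h(Y^\star)=\hm^h(Y^\star)$ by Lemma~\ref{lem1}(ii),(iii); thus without loss of generality $Y$ is compact with $\hm^h(Y)=0$. Now apply Lemma~\ref{lemHaus}(ii) to get $g\prec h$ with $\hm^g(Y)=0$, and note $\ubox_0^g(Y)<\infty$ is what we actually need—but more carefully, since $Y$ is compact and $\hm^g(Y)=0$, Lemma~\ref{howroyd}(ii) gives $\hm^{fg}(X\times Y)\leq\hm^f(X)\,\ubox_0^g(Y)$ for any $f\in\HH$; the trouble is $\ubox_0^g(Y)$ need not be finite. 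The cleaner route: since $\hm^h(Y)=0$, pick (Lemma~\ref{lemHaus}(ii)) a function $h'\prec h$ with $\hm^{h'}(Y)=0$, hence (Lemma~\ref{lemHaus}(i) applied in reverse) there is a sequence of covers showing $\ubox_0^{h/h'}$... rather, write $h=h'\cdot k$ where $k=h/h'\to 0$; then $\ubox_0^{k}(Y)$ is governed by $\hm^{h'}(Y)=0$ through a Method~I comparison, giving $\ubox_0^k(Y)=0$ after passing to a smaller Hausdorff function. Since $X$ is \hnull{}, $\hm^{h'}(X)=0$ by Theorem~\ref{basicHnull}(iv). Then Lemma~\ref{howroyd}(ii) yields $\hm^{h'k}(X\times Y)=\hm^h(X\times Y)\leq\hm^{h'}(X)\,\ubox_0^k(Y)=0$, using that the product is not of the form $0\cdot\infty$ once $\ubox_0^k(Y)=0$. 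I expect the bookkeeping of which Hausdorff function to shrink and in what order to be the main technical obstacle here; the idea is forced, but getting the finiteness/zero conditions of Lemma~\ref{howroyd}(ii) to line up takes care.

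Next, (ii)\Implies(iii) is immediate: if $Y$ is \si compact with $\hm^h(Y)=0$ then $\uhm_0^h(Y)=\hm^h(Y)=0$ by Lemma~\ref{lem1}(vi) (or, for compact $Y$, by Lemma~\ref{lem1}(iii)), so (ii) applies—for a \si compact $Y=\bigcup_n Y_n$ one covers $X\times Y$ by the countably many $X\times Y_n$ and uses \si additivity of $\hm^h$. For (iii)\Implies(iv): given $E\in\EE$, by definition $E=\bigcup_n F_n$ with each $F_n$ closed and $\mu$-null, i.e.\ $\hm^1(F_n)=0$ by the identification of $\hm^1$ with Haar measure; each $F_n$ is compact, so (iii) gives $\hm^1(X\times F_n)=0$, and $\hm^1(X\times E)=0$ by countable additivity. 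Then (iv)\Implies(v) is trivial since $\CC\subs\EE$ by Lemma~\ref{EC}(ii).

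Finally, (v)\Implies(i), which I expect to be the crux. Suppose $X$ is \emph{not} \hnull{}; by Theorem~\ref{basicHnull}(iv) there is $h\in\HH$ with $\hm^h(X)>0$. I would like to find $C\in\CC$ with $\hm^1(X\times C)>0$, contradicting (v). The natural tool is the lower product bound Lemma~\ref{howroyd}(iii): if $g$ and $h$ are of finite order then $\hm^h(X)\,\hm^g(C)\leq\hm^{hg}(X\times C)$. So I want $g\in\HH$ with $g\prec 1$ (so that $hg\prec h$, hence, after possibly replacing $h$ by a smaller finite-order function with $\hm^h(X)>0$ still—using Lemma~\ref{lemHaus} in the contrapositive direction, or rather the fact that $\hm^h(X)>0$ for \emph{some} $h$ forces it for a finite-order one by interpolation), and with $\hm^g(C)>0$ for a suitable $C\in\CC$: this is exactly Lemma~\ref{EC}(iii). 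One then needs $hg$ to dominate $g_1(r)=r$, i.e.\ $hg\succ 1$ or at least $\hm^1(X\times C)\leq\hm^{hg}(X\times C)$... the direction must be arranged so that $\hm^{hg}(X\times C)\leq\hm^1(X\times C)\cdot(\text{something})$, which holds when $hg\succ 1$ via Lemma~\ref{lemHaus}(i). So the scheme is: from $\hm^h(X)>0$, pass to a finite-order $h_0\prec 1$ with $\hm^{h_0}(X)>0$ and $h_0$ small; choose via Lemma~\ref{EC}(iii) a $C\in\CC$ with $\hm^{g}(C)>0$ where $g$ is chosen of finite order with $g\prec 1$ and $h_0 g\succ 1$; then $\hm^1(X\times C)\geq c\,\hm^{h_0 g}(X\times C)\geq c\,\hm^{h_0}(X)\,\hm^g(C)>0$. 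The delicate point—the main obstacle—is simultaneously arranging the finite-order hypotheses required by Lemma~\ref{howroyd}(iii),(iv) and the relation $h_0 g\succ 1$ while keeping both $\hm^{h_0}(X)>0$ and $\hm^g(C)>0$; this is a matching-of-Hausdorff-functions argument of the same flavor as the proof of Lemma~\ref{EC}(iii), and I would model it closely on that.
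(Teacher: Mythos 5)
Your cycle (i)\Implies(ii)\Implies(iii)\Implies(iv)\Implies(v)\Implies(i) is the same skeleton as the paper's, and your (ii)\Implies(iii)\Implies(iv)\Implies(v) steps are fine. But your (i)\Implies(ii) has a genuine gap. You route the argument through Lemma~\ref{howroyd}(ii), which forces you to bound a box-counting quantity $\ubox^k(Y)$ (or $\ubox_0^k(Y)$) using only the hypothesis $\uhm_0^h(Y)=0$. You correctly flag that ``$\ubox_0^g(Y)$ need not be finite,'' but the proposed repair --- that $\ubox_0^k(Y)$ is ``governed by $\hm^{h'}(Y)=0$ through a Method~I comparison'' --- is false. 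Hausdorff content zero gives no upper bound on covering numbers $N_Y(r)$ at any fixed scale: there are compact sets $Y$ with $\hm^h(Y)=\uhm_0^h(Y)=0$ but $\ubox^{k}(Y)=\infty$ for \emph{every} factor $k$ of $h$ (any compact set of Hausdorff dimension $0$ and packing dimension $1$, with $h(r)=r^{1/2}$, does this, since every factor $k$ of $h$ satisfies $k\geq h$ near $0$ and hence $\ubox^k\geq\ubox^h=\infty$). So no choice of factorization $h=h'k$ can make Lemma~\ref{howroyd}(ii) applicable. The paper avoids box counting entirely: it takes the finite covers $\mc U_j$ of $Y$ witnessing $\uhm_0^h(Y)=0$ with $\sum_{U\in\mc U_j}h(dU)<2^{-j}\eta$, sets $\eps_j=\min\{dU:U\in\mc U_j\}$, uses the \smz{} property of $X$ (Theorem~\ref{hnullsmz}) to get a cover $\{V_j\}$ with $dV_j\leq\eps_j$, and observes that in the maximum metric $d(V_j\times U)=dU$, so the product cover $\{V_j\times U\}$ has the same Hausdorff sum $<2\eta$. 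That direct covering argument is the missing idea.

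Your (v)\Implies(i) is in the right direction (Lemma~\ref{howroyd}(iii) plus Lemma~\ref{EC}(iii)), but the ``delicate point'' you defer --- simultaneously arranging finite order, $g\prec 1$, $\hm^{h_0}(X)>0$, $\hm^g(C)>0$, and a usable comparison between $\hm^{h_0g}$ and $\hm^1$ --- is exactly where the work is, and your sketch of it (invoking Lemma~\ref{lemHaus}(i), which produces measure zero rather than a comparison inequality) does not close it. The paper's resolution is to replace $h$ by a concave majorant with $h(r)\geq\sqrt r$ (enlarging $h$ only increases $\hm^h$, so positivity survives) and then set $g(r)=r/h(r)$: this $g$ is automatically an increasing Hausdorff function with $g\prec1$, both $h$ and $g$ are of finite order, and $hg$ is \emph{identically} the identity, so $\hm^{hg}=\hm^1$ with no domination argument needed at all.
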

\begin{proof}
(i)\Implies(ii):
Suppose $X$ is \hnull{}. Fix $\eta>0$.
Since $\uhm_0^h(Y)=0$, for each $j\in\Nset$ there is a finite
family $\mc U_j$ of sets such that $\sum_{U\in\mc U_j}h(dU)<2^{-j}\eta$.
We may also assume that $dU<\eta$ for all $U$.

Let $\eps_j=\min\{dU:U\in\mc U_j\}$.
Choose a cover $\{V_j\}$ of $X$ such that $dV_j\leq\eps_j$ and
define
$$
  \mc W=\{V_j\times U:j\in\Nset,\,U\in\mc U_j\}.
$$
It is obvious that $\mc W$ is a cover of $X\times Y$. Since
$d(V_j\times U)=d(U)$ for all $j$ and $U\in\mc U_j$ by the choice
of $\eps_j$, we have
$$
  \sum_{W\in\mc W}h(dW)=
  \sum_{j\in\Nset}\sum_{U\in\mc U_j}h(dU)<
  \sum_{j\in\Nset}2^{-j}\eta=2\eta.
$$
Therefore $\hm_\eta^h(X\times Y)<2\eta$, which is enough for
$\hm^h(X\times Y)=0$, as $\eta$ was arbitrary.

(ii)\Implies(iii)\Implies(iv)\Implies(v) is trivial.

(v)\Implies(i):
Suppose $X$ is not \hnull. We will show that $\hm^1(X\times C)>0$
for some $C\in\CC$.
By assumption there is $h\in\HH$ such that $\hm^h(X)>0$.
\emph{Mutatis mutandis} we may assume $h$ be concave and $h(r)\geq\sqrt r$.
In particular $g(r)=r/h(r)$ is an increasing function and
$\lim_{r\to0}g(r)=0$, i.e.~$g$ is Hausdorff function, and $g\prec1$.
Also both $h$ and $g$ are of finite order.
Use Lemma~\ref{EC}(iii) to find $C\in\CC$ such that $\hm^g(C)>0$.
Now apply Lemma~\ref{howroyd}(iii):
$$
  \hm^1(X\times C)=\hm^{h\cdot g}(X\times C)\geq\hm^h(X)\cdot\hm^g(C)>0.
  \qedhere
$$
\end{proof}
\begin{coro}
If $X$ is \hnull{} then $\hdim X\times Y=\hdim Y$ for every \si compact metric space $Y$.
\end{coro}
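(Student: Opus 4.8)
The plan is to bound $\hdim(X\times Y)$ from above by $\hdim Y$ (the reverse inequality being trivial, since $Y$ embeds isometrically as a slice $\{x_0\}\times Y$ and $\hdim$ is monotone). So fix a \si compact metric space $Y$ and suppose $s>\hdim Y$; it suffices to show $\hdim(X\times Y)\leq s$. First I would choose a Hausdorff function $h$ with $h\prec r^{s-\hdim Y}$, say $h\prec t$ for a small $t>0$ with $\hdim Y+t\le s$, so that $\hm^{h}(Y)=0$: indeed $\hdim Y<\hdim Y+t$ forces $\hm^{\,\hdim Y+t}(Y)=0$, and then Lemma~\ref{lemHaus}(ii) supplies $g\prec r^{\hdim Y+t}$ with $\hm^{g}(Y)=0$; relabel this $g$ as our Hausdorff function and note $g\prec r^{s}$ as well since $\hdim Y+t\le s$. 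Because $Y$ is \si compact, Lemma~\ref{lem1}(vi) gives $\uhm^{g}(Y)=\hm^{g}(Y)=0$, and then Lemma~\ref{IncreasingSetsLemma} (applied with the value $s$ replaced by any positive number, since the measure is $0$) produces an increasing sequence $Y_n\upto Y$ with $\uhm^{g}_0(Y_n)=0$ for every $n$.

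Now I would invoke the product characterization of \hnull{} spaces. Since $X$ is \hnull{} and $\uhm^{g}_0(Y_n)=0$, Theorem~\ref{prodHnull}, specifically (i)\Implies(ii), yields $\hm^{g}(X\times Y_n)=0$ for each $n$. The sets $X\times Y_n$ increase to $X\times Y$, and $\hm^{g}$ is a Borel outer measure, hence countably subadditive; therefore $\hm^{g}(X\times Y)=0$. Since $g\prec r^{s}$, Lemma~\ref{lemHaus}(i) (or directly the definition of $\prec$ together with monotonicity of Hausdorff measures in the function) gives $\hm^{s}(X\times Y)=0$. Hence $\hdim(X\times Y)\le s$, and letting $s\downarrow\hdim Y$ finishes the upper bound.

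The only mildly delicate point is the bookkeeping of Hausdorff functions: one must pick $g$ simultaneously dominated by $r^{s}$ and satisfying $\hm^{g}(Y)=0$, which is exactly what the combination $\hdim Y<s$ plus Lemma~\ref{lemHaus}(ii) provides. Everything else is an application of the already-established machinery: the identity $\uhm^{g}=\hm^{g}$ on \si compact sets, the continuity Lemma~\ref{IncreasingSetsLemma}, the implication (i)\Implies(ii) of Theorem~\ref{prodHnull}, and countable subadditivity of $\hm^{g}$. I expect no real obstacle; the proof is essentially a one-line corollary of Theorem~\ref{prodHnull} once the degenerate role of the value $0$ in Lemma~\ref{IncreasingSetsLemma} is noted.
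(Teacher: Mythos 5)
Your argument is correct in substance and is essentially the intended one: the corollary is a direct consequence of Theorem~\ref{prodHnull}, with the trivial lower bound coming from the isometric slice $\{x_0\}\times Y$. Two remarks. First, you can bypass all the Hausdorff-function bookkeeping and the detour through $\uhm_0^g$: clause (iii) of Theorem~\ref{prodHnull} is tailored exactly to this situation, so for any $s>\hdim Y$ one takes $h(r)=r^s$, notes $\hm^s(Y)=0$, concludes $\hm^s(X\times Y)=0$, hence $\hdim(X\times Y)\leq s$, and lets $s\downarrow\hdim Y$. Second, the one step of yours that does not quite deliver what you claim is the appeal to Lemma~\ref{IncreasingSetsLemma}: from $\uhm^g(Y)=0$ it gives, for each $\eps>0$, an increasing sequence with $\sup_n\uhm_0^g(Y_n)<\eps$, but not a single sequence with $\uhm_0^g(Y_n)=0$, which is what clause (ii) requires. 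The repair is immediate from the hypothesis you are already using: write $Y=\bigcup_n K_n$ with $K_n$ compact increasing; then Lemma~\ref{lem1}(iii) gives $\uhm_0^g(K_n)=\hm^g(K_n)\leq\hm^g(Y)=0$, and you apply (i)\Implies(ii) to each $K_n$ and finish by countable subadditivity as you do. With that substitution your proof is complete, though longer than the one-line application of (iii).
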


\section{\uhnull{} spaces}
\label{sec:uhnull}
\begin{thm}\label{basicUhnull}
The following are equivalent.
\begin{enum}
\item $X$ is \uhnull,
\item $\uhdim f(X,\rho)=0$ for each uniformly equivalent metric on $X$,
\item $\uhdim f(X)<\infty$ for each uniformly continuous $f:X\to Y$ into another
metric space,
\item $X\in\NNs(\uhm_0^h)$ for each $h\in\HH$,
\item $\uhm^h(X)<\infty$ for each $h\in\HH$.
\end{enum}
\end{thm}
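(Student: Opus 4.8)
The strategy mirrors that of Theorem~\ref{basicHnull}, with $\uhm^h$ and $\uhdim$ in place of $\hm^h$ and $\hdim$, so I will reuse as much of that scheme as possible and only point out where the finite-cover nature of $\uhm_0^h$ forces a change. The trivial implications (i)\Implies(ii) and (i)\Implies(iii) require nothing. For the cycle I would aim at (ii)\Implies(iv), (iii)\Implies(iv), (iv)\Implies(v), and (v)\Implies(i).

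For (ii)\Implies(iv) and (iii)\Implies(iv) I would argue simultaneously exactly as in Theorem~\ref{basicHnull}: given $h\in\HH$, pick a convex $g\in\HH$ with $g\prec h^{1/n}$ for every $n$, so that $\rho(x,y)=g(d(x,y))$ is a metric uniformly equivalent to $d$ and $\id_X\colon(X,d)\to(X,\rho)$ is uniformly continuous. Either hypothesis then gives $\uhdim(X,\rho)<\infty$, hence $\uhm^n(X,\rho)=0$ for some $n$, i.e.\ $(X,\rho)\in\NNs(\uhm_0^n)$. Since $h\circ g^{-1}\succ n$, Lemma~\ref{lem1}(vii) upgrades this to $(X,\rho)\in\NNs(\uhm_0^{h\circ g^{-1}})$, and the Lipschitz-type bound $d(x,y)\leq g^{-1}(\rho(x,y))$ together with the $\uhm$-version of Lemma~\ref{lipschitz2} transports it to $(X,d)\in\NNs(\uhm_0^h)$. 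The only subtlety relative to the Hausdorff case is that $\NNs(\uhm_0^h)$, not plain $\uhm_0^h$-nullity, is the right notion throughout, since $\uhm_0^h$ is only finitely additive; but the lemmas quoted are stated exactly in that form, so this is painless.

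Implication (iv)\Implies(v) is immediate from Lemma~\ref{lem1}(vii) once we notice that $X\in\NNs(\uhm_0^h)$ for \emph{every} $h$: given any $h$, choose $g\prec h$, apply (iv) to $g$ to get $X\in\NNs(\uhm_0^g)$, and conclude $\uhm^h(X)=0<\infty$ (alternatively, $\uhm^h(X)\le\uhm_0^g$-sum bounds directly). For (v)\Implies(i), let $f\colon X\to(Y,\rho)$ be uniformly continuous, pick $g\in\HH$ with $\rho(fx,fy)\le g(d(x,y))$, fix $s>0$ and set $h(r)=r^s$; then $\uhm^{h\circ g}(X)<\infty$ by hypothesis, and the $\uhm$-analogue of Lemma~\ref{lipschitz2} gives $\uhm^s(f(X))=\uhm^h(f(X))\le\uhm^{h\circ g}(X)<\infty$, whence $\uhdim f(X)\le s$; letting $s\to0$ finishes.

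The one place demanding genuine care — the ``main obstacle'' — is the step $\uhm^n(X,\rho)=0\Rightarrow X\in\NNs(\uhm_0^n)$ and its companion transport through $\id_X$: because $\uhm^h=(\uhm_0^h)^{\mathrm I}$ is obtained by Method I from a merely finitely additive set function, one must be scrupulous that ``$\uhm^h(X)=0$'' is genuinely equivalent to ``$X$ is a countable increasing union of sets of $\uhm_0^h$-content zero'' (this is essentially Lemma~\ref{IncreasingSetsLemma} specialized to $s\to0$, or Lemma~\ref{gammagr}), and that Lemma~\ref{lipschitz2}'s $\uhm$-version is applied at the level of $\uhm_0^h$ before taking Method~I. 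Once that bookkeeping is respected, the proof is a faithful copy of the $\hm$-case.
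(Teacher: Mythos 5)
Your proposal is correct and follows essentially the same route as the paper, whose entire proof is the remark that one repeats the argument of Theorem~\ref{basicHnull} with Lemma~\ref{lem1} in place of Lemma~\ref{lemHaus}. You carry out exactly that substitution, and your explicit attention to working with $\NNs(\uhm_0^h)$ (via Lemma~\ref{lem1}(vii) and applying the $\uhm$-version of Lemma~\ref{lipschitz2} piecewise before Method~I) is precisely the bookkeeping the paper's one-line proof leaves implicit.
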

\begin{proof}
One has to employ Lemma~\ref{lem1} instead of Lemma~\ref{lemHaus},
otherwise the proof goes the same way as that of Theorem~\ref{basicHnull}.
\end{proof}
Next we provide a combinatorial characterization of \uhnull{} sets
that parallels Theorem~\ref{hnullsmz}.
\begin{lem}\label{seqep}
For each $\Seqeps$ there exists $g\in\HH$ such that: If $\uhm^g(X)=0$,
then $X$ admits an $\seqeps$-fine $\gamma$-groupable cover.
\end{lem}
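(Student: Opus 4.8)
The statement to prove is Lemma~\ref{seqep}: for each sequence $\Seqeps$ there is $g\in\HH$ such that $\uhm^g(X)=0$ implies $X$ has an $\seqeps$-fine $\gamma$-groupable cover. The template is Lemma~\ref{seqep2}, with $\hm^g$ replaced by $\uhm^g$, $\lambda$-cover replaced by $\gamma$-groupable cover, and Proposition~\ref{lambda} replaced by Lemma~\ref{gammagr}. So the plan is: first choose $g\in\HH$ appropriately in terms of $\seqeps$; then, given $\uhm^g(X)=0$, apply Lemma~\ref{gammagr} to obtain a $\gamma$-groupable cover $\seq{U_n}$ with $\sum_n g(dU_n)<\infty$ and witnessing groups $\mc G_j$; then shrink and regroup this cover so that, after reindexing, the $n$-th member has diameter at most $\eps_n$, while keeping it $\gamma$-groupable.

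The subtlety, compared to Lemma~\ref{seqep2}, is that the $\gamma$-groupability structure is tied to the partition of $\Nset$ into witnessing groups, so I cannot simply reorder the whole sequence by decreasing diameter (that would destroy the grouping). Instead I would work group by group. Assume WLOG $X$ has no isolated points. The idea: since the sequence $\seqeps$ can be assumed nonincreasing (replace $\eps_n$ by $\min_{k\le n}\eps_k$; an $\seqeps$-fine cover for the smaller sequence works for the original), and since $\gamma$-groupability only requires the unions $\bigcup_{n\in I_j}U_n$ over a partition into \emph{intervals} $I_j$ to form a $\gamma$-cover, I have freedom to enlarge each interval $I_j$. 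The plan is to choose $g\in\HH$ growing slowly enough — e.g. $g(\eps_n)\ge 1/n$ as in Lemma~\ref{seqep2}, or a slightly stronger requirement if needed — so that once $\sum_n g(dU_n)<1$ and the members within each witnessing group are listed in nonincreasing order of diameter, the $m$-th smallest-so-far member has $g$-value at most $1/m$, hence diameter at most $\eps_m$. Concretely: after Lemma~\ref{gammagr} gives the groupable cover with witnessing groups $\mc G_0,\mc G_1,\dots$ and $\sum_n g(dU_n)<1$, list \emph{all} the sets in the order $\mc G_0$ then $\mc G_1$ then $\dots$ (concatenating the groups), sorting within each $\mc G_j$ by nonincreasing diameter; call the resulting enumeration $\seq{W_n}$. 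Since this ordering is a rearrangement of $\seq{U_n}$ with the partial sums only permuted within blocks, one still gets $g(dW_n)\le\frac1n\sum_{k\le n}g(dW_k)<\frac1n\le g(\eps_n)$ for all $n$ — wait, this needs the sequence $g(dW_n)$ to be globally nonincreasing, which concatenating sorted blocks does not guarantee.

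So the honest fix: I would strengthen the choice of $g$ so that the bound survives the regrouping. Specifically, choose $h\in\HH$ with $h(\eps_{2^j})\ge 2^{-j}$ (or similar dyadic bookkeeping), and arrange that each witnessing group $\mc G_j$ contributes total $g$-mass $<2^{-j}\cdot(\text{something})$; then within group $\mc G_j$, every member has $g$-diameter bounded by the group's total mass, which I can force to be $\le\eps_n$ for all the finitely many indices $n$ that the $j$-th group will occupy after concatenation. This is a routine counting argument once the group masses decay fast enough — and I can always re-split a witnessing group into smaller pieces (keeping it a $\gamma$-cover, since finer partitions into intervals still work as long as I take the $I_j$ to be the original intervals; actually I must merge, not split, to preserve the $\gamma$-cover property, so the bookkeeping goes the other way: merge consecutive original groups so that each merged group has few members and controllably small total $g$-mass relative to where it lands). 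Either way, the mechanism is: slow-growing $g$ $\Rightarrow$ small total $g$-mass of the cover $\Rightarrow$ small $g$-mass per group $\Rightarrow$ small diameter per member $\Rightarrow$ $\eps_n$-fineness after a careful reindexing that respects the interval structure.

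\textbf{Main obstacle.} The one genuine difficulty is reconciling the "sort by decreasing diameter" trick of Lemma~\ref{seqep2} with the rigidity of the $\gamma$-groupable structure: reordering across groups is forbidden, so the clean inequality $g(dG_n)\le\frac1n\sum g(dG_k)$ is not directly available. I expect the resolution to be a slightly more careful choice of $g$ (a faster-decaying target, e.g. involving $2^{-j}$ per group rather than a single global bound) together with a merging of consecutive witnessing groups so that the $j$-th (merged) group lands on a controlled block of indices whose $\eps$-values are all at least the group's total $g$-mass. Everything else — the appeal to Lemma~\ref{gammagr} in both directions, the WLOG on isolated points, the monotonization of $\seqeps$ — is routine and mirrors the proof of Lemma~\ref{seqep2}.
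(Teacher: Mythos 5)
Your skeleton matches the paper's: choose $h$ with $h(\eps_n)\geq\frac1n$, invoke Lemma~\ref{gammagr} to get a $\gamma$-groupable cover of finite Hausdorff sum, and then run the reordering argument of Lemma~\ref{seqep2}. But the ``main obstacle'' on which you spend most of the proposal --- that reordering across witnessing groups is forbidden --- is not an obstacle at all, and because you treat it as one, the key step of your proof is never actually completed. In the paper's definition the witnessing partition $\Nset=I_0\cup I_1\cup\dots$ is into arbitrary \emph{finite sets} (``or intervals, which makes no difference''), so $\gamma$-groupability is invariant under any permutation of the enumeration: if $\seq{W_n}$ is a rearrangement of $\seq{U_n}$, the same groups $\mc G_j$, now sitting on scattered finite index sets, still witness groupability, and the unions $\bigcup\mc G_j$ --- hence the $\gamma$-cover property --- are literally unchanged. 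So one discards finitely many groups to make the total sum less than $1$ (the remaining unions still form a $\gamma$-cover), sorts the whole remaining family globally by nonincreasing diameter, and the inequality $h(dW_n)\leq\frac1n\sum_k h(dW_k)<\frac1n\leq h(\eps_n)$ gives $\seqeps$-fineness exactly as in Lemma~\ref{seqep2}. Your substitute mechanism (dyadic mass bounds per group plus merging of consecutive groups) would likely work if carried out, but as written it is a sketch with an unresolved reversal (``I must merge, not split, so the bookkeeping goes the other way''), so the proof is not there.

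A second, smaller gap: you propose to apply Lemma~\ref{gammagr} directly to the hypothesis $\uhm^g(X)=0$, but that lemma characterizes membership in $\NNs(\uhm_0^g)$, which is not obviously implied by vanishing of the Method~I measure $\uhm^g$. The paper bridges this with two functions: choose $h$ with $h(\eps_n)\geq\frac1n$ and then $g\prec h$; Lemma~\ref{lem1}(vii) converts $\uhm^g(X)=0$ (indeed $<\infty$) into $X\in\NNs(\uhm_0^h)$, to which Lemma~\ref{gammagr} applies with the function $h$. Your parenthetical ``or a slightly stronger requirement if needed'' gestures at this but does not supply it.
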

\begin{proof}
Assume without loss of generality that $X$ has no isolated points.
Choose Hausdorff functions $g,h$ such that $h(\eps_n)\geq\frac1n$ for all $n\in\Nset$
and $g\prec h$.
Suppose $\uhm^g(X)=0$. Then $X\in\NNs(\uhm_0^h)$
by Lemma~\ref{lem1}(vii). By Lemma~\ref{gammagr} there is a
$\gamma$-groupable cover $\{G_n\}$ such that $\sum_nh(dG_n)<\infty$.
Proceed as in the proof of Lemma~\ref{seqep2}.
\end{proof}

\begin{thm}\label{combUhnull}
Let $X$ be a separable metric space.
$X$ is \uhnull{} if and only if for each $\Seqeps$, $X$ has
an $\seqeps$-fine $\gamma$-groupable cover.
\end{thm}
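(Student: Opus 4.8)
The plan is to mimic the proof of Theorem~\ref{hnullsmz} essentially verbatim, substituting Lemma~\ref{seqep} for Lemma~\ref{seqep2}, Lemma~\ref{gammagr} for Proposition~\ref{lambda}, and Theorem~\ref{basicUhnull} for Theorem~\ref{basicHnull}. In other words, the $\gamma$-groupable cover plays the role that the $\lambda$-cover plays in the \hnull{} $=$ \smz{} case, and $\uhm^h$ replaces $\hm^h$ throughout.

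For the forward implication I would argue as follows. Let $X$ be \uhnull{} and let $\Seqeps$ be given. Apply Lemma~\ref{seqep} to obtain $g\in\HH$ such that $\uhm^g(X)=0$ forces the existence of an $\seqeps$-fine $\gamma$-groupable cover of $X$. By Theorem~\ref{basicUhnull}(iv) we have $X\in\NNs(\uhm_0^g)$; and since $\uhm^g=(\uhm_0^g)^{\mathrm{I}}$ is obtained by Method~I and is therefore \si additive, $\NNs(\uhm_0^g)\subs\NN(\uhm^g)$, so $\uhm^g(X)=0$. Lemma~\ref{seqep} now delivers the desired $\seqeps$-fine $\gamma$-groupable cover.

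For the reverse implication, suppose every $\Seqeps$ admits an $\seqeps$-fine $\gamma$-groupable cover. By Theorem~\ref{basicUhnull} it suffices to verify that $X\in\NNs(\uhm_0^h)$ for each $h\in\HH$. Fix such an $h$; since $h(0)=0$ and $h$ is right-continuous, choose $\seqeps$ with $h(\eps_n)\leq 2^{-n}$ for all $n$. By hypothesis there is an $\seqeps$-fine $\gamma$-groupable cover $\seq{U_n}$ of $X$, and, $h$ being nondecreasing, $\sum_n h(dU_n)\leq\sum_n h(\eps_n)\leq 2<\infty$. Hence $X\in\NNs(\uhm_0^h)$ by Lemma~\ref{gammagr}. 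As $h\in\HH$ was arbitrary, $X$ is \uhnull{}.

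I do not expect a serious obstacle here: the genuine analytic content has already been packaged into Lemma~\ref{seqep} and Lemma~\ref{gammagr}, and the argument above is essentially bookkeeping. The only points that need a little care are the identification $\NNs(\uhm_0^g)=\NN(\uhm^g)$ used to discharge the hypothesis of Lemma~\ref{seqep}, and the reduction to spaces without isolated points carried out inside Lemma~\ref{seqep} (legitimate because the isolated points of a separable metric space form a countable set, which can be absorbed into a single witnessing group of the $\gamma$-groupable cover).
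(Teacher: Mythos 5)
Your proposal is correct and follows essentially the same route as the paper: the forward direction is exactly the intended application of Lemma~\ref{seqep} via Theorem~\ref{basicUhnull}(iv), and the reverse direction is the natural adaptation of the Theorem~\ref{hnullsmz} argument using Lemma~\ref{gammagr}. One cosmetic slip: the Method~I measure $\uhm^g$ is \si \emph{subadditive} (an outer measure), not \si additive, but that is all you need for the inclusion $\NNs(\uhm_0^g)\subs\NN(\uhm^g)$.
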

\begin{proof}
The forward implication follows at once from the above lemma.
The reverse implication is proved as the one of Theorem~\ref{hnullsmz}.
\end{proof}

\begin{thm}\label{prodUhnull}
The following are equivalent.
\begin{enum}
\item $X$ is \uhnull,
\item for each $h\in\HH$, $Y\in\NNs(\uhm_0^h)$ and each complete $Z\sups X$
there is $F$ \si compact, $X\subs F\subs Z$,
such that $\uhm^h(F\times Y)=0$,
\item $\uhm^h(X\times Y)=0$ whenever $h\in\HH$ and $\uhm_0^h(Y)=0$,
\item $\uhm^1(X\times E)=0$ for each $E\in\EE$,
\item $\uhm^1(X\times C)=0$ for each $C\in\CC$.
\end{enum}
\end{thm}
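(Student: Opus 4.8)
The plan is to mimic the structure of the proof of Theorem~\ref{prodHnull}, transferring each step from $\hm$ to $\uhm$ by replacing countable $\delta$-covers with finite ones and Lemma~\ref{lambda} / Lemma~\ref{howroyd}(iii) with Lemma~\ref{gammagr} / Lemma~\ref{howroyd}(iv), while using Theorem~\ref{basicUhnull} and Theorem~\ref{combUhnull} in place of Theorem~\ref{basicHnull} and Theorem~\ref{hnullsmz}. The implications (iii)\Implies(iv)\Implies(v) are trivial from $\CC\subs\EE$ (Lemma~\ref{EC}(ii)) and the fact that $E\in\EE$ gives $E\in\NNs(\uhm_0^h)$ for a suitable $h\prec1$ by Lemma~\ref{EC}(i), combined with Lemma~\ref{lem1}(vii); and (ii)\Implies(iii) is immediate since from $X\subs F\subs Z$ and $\uhm^h(F\times Y)=0$ we get $\uhm^h(X\times Y)=0$ by monotonicity. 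So the real content is (i)\Implies(ii) and (v)\Implies(i).

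For (i)\Implies(ii): assume $X$ is \uhnull{}, fix $h\in\HH$, $Y\in\NNs(\uhm_0^h)$, and a complete $Z\sups X$. By Theorem~\ref{combUhnull}, $X$ has an $\seqeps$-fine $\gamma$-groupable cover for every $\Seqeps$. Write $Y_k\upto Y$ with $\uhm_0^h(Y_k)=0$; for each $j$ take a finite family $\mc U_j$ covering $Y_{k(j)}$ (with $k(j)\to\infty$) such that $\sum_{U\in\mc U_j}h(dU)<2^{-j}$ and all diameters below a prescribed $\eta$. Setting $\eps_j=\min\{dU:U\in\mc U_j\}$, choose an $\seqeps$-fine $\gamma$-groupable cover $\{V_j\}$ of $X$ with witnessing groups $\{V_j:j\in I_m\}$. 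Form $\mc W_m=\{V_j\times U:j\in I_m,\ U\in\mc U_j\}$. Since $d(V_j\times U)=dU$ and the groups are organized so that $\bigcup_{j\in I_m}V_j$ form a $\gamma$-cover of $X$, the family $\{\bigcup\mc W_m\}$ is a $\gamma$-groupable cover of $X\times \limsup Y_{k(j)}\sups X\times Y$ with $\sum_{W\in\mc W_m}h(dW)$ small; by Lemma~\ref{gammagr} this gives $X\times Y'\in\NNs(\uhm_0^h)$ where $Y'\sups Y$, hence $\uhm^h(X\times Y)=0$. Finally, by Lemma~\ref{lem1}(iv) applied inside the complete space $Z\times Z^\star$ (or $Z\times Y^\star$), there is a \si compact $K\sups X\times Y$ with $\hm^h(K)=0$; projecting $K$ to $Z$ yields a \si compact $F$ with $X\subs F\subs Z$, and $F\times Y\subs K$... here one must be slightly careful: a better route is to run the covering argument directly to produce a \si compact $F$ by taking $\gamma$-groupable covers by \emph{closed} sets (legitimate since in metric spaces one may close up cover elements without increasing diameter, cf.~Lemma~\ref{lem1}(ii)), so that $\overline{V_j}$ are compact when $X\subs Z$ with $Z$ having the relevant covers; then $F=\bigcap_m\bigcup_{j\in I_m}\overline{V_j}$ is \si compact, contains $X$, sits in $Z$, and $F\times Y\in\NNs(\uhm_0^h)$, whence $\uhm^h(F\times Y)=0$ by Lemma~\ref{lem1}(vi)-type reasoning.

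For (v)\Implies(i): contrapositively, suppose $X$ is not \uhnull{}. By Theorem~\ref{basicUhnull}(iv) there is $h\in\HH$ with $X\notin\NNs(\uhm_0^h)$, i.e.~$\uhm^h(X)>0$. \emph{Mutatis mutandis} normalize $h$ to be concave of finite order with $h(r)\geq\sqrt r$, so that $g(r)=r/h(r)$ is a finite-order Hausdorff function with $g\prec 1$. Apply Lemma~\ref{EC}(iii) to get $C\in\CC$ with $\hm^g(C)>0$; since $C$ is compact, $\uhm^g(C)=\hm^g(C)>0$ by Lemma~\ref{lem1}(iii),(vi). Now invoke Lemma~\ref{howroyd}(iv) (valid since $g,h$ are of finite order): $\uhm^1(X\times C)=\uhm^{h\cdot g}(X\times C)\geq\uhm^h(X)\cdot\uhm^g(C)>0$, contradicting (v). This completes the cycle.

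The main obstacle is the bookkeeping in (i)\Implies(ii): one must simultaneously arrange that the product cover is $\gamma$-groupable (so the diameters of the \emph{groups} shrink — controlled by the $\gamma$-cover side coming from $X$) and that the sum $\sum h(dW)$ over each group is summable (controlled by the $Y$ side), and then extract a \si compact $F\subs Z$ rather than merely verifying $\uhm^h(X\times Y)=0$. Handling the completion/\si-compact extraction cleanly — rather than as an afterthought — is where care is needed; everything else is a routine transcription of the Hausdorff-measure arguments already given for Theorem~\ref{prodHnull}, with Lemma~\ref{gammagr} doing the work that Proposition~\ref{lambda} does there.
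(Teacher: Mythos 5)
Your proposal follows essentially the same route as the paper's proof: (v)\Implies(i) via Lemma~\ref{EC}(iii), the normalization of $h$, and Lemma~\ref{howroyd}(iv) is identical, and your (i)\Implies(ii) is the paper's argument of pairing a $\gamma$-groupable cover $\mc U=\bigcup_j\mc U_j$ of $Y$ with $\sum_{U\in\mc U}h(dU)<\infty$ (Lemma~\ref{gammagr}) against an $\seqeps$-fine $\gamma$-groupable cover of $X$ by sets closed in $Z$, where $\eps_j=\min\{dU:U\in\mc U_j\}$ (Theorem~\ref{combUhnull}), and reading off the product cover $\{V_j\times U\}$ with $d(V_j\times U)=dU$. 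The one slip worth flagging is your formula for the \si compact set: since a $\gamma$-groupable cover only places each point of $X$ in \emph{cofinitely} many group-unions, $\bigcap_m\bigcup_{j\in I_m}\overline{V_j}$ need not contain $X$; you want $F=K\cap\bigcup_i\bigcap_{m\geq i}\bigcup_{j\in I_m}\overline{V_j}$ with $K\sups X$ \si compact as supplied by Lemma~\ref{lem1}(iv), exactly as the paper does, after which the rest of your argument goes through unchanged.
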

\begin{proof}
The proof is similar to that of Theorem~\ref{prodHnull}. The only nontrivial implications
are
(i)\Implies(ii) and (v)\Implies(i).

(i)\Implies(ii):
Let $Z\sups X$ be a complete metric space.
Suppose $X$ is \uhnull{}. In particular, by Lemma~\ref{lem1} $X$ is contained in
a \si compact set $K\subs Z$. Let $h\in\HH$ and $Y\in\NNs(\uhm_0^h)$.
Lemma~\ref{gammagr} yields a $\gamma$-groupable cover $\mc U$ of $Y$ such that
$\sum_{U\in\mc U}h(dU)<\infty$.
Denote by $\mc U_j$ the witnessing groups.
Let $\eps_j=\min\{dU:U\in\mc U_j\}$.
Using Theorem~\ref{combUhnull} choose a $\gamma$-groupable cover $\{V_j\}$ of $X$
such that $dV_j\leq\eps_j$.  We may and shall assume that each $V_j$ is a closed subset of $Z$.
Denote by $\mc V_k$ the witnessing groups.
Define
\begin{align*}
  \mc W&=\{V_j\times U:j\in\Nset,\,U\in\mc U_j\},\\
  F&=K\cap\bigcup_{i\in\Nset}\bigcap_{k\geq i}\bigcup\mc V_k.
\end{align*}
The set $F\subs Z$ is clearly an $F_\sigma$ subset of $K$ and is thus \si compact.
It is easy to check that $\mc W$ is a $\gamma$-groupable cover of $F\times Y$.
Since $d(V_j\times U)=d(U)$ for all $j$ and $U\in\mc U_j$ by the choice
of $\eps_j$, we have
$$
  \sum_{W\in\mc W}h(dW)=
  \sum_{U\in\mc U}h(dU)<\infty.
$$
Using Lemma~\ref{gammagr} it follows that $F\times Y\in\NNs(\uhm_0^h)$ and
in particular $\uhm^h(X\times Y)=0$.

(v)\Implies(i):
Suppose $X$ is not \uhnull. We will show that $\hm^1(X\times C)>0$
for some $C\in\CC$.
By assumption there is $h\in\HH$ such that $\uhm^h(X)>0$. As well as in the proof of
Theorem~\ref{prodHnull} suppose $h$ is concave, hence of finite order, and
find a Hausdorff function of finite order $g\prec1$ such
that $g(r)h(r)=r$ and $C\in\CC$ such that $\hm^g(C)>0$. This time apply
Lemma~\ref{howroyd}(iv):
$$
  \uhm^1(X\times C)=\uhm^{h\cdot g}(X\times C)\geq\uhm^h(X)\cdot\uhm^g(C)>0.
  \qedhere
$$
\end{proof}
\begin{coro}
If $X$ is \uhnull{} then $\uhdim X\times Y=\uhdim Y$ for every metric space $Y$.
In particular, $\uhdim X\times Y=\hdim Y$ if $Y$ is \si compact.
\end{coro}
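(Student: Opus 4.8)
The plan is to prove the two inequalities $\uhdim Y\le\uhdim(X\times Y)$ and $\uhdim(X\times Y)\le\uhdim Y$ separately; the ``in particular'' clause then follows at once, since $\uhdim Y=\hdim Y$ whenever $Y$ is \si compact, as recorded after the definition of $\uhdim$. Throughout one may assume $X$ and $Y$ nonempty, the product being empty otherwise.

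For $\uhdim Y\le\uhdim(X\times Y)$ I would note that the coordinate projection $\pi\colon X\times Y\to Y$ is $1$-Lipschitz for the maximum metric, so Lemma~\ref{lipschitz} gives $\uhm^s(Y)=\uhm^s(\pi(X\times Y))\le\uhm^s(X\times Y)$ for every $s>0$; hence $\uhm^s(X\times Y)=0$ forces $\uhm^s(Y)=0$, which yields the desired inequality between dimensions.

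For the reverse inequality $\uhdim(X\times Y)\le\uhdim Y$ I may assume $\uhdim Y<\infty$. Fix $s>\uhdim Y$. By the definition of $\uhdim Y$ as an infimum there is $t<s$ with $\uhm^t(Y)=0$, and, writing $g_u(r)=r^u$, we have $g_t\prec g_s$; hence Lemma~\ref{lem1}(vii) (applied with $\uhm^{g_t}(Y)=0<\infty$) gives $Y\in\NNs(\uhm_0^{s})$, say $Y\subseteq\bigcup_n Y_n$ with $\uhm_0^{s}(Y_n)=0$ for all $n$. Since $X$ is \uhnull, Theorem~\ref{prodUhnull}, implication (i)\Implies(iii), applied with $h=g_s$ and $Y_n$ in place of $Y$, gives $\uhm^{s}(X\times Y_n)=0$ for each $n$. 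Now $X\times Y\subseteq\bigcup_n(X\times Y_n)$, and since $\uhm^{s}$ is obtained by Method I construction it is \si subadditive, so $\uhm^{s}(X\times Y)=0$; thus $\uhdim(X\times Y)\le s$. As $s>\uhdim Y$ was arbitrary, $\uhdim(X\times Y)\le\uhdim Y$, and combining the two inequalities gives $\uhdim(X\times Y)=\uhdim Y$.

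The only delicate point — and the one I expect to require care rather than ingenuity — is the passage from the hypothesis $\uhm^t(Y)=0$, which concerns the \si additive measure $\uhm^t$ and (for $Y$ not totally bounded) says nothing directly about the finitely additive content $\uhm_0^t$ appearing in Theorem~\ref{prodUhnull}(iii), to a countable decomposition of $Y$ into pieces of vanishing $\uhm_0^{s}$-content; Lemma~\ref{lem1}(vii) bridges exactly this gap at the cost of an arbitrarily small increase of the exponent, after which the product theorem applies piecewise and the pieces are reassembled by countable subadditivity. No substantive obstacle is expected beyond this bookkeeping: the argument parallels the \hnull{} case, with Hausdorff measure and dimension replaced by their upper counterparts and the role played there by the \si compactness of $Y$ taken over by Lemma~\ref{lem1}(vii).
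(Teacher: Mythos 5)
Your argument is correct and is essentially the intended derivation: the paper states this corollary without proof, and the natural route is exactly yours --- the $1$-Lipschitz projection together with Lemma~\ref{lipschitz} for $\uhdim Y\leq\uhdim(X\times Y)$, and Theorem~\ref{prodUhnull}(iii) with $h(r)=r^s$ for the reverse inequality. Your bridging step via Lemma~\ref{lem1}(vii), which trades the hypothesis $\uhm^t(Y)=0$ for a countable decomposition into pieces of vanishing $\uhm_0^s$-content at the cost of raising the exponent from $t$ to $s$, is precisely the point that lets the corollary hold for arbitrary metric $Y$ rather than only \si compact $Y$ as in the \hnull{} analogue, and the reassembly by countable subadditivity of the Method~I measure $\uhm^s$ is sound.
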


\subsection*{Products of \hnull{} and \uhnull{} sets}
It is well known that a product of two \smz{} sets need not be \smz{}.
Thus the product of two \hnull{} sets need not be \hnull{}.
But if one of the factors is \uhnull{}, the product is \hnull{}:
\begin{thm}\label{productHUH}
\begin{enum}
\item If $X$ and $Y$ are \uhnull{}, then $X\times Y$ is \uhnull{}.
\item If $X$ is \hnull{} and $Y$ is \uhnull{}, then $X\times Y$ is \hnull{}.
\end{enum}
\end{thm}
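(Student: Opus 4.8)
The plan is to reduce both parts to the product characterizations already established, namely Theorem~\ref{prodUhnull} for (i) and Theorem~\ref{prodHnull} for (ii), together with the Howroyd-type product inequalities of Lemma~\ref{howroyd}. The key observation is that when we feed a product $X\times Y$ into one of these criteria, the ``test space'' $C\in\CC$ (or $E\in\EE$) is compact and highly regular, so that the relevant Hausdorff function on the test space can be split off as a finite-order factor and absorbed via the multiplicativity lemmas. Concretely, for (ii) I would verify condition (v) of Theorem~\ref{prodHnull}: given $C\in\CC$, I want $\hm^1\bigl((X\times Y)\times C\bigr)=0$. Rewriting $(X\times Y)\times C$ as $X\times(Y\times C)$ (the maximum metric is associative), it suffices to show $Y\times C$ behaves like a $\sigma$-compact null test object for the \hnull{} space $X$. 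Since $Y$ is \uhnull{}, by Theorem~\ref{prodUhnull}(v) we have $\uhm^1(Y\times C)=0$; by Lemma~\ref{lem1}(iv) there is a \si compact $K\sups Y\times C$ with $\hm^1(K)=0$. Then $\hm^1(X\times(Y\times C))\le\hm^1(X\times K)=0$ by Theorem~\ref{prodHnull}(iii), since $K$ is \si compact with $\hm^1(K)=0$. This settles (ii).

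For (i) the same strategy applies with $\uhm$ in place of $\hm$: I would check condition (v) of Theorem~\ref{prodUhnull} for $X\times Y$, i.e.\ that $\uhm^1\bigl((X\times Y)\times C\bigr)=\uhm^1\bigl(X\times(Y\times C)\bigr)=0$ for every $C\in\CC$. Here the issue is that ``$Y\times C$'' should be a valid test object in Theorem~\ref{prodUhnull}(iii) applied to $X$, which asks for a space $Y'$ with $\uhm_0^h(Y')=0$ for a suitable $h\in\HH$. We know $Y$ is \uhnull{} and $C\in\CC$ is compact, so by Theorem~\ref{prodUhnull}(v) (applied to $Y$), $\uhm^1(Y\times C)=0$, hence by Lemma~\ref{lem1}(iv) $Y\times C$ lies in a \si compact $K$ with $\hm^1(K)=0$; but we need the relation $X\in\NNs(\uhm_0^h)$ against a fixed $h$, so I would instead run Theorem~\ref{prodUhnull}(iii) directly with $Y':=Y\times C$ and $h=g_1$ (the identity $h(r)=r$), using $\uhm_0^1(\clos{Y\times C})=\uhm_0^1(Y\times C)$ via Lemma~\ref{lem1}(ii)--(iii) and the fact that on a compact set $\uhm_0^1=\hm^1$, which is $0$ after passing to a null \si compact superset. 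This gives $\uhm^1(X\times(Y\times C))=0$ as required.

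I expect the main obstacle to be bookkeeping the difference between the several ``nullity'' notions attached to $\uhm$: being in $\NNs(\uhm_0^h)$ for some $h$, having $\uhm^h=0$, and having $\uhm_0^h=0$ on the closure. The cleanest route is probably to observe that Theorem~\ref{prodUhnull}(ii) is the right tool: given $C\in\CC$, it is compact, so $\uhm_0^1(C)=\hm^1(C)=0$ (Lemma~\ref{EC}(ii) gives $\hm^1(C_I)=0$, and Lemma~\ref{lem1}(iii)); applying Theorem~\ref{prodUhnull}(ii) to the \uhnull{} space $Y$ with this $C$ yields a \si compact $F\sups Y$ with $\uhm^1(F\times C)=0$; then $F\times C$ is \si compact with $\uhm^1=\hm^1$ vanishing, so it is a legitimate test space to plug into Theorem~\ref{prodUhnull}(ii) for $X$, producing a \si compact superset of $X\times(F\times C)\sups X\times Y\times C$ of $\uhm^1$-measure zero. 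Everything else is associativity of the maximum metric and monotonicity of $\uhm^1$. The analogous but simpler chase with $\hm^1$ handles (ii), where one does not even need the \si compact refinements because Theorem~\ref{prodHnull}(iii) already accepts an arbitrary \si compact null $Y$.
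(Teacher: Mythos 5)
Your argument is correct (in its final, cleaned-up form), but it takes a genuinely different route from the paper. The paper's proof of (i) is two lines: since $Y$ is \uhnull{}, Theorem~\ref{basicUhnull}(iv) gives $Y\in\NNs(\uhm_0^h)$ for \emph{every} $h\in\HH$; feeding this into Theorem~\ref{prodUhnull}(ii) yields $\uhm^h(X\times Y)=0$ for every $h$, and Theorem~\ref{basicUhnull}(v) concludes. Part (ii) is identical with Theorems~\ref{prodHnull} and~\ref{basicHnull}. You instead verify condition (v) of the product theorems for $X\times Y$, i.e.\ you test against the canonical sets $C\in\CCC$... rather, $C\in\CC$, using associativity of the maximum metric to rewrite $(X\times Y)\times C$ as $X\times(Y\times C)$ and applying the product criteria twice. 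This works, but it is more roundabout: it routes a statement about general metric spaces through the Cantor-cube-specific characterization, and it forces the bookkeeping you yourself flag. Indeed, your first pass has a genuine slip: Lemma~\ref{lem1}(iv) requires $Y\times C\in\NNs(\uhm_0^1)$, whereas you only have $\uhm^1(Y\times C)=0$, and the paper is careful to distinguish these (cf.\ Lemma~\ref{lem1}(vii), which trades a gap between Hausdorff functions for exactly this passage). Your final paragraph repairs this correctly by invoking Theorem~\ref{prodUhnull}(ii) to get a \si compact $F\sups Y$ with $\uhm^1(F\times C)=0$ and then using that $\uhm^1$, $\uhm^1_0$ and $\hm^1$ agree on \si compact sets. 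What the paper's route buys is that the quantification over all $h\in\HH$ in conditions (ii)--(iii) does all the work at once, with no need for associativity tricks or for descending to the test family $\CC$; what your route illustrates is that the single function $h(r)=r$ together with the $\CC$-test already suffices, at the price of two nested applications of the product machinery.
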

\begin{proof}
(i) Suppose $X,Y$ are \uhnull{}. By Theorem~\ref{basicUhnull}(iv)
$Y\in\NNs(\uhm_0^h)$ for all $h\in\HH$. Hence Theorem~\ref{prodUhnull}(ii) yields
$\uhm^h(X\times Y)=0$ for all $h\in\HH$, which is by Theorem~\ref{basicUhnull}(v) enough.

(ii) is derived in the same manner from Theorems~\ref{prodHnull} and~\ref{basicHnull}.
\end{proof}

M.~Scheepers~\cite[Theorem 1, Lemma 3]{MR1779763} proved that a product of two \smz{} sets
is \smz{} as long as one of the sets satisfies the Hurewicz property.
Theorem~\ref{productHUH}(ii) improves his result (recall that \smz{}$=$\hnull{}).
Indeed, it is easy to show that a \smz{} space satisfying the Hurewicz property
is \uhnull{}. But since \uhnull{} is a uniform property and
Hurewicz property is topological, one cannot expect \emph{a priori} that
every \uhnull{} set has the Hurewicz property. (It is of course so if Borel Conjecture holds.)
\begin{prop}
Assuming the Continuum Hypothesis, there is an \uhnull{} set that does not have the Hurewicz property.
\end{prop}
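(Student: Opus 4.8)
The plan is to produce, under CH, an \uhnull{} set $X\subs\Cset$ whose underlying topological space fails the Hurewicz property. Since \uhnull{} is a uniform notion while Hurewicz is topological, the strategy is to transplant onto $\Cset$ a \smz{}-style construction that gives Hurewicz failure, and then upgrade its smallness from \smz{} to \uhnull{} using the combinatorial characterisation of Theorem~\ref{combUhnull}. Concretely, I would build $X=\{x_\alpha:\alpha<\omega_1\}$ by transfinite recursion, at each stage using a countable list of ``tasks'': on the one hand, tasks that force $\gamma$-groupable covers for each sequence $\Seqeps$ (to guarantee \uhnull{}), and on the other hand tasks that diagonalise against every candidate family of open covers witnessing Hurewicz, making sure the constructed set is not Hurewicz. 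Enumerating all such objects in type $\omega_1$ is exactly what CH buys us.

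The key steps, in order, would be: (1) Recall that a separable metric space is \uhnull{} iff for every $\Seqeps$ it has an $\seqeps$-fine $\gamma$-groupable cover (Theorem~\ref{combUhnull}); hence it suffices to ensure, at cofinally many stages, that the set constructed so far together with the new point still admits such covers for the currently-listed sequences. (2) Recall a combinatorial reformulation of Hurewicz failure: $X$ is not Hurewicz iff there is a sequence of open covers $\seq{\mc O_n}$ of $X$ such that for no choice of finite $\mc F_n\subs\mc O_n$ is $\seq{\bigcup\mc F_n}$ a $\gamma$-cover; in $\Cset$ one can take the $\mc O_n$ to be clopen partitions, so the obstruction is carried by a fixed countable family of clopen sets, and one needs a single point $x$ that ``escapes'' all finite unions infinitely often. (3) Set up a bookkeeping enumerating, in type $\omega_1$ (using CH), all sequences $\Seqeps$ and all countable families of clopen covers that could witness Hurewicz, so each appears cofinally. (4) At stage $\alpha$, given $X_\alpha=\{x_\beta:\beta<\alpha\}$, which is countable hence \uhnull{} and \smz{}, handle the $\alpha$-th sequence $\Seqeps$ by noting there are still ``many'' admissible $\gamma$-groupable covers for $X_\alpha$ and we have freedom to pick $x_\alpha$ inside the intersection of the witnessing $\gamma$-cover (a dense $G_\delta$-like set), while simultaneously choosing $x_\alpha$ to defeat the $\alpha$-th Hurewicz-candidate by landing outside infinitely many of its finite subunions. (5) Verify that the resulting $X$ is \uhnull{} (every $\Seqeps$ was served, and adding countably many further points at later stages preserves the $\gamma$-groupable cover up to refinement) and not Hurewicz (every candidate was defeated).

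The main obstacle I expect is step~(4): reconciling the two demands at a single stage. The \uhnull{} requirement wants $x_\alpha$ in a prescribed ``large'' set (the eventual intersection of a $\gamma$-groupable cover witnessing $\seqeps$-fineness for $X_\alpha$), while the anti-Hurewicz requirement wants $x_\alpha$ in another prescribed set (points missed infinitely often by the given finite subunions). One must argue these two sets have nonempty — indeed large — intersection. The \uhnull{} side is genuinely flexible: for a given $\Seqeps$ and countable $X_\alpha$, the family of $\seqeps$-fine $\gamma$-groupable covers can be chosen with witnessing $\gamma$-cover whose ``limit set'' $\bigcup_i\bigcap_{j\ge i}\bigcup\mc V_j$ is comeager, so almost any $x_\alpha$ works; the anti-Hurewicz side asks $x_\alpha$ to avoid, for infinitely many $n$, the finite clopen set $\bigcup\mc F_n$, and genericity of the candidate (we only need to defeat those $\seq{\mc F_n}$ that are not already $\gamma$-covers of what we've built) keeps this target nonempty. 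Packaging this cleanly — isolating exactly the right ``large set'' lemma so the transfinite recursion runs without further ado — is where the real work lies; the rest is standard CH bookkeeping.

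\begin{proof}[Proof sketch]
Under CH, enumerate in type $\omega_1$ all pairs consisting of a sequence $\Seqeps$ and a sequence $\seq{\mc O_n}$ of clopen covers of $\Cset$, so that each such pair occurs cofinally often. Build $X=\{x_\alpha:\alpha<\omega_1\}$ recursively. At stage $\alpha$, $X_\alpha=\{x_\beta:\beta<\alpha\}$ is countable, hence \uhnull{} by countability; using Theorem~\ref{combUhnull} and Lemma~\ref{gammagr}, fix for the $\alpha$-th sequence $\Seqeps$ a $\gamma$-groupable $\seqeps$-fine cover of $X_\alpha$ whose limit set $L_\alpha=\bigcup_i\bigcap_{j\ge i}\bigcup\mc V_j$ is comeager in $\Cset$; replacing each $\mc V_j$ by a clopen refinement keeps $\seqeps$-fineness and keeps $L_\alpha$ comeager. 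Simultaneously, if $\seq{\mc O_n}$ is the $\alpha$-th cover sequence and it is not already trivially a $\gamma$-cover candidate for $X_\alpha$, the set $H_\alpha$ of points $x$ that lie outside $\bigcup\mc F_n$ for infinitely many $n$, for every choice of finite $\mc F_n\subs\mc O_n$, is a dense $G_\delta$, hence comeager. Pick $x_\alpha\in L_\alpha\cap H_\alpha\setminus X_\alpha$, which is nonempty since both sets are comeager.

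Then $X$ is \uhnull{}: given any $\Seqeps$, choose $\alpha$ with the $\alpha$-th pair having first coordinate $\Seqeps$ and $\alpha$ large; by construction $x_\beta\in L_\alpha$ for all $\beta\ge\alpha$, so all but countably many points of $X$ lie in the limit set of the $\seqeps$-fine $\gamma$-groupable cover chosen at stage $\alpha$, and the remaining countably many points are covered by adding finitely many extra sets to finitely many witnessing groups; this yields an $\seqeps$-fine $\gamma$-groupable cover of $X$, so $X$ is \uhnull{} by Theorem~\ref{combUhnull}. Finally $X$ is not Hurewicz: given any candidate sequence $\seq{\mc O_n}$ of open (clopen, without loss) covers, pick $\alpha$ with the $\alpha$-th pair having second coordinate $\seq{\mc O_n}$; then $x_\alpha\in H_\alpha$ witnesses that no choice of finite $\mc F_n\subs\mc O_n$ makes $\seq{\bigcup\mc F_n}$ a $\gamma$-cover of $X$. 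Hence $X$ is \uhnull{} but lacks the Hurewicz property.
\end{proof}
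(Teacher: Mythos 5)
There is a fatal gap at precisely the step you flag as the crux. For a $\gamma$-groupable $\seqeps$-fine cover with witnessing groups $\mc V_j$, the ``limit set'' $L=\bigcup_i\bigcap_{j\geq i}\bigcup\mc V_j$ is never comeager once $\seqeps$ decreases rapidly (say $\eps_n=4^{-n}$): each group $\mc V_j$ is a \emph{finite} family of sets of diameter at most $\eps_n$ for $n\in I_j$, so $\mu\bigl(\bigcup\mc V_j\bigr)\leq\sum_{n\in I_j}\eps_n$, and since the $I_j$ partition $\Nset$ these sums tend to $0$ along $j$. Hence each $\bigcap_{j\geq i}\bigcup\mc V_j$ is a closed null set, therefore nowhere dense, and $L$ is a meager null set --- indeed a member of $\EE$. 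No clopen refinement changes this, so the set of admissible $x_\alpha$ on the \uhnull{} side is small rather than large, and the recursion cannot run on Baire-category grounds. This is no accident: the points one may add while preserving such a cover always lie in an $\EE$-set, which is exactly why \uhnull{} coincides with $\EE$-additivity (Theorem~\ref{mainME}) and is a far stronger property than \smz{}; a ``pick a point in a comeager set'' recursion would at best produce a Luzin-type set, and Luzin sets need not be $\MM$-additive. The anti-Hurewicz half is also broken as stated: since $\Cset$ is compact, every clopen cover of $\Cset$ admits a finite subcover, so your set $H_\alpha$ is empty; the covers relevant to the Hurewicz property are open covers of the final, as yet unknown set $X$, and they cannot be enumerated in advance nor reduced to covers of $\Cset$ without trivializing the diagonalization.

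The paper sidesteps both difficulties by quoting known constructions instead of running a fresh recursion: under CH, Galvin and Miller build a $\gamma$-set $X\subs\Cset$ concentrated on a countable set $D$; every $\gamma$-set is $\MM$-additive (Galvin--Miller), hence \uhnull{} by Theorem~\ref{mainME}; and by a theorem of Nowik, Scheepers and Weiss the set $X\setminus D$ fails the Hurewicz property, while remaining \uhnull{} as a subset of $X$. If you want a self-contained transfinite construction you would essentially have to redo the Galvin--Miller argument, whose whole point is to thread $\omega_1$ stages through admissible sets that are \emph{small}; the comeager-target template does not suffice here.
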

\begin{proof}
It follows from~\cite[Theorem 1]{MR738943} and its proof that under
the Continuum Hypothesis there is a $\gamma$-set
$X\subs\Cset$ that is concentrated on a countable set $D$.
By~\cite[Theorem 6]{MR738943}, every
$\gamma$-set is $\MM$-additive. By Theorem~\ref{mainME} \emph{infra},
every $\MM$-additive set is \uhnull{}. Hence $X$ is \uhnull{}. On the other hand,
as proved in~\cite[Theorem 20]{MR1610427}, the set $X\setminus D$ does not have the Hurewicz property
and since it is a subset of $X$, it is clearly \uhnull{}.
\end{proof}
%
%
%
%

\subsection*{Universally meager sets}
Recall that a separable metric space $E$ is termed \emph{universally meager}
\cite{MR1814112,MR2427418}
if for any perfect Polish spaces $Y,X$ such that $E\subs X$ and every
continuous one--to--one mapping $f:Y\to X$ the set $f^{-1}(E)$ is meager
in $Y$. We show that \uhnull{} sets are universally meager.
\begin{lem}\label{lemUM}
Let $X,Y,Z$ be perfect Polish spaces and $\phi:Y\to X$ a continuous one--to--one mapping.
Let $\mc F$ be an equicontinuous family of uniformly continuous mappings of $Z$ into $X$.
If $E\subs Z$ is \uhnull{}, then there is a \si compact set
$F\sups E$ such that $\phi^{-1}f(F)$ is meager in $Y$ for all $f\in\mc F$.
\end{lem}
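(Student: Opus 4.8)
The plan is to reduce the statement to a Baire-category argument built on the $\gamma$-groupable covers provided by Theorem~\ref{combUhnull}, exploiting equicontinuity to handle the whole family $\mc F$ uniformly. First I would fix a complete metric $d_Z$ on $Z$ and choose, for the equicontinuous family $\mc F$, a single Hausdorff function $g\in\HH$ (or better, a modulus) such that $d_X(f(x),f(y))\leq g(d_Z(x,y))$ simultaneously for all $f\in\mc F$; this is possible because $\mc F$ is equicontinuous and the $f$'s are uniformly continuous. The point of $g$ is that every $g$-small set in $Z$ is mapped by each $f\in\mc F$ into a set of small diameter in $X$, so a single cover of $E$ controls all the images at once.

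Next I would transfer the problem to $Y$ via $\phi$. Since $\phi$ is a continuous one-to-one map between Polish spaces, $\phi$ restricted to any $\sigma$-compact set is a Borel isomorphism onto its (Borel) image, and in particular $\phi^{-1}$ maps compact sets to Borel sets and, crucially, maps nowhere dense sets to meager sets is false in general --- so instead I would work directly with a decreasing sequence of open dense sets in $Y$. Concretely, using that $Y$ is perfect Polish, I would fix a countable base and, for a prescribed sequence $\seqeps$, produce from Theorem~\ref{combUhnull} an $\seqeps$-fine $\gamma$-groupable cover $\seq{U_n}$ of $E$; enlarging each $U_n$ slightly to an open set of the same order of diameter and taking the $F_\sigma$ hull appropriately, I get a $\sigma$-compact $F\sups E$ of the form $F=\bigcup_i\bigcap_{k\geq i}\bigcup\mc V_k$ (exactly as in the proof of Theorem~\ref{prodUhnull}(i)). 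For a fixed $f\in\mc F$, the $\gamma$-cover property means each point of $F$ lies in all but finitely many of the groups $\bigcup\mc V_k$, so $f(F)$ is covered by $\gamma$-groupable covers of arbitrarily small mesh; pulling back through $\phi^{-1}$, the sets $\phi^{-1}f\bigl(\bigcup_{k\geq i}\bigcup\mc V_k\bigr)$ form, for a suitable choice of the $\seqeps$ made in advance, a sequence whose complements are dense open in $Y$ (density coming from $Y$ perfect plus $\phi$ continuous one-to-one, so images of the $\sigma$-compact pieces are meager once their diameters shrink fast enough relative to a fixed countable base of $Y$). Then $\phi^{-1}f(F)=\bigcap_i\phi^{-1}f\bigl(\bigcup_{k\geq i}\bigcup\mc V_k\bigr)$ is a countable intersection of... no, rather $\phi^{-1}f(F)\subs\bigcup_i\bigcap_{k\geq i}(\dots)$, and I would arrange each $\bigcap_{k\geq i}\phi^{-1}f(\bigcup\mc V_k)$ to be nowhere dense, making $\phi^{-1}f(F)$ meager.

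The single set $F$ must work for \emph{all} $f\in\mc F$ simultaneously, and this is where equicontinuity re-enters: because the bound $d_X(f(x),f(y))\leq g(d_Z(x,y))$ is uniform in $f$, the diameters of $f(\bigcup\mc V_k)$ are small uniformly in $f$, so the same combinatorial cover $\seq{\mc V_k}$ of $X$ (chosen once, with mesh decaying fast relative to bases of both $X$ and $Y$) witnesses meagerness of $\phi^{-1}f(F)$ for every $f$. The main obstacle, and the step requiring the most care, is the passage from ``small diameter in $X$'' to ``nowhere dense after pulling back by $\phi^{-1}$ in $Y$'': $\phi^{-1}$ need not be continuous, so I cannot simply say preimages of small sets are small. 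The fix is to choose the target cover of $X$ fine enough, in advance, that each piece's $\phi$-preimage is contained in a prescribed basic open set of $Y$ of small diameter --- this is possible for a \emph{fixed} countable exhaustion of $Y$ by compacta together with the fact that $\phi$ is a continuous bijection (hence a homeomorphism on each compactum), so one interleaves the choice of the $\seqeps$ fed into Theorem~\ref{combUhnull} with an enumeration of basic open sets of $Y$, exactly the standard diagonalization used in proving that \smz{} (or $\gamma$-groupable-cover) sets are universally meager. Once this bookkeeping is set up, the verification that $\phi^{-1}f(F)$ is meager is routine.
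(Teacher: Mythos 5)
Your opening move---extracting a single modulus $g\in\HH$ with $d_X(f(x),f(y))\leq g(d_Z(x,y))$ for all $f\in\mc F$ simultaneously from equicontinuity---is exactly right and matches the paper. But the rest of the argument has a genuine gap, and it sits precisely at the step you defer as ``bookkeeping.'' To show that a closed set of the form $\bigcap_{k\geq i}\phi^{-1}f(\bigcup\mc V_k)$ is nowhere dense you must show, for every basic open $U\subs Y$, that $\phi(U)\not\subs f(\bigcup\mc V_k)$ for some $k\geq i$, i.e.\ that $\phi(U)$ cannot be covered by $\abs{\mc V_k}$ sets of diameter at most $g(\eps)$. Two things block this. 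First, your mechanism---forcing the $\phi$-preimages of the pieces to have small diameter via a compact exhaustion of $Y$---does not deliver nowhere density: a set of small diameter need not be nowhere dense, a continuous injection can crush a nonempty open subset of $Y$ into an arbitrarily small region of $X$ (so smallness of $f(V)$ in $X$ says nothing about $\phi^{-1}f(V)$ in $Y$), and $\phi^{-1}$ is controlled only on each compactum separately, not globally. Second, and more fundamentally, Theorem~\ref{combUhnull} gives no bound whatsoever on the cardinalities $\abs{\mc V_k}$ of the witnessing groups (that extra cardinality control is exactly what separates \upnull{} from \uhnull{}), so you cannot choose $\seqeps$ ``in advance'' to defeat covers whose group sizes are determined only after $\seqeps$ is fixed---the construction is circular.

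The missing idea, which is the heart of the paper's proof, is a quantitative lower bound on images of open sets: since $\phi$ is one-to-one, each $\phi(U_n)$ ($U_n$ ranging over a countable base of $Y$) is uncountable analytic, hence contains a perfect set, hence is not \uhnull{}, so $\uhm^{h_n}(\phi U_n)>0$ for some $h_n\in\HH$; choosing $h\prec h_n$ for all $n$ gives one $h$ with $\uhm^h(\phi U)>0$ for \emph{every} nonempty open $U\subs Y$. One then applies the \uhnull{} hypothesis to the composite gauge $h\circ g$: there is a \si compact $F\sups E$ with $\uhm^{h\circ g}(F)=0$, whence $\uhm^h(fF)=0$ for every $f\in\mc F$ by Lemma~\ref{lipschitz2}, and the $F_\sigma$ set $\phi^{-1}f(F)$ must be meager, since otherwise it would contain some $U_n$ and force $\uhm^h(fF)\geq\uhm^h(\phi U_n)>0$. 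The Hausdorff function $h$ does the bookkeeping your counting argument cannot: it defeats covers of all cardinalities at once and so breaks the circularity. You never invoke the uncountability of $\phi(U)$, and without some such lower bound the conclusion is out of reach.
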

\begin{proof}
Let $\{U_n\}$ be a countable base for $Y$.
As $\phi$ is one--to--one the set $\phi(U_n)$ is analytic and
uncountable for each $n$. Therefore it contains a perfect set and thus is not \uhnull{},
i.e.~there is $h_n\in\HH$ such that $\uhm^{h_n}(\phi U_n)>0$.
Choose $h\in\HH$ such that $h\prec h_n$ for all $n$, so that
$\uhm^h(\phi U_n)>0$ for all $n$.
Therefore $\uhm^h(\phi U)>0$ for each nonempty set $U$ open in $Y$.

Since $\mc F$ is equicontinuous, there is a function $g\in\HH$
such that~\eqref{lip2} is satisfied by each $f\in\mc F$.
By Theorem~\ref{basicUhnull} $E\in\NNs(\uhm_0^{h{\circ}g})$. Therefore there is a \si compact set
$F\sups E$ such that $\uhm^{h{\circ}g}(F)=0$. Hence Lemma~\ref{lipschitz2} guarantees that
$\uhm^h(fF)=0$ for all $f\in\mc F$. Therefore the $F_\sigma$-set $\phi^{-1}f(F)$
is meager in $Y$: for otherwise
it would contain an open set witnessing $\uhm^h(fF)>0$.
\end{proof}
\begin{thm}\label{umg}
Every \uhnull{} set is universally meager.
\end{thm}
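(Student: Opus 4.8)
The plan is to derive Theorem~\ref{umg} directly from Lemma~\ref{lemUM} by an unravelling of the definition of universal meagerness. Suppose $E$ is \uhnull{} and fix perfect Polish spaces $X,Y$ with $E\subs X$ and a continuous one-to-one map $f:Y\to X$; we must show $f^{-1}(E)$ is meager in $Y$. The obstacle is that Lemma~\ref{lemUM} is phrased for an equicontinuous family of uniformly continuous maps from a \emph{third} space $Z$ into $X$, whereas here we are handed a single continuous (not necessarily uniformly continuous) injection of $Y$ into $X$, and $E$ sits inside $X$, not inside the domain of that map. So the first task is to massage the data into the form Lemma~\ref{lemUM} accepts.

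To do this, first replace the metric on $X$ by a uniformly equivalent totally bounded metric (e.g.\ embed $X$ into a compact metric space, or simply note that every separable metrizable space carries a totally bounded metric inducing its topology); since \uhnull{} is preserved under uniformly continuous maps and is in particular a uniform-equivalence invariant by Theorem~\ref{basicUhnull}(ii), $E$ remains \uhnull{} in this metric. Now take $Z=X$ with this metric and let $\mc F=\{\id_X\}$, a one-element — hence trivially equicontinuous — family of uniformly continuous maps of $Z$ into $X$. Apply Lemma~\ref{lemUM} with $\phi=f$: since $E\subs Z=X$ is \uhnull{}, there is a \si compact set $F\sups E$ with $\phi^{-1}\id_X(F)=f^{-1}(F)$ meager in $Y$. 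Because $E\subs F$ we get $f^{-1}(E)\subs f^{-1}(F)$, so $f^{-1}(E)$ is meager in $Y$, as required.

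Two small points need checking. First, the definition of universally meager quantifies over all perfect Polish $X$ with $E\subs X$; changing the metric on $X$ to a totally bounded one does not change the underlying topological space, so the conclusion for the re-metrized $X$ is literally the conclusion we want. Second, Lemma~\ref{lemUM} requires $Z$ to be a perfect Polish space: if $X$ happens to have isolated points one passes to $X$ without them (a clopen modification) or simply notes that $E$, being \uhnull{}, is contained in a \si compact — indeed perfect-free-complement — subset; in any case an \uhnull{} set is nowhere-locally-compact-ly small and one may freely assume $X$ perfect, as the isolated-point part is countable and contributes a meager (indeed nowhere dense) preimage. Thus the theorem follows with essentially no new work beyond invoking Lemma~\ref{lemUM} correctly.

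I expect the only genuine subtlety to be the bookkeeping in the previous paragraph — ensuring the spaces fed into Lemma~\ref{lemUM} meet its hypotheses (perfect, Polish, and that the map $\phi$ there is the given injection while the family $\mc F$ is just the identity) — rather than any analytic content, which is entirely carried by Lemma~\ref{lemUM}. A clean write-up is: re-metrize $X$ to be totally bounded, set $Z=X$ and $\mc F=\{\id_X\}$, apply Lemma~\ref{lemUM} to get \si compact $F\sups E$ with $f^{-1}(F)$ meager, and conclude $f^{-1}(E)\subs f^{-1}(F)$ is meager. $\qed$
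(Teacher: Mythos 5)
Your proof is correct and is essentially the paper's own argument, which reads in full ``Apply Lemma~\ref{lemUM} with $Z=X$ and $\mc F=\{\id_X\}$.'' The extra bookkeeping you add is unnecessary: the identity on $X$ is uniformly continuous for the given metric (no re-metrization needed), Lemma~\ref{lemUM} only asks $\phi$ to be continuous and one-to-one (so the given $f$ qualifies as is), and the definition of universally meager already restricts to perfect Polish $X$ and $Y$.
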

\begin{proof}
Apply Lemma~\ref{lemUM} with $Z=X$ and $\mc F=\{\id_X\}$.
\end{proof}
\section{\upnull{} spaces}
\label{sec:pnull}

\begin{thm}\label{basicPnull}
The following are equivalent.
\begin{enum}
\item $X$ is \upnull,
\item $\updim f(X,\rho)=0$ for each uniformly equivalent metric on $X$,
\item $\updim f(X)<\infty$ for each uniformly continuous $f:X\to Y$ into another
metric space,
\item $X\in\NNs(\ubox_0^h)$ for each $h\in\HH$,
\item $\ubox^h(X)<\infty$ for each $h\in\HH$.
\end{enum}
\end{thm}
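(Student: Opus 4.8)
The statement to prove is Theorem~\ref{basicPnull}, characterizing \upnull{} spaces via upper box measures and dimensions, exactly parallel to Theorems~\ref{basicHnull} (for \hnull{}) and~\ref{basicUhnull} (for \uhnull{}).

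The plan is to follow the template of the proof of Theorem~\ref{basicHnull} essentially verbatim, substituting the box measure $\ubox^h$ for the Hausdorff measure $\hm^h$, the ideal statement $X\in\NNs(\ubox_0^h)$ for $\hm^h(X)=0$, and the packing dimension $\updim$ for $\hdim$, while replacing every invocation of Lemma~\ref{lemHaus} by the corresponding clause of Lemma~\ref{lemP}.

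First, the implications (i)$\Rightarrow$(ii) and (i)$\Rightarrow$(iii) are trivial, since a uniformly equivalent metric and the identity map give a particular uniformly continuous image. For (ii)$\Rightarrow$(iv) and (iii)$\Rightarrow$(iv), proved simultaneously: given $h\in\HH$, choose a convex Hausdorff function $g$ with $g\prec h^{1/n}$ for all $n$; then $\rho(x,y)=g(d(x,y))$ is a uniformly equivalent metric, and $\id_X\colon(X,d)\to(X,\rho)$ is uniformly continuous, so either hypothesis gives $\updim(X,\rho)<\infty$, whence $\ubox^n(X,\rho)=0$ for some $n$, i.e. $(X,\rho)\in\NNs(\ubox_0^n)$. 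Since $h\circ g^{-1}\succ n$, Lemma~\ref{lemP}(iv) gives $(X,\rho)\in\NNs(\ubox_0^{h\circ g^{-1}})$; because $d(x,y)\leq g^{-1}(\rho(x,y))$, Lemma~\ref{lipschitz2} (the $\ubox$ clause) yields $X=(X,d)\in\NNs(\ubox_0^h)$. The implication (iv)$\Rightarrow$(v) is immediate from the definition $\ubox^h=(\ubox_0^h)^{\mathrm I}$, since membership in $\NNs(\ubox_0^h)$ forces $\ubox^h(X)=0<\infty$.

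For (v)$\Rightarrow$(i): let $f\colon X\to(Y,\rho)$ be uniformly continuous, pick $g\in\HH$ with $\rho(fx,fy)\leq g(d(x,y))$, fix $s>0$ and set $h(r)=r^s$; by hypothesis $\ubox^{h\circ g}(X)<\infty$, so Lemma~\ref{lipschitz2} gives $\ubox^s(fX)=\ubox^h(fX)\leq\ubox^{h\circ g}(X)<\infty$, hence $\updim f(X)\leq s$; letting $s\to0$ gives $\updim f(X)=0$. I do not anticipate a genuine obstacle here: the only point requiring care is that the "trivial'' step (iv)$\Rightarrow$(v) genuinely uses the Method~I definition of $\ubox^h$, and that one must quote the $\ubox$ variants of Lemmas~\ref{lipschitz2} and~\ref{lemP} rather than their Hausdorff originals — but all of these are already available in the excerpt. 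Accordingly the proof can be presented very briefly.

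\begin{proof}
One argues exactly as in the proof of Theorem~\ref{basicHnull}, using Lemma~\ref{lemP} in place of Lemma~\ref{lemHaus} and the $\ubox$-clauses of Lemmas~\ref{lipschitz2} and~\ref{IncreasingSetsLemma}, and reading ``$X\in\NNs(\ubox_0^h)$'' for ``$\hm^h(X)=0$''.

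(i)$\Rightarrow$(ii) and (i)$\Rightarrow$(iii) are trivial. For (ii)$\Rightarrow$(iv) and (iii)$\Rightarrow$(iv), let $h\in\HH$ and choose a convex $g\in\HH$ with $g\prec h^{1/n}$ for each positive $n\in\Nset$; then $\rho(x,y)=g(d(x,y))$ is a uniformly equivalent metric on $X$ and $\id_X\colon(X,d)\to(X,\rho)$ is uniformly continuous. Under (ii) or (iii) we get $\updim(X,\rho)<\infty$, so $\ubox^n(X,\rho)=0$, i.e.\ $(X,\rho)\in\NNs(\ubox_0^n)$, for some $n$. Since $h{\circ}g^{-1}\succ n$, Lemma~\ref{lemP}(iv) gives $(X,\rho)\in\NNs(\ubox_0^{h{\circ}g^{-1}})$, and as $d(x,y)\leq g^{-1}(\rho(x,y))$, Lemma~\ref{lipschitz2} yields $X\in\NNs(\ubox_0^h)$.

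(iv)$\Rightarrow$(v) is immediate, since $\ubox^h=(\ubox_0^h)^\mathrm{I}$ and $X\in\NNs(\ubox_0^h)$ forces $\ubox^h(X)=0$.

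(v)$\Rightarrow$(i): Let $f\colon X\to(Y,\rho)$ be uniformly continuous and pick $g\in\HH$ with $\rho(fx,fy)\leq g(d(x,y))$ for all $x,y$. Fix $s>0$ and set $h(r)=r^s$. By hypothesis $\ubox^{h{\circ}g}(X)<\infty$, so Lemma~\ref{lipschitz2} gives $\ubox^s(fX)=\ubox^h(fX)\leq\ubox^{h{\circ}g}(X)<\infty$, whence $\updim f(X)\leq s$. As $s>0$ was arbitrary, $\updim f(X)=0$.
\end{proof}
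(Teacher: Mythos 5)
Your proposal is correct and follows exactly the route the paper intends: the paper's own proof of Theorem~\ref{basicPnull} consists of the single remark that one argues as in Theorems~\ref{basicHnull} and~\ref{basicUhnull} with Lemma~\ref{lemP} in place of Lemma~\ref{lemHaus}, and your write-up is a faithful expansion of precisely that substitution (using Lemma~\ref{lemP}(iv) where the Hausdorff proof used Lemma~\ref{lemHaus}(i), and the $\ubox$-clause of Lemma~\ref{lipschitz2}). The only cosmetic quibble is that you cite Lemma~\ref{IncreasingSetsLemma} in the preamble without actually needing it.
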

\begin{proof}
This is proved in the same manner as Theorems~\ref{basicHnull} and~\ref{basicUhnull}, with the aid
of Lemma~\ref{lemP}.
\end{proof}
Next we provide a combinatorial characterization of \upnull{} sets.
Note the similarity of \ref{combPnull}(ii)
with Theorem~\ref{combUhnull}.
\begin{lem}\label{seqep3}
For each $\Seqeps$ there exists $g\in\HH$ such that: If $\ubox^g(X)=0$,
then $X$ admits an $\seqeps$-fine $\gamma$-groupable cover such that
the witnessing groups $\mc G_j$ satisfy $\abs{\mc G_j}\leq j$ for each $j$.
\end{lem}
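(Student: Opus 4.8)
The plan is to follow the template of Lemmas~\ref{seqep2} and~\ref{seqep}, the new ingredient being that the box content $\ubox_0^h$ records \emph{covering numbers}: if $\ubox_0^h(E)=0$, then $N_E(r)\le 1/h(r)$ for all sufficiently small $r$, so a lower bound on $h$ at a prescribed scale bounds the number of small sets needed to cover $E$. This is precisely the leverage that lets us prescribe $\abs{\mc G_j}\le j$, which the Hausdorff argument cannot do.

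First I would make the usual reductions: replacing each $\eps_n$ by $\min(\eps_0,\dots,\eps_n,2^{-n})$ we may assume that $\seqeps$ is non-increasing and tends to $0$, and (as in the proof of Lemma~\ref{seqep2}) that $X$ has no isolated points. Partition $\Nset$ into consecutive finite intervals $I_j$ with $\abs{I_j}=j$, and put $m_j=\max I_j$; note $m_j\to\infty$, so $\eps_{m_j}\to0$. Choose $h\in\HH$ with $h(\eps_{m_j})\ge 1/j$ for all $j$ (possible, since the points $\eps_{m_j}$ and the bounds $1/j$ are both non-increasing in $j$), and then pick $g\in\HH$ with $g\prec h$ (such $g$ exists). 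This $g$ is the function the lemma asks for.

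Now assume $\ubox^g(X)=0$. As $\ubox^g(X)=0<\infty$ and $g\prec h$, Lemma~\ref{lemP}(iv) yields $X\in\NNs(\ubox_0^h)$, so there are sets $X_k\upto X$ with $\ubox_0^h(X_k)=0$ for each $k$ (one may pass to finite unions here, since $N_{A\cup B}(r)\le N_A(r)+N_B(r)$ makes $\ubox_0^h$ finitely subadditive). For each $k$ fix $\del_k>0$ with $N_{X_k}(r)\,h(r)<1$ for all $0<r\le\del_k$ (the covering numbers are finite since $X_k$ is totally bounded by Lemma~\ref{lemP}(i)), chosen non-increasing in $k$. A diagonalisation then provides a non-decreasing $k\colon\Nset\to\Nset$ with $k(j)\le j$, $k(j)\to\infty$, and $\eps_{m_j}\le\del_{k(j)}$ once $k(j)\ge1$ --- for instance $k(j)=\max(\{k\le j:\eps_{m_j}\le\del_k\}\cup\{0\})$. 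For every such $j$ then $N_{X_{k(j)}}(\eps_{m_j})<1/h(\eps_{m_j})\le j$, so $X_{k(j)}$ is covered by a family $\mc G_j$ of at most $j$ subsets of $X$, each of diameter $\le\eps_{m_j}$; for the finitely many $j$ with $k(j)=0$ put $\mc G_j=\emptyset$. Enumerate $\mc G_j$ as $\{U_n:n\in I_j\}$, padding with copies of $\emptyset$ if necessary.

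It remains to verify the three requirements. The cover $\seq{U_n}$ is $\seqeps$-fine because $n\in I_j$ gives $n\le m_j$, whence $dU_n\le\eps_{m_j}\le\eps_n$; it covers $X$ since $\bigcup\mc G_j\sups X_{k(j)}$ for large $j$ and $X_k\upto X$; and it is $\gamma$-groupable with witnessing groups $\mc G_j$, because $k(j)\to\infty$ forces every $x\in X$ to belong to $\bigcup\mc G_j$ for all but finitely many $j$. Finally $\abs{\mc G_j}\le\abs{I_j}=j$ by construction. The only delicate points are that $h$ must be chosen with the schedule $j\mapsto m_j$ (not merely $\seqeps$) in mind, and that the diagonal index $k(j)$ must diverge while staying $\le j$, so that the $j$-th group is simultaneously large enough to cover $X_{k(j)}$ and small enough to have at most $j$ members; both are routine once $h$ and $g$ are fixed as above.
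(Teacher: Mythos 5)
Your proof is correct and follows essentially the same route as the paper's: schedule the scales along blocks of length $j$ (the paper uses $\del_j=\eps_{0+1+\dots+j}$, which is the same triangular-number bookkeeping as your $\eps_{m_j}$), choose the Hausdorff function so that $1/h$ at those scales is at most $j$, and extract an increasing sequence $X_k\upto X$ with vanishing box content to build the witnessing groups. The only cosmetic difference is that you reach the sets $X_k$ via Lemma~\ref{lemP}(iv) with an auxiliary pair $g\prec h$, whereas the paper applies Lemma~\ref{IncreasingSetsLemma} directly to a single function; both devices are standard in the paper and deliver the same estimate.
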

\begin{proof}
Assume without loss of generality that $\seqeps$ is decreasing.
Set $\del_n=\eps_{0+1+\dots+n}$.
Choose a Hausdorff function $g$ such that $g(\del_n)>\frac1n$ for all $n\in\Nset$.
Suppose $\ubox^g(X)=0$. Use Lemma~\ref{IncreasingSetsLemma} to find $X_k\upto X$
such that $\ubox_0(X_k)<1$.
Thus for each $k$ there is $n_k$ such that
$N_{X_k}(\del_n)g(\del_n)<1$ for each $n\geq n_k$.
Define the witnessing groups as follows:
If $n_k\leq j<n_{k+1}$, let $\mc G_j$ be a cover of $X_k$ witnessing $N_{X_k}(\del_j)g(\del_j)<1$.
Clearly $\abs{\mc G_j}<\frac{1}{g(\del_j)}<j$.
The cover we are looking for is of course $\bigcup_{j\in\Nset}\mc G_j=\{U_n:n\in\Nset\}$
ordered in such a way that if $i<j$ and $U_n\in\mc G_i$ and $U_m\in\mc G_j$, then $n<m$.
It is clear that $\{U_n\}$ is a $\gamma$-groupable cover.
If $U_n\in\mc G_j$, then $n\leq\sum_{i\leq j}\abs{\mc G_i}<0+1+\dots+j$. Hence
$dU_n<\del_j=\eps_{0+1+\dots+j}<\eps_n$. Therefore $\{U_n\}$ is $\seqeps$-fine.
\end{proof}
\begin{thm}\label{combPnull}
Let $X$ be a separable metric space. The following are equivalent.
\begin{enum}
\item $X$ is \upnull{},
\item for each $\Seqeps$, $X$ has
an $\seqeps$-fine $\gamma$-groupable cover such that the witnessing groups $\mc G_j$
satisfy $\abs{\mc G_j}\leq j$ for each $j$,
\item for each $\Seqeps$, there is a sequence $\{\mc F_n\}$ of families
of sets such that $d\mc F_n\leq\eps_n$ and $\abs{\mc F_n}\leq n$ for all $n\in\Nset$
and the sequence $\{\bigcup\mc F_n\}$ is a $\gamma$-cover of $X$,
\item there is $f\in\Pset$ such that for each $\Seqeps$, there is a sequence $\{\mc F_n\}$
of families
of sets such that $d\mc F_n\leq\eps_n$ and $\abs{\mc F_n}\leq f(n)$ for all $n\in\Nset$
and the sequence $\{\bigcup\mc F_n\}$ is a $\gamma$-cover of $X$.
\end{enum}
\end{thm}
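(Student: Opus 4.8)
The plan is to prove the cycle (i)\Implies(ii)\Implies(iii)\Implies(iv)\Implies(i). The implication (i)\Implies(ii) is immediate from Lemma~\ref{seqep3}: given $\seqeps$, let $g\in\HH$ be the function it provides; by Theorem~\ref{basicPnull}(iv) we have $X\in\NNs(\ubox_0^g)$, and then (using Lemma~\ref{lemP}(v) or the relation between $\ubox^g$ and $\ubox_0^g$) we get $\ubox^g(X)=0$, so the lemma yields the desired $\seqeps$-fine $\gamma$-groupable cover with $\abs{\mc G_j}\le j$. The implication (ii)\Implies(iii) is essentially a reindexing: given $\seqeps$, apply (ii) to the sequence $\seqeps$ itself to get a $\gamma$-groupable cover with witnessing groups $\mc G_j$, $\abs{\mc G_j}\le j$; the sequence of families $\mc F_j=\mc G_j$ then has $\abs{\mc F_j}\le j$, and since the cover is $\seqeps$-fine and each $U\in\mc F_j$ comes from the witnessing group $\mc G_j$, one checks $d\mc F_j\le\eps_{?}$ — here one must be a little careful about which $\eps_n$ bounds $d\mc G_j$, but passing to a monotone $\seqeps$ and using the ordering of the cover from the proof of Lemma~\ref{seqep3} makes $d\mc F_n\le\eps_n$ work out. (iii)\Implies(iv) is trivial: take $f=\id$.

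The real content is (iv)\Implies(i), and this is the step I expect to be the main obstacle. Here I am handed a single $f\in\Pset$ such that every $\seqeps$ admits families $\mc F_n$ with $\abs{\mc F_n}\le f(n)$, $d\mc F_n\le\eps_n$, and $\{\bigcup\mc F_n\}$ a $\gamma$-cover; I must deduce the full strength of \upnull, which by Theorem~\ref{basicPnull}(iv) amounts to showing $X\in\NNs(\ubox_0^h)$ for every $h\in\HH$. The strategy is a regrouping/averaging argument: fix $h\in\HH$. We want $X_k\upto X$ with $\ubox_0^h(X_k)=0$, i.e.\ for each $k$ a set $X_k$ on which $N_{X_k}(r)h(r)\to0$. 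The point of the bound $\abs{\mc F_n}\le f(n)$ — with $f$ \emph{fixed} and independent of $\seqeps$ — is that by choosing $\seqeps$ decaying fast enough relative to $h$ and $f$ (so that $f(n)h(\eps_n)$ is summable, or at least tends to $0$ along a suitable subsequence), the families $\bigcup_{n\in I_j}\mc F_n$ over intervals $I_j$ give, for the $\gamma$-cover sets $E_k=\bigcap_{j\ge k}\bigcup_{n\in I_j}\bigcup\mc F_n$, covers of $E_k$ by at most $\sum_{n\in I_j}f(n)$ pieces of diameter at most $\eps_{\min I_j}$, and $(\sum_{n\in I_j}f(n))\,h(\eps_{\min I_j})$ can be driven to $0$ as $j\to\infty$ by the choice of $\seqeps$ and the partition. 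This forces $\ubox_0^h(E_k)=0$, and since the $E_k$ increase to $X$ (the $\gamma$-cover property: every point lies in all but finitely many $\bigcup_{n\in I_j}\bigcup\mc F_n$), we conclude $X\in\NNs(\ubox_0^h)$.

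The delicate part of (iv)\Implies(i) is getting the quantifiers right: $\seqeps$ and the interval partition $\Nset=\bigcup_j I_j$ must be selected \emph{after} $h$ and using only the fixed $f$, in such a way that (a) $\eps_n$ decreases fast enough that $h(\eps_{\min I_j})$ is tiny compared with the reciprocal of $\sum_{n\in I_j}f(n)$, and (b) the partition into $I_j$ is coarse enough that the $\gamma$-cover hypothesis applies to $\{\bigcup_{n\in I_j}\bigcup\mc F_n\}$ — but the hypothesis as stated already delivers a $\gamma$-cover of the form $\{\bigcup\mc F_n\}$, so in fact no extra regrouping into intervals is needed: one can take $X_k=\bigcap_{n\ge k}\bigcup\mc F_n$ directly and only needs $f(n)h(\eps_n)\to0$, which is arranged by picking $\eps_n$ so small that $h(\eps_n)<1/(n\,f(n))$. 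I would present it that way, which streamlines the argument considerably. Finally, one should note for completeness that (iv)\Implies(iii) would be false without the uniformity in $f$, which is exactly why clause (iv) is phrased with a single $f$ working for all $\seqeps$; this is the analogue, for box dimension, of the ``one scaling function works everywhere'' phenomenon, and it is what makes \upnull\ strictly stronger than \dpnull.
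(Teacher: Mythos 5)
Your overall route is the same as the paper's: the cycle (i)\Implies(ii)\Implies(iii)\Implies(iv)\Implies(i), with (i)\Implies(ii) delegated to Lemma~\ref{seqep3} via Theorem~\ref{basicPnull}(iv), the middle implications essentially trivial, and (iv)\Implies(i) done by setting $X_k=\bigcap_{n\geq k}\bigcup\mc F_n$ and estimating $\ubox_0^h(X_k)$ directly --- the paper likewise needs no regrouping into intervals. However, your concrete choice of $\seqeps$ in (iv)\Implies(i) does not work as written. The claim that one ``only needs $f(n)h(\eps_n)\to0$'' overlooks the scales $r$ strictly between consecutive $\eps_n$'s: for $\eps_n\leq r<\eps_{n-1}$ you can bound $N_{X_k}(r)\leq N_{X_k}(\eps_n)\leq f(n)$, but $h(r)$ is only bounded by $h(\eps_{n-1})$, so the quantity you must control is $f(n)\,h(\eps_{n-1})$, not $f(n)\,h(\eps_n)$. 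With your choice $h(\eps_n)<1/(n\,f(n))$ this gives $f(n)h(\eps_{n-1})<f(n)/\bigl((n-1)f(n-1)\bigr)$, which is unbounded for fast-growing $f$ (e.g.\ $f(n)=2^{2^n}$), so $\ubox_0^h(X_k)$ is not controlled and the argument fails.

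The repair is a one-index shift, and it is exactly what the paper does: choose $\seqeps$ decreasing with $h(\eps_n)<1/f(n+1)$, so that for $\eps_n\leq r<\eps_{n-1}$ one gets $N_{X_k}(r)h(r)\leq f(n)h(\eps_{n-1})\leq f(n)/f(n)=1$, hence $\ubox_0^h(X_k)\leq1$ for all $k$ and all $h\in\HH$ (which suffices via the standard passage to some $g\prec h$, cf.\ Lemma~\ref{lemP}(iv)); if you prefer to land on $\ubox_0^h(X_k)=0$ outright, take $h(\eps_n)\leq1/\bigl((n+1)f(n+1)\bigr)$. A minor further remark: your closing parenthetical about what separates \upnull{} from \dpnull{} is not quite right --- the \dpnull{} characterization in Theorem~\ref{combDnull} also has a single $f$ uniform in $\seqeps$; the actual difference is that there the cover conditions are only required along an infinite set $I$ of indices rather than all of $\Nset$.
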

\begin{proof}
(i)\Implies(ii) follows from the above lemma. (ii)\Implies(iii)\Implies(iv) is trivial.
(iv)\Implies(i): Let $h$ be a Hausdorff function.
Choose $\seqeps$ so that $h(\eps_n)<\frac1{f(n+1)}$. Consider the families $\mc F_n$ given by (iii)
and for each $k$ set $X_k=\{x\in X:\forall n\geq k\ x\in\bigcup\mc F_n\}$.
Clearly $X_k\upto X$. It remains to show $\ubox_0^h(X_k)\leq 1$.
Let $\eps<\eps_k$. There is $n>k$ such that $\eps_n\leq\eps<\eps_{n-1}$.
Obviously $N_{X_k}(\eps)\leq N_{X_k}(\eps_n)\leq\abs{\mc F_n}\leq f(n)$. Hence
$N_{X_k}(\eps)h(\eps)\leq f(n)h(\eps_{n-1})\leq\frac{f(n)}{f(n-1+1)}=1$.
Thus $\ubox_0^h(X_k)\leq1$.
\end{proof}
\begin{thm}\label{prodPnull}
The following are equivalent.
\begin{enum}
\item $X$ is \upnull,
\item for each $h\in\HH$, $Y\in\NNs(\ubox_0^h)$ and each complete $Z\sups X$
there is $F$ \si compact, $X\subs F\subs Z$,
such that $\ubox^h(F\times Y)=0$,
\item $\ubox^h(X\times Y)=0$ whenever $h\in\HH$ and $\ubox_0^h(Y)=0$,
\item $\ubox^1(X\times E)=0$ for each $E\in\EE$,
\item $\ubox^1(X\times C)=0$ for each $C\in\CC$.
\end{enum}
\end{thm}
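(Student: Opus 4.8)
The plan is to establish the cycle (i)$\Implies$(ii)$\Implies$(iii)$\Implies$(iv)$\Implies$(v)$\Implies$(i), following the proof of Theorem~\ref{prodUhnull} with box measures in place of upper Hausdorff measures. The chain (ii)$\Implies$(iii)$\Implies$(iv)$\Implies$(v) is routine: for (ii)$\Implies$(iii) instantiate $Z$ as the completion of $X$, obtain the \si compact $F\sups X$ with $\ubox^h(F\times Y)=0$, and use monotonicity on $X\times Y\subs F\times Y$; for (iii)$\Implies$(iv) use Lemma~\ref{EC}(i) to pick $h\prec1$ with $E\in\NNs(\ubox_0^h)$, apply (iii) to the members of a decomposition of $E$ into $\ubox_0^h$-null sets and sum by \si subadditivity of $\ubox^h$, then invoke Lemma~\ref{lemP}(iv) (here $h\prec g_1$) to pass from $\ubox^h(X\times E)=0$ to $\ubox^1(X\times E)=0$; and (iv)$\Implies$(v) is just $\CC\subs\EE$ (Lemma~\ref{EC}(ii)).

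For (v)$\Implies$(i) I argue contrapositively, exactly as in the last step of Theorem~\ref{prodUhnull} but with Lemma~\ref{howroyd}(iv) replaced by Lemma~\ref{howroyd}(v). If $X$ is not \upnull{}, Theorem~\ref{basicPnull}(v) gives $h\in\HH$ with $\ubox^h(X)>0$; enlarging $h$ we may assume it concave with $h(r)\geq\sqrt r$, so that $g(r)=r/h(r)$ is a Hausdorff function of finite order with $g\prec1$ and $g(r)h(r)=r$. Lemma~\ref{EC}(iii) yields $C\in\CC$ with $\hm^g(C)>0$, hence $\dbox^g(C)\geq\hm^g(C)>0$ by~\eqref{basicIneq}, and Lemma~\ref{howroyd}(v) provides $c>0$ with
$$
  \ubox^1(X\times C)=\ubox^{hg}(X\times C)\geq\tfrac1c\,\ubox^h(X)\,\dbox^g(C)>0,
$$
contradicting (v).

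The real work is in (i)$\Implies$(ii). Here I imitate the proof of Theorem~\ref{prodUhnull}(i)$\Implies$(ii), but replacing the $\gamma$-groupable cover furnished by Theorem~\ref{combUhnull} with the \emph{size-controlled} cover of Theorem~\ref{combPnull}(iii). Fix $h\in\HH$, a complete $Z\sups X$, and $Y\in\NNs(\ubox_0^h)$; write $Y=\bigcup_jY_j$ with $Y_j\upto Y$ and $\ubox_0^h(Y_j)=0$, so $N_{Y_j}(r)h(r)\to0$ for each $j$. By Theorem~\ref{basicPnull}(iv) and Lemma~\ref{lemP}(iii), $X$ lies in a \si compact $K\subs Z$. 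Choose a decreasing sequence $\seqeps$ tending to $0$ \emph{so slowly} that, with $n(r)$ the least $n$ for which $\eps_n\leq r$, one has $n(r)\cdot N_{Y_j}(r)h(r)\to0$ as $r\to0$ for every $j$ --- possible since each $N_{Y_j}(r)h(r)\to0$ (pass to the nondecreasing majorant $\sup_{0<s\leq r}N_{Y_j}(s)h(s)$ and thin out $\seqeps$ accordingly). Apply Theorem~\ref{combPnull}(iii) to $\seqeps$ to get finite families $\mc F_n$ with $d\mc F_n\leq\eps_n$, $\abs{\mc F_n}\leq n$, and $\bigl\{\bigcup\mc F_n\bigr\}$ a $\gamma$-cover of $X$; taking closures in $Z$ of the finitely many members of each $\mc F_n$ changes neither diameters nor cardinalities, so we may assume $G_n:=\bigcup\mc F_n$ is closed in $Z$. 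Put $F_i=K\cap\bigcap_{n\geq i}G_n$ and $F=\bigcup_iF_i$: then $F$ is an $F_\sigma$ subset of $K$, hence \si compact, and $X\subs F\subs Z$ because $\{G_n\}$ is a $\gamma$-cover of $X$.

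It remains to check $\ubox^h(F\times Y)=0$, for which it is enough that $\ubox_0^h(F_i\times Y_j)=0$ for all $i,j$, as $F\times Y$ is their countable union. Fix $i,j$; for all sufficiently small $r$ we have $n(r)\geq i$, and since the at most $n(r)$ closed sets comprising $G_{n(r)}$ have diameter $\leq d\mc F_{n(r)}\leq\eps_{n(r)}\leq r$, they form an $r$-cover of $F_i\subs G_{n(r)}$, whence $N_{F_i}(r)\leq n(r)$. Therefore
$$
  \ubox_0^h(F_i\times Y_j)=\limsup_{r\to0}N_{F_i\times Y_j}(r)\,h(r)
  \leq\limsup_{r\to0}N_{F_i}(r)\,N_{Y_j}(r)\,h(r)
  \leq\limsup_{r\to0}n(r)\,N_{Y_j}(r)h(r)=0.
$$
The main obstacle is precisely this construction. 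In Theorem~\ref{prodUhnull} one works with a convergent series $\sum h(dW)<\infty$ and the clean reformulation of Lemma~\ref{gammagr}; here $\ubox_0^h$ is a $\limsup$ of covering numbers, with no such additive description, and forming the product cover multiplies $N_{\bigcup\mc F_n}(r)$ by the factor $\abs{\mc F_n}$. It is exactly the bound $\abs{\mc F_n}\leq n$ of Theorem~\ref{combPnull} --- which has no counterpart for \uhnull{} sets --- together with a $\seqeps$ tuned slowly enough to $Y$ so that $n(r)\,N_{Y_j}(r)h(r)\to0$, that absorbs this multiplicative blow-up and lets the estimate close.
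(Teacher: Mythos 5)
Your proof is correct, and the cycle (ii)$\Rightarrow$(iii)$\Rightarrow$(iv)$\Rightarrow$(v)$\Rightarrow$(i) matches the paper's (which dismisses the middle chain as trivial and, for (v)$\Rightarrow$(i), does exactly what you do: repeat the argument of Theorem~\ref{prodHnull} with Lemma~\ref{howroyd}(v) and the inequality $\dbox^g\geq\hm^g$ in place of Lemma~\ref{howroyd}(iii)). Where you genuinely diverge is (i)$\Rightarrow$(ii). The paper's route is a short calculation in the calculus of Hausdorff functions: by Lemma~\ref{lemP}(v) improve $Y\in\NNs(\ubox_0^h)$ to $Y\in\NNs(\ubox_0^g)$ for some $g\prec h$, pick $f\in\HH$ with $fg\geq h$, use Theorem~\ref{basicPnull}(iv) and Lemma~\ref{lemP}(iii) to get a \si compact $F\sups X$ with $\ubox^f(F)=0$, and close with the product inequality of Lemma~\ref{howroyd}(i), $\ubox^h(F\times Y)\leq\ubox^{fg}(F\times Y)\leq\ubox^f(F)\,\ubox^g(Y)=0$. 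You instead transplant the explicit cover construction from Theorem~\ref{prodUhnull}(i)$\Rightarrow$(ii), replacing the $\gamma$-groupable cover by the size-controlled cover of Theorem~\ref{combPnull} and tuning $\seqeps$ to $Y$ so that $n(r)\,N_{Y_j}(r)h(r)\to0$; your diagonal choice of $\seqeps$ and the estimate $N_{F_i}(r)\leq n(r)$ are both sound, and the decomposition $F\times Y=\bigcup_{i,j}F_i\times Y_j$ legitimately reduces everything to $\ubox_0^h$-nullity. The trade-off: the paper's argument is shorter and avoids any quantitative bookkeeping, but leans on Lemma~\ref{lemP}(v) and the unproved-in-detail product inequality; yours is longer and requires the careful slow-sequence argument, but is more self-contained for this implication (it needs only the combinatorial characterization and the trivial bound $N_{A\times B}(r)\leq N_A(r)N_B(r)$) and makes visible exactly why the cardinality bound $\abs{\mc F_n}\leq n$ — absent in the \uhnull{} setting — is what controls the product.
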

\begin{proof}
(i)\Implies(ii):
Suppose $Y\in\NNs(\ubox_0^h)$. By Lemma~\ref{lemP}(v) there is $g\prec h$ such that
$Y\in\NNs(\ubox_0^h)$. Let $f\in\HH$ be such that $fg\geq h$. Since $X$ is \upnull{},
Theorem~\ref{basicPnull}(iv) yields $X\in\NNs(\ubox_0^f)$. Thus there is, by Lemma~\ref{lemP}(iii),
a \si compact set $F\sups X$ such that $\ubox^f(F)=0$.
Apply Lemma~\ref{howroyd}(i):
$$
  \ubox^h(F\times Y)\leq\ubox^{fg}(F\times Y)\leq\ubox^f(F)\ubox^g(Y)=0.
$$

(ii)\Implies(iii)\Implies(iv)\Implies(v) is trivial.

(v)\Implies(i):
Suppose $X$ is not \upnull{}. We will show that $\ubox^1(X\times C)>0$
for some $C\in\CC$.
By assumption there is $h\in\HH$ such that $\ubox^h(X)>0$.
Proceed as in the proof of
Theorem~\ref{prodHnull} this time applying Lemma~\ref{howroyd}(v).
\end{proof}
\begin{thm}
If $X$ and $Y$ are \upnull{}, then $X\times Y$ is \upnull{}.
\end{thm}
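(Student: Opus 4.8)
The plan is to run the same argument as for Theorem~\ref{productHUH}(i), with the box measures $\ubox_0^h$, $\ubox^h$ in place of the upper Hausdorff measures. So suppose $X$ and $Y$ are \upnull{}. First I would use Theorem~\ref{basicPnull}(iv) to record that $Y\in\NNs(\ubox_0^h)$ for every Hausdorff function $h$; this is precisely the hypothesis on $Y$ needed to feed into Theorem~\ref{prodPnull}(ii).

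Next, fix $h\in\HH$ and take $Z$ to be the completion of $X$ (any complete separable metric space containing $X$ works just as well). Since $X$ is \upnull{}, the implication (i)\Implies(ii) of Theorem~\ref{prodPnull} applies to this $h$, this $Y$ and this $Z$, and yields a \si compact set $F$ with $X\subs F\subs Z$ and $\ubox^h(F\times Y)=0$. Because $\ubox^h$ is a monotone Borel outer measure and $X\times Y\subs F\times Y$, it follows that $\ubox^h(X\times Y)=0$, and in particular $\ubox^h(X\times Y)<\infty$. As $h$ was arbitrary and $X\times Y$, equipped with the maximum metric, is again a separable metric space, Theorem~\ref{basicPnull}(v) now gives that $X\times Y$ is \upnull{}, which completes the proof.

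I do not expect any genuine obstacle here: the proof is a short concatenation of Theorems~\ref{basicPnull} and~\ref{prodPnull}, perfectly parallel to Theorem~\ref{productHUH}(i). The one point to be careful about is to invoke part (ii) of Theorem~\ref{prodPnull} rather than part (iii): from $Y$ being \upnull{} one obtains only $Y\in\NNs(\ubox_0^h)$, which need not imply the stronger condition $\ubox_0^h(Y)=0$ required in (iii). (One could alternatively phrase the whole thing through Theorem~\ref{prodPnull}(v) and Lemma~\ref{howroyd}(v), but going through (ii) is the most economical route.)
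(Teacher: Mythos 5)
Your proof is correct and follows essentially the same route as the paper, which simply concatenates Theorem~\ref{basicPnull}(iv),(v) with Theorem~\ref{prodPnull}. The one divergence is the point you flag yourself: the paper actually cites part (iii) of Theorem~\ref{prodPnull}, whose hypothesis $\ubox_0^h(Y)=0$ is not directly delivered by $Y$ being \upnull{}; to use (iii) one must first write $Y=\bigcup_n Y_n$ with $\ubox_0^h(Y_n)=0$ and then invoke countable subadditivity of $\ubox^h$ on $X\times Y=\bigcup_n (X\times Y_n)$. Your choice of part (ii), whose hypothesis $Y\in\NNs(\ubox_0^h)$ is exactly what Theorem~\ref{basicPnull}(iv) provides, avoids that extra (easy but unstated) step, so your version is if anything slightly tighter than the paper's one-line citation.
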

\begin{proof}
Apply Theorem~\ref{basicPnull}(iv) and (v) and Theorem~\ref{prodPnull}(iii).
\end{proof}
\begin{prop}\label{XN}
If $X$ is \upnull{}, then
\begin{enum}
\item $\hm^h(X\times Y)=0$ whenever $h\in\HH$ and $\hm^h(Y)=0$,
\item in particular $\hm^1(X\times N)=0$ for each $N\in\NN$.
\end{enum}
\begin{proof}
(i) follows from Theorem~\ref{basicPnull}(v) and Lemma~\ref{howroyd}(ii).
(ii) follows from (i).
\end{proof}
\end{prop}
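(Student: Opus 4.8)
The plan is to obtain (i) as a direct application of Lemma~\ref{howroyd}(ii), after splitting the Hausdorff function $h$ into a product of two Hausdorff functions, one of which ``absorbs'' the nullity of $Y$ and the other the smallness of $X$. So suppose $h\in\HH$ and $\hm^h(Y)=0$. First I would invoke Lemma~\ref{lemHaus}(ii) to pass to a function $g\prec h$ with $\hm^g(Y)=0$ still; by the meaning of $\prec$ this gives $h(r)/g(r)\to0$ as $r\to0$, so that the rule $\phi(r)=\sup_{0<t\le r}h(t)/g(t)$ defines, after the customary right-continuous adjustment, a genuine Hausdorff function that is positive for $r>0$, tends to $0$ at $0$, and satisfies $g(r)\phi(r)\ge h(r)$ for every $r$. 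Since $X$ is \upnull{}, Theorem~\ref{basicPnull}(v) supplies $\ubox^\phi(X)<\infty$. Because $X\times Y$ is isometric to $Y\times X$ (both carry the maximum metric) and $\hm^{g\phi}$ dominates $\hm^h$ on account of $g\phi\ge h$, Lemma~\ref{howroyd}(ii) then yields
\[
  \hm^h(X\times Y)\le\hm^{g\phi}(Y\times X)\le\hm^g(Y)\cdot\ubox^\phi(X)=0,
\]
the product $\hm^g(Y)\cdot\ubox^\phi(X)$ being of the form $0\cdot(\text{finite})$ rather than the excluded $0\cdot\infty$. This proves (i).

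For (ii) I would only recall that in the chosen metric $\hm^1$ restricted to Borel sets coincides with the Haar measure $\mu$ of $\Cset$; hence every $N\in\NN$ is contained in a Borel (indeed $G_\delta$) set of $\mu$-measure $0$, whence $\hm^1(N)=0$. Applying (i) with $h(r)=r$ gives $\hm^1(X\times N)=0$.

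The step that takes a moment's thought is the manufacture of the complementary factor $\phi$ as a bona fide element of $\HH$: monotonicity is automatic from the supremum, right-continuity needs the standard touch-up, positivity for $r>0$ and the limit $0$ at $0$ both come straight out of $g\prec h$, and finiteness of $\phi(r)$ for each $r$ follows because $h(t)/g(t)$ is bounded on $(0,r]$. One also has to check that the proviso in Lemma~\ref{howroyd}(ii) is not triggered, i.e.\ that $\ubox^\phi(X)$ is finite and not infinite --- but that is exactly what the hypothesis that $X$ is \upnull{} buys us through Theorem~\ref{basicPnull}(v). There is no real obstacle here; the entire content is the factorization trick together with the two quoted lemmas.
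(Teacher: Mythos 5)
Your proof is correct and is exactly the argument the paper intends: the paper's own proof is a one-line citation of Theorem~\ref{basicPnull}(v) and Lemma~\ref{howroyd}(ii), and your factorization $h\leq g\phi$ with $g\prec h$ obtained from Lemma~\ref{lemHaus}(ii) is the standard way to fill in that citation (the same trick the paper carries out explicitly in the proof of Theorem~\ref{mainNN}). The details you check --- that $\phi$ is a genuine Hausdorff function, that the $0\cdot\infty$ proviso is not triggered, and the isometry $X\times Y\cong Y\times X$ under the maximum metric --- are all the right ones.
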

\begin{coro}
If $X$ is \upnull{} then for every metric space $Y$
\begin{enum}
\item $\pdim X\times Y=\pdim Y$,
\item $\hdim X\times Y=\hdim Y$.
\end{enum}
\end{coro}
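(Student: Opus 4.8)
The plan is to establish each equality as a pair of inequalities. The inequalities $\hdim Y\leq\hdim(X\times Y)$ and $\pdim Y\leq\pdim(X\times Y)$ are the easy halves: picking any $x_0\in X$ (we may of course assume $X\neq\emptyset$), the slice $\{x_0\}\times Y$ is isometric to $Y$ in the maximum metric, so monotonicity of the two dimensions delivers them immediately. Thus everything reduces to the reverse inequalities, which I would read off from the product estimates of the preceding sections.

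For (ii) this is immediate. If $s>\hdim Y$ then $\hm^s(Y)=0$, so Proposition~\ref{XN}(i) applied with $h(r)=r^s$ yields $\hm^s(X\times Y)=0$, hence $\hdim(X\times Y)\leq s$; letting $s\downarrow\hdim Y$ finishes (ii).

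For (i) I would argue the same way, but must first bridge a small gap: Theorem~\ref{prodPnull} is phrased with the box \emph{content} $\ubox_0^h$, whereas $\pdim$ is controlled by the box \emph{measure} $\ubox^h$, and $\ubox^h(Y)=0$ does not in general imply $\ubox_0^h(Y)=0$. To get around this, fix $t>\pdim Y$ and choose $s$ with $\pdim Y<s<t$, so that $\ubox^s(Y)=0<\infty$; then $g_s\prec g_t$ (writing $g_u(r)=r^u$), so Lemma~\ref{lemP}(iv) gives $Y\in\NNs(\ubox_0^t)$. Writing $Y=\bigcup_n Y_n$ with $\ubox_0^t(Y_n)=0$, the implication (i)\Implies(iii) of Theorem~\ref{prodPnull}, applied to each $Y_n$ in place of $Y$, yields $\ubox^t(X\times Y_n)=0$; since $\ubox^t$ is a (countably subadditive) outer measure this gives $\ubox^t(X\times Y)=0$, i.e.\ $\pdim(X\times Y)\leq t$, and letting $t\downarrow\pdim Y$ completes (i). The one step that genuinely requires care is this content-to-measure passage via the auxiliary exponent $s$ and Lemma~\ref{lemP}(iv); the rest is a routine application of the earlier product theorems, and the degenerate cases $\pdim Y=\infty$ or $\hdim Y=\infty$ are already handled by the easy inequality.
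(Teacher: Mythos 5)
Your proof is correct and follows the route the paper intends: the corollary is left unproved there precisely because it is meant to be read off from Proposition~\ref{XN}(i) and Theorem~\ref{prodPnull} exactly as you do, with the easy inequalities coming from the isometric slice $\{x_0\}\times Y$. Your careful bridge from $\ubox^s(Y)=0$ to $Y\in\NNs(\ubox_0^t)$ via Lemma~\ref{lemP}(iv) is the right way to handle the content-versus-measure issue (one could alternatively invoke Lemma~\ref{howroyd}(i) with $\ubox^{\eps}(X)<\infty$ directly), and nothing is missing.
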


\section{\Dpnull{} spaces}
\label{sec:dpnull}

\begin{thm}\label{basicDpnull}
The following are equivalent.
\begin{enum}
\item $X$ is \dpnull,
\item $\dpdim f(X,\rho)=0$ for each uniformly equivalent metric on $X$,
\item $\dpdim f(X)<\infty$ for each uniformly continuous $f:X\to Y$ into another
metric space,
\item $\dbox^h(X)=0$ for each $h\in\HH$,
\item $\dbox^h(X)<\infty$ for each $h\in\HH$.
\end{enum}
\end{thm}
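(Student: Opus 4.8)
The plan is to run exactly the argument of Theorems~\ref{basicHnull}, \ref{basicUhnull} and~\ref{basicPnull}, with the directed box measure $\dbox^h$ playing the role that $\hm^h$, $\uhm^h$ and $\ubox^h$ played there and with Lemma~\ref{lemDP} substituted for Lemmas~\ref{lemHaus}, \ref{lem1} and~\ref{lemP}. The implications (i)\Implies(ii) and (i)\Implies(iii) are immediate from the definitions, and (iv)\Implies(v) is trivial; so the work is in (ii)\Implies(iv), (iii)\Implies(iv) and (v)\Implies(i), which together close the cycle.

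For (ii)\Implies(iv) and (iii)\Implies(iv) I would proceed as in Theorem~\ref{basicHnull}: fix $h\in\HH$, choose a convex Hausdorff function $g$ with $g\prec h^{1/n}$ for every positive $n\in\Nset$, and put $\rho(x,y)=g(d(x,y))$, a metric uniformly equivalent to $d$. The identity $\id_X:(X,d)\to(X,\rho)$ is uniformly continuous, so either (ii) or (iii) gives $\dpdim(X,\rho)<\infty$, hence $\dbox^n(X,\rho)=0$ for some $n$. Since the choice of $g$ forces $h{\circ}g^{-1}\succ n$, Lemma~\ref{lemDP}(iii) promotes this to $\dbox^{h{\circ}g^{-1}}(X,\rho)=0$; as $d(x,y)\le g^{-1}(\rho(x,y))$, Lemma~\ref{lipschitz2} in its $\dbox^h$ form then yields $\dbox^h(X,d)\le\dbox^{h{\circ}g^{-1}}(X,\rho)=0$, which is (iv). For (v)\Implies(i), given a uniformly continuous $f:X\to(Y,\rho)$ I would pick $g\in\HH$ with $\rho(fx,fy)\le g(d(x,y))$, fix $s>0$, set $h(r)=r^s$, and combine (v) with the $\dbox^h$ form of Lemma~\ref{lipschitz2} to get $\dbox^s(f(X))=\dbox^h(f(X))\le\dbox^{h{\circ}g}(X)<\infty$; hence $\dpdim f(X)\le s$, and as $s>0$ was arbitrary, $\dpdim f(X)=0$.

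The point special to \dpnull, and the step I would watch, is that $\dbox^h$ is built from the contents $\lbox_0^h$, which is not even finitely subadditive, via the \emph{directed} Method~I construction. Thus a claim such as ``$\dbox^g(E)<\infty$ and $g\prec h$ imply $\dbox^h(E)=0$'' must be read as producing a single increasing witnessing sequence $E_n\upto E$: one runs the proof of Lemma~\ref{lemDP}(iii) noting that passing from $\lbox_0^g$ to $\lbox_0^h$ annihilates the contents of each $E_n$ separately, so the same increasing sequence serves for $h$. Likewise Lemma~\ref{lipschitz2} is to be invoked in the $\dbox^h$ form stated there rather than treated as a statement about an ordinary Borel outer measure. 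Beyond this bookkeeping nothing genuinely new is required; in particular, unlike the \si additivity of the \dpnull{} ideal (Corollary~\ref{dpnullideal}), these equivalences need no hard directed-union argument.
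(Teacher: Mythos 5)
Your proposal is correct and is essentially the paper's own proof: the paper disposes of Theorem~\ref{basicDpnull} by declaring it ``proved in the same manner as Theorems~\ref{basicHnull} and~\ref{basicUhnull}, with the aid of Lemma~\ref{lemDP},'' which is exactly the transcription you carry out. Your extra remark that the passage from $\dbox^g$ to $\dbox^h$ for $g\prec h$ must be witnessed by a single increasing sequence (rather than by appealing to subadditivity of $\lbox_0^h$) is a correct and worthwhile clarification of the one point where the template needs care.
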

\begin{proof}
This is proved in the same manner as Theorems~\ref{basicHnull} and~\ref{basicUhnull},
with the aid of Lemma~\ref{lemDP}.
\end{proof}
Note the similarity of the following characterization of \dpnull{}-sets
with Theorem~\ref{combPnull}.
\begin{thm}
Let $X$ be a separable metric space. The following are equivalent. \label{combDnull}
\begin{enum}
\item $X$ is \dpnull{},
\item for each $\Seqeps$, there is $I\in[\Nset]^\Nset$ and a sequence
$\{\mc F_n:n\in\Nset\}$ of families of sets such that $d\mc F_n\leq\eps_n$ and
$\abs{\mc F_n}\leq n$ for all $n\in I$
and the sequence $\{\bigcup\mc F_n:n\in I\}$ is a $\gamma$-cover of $X$,
\item there is $f\in\Pset$ such that for each $\Seqeps$,
there is $I\in[\Nset]^\Nset$ and a sequence
$\{\mc F_n:n\in\Nset\}$ of families of sets such that $d\mc F_n\leq\eps_n$ and
$\abs{\mc F_n}\leq f(n)$ for all $n\in I$
and the sequence $\{\bigcup\mc F_n:n\in I\}$ is a $\gamma$-cover of $X$.
\end{enum}
\end{thm}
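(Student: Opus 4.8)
The plan is to mimic exactly the structure of the proof of Theorem~\ref{combPnull}, with $\dbox^h$ in place of $\ubox^h$ throughout, the key difference being that the ``$\liminf$'' in the definition of $\lbox_0^h$ forces one to pass to an infinite subset $I\subs\Nset$ of scales rather than to eventually-all scales. The implications (ii)$\Implies$(iii) is trivial, so the work is in (i)$\Implies$(ii) and (iii)$\Implies$(i).

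\textbf{The implication (i)$\Implies$(ii).} First I would prove the analogue of Lemma~\ref{seqep3}: for each $\Seqeps$ there is $g\in\HH$ such that $\dbox^g(X)=0$ implies $X$ has an $\seqeps$-fine cover of the desired ``grouped'' form restricted to an infinite set of indices. As in Lemma~\ref{seqep3}, assume $\seqeps$ decreasing, set $\del_n=\eps_{0+1+\dots+n}$, and choose $g\in\HH$ with $g(\del_n)>\frac1n$. Given $\dbox^g(X)=0$, Lemma~\ref{IncreasingSetsLemma} (the $\dbox$ version) yields $X_k\upto X$ with $\lbox_0^g(X_k)<1$. For each $k$ the $\liminf$ gives \emph{infinitely many} scales $n$ with $N_{X_k}(\del_n)g(\del_n)<1$; a diagonal choice produces an increasing sequence $n_0<n_1<\dots$ and covers $\mc G_{n_j}$ of $X_{k(j)}$ (with $k(j)\to\infty$) witnessing $N_{X_{k(j)}}(\del_{n_j})g(\del_{n_j})<1$, hence $\abs{\mc G_{n_j}}<\frac1{g(\del_{n_j})}<n_j$. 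Taking $I=\{n_j\}$ and $\mc F_{n_j}=\mc G_{n_j}$, the sets $\bigcup\mc F_{n_j}$ form a $\gamma$-cover of $X$ (each $x$ lies in $X_{k(j)}$ for all large $j$), the cardinality bound holds, and the diameter bound $d\mc F_{n_j}\le\del_{n_j}\le\eps_{n_j}$ holds since $\del_{n_j}=\eps_{0+1+\dots+n_j}\le\eps_{n_j}$. Then (i)$\Implies$(ii) follows exactly as in Theorem~\ref{combPnull}: given $\Seqeps$, Theorem~\ref{basicDpnull}(iv) gives $\dbox^g(X)=0$ for the $g$ produced above, and the lemma delivers the cover.

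\textbf{The implication (iii)$\Implies$(i).} Given $f\in\Pset$ as in (iii) and an arbitrary Hausdorff function $h$, choose $\seqeps$ with $h(\eps_n)<\frac1{f(n+1)}$, apply (iii) to obtain $I\in[\Nset]^\Nset$ and families $\mc F_n$, and set $X_k=\{x:\forall n\in I,\ n\ge k\Rightarrow x\in\bigcup\mc F_n\}$. Then $X_k\upto X$ because $\{\bigcup\mc F_n:n\in I\}$ is a $\gamma$-cover. To bound $\lbox_0^h(X_k)$ it suffices to exhibit \emph{infinitely many} scales $\eps$ tending to $0$ with $N_{X_k}(\eps)h(\eps)\le1$: for each $n\in I$ with $n>k$ one has $N_{X_k}(\eps_n)\le\abs{\mc F_n}\le f(n)$, and choosing $\eps$ slightly below $\eps_n$ (say $\eps_n\le\eps<\eps_{n-1}$, using monotonicity of $N$ and of $h$) gives $N_{X_k}(\eps)h(\eps)\le f(n)h(\eps_{n-1})\le\frac{f(n)}{f((n-1)+1)}=1$. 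Since $I$ is infinite these scales accumulate at $0$, so $\lbox_0^h(X_k)\le1$, whence $\dbox^h(X)\le1$; as $h$ was arbitrary, Theorem~\ref{basicDpnull}(v) gives that $X$ is \dpnull{}.

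\textbf{Main obstacle.} The one genuinely delicate point is keeping the bookkeeping honest in the diagonalization of (i)$\Implies$(ii): the $\liminf$ only guarantees suitable scales for each $X_k$ separately, and one must interleave the choices so that (a) the chosen scales $n_j$ are strictly increasing, (b) the corresponding approximating sets $X_{k(j)}$ exhaust $X$ (i.e.\ $k(j)\to\infty$), and (c) the resulting $\gamma$-cover property survives. This is why the conclusion is stated with an infinite index set $I$ rather than with ``for all large $n$'' as in Theorem~\ref{combPnull}; everything else is a routine transcription of the $\ubox$ arguments.
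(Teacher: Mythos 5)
Your overall architecture is exactly the paper's: reduce to $\dbox^h(X)<\infty$ for all $h$ via Theorem~\ref{basicDpnull}, calibrate a Hausdorff function to $\seqeps$, pass to $X_k\upto X$ with small lower box content, and read off covers at good scales; the direction (iii)$\Implies$(i) is a correct transcription of the $\ubox$ computation (there, exhibiting a single sequence of scales tending to $0$ with $N_{X_k}(\eps)h(\eps)\leq1$ does bound a $\liminf$). However, there is one step in (i)$\Implies$(ii) that does not follow as written. From $\lbox_0^g(X_k)=\liminf_{r\to0}N_{X_k}(r)g(r)<1$ you claim ``infinitely many scales $n$ with $N_{X_k}(\del_n)g(\del_n)<1$.'' A $\liminf$ over $r\to0$ that is $<1$ only produces arbitrarily small \emph{real} scales $r$ with $N_{X_k}(r)g(r)<1$; the $\liminf$ restricted to the subsequence $\seq{\del_n}$ is in general \emph{larger} than the $\liminf$ over all $r$, so these good scales need not lie among the $\del_n$ and the product may well be $\geq1$ at every $\del_n$. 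This is precisely the point where the $\dbox$ case genuinely differs from Lemma~\ref{seqep3}, where the $\limsup$ condition gives the inequality at \emph{all} sufficiently small scales, hence in particular at the $\del_n$.

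The repair is short and is exactly what the paper does: take the good scales $\del_k>0$ (with $N_{X_k}(\del_k)g(\del_k)$ small) to be arbitrary reals tending to $0$, and \emph{locate} each between consecutive reference scales, $\eps_{m(k)+1}\leq\del_k<\eps_{m(k)}$. Then
\begin{equation*}
  N_{X_k}(\del_k)<\frac{c}{g(\del_k)}\leq\frac{c}{g(\eps_{m(k)+1})},
\end{equation*}
and the cover witnessing $N_{X_k}(\del_k)$ has diameter $\leq\del_k<\eps_{m(k)}$ and at most $c\,(m(k)+1)$ members once $g$ is calibrated by $g(\eps_n)=\frac1n$; the index shift from $m(k)+1$ to $m(k)$ is absorbed by taking $c=\frac12$ (in your setup one can instead use integrality: $N<m(k)+1$ forces $N\leq m(k)$). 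With this change your diagonalization, the cardinality bound, the diameter bound and the $\gamma$-cover property all go through, and the rest of your proof matches the paper's.
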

\begin{proof}
(i)\Implies(ii):
Assume without loss of generality that $\seqeps$ is decreasing.
Choose a Hausdorff function $g$ such that $g(\eps_n)=\frac1n$ for all $n>0$.
Suppose $\dbox^g(X)=0$. Thus there is a sequence $X_k\upto X$
such that $\lbox_0(X_k)<\frac12$.
Therefore it is possible to choose a decreasing sequence $\del_k>0$ such that
$N_{X_k}(\del_k)g(\del_k)<\frac12$.
Fix $k$ for the moment. Let $m(k)$ be the (unique) integer satisfying
$\eps_{m(k)+1}\leq\del_k<\eps_{m(k)}$.
Then
$$
  N_{X_k}(\eps_{m(k)})\leq
  N_{X_k}(\del_k)\frac{g(\del_k)}{g(\eps_{m(k)})}\leq
  \frac12(m(k)+1)\leq m(k).
$$
Choose a cover $\mc F_{m(k)}$ of $X_k$ witnessing $N_{X_k}(\del_k)g(\del_k)<\frac12$.
Let $I=\{m(k):k\in\Nset\}$. Verification of the required properties of
$\{\bigcup\mc F_n:n\in I\}$ is straightforward.

(ii)\Implies(iii) is trivial.
(iii)\Implies(i):
Let $g$ be a Hausdorff function.
Choose $\Seqeps$ decreasing subject to $g(\eps_n)\leq\frac{1}{f(n)}$.
Let $I$ and $\{\mc F_n:n\in\Nset\}$ be as in (iii). For $n\in I$ and $k\in\Nset$
set $F_n=\bigcup\mc F_n$ and $X_k=\bigcap_{k\leq n\in I}F_n$.
Obviously $X_k\upto X$ and
$$
  \forall n\geq k, n\in I\quad
  N_{X_k}(\eps_n)g(\eps_n)\leq
  \abs{\mc F_n}g(\eps_n)\leq f(n)\frac{1}{f(n)}=1,
$$
which yields $\lbox_0^g(X_k)\leq 1$. Consequently $\dbox^g(X)\leq1$. Thus
$X$ is \dpnull{} by Theorem~\ref{basicDpnull}(v).
\end{proof}
This combinatorial description of \dpnull{} sets yields a surprising consequence:
though $\dbox^g$ is not even finitely additive, \dpnull{} is a \si additive property.
\begin{coro}
For each metric space $X$, the family of all \dpnull{} subsets of $X$ forms a \si ideal.
\label{dpnullideal}
\end{coro}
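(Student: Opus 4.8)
The plan is to prove the family of \dpnull{} subsets of $X$ is closed under subsets and under countable unions, which is all a \si ideal requires (and $\emptyset$ is certainly \dpnull{}). Closure under subsets is immediate: $\lbox_0^h$ is monotone because the covering number function is, so $Y\subs X$ forces $\dbox^h(Y)\leq\dbox^h(X)$ for every $h\in\HH$, and Theorem~\ref{basicDpnull}(iv) finishes it. For countable unions I would take $X=\bigcup_{k\in\Nset}X_k$ with each $X_k$ \dpnull{} and verify condition~(iii) of Theorem~\ref{combDnull} for $X$ with the fixed witness $f(n)=n^2+n$; since (i)--(iii) there are equivalent, this suffices.

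So, given $\Seqeps$, the idea is to run a recursion on $k$ that builds a \emph{decreasing} chain of infinite sets $\Nset\sups I^{(0)}\sups I^{(1)}\sups\cdots$ and, for each $k$, families $\mc G^k_n$ ($n\in I^{(k)}$) with $d\mc G^k_n\leq\eps_n$, $\abs{\mc G^k_n}\leq n$, and $\{\bigcup\mc G^k_n:n\in I^{(k)}\}$ a $\gamma$-cover of $X_k$. The recursion step, with $I^{(k-1)}=\{p_0<p_1<\cdots\}$ (and $I^{(-1)}=\Nset$): apply Theorem~\ref{combDnull}(ii) to the \dpnull{} set $X_k$ and the thinned sequence $\seq{\eps_{p_m}}_{m}$ to get an infinite $J$ and families $\mc H_m$ with $d\mc H_m\leq\eps_{p_m}$, $\abs{\mc H_m}\leq m$ for $m\in J$, and $\{\bigcup\mc H_m:m\in J\}$ a $\gamma$-cover of $X_k$; then put $I^{(k)}=\{p_m:m\in J\}$ and $\mc G^k_{p_m}=\mc H_m$. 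As $p_m\geq m$, the size bound $\abs{\mc G^k_n}\leq n$ holds.

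With the chain in hand I would choose $m_0<m_1<\cdots$ with $m_k\in I^{(k)}$ (a diagonal), set $I=\{m_k:k\in\Nset\}$, and for $n=m_k\in I$ define $\mc F_n=\bigcup_{j\leq k}\mc G^j_{m_k}$ (and $\mc F_n=\emptyset$ otherwise); this is legitimate since $m_k\in I^{(k)}\subs I^{(j)}$ for all $j\leq k$, and it gives $d\mc F_n\leq\eps_n$ and $\abs{\mc F_n}\leq(k+1)m_k\leq m_k^2+m_k=f(n)$. To see $\{\bigcup\mc F_n:n\in I\}$ is a $\gamma$-cover of $X$, take $x\in X_j$: then $I\setminus I^{(j)}$ is finite (it lies inside $\{m_0,\dots,m_{j-1}\}$), and for the cofinitely many $n=m_k\in I$ with $n\in I^{(j)}$ one has $x\in\bigcup\mc G^j_n\subs\bigcup\mc F_n$ for all but finitely many of them, because $\{\bigcup\mc G^j_n:n\in I^{(j)}\}$ is a $\gamma$-cover of $X_j$ and the restriction of a $\gamma$-cover to an infinite subset of its index set is still a $\gamma$-cover. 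That is condition~(iii) of Theorem~\ref{combDnull}.

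The step I expect to be the real obstacle --- and where the obvious approach breaks --- is the compatibility of these index sets. Applying Theorem~\ref{combDnull} to the $X_k$ separately produces infinite sets carrying the partial $\gamma$-covers that may be pairwise almost disjoint, so there is no single subsequence on which all of them live, and one cannot simply superimpose the covers. Forcing the chain $I^{(k)}$ to \emph{decrease} is precisely what fixes this, since a decreasing chain of infinite sets always has an infinite pseudo-intersection --- any strictly increasing diagonal choice --- and along such a pseudo-intersection each $X_k$ is covered from some index on. Beyond that, only routine bookkeeping of diameters, cardinalities and the trivial cases (empty $X_k$, finitely many $X_k$) remains.
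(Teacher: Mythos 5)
Your proof is correct and follows essentially the same route as the paper's: a diagonal application of Theorem~\ref{combDnull}(ii) to the sets $X_k$, producing a triangular array of families along a common diagonal index set, merged column-wise and fed into condition~(iii) with a quadratic witness $f$. Your explicit bookkeeping with the decreasing chain of infinite index sets and its pseudo-intersection merely spells out the ``apply repeatedly \dots\ to find a diagonal sequence'' step that the paper leaves implicit.
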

\begin{proof}
This follows by a diagonal construction.
Let $\{X_k\}$ be a countable collection of \dpnull{} subsets of $X$ and $Y=\bigcup_nX_k$.
Let $\Seqeps$.
Apply repeatedly Theorem~\ref{combDnull}(ii) to find a diagonal sequence $\seq{n_i:i\in\Nset}$
and a triangular matrix $\{\mc F_{k,i}:k\leq i\in\Nset\}$ of collections of sets
with the following properties:
\begin{enumerate}[{\rm(a)}]
\item $\forall i\in\Nset\,\forall k\leq i\ d\mc F_{i,k}\leq\eps_{n_i}$,
\item $\forall i\in\Nset\,\forall k\leq i\ \abs{\mc F_{i,k}}\leq n_i$,
\item $\forall k\in\Nset\, \{\bigcup\mc F_{i,k}:i\geq k\}$ is a $\gamma$-cover of $X_k$.
\end{enumerate}
For each $i$ put $\mc G_i=\bigcup_{k\leq i}\mc F_{i,k}$.
Then (a) yields $d\mc G_i\leq\eps_{n_i}$ , (b) yields $\abs{\mc G_i}\leq n_i^2$, and (c) yields that
$\{\bigcup\mc G_i:i\in\Nset\}$ is a $\gamma$-cover of $Y$.
Apply Theorem~\ref{combDnull}(iii) with $f(n)=n^2$ to conclude that $Y$ is \dpnull{}.
\end{proof}
\begin{thm}
The following are equivalent.\label{prodDpnull}
\begin{enum}
\item $X$ is \dpnull,
\item for each $h\in\HH$, each $Y$ such that $\lbox_0^h(Y)=0$ and each complete $Z\sups X$
there is $F$ \si compact, $X\subs F\subs Z$,
such that $\dbox^h(F\times Y)=0$,
\item $\dbox^h(X\times Y)=0$ whenever $h\in\HH$ and $\lbox_0^h(Y)=0$,
\item $\dbox^1(X\times E)=0$ for each $E\in\EE$,
\item $\dbox^1(X\times C)=0$ for each $C\in\CC$.
\end{enum}
\end{thm}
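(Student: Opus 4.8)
The plan is to run the cycle (i)\Implies(ii)\Implies(iii)\Implies(v)\Implies(i) and to adjoin (iv) via (i)\Implies(iv)\Implies(v), following the pattern of Theorems~\ref{prodHnull}, \ref{prodUhnull} and~\ref{prodPnull}. Of the auxiliary implications, (ii)\Implies(iii) is monotonicity of $\dbox^h$ together with $X\subseteq F$ (apply (ii) with $Z$ a completion of $X$); (iv)\Implies(v) is $\CC\subseteq\EE$ (Lemma~\ref{EC}(ii)); and (iii)\Implies(v) is immediate once one observes that for $C_I\in\CC$ one has $N_{C_I}(2^{-n})\cdot2^{-n}=2^{-\abs{n\cap I}}\to0$, hence $\lbox_0^h(C_I)=0$ for $h(r)=r$, so that (iii) applied to $Y=C_I$ with this $h$ gives $\dbox^1(X\times C_I)=0$. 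Thus the substance is in (i)\Implies(ii), (i)\Implies(iv) and (v)\Implies(i).

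For (i)\Implies(ii): fix $h\in\HH$, a set $Y$ with $\lbox_0^h(Y)=0$ (so $Y$ is totally bounded by Lemma~\ref{lemDP}(i)) and a complete $Z\sups X$. Since $\liminf_{r\to0}N_Y(r)h(r)=0$, pick a strictly decreasing $r_j\to0$ with $N_Y(r_j)h(r_j)<j^{-2}$ and feed $\seq{r_j}$ into the combinatorial characterization of \dpnull{} sets, Theorem~\ref{combDnull}(ii): this yields $I\in[\Nset]^\Nset$ and finite families $\mc F_j$ of subsets of $X$ with $d\mc F_j\leq r_j$, $\abs{\mc F_j}\leq j$ for $j\in I$, and $\seq{\bigcup\mc F_j:j\in I}$ a $\gamma$-cover of $X$. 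Replacing each member of $\mc F_j$ by its closure in $Z$ (which does not enlarge diameters), put $X_k=\bigcap\{\bigcup\mc F_j:j\in I,\ j\geq k\}$ and $F=\bigcup_k X_k$. Each $X_k$ is closed in $Z$ and, since $N_{X_k}(r_j)\leq\abs{\mc F_j}\leq j$ for $j\in I$ with $j\geq k$ and $r_j\to0$, totally bounded, hence compact; so $F$ is \si compact, and the $\gamma$-cover property gives $X\subseteq F\subseteq Z$ with $X_k\upto F$. For $j\in I$ with $j\geq k$ one has $N_{X_k\times Y}(r_j)h(r_j)\leq N_{X_k}(r_j)N_Y(r_j)h(r_j)\leq j\cdot j^{-2}=j^{-1}$, so $\lbox_0^h(X_k\times Y)=0$; as $X_k\times Y\upto F\times Y$ this gives $\dbox^h(F\times Y)=0$.

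The implication (i)\Implies(iv) is a variant of the same construction, made necessary because $\dbox^h$ is not \si subadditive: unlike in the earlier theorems one cannot simply add the zeros $\dbox^h(X\times E_n)=0$ over a decomposition of $E$, so one must run the combinatorial construction once for the whole $F_\sigma$ set, in the spirit of the diagonal argument of Corollary~\ref{dpnullideal}. Given $E\in\EE$, Lemma~\ref{EC}(i) lets us write $E=\bigcup_n E_n$ with $E_n$ increasing and $\ubox_0^h(E_n)=0$ for a single $h\prec1$; thus $N_{E_n}(r)h(r)\to0$ for each $n$ and $r/h(r)\to0$. Choose $r_j\to0$ decreasing with $N_{E_n}(r_j)h(r_j)<j^{-2}$ for all $n\leq j$, run Theorem~\ref{combDnull}(ii) on $\seq{r_j}$ exactly as above to obtain $I$, the $\mc F_j$, the sets $X_k$ and the \si compact $F\sups X$, and set $G_m=X_m\times E_m$. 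Then $G_m\upto F\times E$, and for $j\in I$ with $j\geq m$,
$$
  N_{G_m}(r_j)\,r_j\leq N_{X_m}(r_j)\,N_{E_m}(r_j)\,r_j
  \leq j\cdot\frac1{j^2h(r_j)}\cdot r_j=\frac1j\cdot\frac{r_j}{h(r_j)}\longrightarrow0,
$$
so $\lbox_0^1(G_m)=0$ and hence $\dbox^1(X\times E)\leq\dbox^1(F\times E)=0$.

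Finally (v)\Implies(i) is proved contrapositively, as in Theorems~\ref{prodHnull} and~\ref{prodUhnull}: if $X$ is not \dpnull{} then $\dbox^h(X)>0$ for some $h\in\HH$ by Theorem~\ref{basicDpnull}(iv), and we may take $h$ concave with $h(r)\geq\sqrt r$, so that $g(r)=r/h(r)$ is a Hausdorff function of finite order with $g\prec1$ and $g(r)h(r)=r$; Lemma~\ref{EC}(iii) supplies $C\in\CC$ with $\hm^g(C)>0$, whence $\dbox^g(C)\geq\hm^g(C)>0$ by~\eqref{basicIneq}, and Lemma~\ref{howroyd}(vi) gives a constant $c>0$ with $c\,\dbox^1(X\times C)=c\,\dbox^{hg}(X\times C)\geq\dbox^h(X)\,\dbox^g(C)>0$, contradicting (v). The crux is (i)\Implies(ii) and its twin (i)\Implies(iv): the delicate point is to distill from the hypothesis on $Y$ (resp.\ from $\ubox_0^h(E_n)=0$) a single diagonal sequence of scales that Theorem~\ref{combDnull}(ii) can accommodate, with multiplicities tuned so that the product estimates tend to $0$ rather than to an arbitrary $\eps$, so that one fixed \si compact $F$ suffices --- Theorem~\ref{combDnull}(ii) here playing the role that Theorem~\ref{combUhnull} plays in the proof of Theorem~\ref{prodUhnull}.
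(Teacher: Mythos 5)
Your proof is correct, and for the two substantial implications it coincides with the paper's: (i)\Implies(ii) is the same construction (feed a sequence of scales tuned to $Y$ into Theorem~\ref{combDnull}(ii), take closures in the complete space $Z$ to obtain the \si compact $F$, and estimate $N_{X_k\times Y}(r_j)h(r_j)\leq j\cdot j^{-2}$, where the paper uses $n\cdot 2^{-n}$), and (v)\Implies(i) is the same contrapositive argument via Lemma~\ref{EC}(iii) and Lemma~\ref{howroyd}(vi).

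Where you genuinely depart from the paper is in how (iv) is attached. The paper runs the chain (ii)\Implies(iii)\Implies(iv)\Implies(v) and declares it trivial; you instead prove (iii)\Implies(v) directly (observing $\lbox_0^1(C_I)=0$) and supply a separate argument for (i)\Implies(iv). Your scepticism about (iii)\Implies(iv) is well founded: for a countable dense $E\in\EE$ the quantity $N_E(r)\cdot r$ stays bounded away from $0$, so (iii) cannot be applied to $Y=E$ with any $h$ that still controls $\dbox^1$, and decomposing $E=\bigcup_n E_n$ runs into the failure of even finite subadditivity of $\lbox_0^h$, so the individual zeros $\dbox^1(X\times E_n)=0$ cannot simply be summed. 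Your fix --- choosing one sequence of scales that works simultaneously for all $E_n$ with $n\leq j$ and running Theorem~\ref{combDnull}(ii) once, in the spirit of the diagonal argument of Corollary~\ref{dpnullideal} --- is correct, and the estimate $N_{X_m}(r_j)\,N_{E_m}(r_j)\,r_j\leq r_j/(j\,h(r_j))\to0$ (using $h\prec1$) does exactly what is needed. Since (i)\Implies(iv)\Implies(v)\Implies(i) still makes all five statements equivalent, your reorganization loses nothing and is arguably more careful than the paper's presentation at this one point.
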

\begin{proof}
(i)\Implies(ii):
Suppose $\lbox_0^h(Y)=0$. Choose $\seqeps$ such that $N_Y(\eps_n)h(\eps_n)<2^{-n}$.
Since $X$ is \dpnull{}, Theorem~\ref{combDnull}(ii) yields $I\in[\Nset]^\Nset$
and a sequence of sets $X_k\upto X$ such that $N_{X_k}(\eps_n)\leq n$ whenever $k\leq n\in I$.
Set $F=\bigcup_k \clos{X}_k$, the closures in $Z$. Since $X_k$'s are of finite box content,
they are by Lemma~\ref{lemP}(i) totally bounded. Since $Z$ is complete, their closures are compact.
Thus $F$ is \si compact.
Clearly $\clos{X}_k\times Y\upto F\times Y$.
Also $N_{\clos X_k\times Y}(\eps_n)\leq n2^{-n}$ whenever $k\leq n\in I$.
Hence $\lbox_0^h(\clos X_k\times Y)\leq\lim n2^{-n}=0$.

(ii)\Implies(iii)\Implies(iv)\Implies(v) is trivial and
(v)\Implies(i) is proved as in Theorem~\ref{prodPnull}, with the aid of
Lemma~\ref{howroyd}(vi).
\end{proof}

\begin{thm}
If $X$ and $Y$ are \dpnull{}, then $X\times Y$ is \dpnull{}.
\end{thm}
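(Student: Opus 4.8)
The plan is to mirror the proof of Theorem~\ref{productHUH}(i), using the product characterization just established. Suppose $X$ and $Y$ are both \dpnull{}. By Theorem~\ref{basicDpnull}(iv), $\dbox^h(Y)=0$ for every $h\in\HH$, and since $\lbox_0^h(Y)\leq\dbox^h(Y)$ (the directed Method~I construction is majorized by the lower box content), we get $\lbox_0^h(Y)=0$ for all $h\in\HH$. In fact, to feed Theorem~\ref{prodDpnull}(ii) we only need that $Y$ has a witnessing increasing decomposition with vanishing lower box content, which is exactly what $\dbox^h(Y)=0$ provides; it is harmless to absorb this into the hypothesis of (ii). Then Theorem~\ref{prodDpnull}(i)\Implies(ii), applied with $Y$ this fixed space and with, say, $Z$ a completion of $X$, yields for each $h\in\HH$ a \si compact $F\sups X$ with $\dbox^h(F\times Y)=0$, hence $\dbox^h(X\times Y)\leq\dbox^h(F\times Y)=0$ by monotonicity. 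Since this holds for every $h\in\HH$, Theorem~\ref{basicDpnull}(iv) gives that $X\times Y$ is \dpnull{}.

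First I would check the one genuine subtlety: Theorem~\ref{prodDpnull}(ii) is phrased for ``each $Y$ such that $\lbox_0^h(Y)=0$'', i.e.\ a single lower box content condition, whereas what we have in hand is the stronger $\dbox^h(Y)=0$. The cleanest route is to observe that $\dbox^h(Y)=0$ means there is an increasing sequence $Y_m\upto Y$ with $\sup_m\lbox_0^h(Y_m)=0$, run the argument of Theorem~\ref{prodDpnull} (i)\Implies(ii) on each $Y_m$ separately to get \si compact $F_m\sups X$ with $\dbox^h(F_m\times Y_m)=0$, and then take $F=\bigcup_m F_m$, which is still \si compact, and note $F\times Y=\bigcup_m(F\times Y_m)$ with $F\times Y_m\supseteq F_m\times Y_m$; a short diagonalization of the increasing decompositions witnessing $\dbox^h(F_m\times Y_m)=0$ then gives $\dbox^h(F\times Y)=0$. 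This is essentially the diagonal argument already carried out in Corollary~\ref{dpnullideal}, so it can be invoked rather than rewritten.

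Alternatively, and perhaps more transparently, I would bypass the product theorem and argue directly from the combinatorial characterization. Given $\Seqeps$, split $\Nset$ into two infinite sets and obtain from Theorem~\ref{combDnull}(iii) (with $f=\id$) an $I_X\in[\Nset]^\Nset$ and families $\mc F_n$ covering $X$ appropriately, and similarly $I_Y$ and $\mc G_n$ for $Y$; by intersecting we may assume $I_X=I_Y=I$ (pass to a common infinite subset, using that the $\gamma$-cover property is preserved under passing to an infinite subsequence of witnessing indices). Then $\{\bigcup\{F\times G:F\in\mc F_n,G\in\mc G_n\}:n\in I\}$ is a $\gamma$-cover of $X\times Y$ with families of size at most $n^2$ and diameter at most $\eps_n$ in the maximum metric, so Theorem~\ref{combDnull}(iii) with $f(n)=n^2$ finishes it.

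The main obstacle is the mismatch between the ``$\dbox^h$'' hypotheses available and the ``$\lbox_0^h$'' hypothesis in Theorem~\ref{prodDpnull}(ii), i.e.\ making sure the \si compact hull $F$ can be chosen uniformly in the increasing decomposition of $Y$; this is exactly the diagonalization already handled in Corollary~\ref{dpnullideal}, so it is routine but must be acknowledged. Everything else is a direct transcription of the \uhnull{} argument.
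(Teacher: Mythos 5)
Your first route is the one the paper itself takes (its proof of this theorem is a one-line appeal to Theorems~\ref{basicDpnull} and~\ref{prodDpnull}(iii)), but as you execute it there are two genuine problems. The inequality $\lbox_0^h(Y)\leq\dbox^h(Y)$ is backwards: taking the constant sequence $Y_n=Y$ in the definition of $\dbox^h$ shows $\dbox^h(Y)\leq\lbox_0^h(Y)$, so $\dbox^h(Y)=0$ is the \emph{weaker} condition and does not give $\lbox_0^h(Y)=0$ (a countable dense subset of $[0,1]$ has $\dbox^s=0$ but $\lbox_0^s=\infty$ for $s<1$). You do flag the mismatch, but your repair does not close it: from $\dbox^h(F_m\times Y_m)=0$ and $F\times Y_m\sups F_m\times Y_m$ nothing follows about $\dbox^h(F\times Y_m)$, since monotonicity runs the wrong way; and the ``short diagonalization'' is precisely where the difficulty sits, because $\lbox_0^h$ is not even finitely subadditive, so merging the increasing decompositions witnessing $\dbox^h(F_m\times Y_m)=0$ over $m$ does not keep the lower box contents under control. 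Corollary~\ref{dpnullideal} cannot simply be invoked for this: its diagonal argument lives at the level of the combinatorial characterization (witnessing families of size at most $n_i$ combine into families of size at most $n_i^2$, and one quotes Theorem~\ref{combDnull}(iii) with $f(n)=n^2$), not at the level of $\dbox^h$-nullity for a fixed $h$.

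Your alternative combinatorial route is the sound one and, once repaired, gives a complete proof; the single flaw is the phrase ``by intersecting we may assume $I_X=I_Y$'': two infinite subsets of $\Nset$ can have finite, even empty, intersection, so intersecting proves nothing. The repair is to nest rather than intersect. First obtain $I_X=\{n_0<n_1<\cdots\}$ and $\seq{\mc F_n}$ for $X$ from Theorem~\ref{combDnull}(ii). Then apply Theorem~\ref{combDnull}(ii) to $Y$ with the sequence $\seq{\eps_{n_k}:k\in\Nset}$, obtaining $I'$ and families $\mc G'_k$ with $\abs{\mc G'_k}\leq k$ and $d\mc G'_k\leq\eps_{n_k}$ for $k\in I'$; setting $I=\{n_k:k\in I'\}\subs I_X$ and $\mc G_{n_k}=\mc G'_k$ gives a witness for $Y$ indexed inside $I_X$, the size bound surviving because $k\leq n_k$. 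Restricting the witness for $X$ to $I$ is legitimate since, as you correctly note, $\gamma$-covers pass to infinite subsequences of the index set. Now for $n\in I$ the family $\{F\times G:F\in\mc F_n,\,G\in\mc G_n\}$ has at most $n^2$ members of diameter at most $\eps_n$ in the maximum metric, their unions form a $\gamma$-cover of $X\times Y$, and Theorem~\ref{combDnull}(iii) with $f(n)=n^2$ concludes. This argument is more self-contained than the paper's citation-style proof and worth writing out.
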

\begin{proof}
This follows from Theorems~\ref{basicDpnull} and~\ref{prodDpnull}(iii).
\end{proof}
\begin{coro}
If $X$ is \dpnull{} then for every metric space $Y$
$\dpdim X\times Y=\dpdim Y$.
\end{coro}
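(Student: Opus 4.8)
The plan is to prove the two inequalities $\dpdim(X\times Y)\ge\dpdim Y$ and $\dpdim(X\times Y)\le\dpdim Y$ separately; throughout, $X$ is understood to be nonempty. The inequality $\ge$ is routine: with the maximum metric on $X\times Y$ the coordinate projection $\pi\colon X\times Y\to Y$ is onto and $1$-Lipschitz, so the $\dbox^s$-version of Lemma~\ref{lipschitz} gives $\dbox^s(Y)=\dbox^s(\pi(X\times Y))\le\dbox^s(X\times Y)$ for every $s>0$; hence $\dbox^s(X\times Y)=0$ implies $\dbox^s(Y)=0$, and $\dpdim Y\le\dpdim(X\times Y)$ follows from $\dpdim=\inf\{s>0:\dbox^s=0\}$.

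For $\le$ it suffices to show that $\dbox^s(X\times Y)=0$ for every $s>\dpdim Y$, since then $\dpdim(X\times Y)\le\dpdim Y$ on letting $s\downarrow\dpdim Y$. Fix such an $s$ and choose $t$ with $\dpdim Y<t<s$. Then $\dbox^t(Y)=0$, so Lemma~\ref{IncreasingSetsLemma} yields $Y_n\upto Y$ with $\lbox_0^t(Y_n)<1$ for every $n$, and each $Y_n$ is totally bounded by Lemma~\ref{lemDP}(i). Put $G_n=\{r>0:N_{Y_n}(r)\,r^t<1\}$; since $\lbox_0^t(Y_n)<1$ each $G_n$ accumulates at $0$, and since $Y_n\subseteq Y_{n+1}$ we have $N_{Y_n}\le N_{Y_{n+1}}$ and so $G_{n+1}\subseteq G_n$. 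This nesting is what will let us control $X$ and all the $Y_n$ along one common sequence of radii: pick $\seqeps$ strictly decreasing to $0$ with $\eps_m\in G_m$ for every $m$ and $m\,\eps_m^{\,s-t}\to0$ (e.g.\ $\eps_m\le m^{-2/(s-t)}$) --- possible since every $G_m$ accumulates at $0$ --- so that $\eps_m\in G_m\subseteq G_k$ whenever $m\ge k$. Now apply Theorem~\ref{combDnull}(ii) to the \dpnull{} set $X$ with this $\seqeps$: there are $I\in[\Nset]^\Nset$ and families $\mc F_m$ with $d\mc F_m\le\eps_m$, $\abs{\mc F_m}\le m$ for $m\in I$, and $\{\bigcup\mc F_m:m\in I\}$ a $\gamma$-cover of $X$. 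Put $X_k=\bigcap\{\bigcup\mc F_m:m\in I,\ m\ge k\}$; then $X_k\upto X$, and for $m\in I$ with $m\ge k$ the family $\mc F_m$ is an $\eps_m$-cover of $X_k$ of size $\le m$, so $N_{X_k}(\eps_m)\le m$.

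Finally set $C_k=X_k\times Y_k$, an increasing sequence with union $X\times Y$. For $m\in I$ with $m\ge k$ --- infinitely many indices, along which $\eps_m\to0$ --- we have $\eps_m\in G_k$, whence $N_{Y_k}(\eps_m)<\eps_m^{-t}$, and therefore
\[
  N_{C_k}(\eps_m)\,\eps_m^{s}\ \le\ N_{X_k}(\eps_m)\,N_{Y_k}(\eps_m)\,\eps_m^{s}\ \le\ m\cdot\eps_m^{-t}\cdot\eps_m^{s}\ =\ m\,\eps_m^{\,s-t}\ \longrightarrow\ 0 .
\]
Hence $\lbox_0^s(C_k)=\liminf_{r\to0}N_{C_k}(r)\,r^s=0$ for every $k$, and since $C_k\upto X\times Y$ this gives $\dbox^s(X\times Y)\le\sup_k\lbox_0^s(C_k)=0$, as required.

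The only genuinely delicate step is the middle paragraph. Each piece $X\times Y_n$ is already $\dbox^s$-null (by Theorem~\ref{prodDpnull}(iii), since $\lbox_0^t(Y_n)<\infty$ and $s>t$ give $\lbox_0^s(Y_n)=0$), but $\dbox^s$ is not even finitely subadditive, so one cannot simply glue these together to get $X\times Y$: the radii along which $\lbox_0^t$ keeps $N_{Y_n}$ bounded form a sparse, $n$-dependent set. Passing instead to the combinatorial description of the \dpnull{} set $X$ (Theorem~\ref{combDnull}), in the spirit of the proof of Corollary~\ref{dpnullideal}, is what rescues the argument --- it allows us to prescribe the radii, and the nesting $G_{n+1}\subseteq G_n$ makes a single choice serve all of the $Y_n$ at once, while the cardinality price $\abs{\mc F_m}\le m$ is harmless because it is swallowed by the slack $\eps_m^{\,s-t}$.
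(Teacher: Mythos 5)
Your proof is correct. The paper states this corollary without proof, evidently as a consequence of Theorem~\ref{prodDpnull}(iii) together with the projection inequality; your argument makes explicit why that derivation is not quite immediate and supplies the missing step. The point you isolate is real: for $s>\dpdim Y$ one only gets $\dbox^s(Y)=0$, not $\lbox_0^s(Y)=0$, so Theorem~\ref{prodDpnull}(iii) does not apply to $Y$ itself; and after decomposing $Y$ into pieces $Y_n$ with $\lbox_0^t(Y_n)<1$ one cannot recombine, since neither $\lbox_0^s$ nor $\dbox^s$ is countably (or even finitely) subadditive. Your fix --- returning to the combinatorial characterization of Theorem~\ref{combDnull}(ii) and choosing a single decreasing sequence of radii lying in the nested sets $G_n$ of ``good radii'' for all the $Y_n$ simultaneously, with enough slack $\eps_m^{\,s-t}$ to absorb the cardinality bound $\abs{\mc F_m}\leq m$ --- is exactly the mechanism used in the paper's own proofs of Theorem~\ref{prodDpnull}(i)$\Rightarrow$(ii) and Corollary~\ref{dpnullideal}, here combined into one diagonalization. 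So your route is in the spirit of the paper's methods rather than a departure from them, but it is more than the one-line deduction the paper's presentation suggests, and it is the honest way to justify the statement. The only caveats are cosmetic: the claim fails trivially for $X=\emptyset$ (which you flag), and the monotonicity fact that $\dbox^t(Y)=0$ for every $t>\dpdim Y$ is implicitly assumed, as it is in the paper's own definition of $\dpdim$ via the two equivalent $\inf$/$\sup$ formulas.
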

Recall that a separable metric space $X$ is a \emph{$\gamma$-set} if
each countable $\omega$-cover contains a subcover that is a $\gamma$-cover
(a cover $\mc U$ is an $\omega$-cover if for each finite set $F\subs X$ there
is a set $U\in\mc U$ such that $F\subs U$).
Nowik and Weiss~\cite{MR1905154} introduce a notion of a \Tprime-set
(cf.~Section~\ref{sec:TT}) and prove that every $\gamma$-set is \Tprime{}.
In view of Theorem~\ref{mainTT} \emph{infra},
the following generalizes their result.
\begin{prop}
Every $\gamma$-set is \dpnull{}.
\end{prop}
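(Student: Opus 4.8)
The plan is to verify condition~(iii) of Theorem~\ref{combDnull} for the fixed function $f(n)=(n+1)^2$, which immediately gives that $X$ is \dpnull{}. The one external input I would use is the well-known selection form of the $\gamma$-property (the Gerlits--Nagy principle $S_1(\Omega,\Gamma)$; see e.g.~\cite{MR738943}): for every sequence $\seq{\mc O_n:n\in\Nset}$ of countable $\omega$-covers of $X$ there are $V_n\in\mc O_n$ such that $\{V_n:n\in\Nset\}$ is a $\gamma$-cover of $X$. (The equivalence of this with the definition of $\gamma$-set given above is standard.)

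Fix $\Seqeps$. Since $X$ is separable metric, for each $m\geq1$ fix a countable cover $\{U^m_k:k\in\Nset\}$ of $X$ by nonempty sets of diameter $\leq\eps_m$ and set $\mc V_m=\{\bigcup_{k\in S}U^m_k:S\subseteq\Nset,\ 1\leq\abs S\leq m\}$; thus every member of $\mc V_m$ is a union of at most $m$ sets of diameter at most $\eps_m$. The key idea is to \emph{truncate}: for $n\in\Nset$ put $\mc O_n=\bigcup_{m\geq n}\mc V_m$. Each $\mc O_n$ is countable, and it is an $\omega$-cover of $X$, because any finite $F\subseteq X$ is covered by at most $\abs F$ of the sets $U^m_k$ as soon as $m\geq\abs F$, and such an $m$ may be chosen $\geq n$ as well. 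Applying $S_1(\Omega,\Gamma)$ to $\seq{\mc O_n}$ yields $V_n\in\mc O_n$ with $\{V_n:n\in\Nset\}$ a $\gamma$-cover; fix for each $n$ an exponent $m_n$ with $V_n\in\mc V_{m_n}$, say $V_n=\bigcup_{k\in S_n}U^{m_n}_k$ with $1\leq\abs{S_n}\leq m_n$. The truncation forces $m_n\geq n$; hence $m_n\to\infty$ and $m_n=m$ implies $n\leq m$.

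For the reindexing, let $I=\{m_n:n\in\Nset\}\in[\Nset]^\Nset$ (infinite because $m_n\to\infty$), and for $m\in I$ put $\mc F_m=\{U^m_k:k\in T_m\}$, where $T_m=\bigcup\{S_n:m_n=m\}$; set $\mc F_m=\emptyset$ for $m\notin I$. Since $m_n=m$ implies $n\leq m$, at most $m+1$ indices $n$ satisfy $m_n=m$, so $T_m$ is finite with $\abs{T_m}\leq m(m+1)\leq(m+1)^2$; also $d\mc F_m\leq\eps_m$ by the choice of the $U^m_k$, and $\bigcup\mc F_m=\bigcup\{V_n:m_n=m\}$. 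It remains to check that $\seq{\bigcup\mc F_m:m\in I}$ is a $\gamma$-cover: given $x\in X$, the set $B_x=\{n:x\notin V_n\}$ is finite, and for $m\in I$ one has $x\notin\bigcup\mc F_m$ exactly when $\emptyset\neq\{n:m_n=m\}\subseteq B_x$; since the sets $\{n:m_n=m\}$, $m\in I$, are nonempty and pairwise disjoint, at most $\abs{B_x}$ of them lie inside $B_x$. Hence $x\in\bigcup\mc F_m$ for all but finitely many $m\in I$, and Theorem~\ref{combDnull}(iii) applies.

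The crux I anticipate is the level bookkeeping. If one simply extracted a $\gamma$-subcover of $\bigcup_m\mc V_m$, that subcover could be concentrated on finitely many levels $m$, leaving no room to place the chosen sets at distinct indices of $I$ while respecting both $\abs{\mc F_n}\leq f(n)$ and $d\mc F_n\leq\eps_n$; truncating the covers is precisely what guarantees $m_n\to\infty$ and, at the same time, that each level $m$ is reused at most $m+1$ times, which is what makes the polynomial bound $f(n)=(n+1)^2$ go through. This is also why one must use form~(iii) of Theorem~\ref{combDnull}, with a fixed $f\in\Pset$, rather than form~(ii) with $f=\id$: distinct selected sets $V_n$ may legitimately share a level.
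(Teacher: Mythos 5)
Your proof is correct, but it takes a genuinely different route from the paper's. The paper verifies condition (ii) of Theorem~\ref{combDnull} (so with the sharper bound $\abs{\mc F_n}\leq n$) using only the single-cover form of the $\gamma$-property: it fixes a sequence $\seq{x_n}$ in $X$ and works with the $\omega$-cover by sets $U(F)=\bigcup_{x\in F}B\bigl(x,\frac12\eps_{\abs F}\bigr)\setminus\{x_{\abs F}\}$, where the deliberate puncture at $x_{\abs F}$ forces $\abs{F_n}\to\infty$ along any $\gamma$-subcover, so that after passing to a subsequence each level $\abs{F_n}$ is used exactly once and $I=\{\abs{F_n}\}$ works directly. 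You achieve the same crux --- preventing the selected sets from concentrating on finitely many diameter scales --- by truncating the covers ($\mc O_n=\bigcup_{m\geq n}\mc V_m$) instead of puncturing them, and you pay for this by invoking the Gerlits--Nagy equivalence of the $\gamma$-property with the selection principle $S_1(\Omega,\Gamma)$, a genuine external theorem that the paper's argument does not need; in exchange you only get condition (iii) with $f(n)=(n+1)^2$, which is still sufficient, and your level bookkeeping (at most $m+1$ selections per level, the disjointness argument for the $\gamma$-cover property of $\seq{\bigcup\mc F_m:m\in I}$) is carried out carefully and correctly. A small point in your favour: your $\omega$-covers $\mc O_n$ are manifestly countable, whereas the paper's family $\{U(F):F\in[X]^{<\omega}\}$ requires an implicit reduction to a countable subfamily to match the stated definition of a $\gamma$-set. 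Both proofs are sound; yours trades self-containedness for a standard citation.
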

\begin{proof}
Let $X$ be a $\gamma$-set.
We verify condition (ii) of Theorem~\ref{combDnull}. Let $\Seqeps$.
Fix an infinite set $\{x_n:n\in\Nset\}\subs X$.
For $F\in[X]^{<\omega}$ put
$$
  U(F)=\bigcup\nolimits_{x\in F}B\bigl(x,\tfrac12\eps_{\abs F}\bigr)
  \setminus\{x_{\abs F}\}.
$$
The family $\{U(F):F\in[X]^{<\omega}\}$
is obviously an $\omega$-cover. Therefore there is a sequence $\{F_n\}$
of finite sets such that $\{U(F_n)\}$ is a $\gamma$-cover.
We may clearly assume that $\abs{F_0}\leq\abs{F_1}\leq\abs{F_2}\leq\dots$.
Since $U(F)$ misses $x_{\abs F}$, for each $k\in\Nset$ there are only
finitely many $n$'s such that $\abs{F_n}=k$. Passing to a subsequence
we may thus assume that $\abs{F_0}<\abs{F_1}<\abs{F_2}<\dots$.
The set $I=\{\abs{F_n}:n\in\Nset\}$ and the families
$\mc F_{\abs{F_n}}=\bigl\{B\bigl(x,\frac12\eps_{\abs{F_n}}\bigr):
x\in F_n\bigr\}$, $n\in\Nset$
obviously witness condition (ii) of Theorem~\ref{combDnull}.
\end{proof}

\section{\uhnull{}-sets vs.~$\MM$-additive and $\EE$-additive sets}
\label{sec:meager}

Recall that $\MM$ denotes the ideal of meager sets
in $\Cset$ and $\EE$ is the intersection ideal in $\Cset$.
Recall that a set $X\subs\Cset$ is termed \emph{strongly null} if
$X+M\neq\Cset$ for each meager set $M$.
The famous Galvin-Mycielski-Solovay Theorem asserts that a subset of $\Cset$ is
strongly null if and only if it is \smz{}.
Together with Theorem~\ref{hnullsmz} it yields:
\begin{thm}
A set $X\subs\Cset$ is \hnull{} if and only if it is strongly null. \label{mainSN}
\end{thm}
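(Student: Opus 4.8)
The plan is to simply combine two results that are already available: Theorem~\ref{hnullsmz}, which gives ``$X$ is \hnull{} $\iff$ $X$ is \smz{}'' for arbitrary metric spaces (hence for $X\subs\Cset$), and the Galvin--Mycielski--Solovay Theorem for the Cantor cube, which asserts that a subset of $\Cset$ is \smz{} if and only if it is strongly null, i.e.\ there is no meager $M\subs\Cset$ with $X+M=\Cset$. Stringing the two equivalences together immediately yields ``$X$ is \hnull{} $\iff$ $X$ is strongly null.'' No new computation is needed.

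Concretely, first I would invoke Theorem~\ref{hnullsmz} to replace \hnull{} by \smz{}; the point to be careful about is that Theorem~\ref{hnullsmz} is stated for general metric spaces and we are applying it to $\Cset$ with the specific metric $d(x,y)=2^{-n(x,y)}$ introduced in Section~\ref{sec:measures}, whose induced topology is the usual Cantor topology, so the notion of \smz{} is the intrinsic one. Then I would invoke the Galvin--Mycielski--Solovay Theorem for $\Cset$ (as cited in the introduction, e.g.~\cite[1.14]{MR2177439}) to replace \smz{} by strongly null. Since the definition of ``strongly null'' recalled at the start of this section is exactly the negation of ``$\exists M\in\MM$ with $X+M=\Cset$'', the chain of equivalences closes.

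There is essentially no obstacle here; the only thing worth a sentence is to note explicitly that the combinatorial notion of \smz{} does not depend on which of the equivalent metrics on $\Cset$ one uses (a countable-cover definition that refers only to diameters going to zero is a topological-up-to-uniform-equivalence notion), so the \smz{} appearing in Theorem~\ref{hnullsmz} and the \smz{} appearing in Galvin--Mycielski--Solovay are the same. With that observed, the proof is a two-line citation.
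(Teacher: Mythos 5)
Your proposal is correct and is exactly the paper's argument: the theorem is stated immediately after recalling the Galvin--Mycielski--Solovay theorem for $\Cset$ and is obtained by chaining it with Theorem~\ref{hnullsmz}. Your extra remark about the metric on $\Cset$ being irrelevant to the \smz{} notion is a sensible (if unstated in the paper) sanity check, but changes nothing.
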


Recall that given an ideal $\mc J$ on $\Cset$, a set $X\subs\Cset$ is termed
\emph{$\mc J$-additive} if $X+J\in\mc J$ for each $J\in\mc J$.
Inspired by this theorem, we attempt to establish, for subsets of $\Cset$, similar connections
between \hnull{} and $\MM$- and $\EE$-additive sets (this section),
\upnull{} and $\NN$-additive sets (Section~\ref{sec:nadd}) and
\dpnull{} and \Tprime-sets (Section~\ref{sec:TT}), respectively.

Besides $\mc J$-additive sets we also define a stronger notion of sharply
$\mc J$-additive sets.
\begin{defn}
Given an ideal $\mc J$ on $\Cset$, a set $X\subs\Cset$
is termed \emph{sharply $\mc J$-additive} if for every $J\in \mc J$
there is a \si compact set $F\sups X$ such that $F+J\in\mc J$.

We also define a set $X\subs\Cset$ to be \emph{sharply null}
if for each $M\in\MM$ there is a \si compact set $F\sups X$ such that
$F+M\neq\Cset$.

It is clear that a sharply $\mc J$-additive set is $\mc J$-additive
and that a sharply null set is strongly null.
\end{defn}

In this section we prove the following theorem that in particular implies
that a set $X\subs\Cset$ is $\MM$-additive if and only if
it is $\EE$-additive.

\begin{thm}\label{mainME}
For any set $X\subs\Cset$, the following are equivalent.
\begin{enum}
\item $X$ is \uhnull{}, \label{uhnull}
\item $X$ is $\MM$-additive, \label{Madd}
\item $X$ is $\EE$-additive, \label{Eadd}
\item $X$ is sharply $\MM$-additive, \label{sMadd}
\item $X$ is sharply $\EE$-additive, \label{sharpE}
\item $X$ is sharply null. \label{sharpN}
\end{enum}
\end{thm}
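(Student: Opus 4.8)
The plan is to prove the cycle of implications

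\begin{equation*}
\text{\eqref{uhnull}}\Implies\text{\eqref{sMadd}}\Implies\text{\eqref{sharpE}}\Implies\text{\eqref{sharpN}}\Implies\text{\eqref{Madd}}\Implies\text{\eqref{uhnull}},
\end{equation*}

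together with the trivial implications \eqref{sMadd}\Implies\eqref{Madd}\Implies\eqref{Eadd} and \eqref{sharpE}\Implies\eqref{Eadd}\Implies\eqref{Madd} (these are immediate from the definitions and from $\EE\subs\MM$ and the fact that a sharply $\mc J$-additive set is $\mc J$-additive). Actually the cleanest route is: first establish \eqref{uhnull}\Implies\eqref{sharpE}, deduce \eqref{sharpE}\Implies\eqref{sharpN} and \eqref{sharpE}\Implies\eqref{sharp$\EE$ is trivially $\EE$-additive}; then close with \eqref{Madd}\Implies\eqref{uhnull} to pin down the whole group. Throughout I use the metric $d(x,y)=2^{-n(x,y)}$ on $\Cset$ fixed in Section~\ref{sec:measures}, under which $\hm^1$ is the Haar measure, and the characterization Lemma~\ref{EC}(i): $E\in\EE$ iff $E\in\NNs(\ubox_0^h)$ for some $h\prec1$.

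For \eqref{uhnull}\Implies\eqref{sharpE} I would argue as follows. Let $E\in\EE$; by Lemma~\ref{EC}(i) pick $h\prec1$ with $E\in\NNs(\ubox_0^h)$, hence (since $\ubox_0^h\geq\uhm_0^h$) also $E\in\NNs(\uhm_0^h)$. Write $g(r)=r/h(r)$, which is a Hausdorff function with $g\prec1$ because $h\prec1$. Since $X$ is \uhnull, Theorem~\ref{basicUhnull}(iv) gives $X\in\NNs(\uhm_0^g)$, and by Lemma~\ref{lem1}(iv) there is a \si compact $F\sups X$ with $\hm^g(F)=0$, equivalently $\uhm^g(F)=0$ by Lemma~\ref{lem1}(vi). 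Now the key geometric point: addition on $\Cset$ is an isometry in each variable, so the sum map $+\colon\Cset\times\Cset\to\Cset$ is Lipschitz (indeed $1$-Lipschitz with respect to the maximum metric on the product), whence by Lemma~\ref{lipschitz} applied with the Hausdorff function $r\mapsto r$ and by Lemma~\ref{howroyd}(i) one gets
\begin{equation*}
\ubox^1(F+E)\leq\ubox^{hg}(F\times E)\leq\ubox^h(F)\,\ubox^g(E);
\end{equation*}
wait — I need $\ubox^g(E)<\infty$, which holds since $E\in\NNs(\ubox_0^h)$ forces a finer $\ubox_0$-null statement, and $\ubox^h(F)$ finite from $\hm^g$-smallness of $F$ via Lemma~\ref{lem1}(vii)-type bookkeeping; the honest version is to first replace $h,g$ by slightly coarser/finer $h_0\prec h$, $g_0\prec g$ with $h_0g_0\geq(\cdot)$ and keep $X\in\NNs(\uhm_0^{g_0})$, $E\in\NNs(\ubox_0^{h_0})$, so that $\ubox^1(F+E)=0$. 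Since $F+E$ is \si compact, Lemma~\ref{EC}(i) (or directly $\hm^1(F+E)=0$ with $F+E$ closed, hence in $\EE$) gives $F+E\in\EE$, proving sharp $\EE$-additivity.

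The implication \eqref{sharpE}\Implies\eqref{sharpN} is then easy: an $\EE$-set is meager, so if $M\in\MM$ take a \si compact $F\sups X$ with $F+(\text{some closed null set covering part of }M)\in\EE\subsetneq\Cset$; more precisely reduce $M$ to an $F_\sigma$ meager set and use that a \si compact $F\sups X$ with $F+E\neq\Cset$ for the closed null parts suffices once one notes $\Cset$ is not meager. And \eqref{sharpN}\Implies\eqref{Madd} is immediate since a sharply null set is strongly null and strongly null sets are $\MM$-additive by the classical Galvin--Mycielski--Solovay machinery — or one argues directly from the sharp form. The main obstacle, and the step I expect to require the most care, is \eqref{Madd}\Implies\eqref{uhnull}: from the purely algebraic hypothesis $X+M\in\MM$ for all meager $M$, one must produce, for each $h\in\HH$, a witness that $X\in\NNs(\uhm_0^h)$, i.e.\ (by Lemma~\ref{gammagr}) a $\gamma$-groupable cover $\seq{U_n}$ of $X$ with $\sum_n h(dU_n)<\infty$. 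The natural attack is to encode a suitable "thin" $F_\sigma$ meager set $M=M_h$ (built from $C\in\CC$ with $\hm^g(C)>0$, $g=r/h(r)$, via Lemma~\ref{EC}(iii)) whose translates cover $\Cset$ unless $X$ already admits such a cover — essentially a Galvin--Mycielski--Solovay-style argument refined to track the Hausdorff gauge $h$. I would structure that step by choosing $C\in\CC$ with $\uhm^g(C)>0$, observing that if $X$ failed to be \uhnull then $\uhm^h(X)>0$ for some $h$ and Theorem~\ref{prodUhnull} would give $\uhm^1(X\times C)>0$; then transferring this to a genuine meager $M$ with $X+M=\Cset$ via the group structure (the set $\{(x,c):x+c\in U\}$ manipulations), contradicting $\MM$-additivity. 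That transfer — from positive upper-Hausdorff content of $X\times C$ to a covering meager set — is where the real work lies.
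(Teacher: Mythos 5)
There are genuine gaps here, two of them fatal to the proposed architecture. First, you assert that the implications between $\MM$-additivity and $\EE$-additivity are ``immediate from the definitions and from $\EE\subs\MM$.'' They are not: if $X$ is $\MM$-additive and $E\in\EE$, the inclusion $\EE\subs\MM$ only yields $X+E\in\MM$, not $X+E\in\EE$; and conversely an $\EE$-additive set controls sums with members of $\EE$ only, while a general meager set need not belong to $\EE$ (it need not even be null). The equivalence of \eqref{Madd} and \eqref{Eadd} is one of the advertised consequences of this very theorem and carries real content. Second, your passage from sharply null to $\MM$-additive rests on the claim that strongly null sets are $\MM$-additive ``by the classical Galvin--Mycielski--Solovay machinery.'' This is false: strong nullity is strictly weaker than $\MM$-additivity (consistently there are strongly null sets that are not meager-additive), which is exactly why the diagram in Theorem~\ref{bigthm} has only a one-way arrow from $\MM$-additive to strongly null. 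The paper's argument for \eqref{sharpN}$\Implies$\eqref{sMadd} uses the \si compactness of the witness $F$ in an essential way: taking a countable dense $Q$, one finds a \si compact $F\sups X$ with $F+Q+M\neq\Cset$, whence $F+M$ is a \si compact set with dense complement and is therefore meager by Baire. Your parenthetical ``or one argues directly from the sharp form'' is precisely the argument that must be supplied.

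Third, the implication you yourself flag as the crux, \eqref{Madd}$\Implies$\eqref{uhnull}, is not proved. The proposed attack --- derive $\uhm^1(X\times C)>0$ from Theorem~\ref{prodUhnull} and ``transfer'' this to a meager $M$ with $X+M\notin\MM$ --- faces a wrong-direction obstruction: the sum map is Lipschitz, so largeness of $X\times C$ gives no lower bound on $X+C$ or on anything derived from it, and no mechanism for producing the witnessing meager set is offered. The paper instead imports Shelah's combinatorial characterization of $\MM$-additive sets (Lemma~\ref{ShelahM}) and uses it to construct directly, for each $h\in\HH$, a $\gamma$-groupable cover with finite $h$-sum, invoking Lemma~\ref{gammagr}. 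Two smaller points: in your proof of \eqref{uhnull}$\Implies$\eqref{sharpE} the gauges are tangled (you bound $\ubox^h(F)$ while knowing only $\uhm^g(F)=0$, and need $\ubox^g(E)$ while knowing only $E\in\NNs(\ubox_0^h)$); this step is salvageable --- Theorem~\ref{prodUhnull}(ii) with $h(r)=r$ gives a \si compact $F\sups X$ with $F\times E'\in\NNs(\uhm_0^1)$ for a \si compact null $E'\sups E$, and Lemmas~\ref{lipschitz} and~\ref{lem1}(iv) finish --- but not as written. And your reduction for \eqref{sharpE}$\Implies$\eqref{sharpN} is only gestured at: a general meager $M$ admits no ``closed null set covering part of $M$'' in the sense you need; the paper quotes Pawlikowski's theorem, which supplies exactly the required $E\in\EE$ with the property that $Y+E\in\NN$ forces $Y+M\neq\Cset$.
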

\begin{proof}
We shall prove now  \eqref{uhnull}$\Implies$\eqref{Eadd} and
\eqref{sharpE}$\Implies$\eqref{sharpN}$\Implies$\eqref{sMadd}$\Implies$\eqref{Madd}.
The implications \eqref{Madd}$\Implies$\eqref{uhnull} and \eqref{Eadd}$\Implies$\eqref{sharpE}
are subject to standalone Propositions~\ref{MeShelah} and~\ref{EisEsharp}.

\smallskip
\eqref{uhnull}$\Implies$\eqref{Eadd}:
Assume $X$ be \uhnull. Let $E\in\EE$.
By Theorem~\ref{prodUhnull}, $X\times E\in\NNs(\uhm_0^1)$.
Since the mapping $(x,y)\mapsto x+y$ is Lipschitz, Lemma~\ref{lipschitz} yields
$X+E\in\NNs(\uhm_0^1)$.

\smallskip
\eqref{sharpE}$\Implies$\eqref{sharpN}:
We employ a Pawlikowski's~\cite{MR1380640} theorem, see also~\cite[Theorem 8.1.19]{MR1350295}:
\emph{For each $M\in\MM$ there exists $E\in\EE$ such that for each $Y\subs\Cset$,
if $Y+E\in\NN$, then $Y+M\neq\Cset$.}

Suppose $X$ is sharply $\EE$-additive. Let $M\in\MM$.
Let $E\in\EE$ be the set guaranteed by the Pawlikowski's theorem.
Since $X$ is sharply $\EE$-additive, there is $F\sups X$
\si compact such that $F+E\in\EE\subs\NN$.
Therefore $F+M\neq\Cset$.
Thus $X$ is sharply null.

\smallskip
\eqref{sharpN}$\Implies$\eqref{sMadd}:
Suppose $X$ is sharply null and let $M\in\MM$.
We may assume that $M$ is \si compact.
Let $Q\subs\Cset$ be a countable dense set.
Since $Q$ is countable, $Q+M$ is meager.
Therefore there is $F\sups X$ such that $Q+M+F\neq\Cset$.
Let $z\notin Q+M+F$. Then, for all $q\in Q$, $z\notin q+M+F$, i.e.~%
$z+q\notin M+F$. Therefore $(M+F)\cap(Q+z)=\emptyset$.
Since $Q$ is dense, so is $Q+z$.
Therefore the complement of $F+M$ is dense.

Since $F+M$ is a continuous image of a \si compact set $F\times M$,
it is \si compact as well.
Since it has a dense complement, it is meager by
the Baire category theorem.

\smallskip
\eqref{sMadd}$\Implies$\eqref{Madd} is obvious.
\end{proof}
In order to prove that every $\MM$-additive set is \uhnull{} we need
a Shelah's~\cite{MR1324470} characterization of $\MM$-additive sets:
\begin{lem}[{\cite[Theorem 2.7.17]{MR1350295}}]\label{ShelahM}
$X$ is $\MM$-additive if and only if
\begin{multline*}
  \forall f\in\UPset\,\,\exists g\in\Pset\,\,\exists y\in\Cset\,\,
  \forall x\in X\,\,\forall^\infty n\,\,\exists k\\
  g(n)\leq f(k)<f(k+1)\leq g(n+1) \&\
  x\rest [f(k),f(k+1))=y\rest [f(k),f(k+1)).
\end{multline*}
\end{lem}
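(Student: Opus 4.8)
Since this lemma is Shelah's characterization of $\MM$-additive sets, my plan is essentially to recall its proof (see \cite[Theorem 2.7.17]{MR1350295}), which rests on one external ingredient that I would take as known: the standard combinatorial description of meager subsets of $\Cset$ in terms of interval partitions (see \cite{MR1350295}), namely that $M\subs\Cset$ is meager exactly when there are a strictly increasing $f\in\UPset$ and a $w\in\Cset$ with $M\subs M(f,w):=\{z\in\Cset:\fmany k\ z\rest[f(k),f(k+1))\neq w\rest[f(k),f(k+1))\}$, each $M(f,w)$ being an $F_\sigma$ meager set, and these sets generating $\MM$.

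For the implication ``combinatorial condition $\Implies$ $X$ is $\MM$-additive'' I would fix a meager $M$, pick a strictly increasing $f$ and a $w$ with $M\subs M(f,w)$, apply the hypothesis to this $f$ to get $g$ and $y$ (harmlessly normalised so that $g$ is strictly increasing and unbounded), and then verify $X+M\subs M(g,y+w)$, which is meager; hence $X+M\in\MM$. The mechanism is exactly the one built into the condition: each long block $[g(n),g(n+1))$ contains some short block $[f(k),f(k+1))$ on which a prescribed $x\in X$ equals $y$; since $f(k)\ge g(n)\to\infty$, on that short block a prescribed $m\in M$ eventually differs from $w$, so $x+m$ differs from $y+w$ somewhere inside $[g(n),g(n+1))$. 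This direction is short once the meager-set characterization is available.

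For the reverse implication I would, given $f\in\UPset$ (which I may take strictly increasing, with blocks $I_k=[f(k),f(k+1))$), test $\MM$-additivity against the specific closed nowhere dense set $M^\star=\{z\in\Cset:\forall k\ z\rest I_k\neq\mathbf0\}$, where $\mathbf0$ denotes the constantly zero sequence. From the fact that $X+M^\star$ is meager I extract an interval partition and a real; coarsening the partition (which only enlarges the set it defines) and moving its endpoints onto values of $f$, I may assume the blocks have the form $J_n=\bigcup_{g_0(n)\le k<g_0(n+1)}I_k$ for a strictly increasing $g_0$, with accompanying real $v$. Translating by a fixed $x\in X$ converts ``$x+m\in X+M^\star$'' into ``$m\rest J_n\neq(v+x)\rest J_n$ for all large $n$'', for every $m\in M^\star$, and the point is to deduce from this that for all large $n$ some block $I_k\subs J_n$ has $(v+x)\rest I_k=\mathbf0$, i.e.\ $x\rest I_k=v\rest I_k$; then $g=f\circ g_0$ and $y=v$ are the required witnesses.

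The step I expect to be the main obstacle is exactly that last extraction. To contradict the hypothetical existence of infinitely many ``bad'' blocks $J_n$ (those with $(v+x)\rest I_k\neq\mathbf0$ for every $I_k\subs J_n$) one must build a \emph{single} $m\in M^\star$ that copies $v+x$ on infinitely many of them while staying off the zero pattern on \emph{every} $I_k$; this succeeds because one can pass to a sparse subset of the bad indices, fill all remaining $J$-blocks with the constant $1$, and use that no $I_k$ straddles a boundary of $\{J_n\}$ (the boundaries being the values $f(g_0(n))$), so every $I_k$ sits inside one $J_n$ and is controlled either by the copied data or by the filler. Beside this I would be careful with the routine normalisations: replacing a nondecreasing unbounded $f$ by a strictly increasing one, replacing $g$ by a strictly increasing unbounded function, and checking that coarsening a partition keeps the associated set meager.
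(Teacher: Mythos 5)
The paper does not prove this lemma: it is imported verbatim from Bartoszy\'nski--Judah \cite[Theorem 2.7.17]{MR1350295}, so there is no internal proof to compare against. Your reconstruction is correct and is essentially the standard argument behind that theorem --- both directions reduce to the interval-partition base of $\MM$, and the one delicate step you flag (building a single $m\in M^\star$ that copies $v+x$ on infinitely many bad $J$-blocks while avoiding the zero pattern on every $I_k$) does go through exactly as you describe, since the coarsened partition boundaries lie among the values of $f$ and hence no $I_k$ straddles a $J$-block.
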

\begin{prop}\label{MeShelah}
If $X\subs\Cset$ is $\MM$-additive, then $X$ is \uhnull.
\end{prop}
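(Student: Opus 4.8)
The plan is to extract a suitable $\gamma$-groupable cover of $X$ directly from Shelah's combinatorial characterization of $\MM$-additive sets (Lemma~\ref{ShelahM}). Fix $h\in\HH$. By Theorem~\ref{basicUhnull}(iv) it is enough to show $X\in\NNs(\uhm_0^h)$, and by Lemma~\ref{gammagr} this reduces to producing a $\gamma$-groupable cover $\seq{U_n}$ of $X$ with $\sum_n h(dU_n)<\infty$. The crucial design choice is to build the ``test sequence'' $f$ fed into Lemma~\ref{ShelahM} with the smallness of $h$ already baked in, so that whatever witness $g$ the lemma returns, the cover it yields is summable.

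First I would choose $f$: since $h$ is right-continuous at $0$ we have $h(2^{-m})\to0$, so one can recursively pick a strictly increasing $f\in\UPset$ with $2^{f(k)}h\bigl(2^{-f(k+1)}\bigr)<2^{-k}$ for all $k$, whence $\sum_k 2^{f(k)}h\bigl(2^{-f(k+1)}\bigr)<\infty$. Apply Lemma~\ref{ShelahM} to this $f$ to get $g\in\Pset$ and $y\in\Cset$; we may assume $g$ is nondecreasing (replace it by its running maximum, which still witnesses the condition once one notes that this $g$ is necessarily unbounded when $X$ is $\MM$-additive, or simply invoke the characterization in the form that provides an increasing $g$). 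For each $n$ put $K_n=\{k:g(n)\le f(k)<f(k+1)\le g(n+1)\}$, a finite set, and
$$
  \mc G_n=\bigcup_{k\in K_n}\bigl\{[p]:p\in2^{f(k+1)},\ p\rest[f(k),f(k+1))=y\rest[f(k),f(k+1))\bigr\},
$$
a finite family of cylinders, each of diameter $2^{-f(k+1)}\le2^{-g(n)}$. Let $\seq{U_n}$ be obtained by listing $\mc G_0$, then $\mc G_1$, then $\mc G_2$, and so on, with the blocks $\mc G_j$ as the witnessing groups.

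Then I would check the two required properties. For the $\gamma$-cover property: given $x\in X$, Lemma~\ref{ShelahM} provides, for all but finitely many $n$, some $k\in K_n$ with $x\rest[f(k),f(k+1))=y\rest[f(k),f(k+1))$, so that the cylinder $[x\rest f(k+1)]\in\mc G_n$ contains $x$; hence $\seq{\bigcup\mc G_n}$ is a $\gamma$-cover and $\seq{U_n}$ is a $\gamma$-groupable cover of $X$. For summability, $\sum_n h(dU_n)=\sum_n\sum_{k\in K_n}2^{f(k)}h\bigl(2^{-f(k+1)}\bigr)$, and here the point is that the sets $K_n$ are pairwise disjoint: if $k\in K_n\cap K_m$ with $n<m$ then $f(k+1)\le g(n+1)\le g(m)\le f(k)<f(k+1)$, a contradiction, using monotonicity of $g$. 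Therefore $\sum_n h(dU_n)\le\sum_k 2^{f(k)}h\bigl(2^{-f(k+1)}\bigr)<\infty$, Lemma~\ref{gammagr} gives $X\in\NNs(\uhm_0^h)$, and since $h\in\HH$ was arbitrary Theorem~\ref{basicUhnull}(iv) yields that $X$ is \uhnull{}. The one place demanding care—and the crux on which summability rests—is arranging that $g$ may be taken nondecreasing; everything else is the routine translation of Shelah's arithmetic condition into cylinders.
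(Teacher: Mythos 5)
Your proposal is correct and follows essentially the same route as the paper: the same recursive choice of $f$ with $2^{f(k)}h\bigl(2^{-f(k+1)}\bigr)\leq 2^{-k}$, the same application of Lemma~\ref{ShelahM}, the same cylinder families grouped according to $g$, and the same conclusion via Lemma~\ref{gammagr} and Theorem~\ref{basicUhnull}(iv). The only difference is that you make explicit the bookkeeping (monotonicity of $g$ and disjointness of the blocks $K_n$) that the paper handles implicitly by summing over all $k$ at once; this is a legitimate refinement, not a divergence.
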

\begin{proof}
Let $X\subs\Cset$ be $\MM$-additive.
Let $h$ be a Hausdorff function.
We have to show that $\uhm^h(X)=0$.
Define recursively $f\in\UPset$
to satisfy
$$
  2^{f(k)}\cdot h\bigl(2^{-f(k+1)}\bigr)\leq 2^{-k},\quad k\in\Nset.
$$
By Lemma~\ref{ShelahM} there is $g\in\Pset$ and $y\in\Cset$ such that
\begin{multline}\label{ShelahM2}
  \forall x\in X\,\,\forall^\infty n\,\,\exists k\\
  g(n)\leq f(k)<g(n+1)\ \&\
  x\rest [f(k),f(k+1))=y\rest [f(k),f(k+1)).
\end{multline}
For $p\in\CCset$ denote $[p]=\{f\in\Cset:p\subs f\}$ the corresponding
cylinder. Define
\begin{alignat*}{3}
  &\mc B_k&&=
  \bigl\{
  \bigl[p\concat y\rest [f(k),f(k+1))\bigr]:p\in 2^{f(k)}
  \bigr\},\qquad
  && k\in\Nset,\\
  &\mc G_n&&=\bigcup
  \bigl\{\mc B_k:g(n)\leq f(k)<g(n+1)\bigr\},
  && n\in\Nset,\\
 & \mc B&&=\bigcup_{k\in\Nset}\mc B_k=\bigcup_{n\in\Nset}\mc G_n.
\end{alignat*}
With this notation~\eqref{ShelahM2} reads
\begin{equation}\label{ShelahM22}
  \forall x\in X\,\,\forall^\infty n\,\,\exists G\in\mc G_n\,\,
  x\in G.
\end{equation}
Since each of the families $\mc G_n$ is finite,
it follows that $\mc G_n$'s witness that $\mc B$ is a $\gamma$-groupable
cover of $X$.
Using Lemma~\ref{gammagr} it remains to show
that the Hausdorff sum $\sum_{B\in\mc B}h(dB)$ is finite.
Since $\abs{\mc B_k}=2^{f(k)}$ and
$dB=2^{-f(k+1)}$ for all $k$ and all $B\in\mc B_k$, we have
$$
  \sum_{B\in\mc B}h(dB)=
  \sum_{k\in\Nset}\sum_{B\in\mc B_k}h(dB)=
  \sum_{k\in\Nset}2^{f(k)}\cdot h(2^{-f(k+1)})
  \leq\sum_{k\in\Nset}2^{-k}<\infty.
  \qedhere
$$
\end{proof}

In order to prove that every $\EE$-additive set is sharply $\EE$-additive, we
employ a combinatorial description of closed null sets
given by Bartoszynski and Shelah~\cite{MR1186905},
see also~\cite[2.6.A]{MR1350295}.
For $f\in\UPset$ let
$$
  \CCC_f=\left\{\seq{F_n}:\forall n\in\Nset\left(F_n\subs2^{[f(n),f(n+1))}\ \&\
         \frac{\abs{F_n}}{2^{f(n+1)}}\leq\frac{1}{2^n}\right)\right\}
$$
and for $f\in\UPset$ and $F\in\CCC_f$ define
$$
  S(f,F)=\{z\in\Cset:\fmany n\in\Nset\ z\rest[f(n),f(n+1))\in F_n\}.
$$
It is easy to check that $S(f,F)\in\EE$ for all $f\in\UPset$ and $F\in\CCC_f$.
By \cite[Theorem 4.2]{MR1186905} (or~\cite[2.6.3]{MR1350295}),
these sets actually form a base of $\EE$. The proof therein
yields a little more:
\begin{lem}\label{2.6.3}
For each $E\in\EE$ and each $f\in\UPset$ there is $g\in\UPset$ and $G\in\CCC_{f{\circ}g}$
such that $E\subs S(f{\circ}g,G)$.
\end{lem}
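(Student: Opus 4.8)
The plan is to reduce $E$ to a single basic approximation and then pass to a coarser partition that runs along a subsequence of $f$. By the cited base theorem \cite[Theorem~4.2]{MR1186905}, fix $f_0\in\UPset$, which we may take to be strictly increasing, and $H=\seq{H_m:m\in\Nset}\in\CCC_{f_0}$ with $E\subs S(f_0,H)$ (the case $E=\emptyset$ being trivial). I would then build $g\in\UPset$ so that each interval $[f(g(n)),f(g(n+1)))$ of the $f{\circ}g$-partition contains a whole interval $[f_0(m(n)),f_0(m(n)+1))$ of the $f_0$-partition, for a strictly increasing sequence $\seq{m(n)}$, and set $G=\seq{G_n:n\in\Nset}$ with $G_n=\{s\in2^{[f(g(n)),f(g(n+1)))}:s\rest[f_0(m(n)),f_0(m(n)+1))\in H_{m(n)}\}$.

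The sequences $g$ and $m$ are produced by an alternating recursion. Put $g(0)=0$; given $g(n)$ and, for $n\geq1$, $m(n-1)$, use unboundedness of $f_0$ to choose $m(n)\geq\max\{n,m(n-1)+1\}$ with $f_0(m(n))\geq f(g(n))$, and then use unboundedness of $f$ to choose $g(n+1)>g(n)$ with $f(g(n+1))\geq\max\{n+1,f_0(m(n)+1)\}$. Then $g$ is strictly increasing and unbounded, $\seq{m(n)}$ is strictly increasing (so $m(n)\to\infty$) with $m(n)\geq n$, one has $f(g(n))\geq n$, and $f(g(n+1))\geq f_0(m(n)+1)>f_0(m(n))\geq f(g(n))$; hence $f{\circ}g$ is strictly increasing and unbounded, i.e.\ $f{\circ}g\in\UPset$, and $[f_0(m(n)),f_0(m(n)+1))\subs[f(g(n)),f(g(n+1)))$ for every $n$.

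It then remains to make the two checks. For $G\in\CCC_{f{\circ}g}$: clearly $G_n\subs2^{[f(g(n)),f(g(n+1)))}$, and counting the coordinates of $[f(g(n)),f(g(n+1)))$ lying outside the sub-block,
$$
  \abs{G_n}=\abs{H_{m(n)}}\cdot2^{(f(g(n+1))-f(g(n)))-(f_0(m(n)+1)-f_0(m(n)))}\leq2^{f(g(n+1))-f(g(n))}\leq2^{f(g(n+1))-n},
$$
the last step by $f(g(n))\geq n$, so $\abs{G_n}/2^{f(g(n+1))}\leq2^{-n}$. For $E\subs S(f{\circ}g,G)$: if $z\in E\subs S(f_0,H)$ then $z\rest[f_0(m),f_0(m+1))\in H_m$ for all $m$ past some $M$; choosing $N$ with $m(n)\geq M$ for $n\geq N$, and using $[f_0(m(n)),f_0(m(n)+1))\subs[f(g(n)),f(g(n+1)))$, we get $z\rest[f(g(n)),f(g(n+1)))\in G_n$ for all $n\geq N$, i.e.\ $z\in S(f{\circ}g,G)$.

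The argument is essentially bookkeeping; the one point requiring care is the interleaving in the recursion — one must alternately choose $g(n)$ and $m(n)$ so that every $f{\circ}g$-block genuinely swallows an $f_0$-block while still keeping $f(g(n))\geq n$ and forcing $m(n)\to\infty$, the latter being exactly what lets the ``for all but finitely many $n$'' requirement defining $S(f{\circ}g,G)$ be inherited from $S(f_0,H)$.
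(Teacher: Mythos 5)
Your proof is correct, and it reaches the lemma by a genuinely different route than the paper, which offers no argument at all here: the paper merely remarks that the proof of the cited Bartoszy\'nski--Shelah base theorem ``yields a little more'', i.e.\ that their construction can be inspected to produce the partition along a subsequence of any prescribed $f$. You instead treat that theorem as a black box --- fix any representation $E\subs S(f_0,H)$ --- and transfer it to the grid $f\circ g$ by an interleaving recursion forcing each block $[f(g(n)),f(g(n+1)))$ to swallow one $f_0$-block $[f_0(m(n)),f_0(m(n)+1))$, while keeping $m(n)\geq n$ and $f(g(n))\geq n$. All the steps check: $g$ and $f\circ g$ land in $\UPset$, the block containments hold, the counting of $\abs{G_n}$ is right, and the tail condition defining $S(f\circ g,G)$ is inherited because $m(n)\to\infty$. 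Your approach buys a self-contained derivation independent of the internals of the cited proof; the paper's remark buys brevity at the cost of asking the reader to re-examine an external argument. One small caveat: your cardinality estimate verifies $G\in\CCC_{f\circ g}$ against the normalization $\abs{F_n}/2^{f(n+1)}\leq 2^{-n}$ exactly as printed, for which the trivial bound $\abs{H_{m(n)}}\leq 2^{f_0(m(n)+1)-f_0(m(n))}$ together with $f(g(n))\geq n$ suffices and the smallness of $H$ is never used. If, as the surrounding claim that $S(f,F)\in\EE$ suggests, the intended normalization is $\abs{F_n}/2^{f(n+1)-f(n)}\leq 2^{-n}$, your $G$ still works, but the computation should instead read $\abs{G_n}/2^{f(g(n+1))-f(g(n))}=\abs{H_{m(n)}}/2^{f_0(m(n)+1)-f_0(m(n))}\leq 2^{-m(n)}\leq 2^{-n}$ --- which is exactly why your insistence on $m(n)\geq n$ in the recursion matters.
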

\begin{lem}\label{Einc3}
Let $f,g\in\UPset$, $F\in\CCC_f$ and $G\in\CCC_{f{\circ}g}$. Then
$S(f,F)\subs S(f{\circ}g,G)$ if and only if
\begin{equation}\label{Einc}
  \fmany n\in\Nset\ \forall k\in[g(n),g(n+1))\quad F_k\subs\{z\rest[f(k),f(k+1)):z\in G_n\}.
\end{equation}
\end{lem}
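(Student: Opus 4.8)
The plan is to rewrite both sides of the equivalence in terms of the block decomposition of $\Nset$ determined by $f$ and $g$. Since $f$ is nondecreasing and $g\in\UPset$, for every $n$ the interval $[(f{\circ}g)(n),(f{\circ}g)(n+1))=[f(g(n)),f(g(n+1)))$ splits into the consecutive sub-intervals $[f(k),f(k+1))$, $k\in[g(n),g(n+1))$; write $I_k=[f(k),f(k+1))$ and $J_n=[f(g(n)),f(g(n+1)))$. Then $z\rest J_n$ is the concatenation of the words $z\rest I_k$, $k\in[g(n),g(n+1))$, and the set $\{z\rest[f(k),f(k+1)):z\in G_n\}$ occurring in \eqref{Einc} is exactly the coordinate projection $\pi_k(G_n)$ of $G_n\subseteq 2^{J_n}$ onto $I_k$. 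So $z\in S(f,F)$ says $z\rest I_k\in F_k$ for all but finitely many $k$; $z\in S(f{\circ}g,G)$ says $z\rest J_n\in G_n$ for all but finitely many $n$; and \eqref{Einc} says that for all but finitely many $n$ and every $k\in[g(n),g(n+1))$ one has $F_k\subseteq\pi_k(G_n)$. After a harmless normalization we may assume $g$ strictly increasing and each $F_k$ nonempty (if infinitely many $F_k$ are empty then $S(f,F)=\emptyset$ and there is nothing to prove).

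The forward implication I would prove by contraposition. If \eqref{Einc} fails there are infinitely many $n$ admitting an index $k_n\in[g(n),g(n+1))$ and a word $\sigma^{(n)}\in F_{k_n}\setminus\pi_{k_n}(G_n)$. The sub-blocks $I_{k_n}$ belong to pairwise disjoint coarse blocks and are therefore disjoint, so one may define $z\in\Cset$ by setting $z\rest I_{k_n}=\sigma^{(n)}$ on each of these sub-blocks and letting $z\rest I_k$ be an arbitrary member of $F_k$ on every remaining sub-block. Then $z\in S(f,F)$, whereas for each of the infinitely many chosen $n$ the word $z\rest J_n$ restricts to $\sigma^{(n)}\notin\pi_{k_n}(G_n)$ on $I_{k_n}$, so $z\rest J_n\notin G_n$; hence $z\notin S(f{\circ}g,G)$ and the inclusion fails.

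For the converse I would assume \eqref{Einc}, take $z\in S(f,F)$, and fix $k_0$ with $z\rest I_k\in F_k$ for all $k\geq k_0$. Since $g$ is unbounded, for all large $n$ we have $g(n)\geq k_0$, so every sub-block $I_k$ of $J_n$ satisfies $z\rest I_k\in F_k$ and, by \eqref{Einc}, $F_k\subseteq\pi_k(G_n)$; thus each coordinate block of the word $z\rest J_n$ lies in the corresponding projection of $G_n$. It remains to pass from this ``coordinatewise'' information to the membership $z\rest J_n\in G_n$ — equivalently, to the inclusion $\bigotimes_{k\in[g(n),g(n+1))}F_k\subseteq G_n$ for all large $n$ — and this is the step that forces one to exploit the cardinality constraints built into $\CCC_f$ and $\CCC_{f{\circ}g}$, bounding $\abs{F_k}$ and $\abs{G_n}$ so tightly that the per-coordinate containments propagate to the diagonal containment. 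I expect this last combinatorial step to be the main obstacle; granting it, $z\rest J_n\in G_n$ holds for all large $n$, so $z\in S(f{\circ}g,G)$ and $S(f,F)\subseteq S(f{\circ}g,G)$. The rest is bookkeeping with the interval partition.
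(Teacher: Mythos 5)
Your ``only if'' direction is essentially the paper's own argument: the paper also proves that half by contraposition, picking for infinitely many $n$ a witness $z_{k_n}\in F_{k_n}$ that is not the restriction of any element of $G_n$, filling the remaining blocks with arbitrary members of the $F_k$, and observing that the resulting $z$ lies in $S(f,F)\setminus S(f{\circ}g,G)$. So that half of your proposal is fine and matches the source.

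The gap is in the ``if'' direction, and you have put your finger on it yourself: you reduce everything to showing that, for large $n$, the coordinatewise containments $F_k\subseteq\{z\rest[f(k),f(k+1)):z\in G_n\}$ force $z\rest[f(g(n)),f(g(n+1)))\in G_n$, and you defer this to ``the cardinality constraints built into $\CCC_f$ and $\CCC_{f{\circ}g}$''. No such argument exists: passing from per-block containment of projections to membership in $G_n$ fails in general, and the cardinality constraints do not rescue it. Concretely, take $f(n)=n$ and $g(n)=2n$, let $F_k=2^{\{k\}}$ (so $\abs{F_k}=2\leq 2^{f(k+1)-k}$ and $S(f,F)=\Cset$), and let $G_n\subseteq 2^{\{2n,2n+1\}}$ consist of the two constant words (so $\abs{G_n}=2\leq 2^{f(g(n+1))-n}$). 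Then every $F_k$ is contained in the corresponding projection of $G_n$, so \eqref{Einc} holds, yet $S(f{\circ}g,G)=\{z\in\Cset:\fmany n\ z(2n)=z(2n+1)\}$ is a proper subset of $\Cset=S(f,F)$. Hence the step you flag as ``the main obstacle'' cannot be completed as you hope; what the argument actually needs is the stronger condition that for all but finitely many $n$ the whole product $\{w\in 2^{[f(g(n)),f(g(n+1)))}:\forall k\in[g(n),g(n+1))\ w\rest[f(k),f(k+1))\in F_k\}$ is contained in $G_n$ --- under that reading the ``if'' direction is immediate and your (and the paper's) contrapositive argument for the ``only if'' direction goes through verbatim. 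For comparison, the paper writes out only the contrapositive direction and dismisses the converse as ``straightforward''; your proposal is more candid about where the difficulty sits, but as written it does not establish the stated equivalence.
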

\begin{proof}
Suppose condition~\eqref{Einc} fails. Then there is $I\in[\Nset]^\Nset$ such that
\begin{equation}\label{Einc2}
\forall n\in I\ \exists k_n\in[g(n),g(n+1))\ \exists z_{k_n}\in F_{k_n}\
  \forall z\in G_n\ z_{k_n}\nsubseteq z.
\end{equation}
For each $k\notin\{k_n:n\in I\}$ choose $z_k\in F_k$ and let $z\in\Cset$ be a sequence that extends
simultaneously all $z_k$'s. Then obviously $z\in S(f,F)$.
On the other hand, condition~\eqref{Einc2} ensures that
$z\notin S(f{\circ}g,G)$.
Thus $S(f,F)\subs S(f{\circ}g,G)$ yields \eqref{Einc}.
The reverse implication is straightforward.
\end{proof}

\begin{prop}\label{EisEsharp}
If $X\subs\Cset$ is $\EE$-additive, then $X$ is sharply $\EE$-additive.
\end{prop}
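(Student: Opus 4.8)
The plan is to run a diagonalization that, given an arbitrary $E\in\EE$, simultaneously produces a single set $E'\supseteq E$ in $\EE$ witnessing all the information needed to capture every sum $X+E$, and uses the characterization of $\EE$-additivity pulled through the combinatorial description of $\EE$ from Lemmas~\ref{2.6.3} and~\ref{Einc3}. Concretely, first I would fix $E\in\EE$ and use Lemma~\ref{2.6.3} to obtain $f\in\UPset$ and $F\in\CCC_f$ with $E\subs S(f,F)$; replacing $E$ by $S(f,F)$ we may assume $E=S(f,F)$. The key point is that $\EE$-additivity of $X$ means $X+S(f,F)\in\EE$, and by Lemma~\ref{2.6.3} again (applied to $X+S(f,F)$ with the partition $f$) there are $g\in\UPset$ and $G\in\CCC_{f\circ g}$ with $X+S(f,F)\subs S(f\circ g,G)$. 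I would then want to extract from $g,G$ a \si{}compact set $F^*\supseteq X$ with $F^*+E\in\EE$.

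The device for producing the \si{}compact set is the slicing trick already used in the paper (cf.~the proofs of Theorems~\ref{prodUhnull} and~\ref{combUhnull}): for each $x\in X$, the translate $x+E=x+S(f,F)$ lies inside $S(f\circ g,G)$, and by Lemma~\ref{Einc3} this is an eventual coordinatewise containment condition on the blocks. For $m\in\Nset$ let
$$
  F_m=\bigl\{x\in\Cset: \forall n\geq m\ \forall k\in[g(n),g(n+1))\ \ (x+F)_k\subs\{z\rest[f(k),f(k+1)):z\in G_n\}\bigr\},
$$
where $(x+F)_k=\{(x+w)\rest[f(k),f(k+1)):w\in F_k\}$ is obtained by adding the fixed block $x\rest[f(k),f(k+1))$ to $F_k$. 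Each $F_m$ is closed (the condition is determined by finitely many coordinates of $x$ on each block and is closed under limits), hence compact, and $\bigcup_m F_m\supseteq X$ because for each $x\in X$ the containment $x+S(f,F)\subs S(f\circ g,G)$ holds, so by Lemma~\ref{Einc3} the block condition holds for all large $n$, i.e.~$x\in F_m$ for some $m$. Put $F^*=\bigcup_m F_m$; it is \si{}compact and contains $X$.

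It remains to check $F^*+E\in\EE$, and it suffices to check $F_m+S(f,F)\in\EE$ for each fixed $m$, since $\EE$ is a \si{}ideal. But by construction, for $x\in F_m$ and $w\in S(f,F)$ with the defining property holding from some index on, $x+w$ has, for all large $n$ and all $k\in[g(n),g(n+1))$, its block $(x+w)\rest[f(k),f(k+1))$ lying in the prescribed set of restrictions of members of $G_n$; concatenating these block choices over $k\in[g(n),g(n+1))$ and noting $\abs{G_n}/2^{(f\circ g)(n+1)}\leq 2^{-n}$, one sees $x+w$ eventually lands in $G_n$ on each super-block $[(f\circ g)(n),(f\circ g)(n+1))$, so $F_m+S(f,F)\subs S(f\circ g,G')$ for a suitable $G'\in\CCC_{f\circ g}$ (possibly after discarding finitely many indices, which only enlarges the set by a single cylinder, still in $\EE$). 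Hence $F_m+E\in\EE$, so $F^*+E\in\EE$, and $X$ is sharply $\EE$-additive.

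The main obstacle is the bookkeeping in the last paragraph: one must verify that the eventual-containment condition defining $F_m$ is genuinely a closed condition on $x$ (so the $F_m$ are compact) and that the block-by-block containments assemble into membership of $x+w$ in a legitimate element of $\CCC_{f\circ g}$, i.e.~that the cardinality bound $\abs{G'_n}\le 2^{-n}\,2^{(f\circ g)(n+1)}$ survives — here one simply takes $G'_n=G_n$, since the blocks of $x+w$ are literally restrictions of elements of $G_n$, so no cardinality is added. The handling of the finitely many "bad" initial indices, and of the possibility $X=\emptyset$, is routine.
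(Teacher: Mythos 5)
Your argument is essentially the paper's proof in a different order: the set $F^*=\bigcup_m F_m$ you construct is exactly the paper's $\widetilde X=\{x\in\Cset:x+S(f,F)\subs S(f{\circ}g,G)\}$, written out as an $F_\sigma$ set via the block condition of Lemma~\ref{Einc3}, and the rest (two applications of Lemma~\ref{2.6.3}, clopenness of each block condition, $X\subs F^*$ by the forward direction of Lemma~\ref{Einc3}) matches the paper step for step. The one place your write-up wobbles is the final ``concatenation'' argument: knowing that each sub-block $(x+w)\rest[f(k),f(k+1))$ is the restriction of \emph{some} element of $G_n$ does not by itself put the whole super-block restriction of $x+w$ into $G_n$, since different $k$ may require different witnesses in $G_n$; what you actually need there is precisely the reverse implication of Lemma~\ref{Einc3} applied to $F^x$ and $G$ (equivalently, the paper's tautological observation that every $x\in\widetilde X$ satisfies $x+S(f,F)\subs S(f{\circ}g,G)$ by definition), so cite that rather than re-deriving it block by block.
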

\begin{proof}
Suppose $X$ is $\EE$-additive. Let $E\in\EE$.
We are looking for an $F_\sigma$-set $\widetilde X\sups X$ such that
$\widetilde X+E\in\EE$.

There are $f\in\UPset$ and $F\in\CCC_f$ such that
$E\subs S(f,F)$. Since $S(f,F)\in\EE$, we have $X+S(f,F)\in\EE$. By Lemma~\ref{2.6.3} there are
$g$ and $G\in\CCC_{f{\circ}g}$
such that $X+S(f,F)\subs S(f{\circ}g,G)$, i.e.
$x+S(f,F)\subs S(f{\circ}g,G)$ for all $x\in X$.

The set $\widetilde X$ we are looking for is
$$
  \widetilde X=\{x\in\Cset:x+S(f,F)\subs S(f{\circ}g,G)\}.
$$
Obviously $X\subs\widetilde X$. It is also obvious that
$\widetilde X+E\subs\widetilde X+S(f,F)\subs S(f{\circ}g,G)\in\EE$.
Thus it remains to show that $\widetilde X$ is $F_\sigma$.

For any $x\in\Cset$ and $k\in\Nset$ set
$$
  F_k^x=\{z+x\rest[f(k),f(k+1)):z\in F_k\}
$$
and consider the sequence $F^x=\seq{F_k^x}$.
Clearly $F^x\in\CCC_f$ and $S(f,F^x)=x+S(f,F)$.
Therefore $\widetilde X=\{x\in\Cset:S(f,F^x)\subs S(f{\circ}g,G)\}$.
Use Lemma~\ref{Einc3} to conclude that
$$
  x\in\widetilde X\Leftrightarrow
  \fmany n\in\Nset\ \forall k\in[g(n),g(n+1))\ F_k^x\subs\{z\rest[f(k),f(k+1)):z\in G_n\}.
$$
It follows that $\widetilde X$ is $F_\sigma$ as long as the sets
$$
  A_{n,k}=\{x\in\Cset:F_k^x\subs\{z\rest[f(k),f(k+1)):z\in G_n\}\}
$$
are closed.
Fix $n\in\Nset$ and $k\in[g(n),g(n+1))$. Unraveling the
definitions yields
$$
  x\in A_{n,k}\Leftrightarrow
  \exists y\in2^{[f(k),f(k+1))}\quad y\subs x\ \&\
  \forall z\in F_k\ \exists t\in G_n\
  z+y\subs t.
$$
Since all three sets $2^{[f(k),f(k+1))}$, $F_k$ and $G_n$ are finite, the set $A_{n,k}$ is even clopen.
We are done.
\end{proof}
\noindent
The proof of Theorem~\ref{mainME} is now complete.
\begin{coro}
Let $f:\Cset\to\Cset$ be a continuous mapping. If $X$ is $\MM$-additive, then so is $f(X)$.
\end{coro}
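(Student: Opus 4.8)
The plan is to read this off directly from Theorem~\ref{mainME} combined with the (elementary) fact, recorded in the Proposition of Section~\ref{sec:null}, that \uhnull{}ness is inherited by uniformly continuous images. First I would observe that $\Cset$ is compact, so the continuous map $f:\Cset\to\Cset$ is automatically uniformly continuous, and hence so is its restriction to $X$. Now suppose $X\subs\Cset$ is $\MM$-additive. By Theorem~\ref{mainME} (the equivalence of $\MM$-additivity with being \uhnull{}), $X$ is \uhnull{}. Since $f\rest X$ is uniformly continuous, $f(X)$ is a uniformly continuous image of the \uhnull{} set $X$, hence \uhnull{} by the cited Proposition. Finally $f(X)$ is again a subset of $\Cset$, so applying Theorem~\ref{mainME} in the other direction yields that $f(X)$ is $\MM$-additive, as desired.

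There is no real obstacle to overcome: the entire content of the argument is the substitution of the "fractal" characterization (\uhnull{}) for the "algebraic" one ($\MM$-additive), which makes stability under continuous self-maps of $\Cset$ transparent even though it is not at all obvious from the additive definition. The only point one must not forget is the passage from continuity to uniform continuity on the compact group $\Cset$. It is worth remarking that the very same three-step scheme --- pass to the dimension-theoretic description, apply preservation under uniformly continuous maps, pass back --- applies verbatim with Theorems~\ref{mainSN}, \ref{mainTT} and \ref{mainNN} in place of Theorem~\ref{mainME}, and thereby shows that a continuous image inside $\Cset$ of a strongly null set (resp.\ a \Tprime{}-set, resp.\ an $\NN$-additive set) again has the same property. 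This is precisely the remaining assertion of the abstract, and one could state it as a single corollary covering all four cases at once.
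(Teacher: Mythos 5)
Your argument is correct and is exactly the route the paper intends (the corollary is stated without proof as an immediate consequence of Theorem~\ref{mainME} and the preservation of \uhnull{} under uniformly continuous maps, with uniform continuity coming for free from the compactness of $\Cset$). Nothing to add.
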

\begin{coro}
If $X\subs\Cset$ is $\MM$-additive, then $\phi(X\times E)\in\EE$
for each $E\in\EE$ and every Lipschitz mapping $\phi:\Cset\times\Cset\to\Cset$.
\end{coro}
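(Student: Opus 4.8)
The plan is to imitate, for an arbitrary Lipschitz map, the argument carried out for the sum map $\sigma(x,y)=x+y$ inside the proof of Theorem~\ref{mainME}. Suppose $X\subs\Cset$ is $\MM$-additive; by Theorem~\ref{mainME} it is then \uhnull{}. Fix $E\in\EE$ and a Lipschitz map $\phi:\Cset\times\Cset\to\Cset$, say with Lipschitz constant $L$. First I would apply Theorem~\ref{prodUhnull}, exactly as in the proof of the implication \eqref{uhnull}$\Implies$\eqref{Eadd} of Theorem~\ref{mainME}, which gives $X\times E\in\NNs(\uhm_0^1)$. Next I would push this forward along $\phi$: since $\phi$ is Lipschitz, the $\phi$-image of a finite $\delta$-cover with small diameter-sum is a finite $L\delta$-cover with small diameter-sum, which is precisely the content of Lemma~\ref{lipschitz} at the level of the finitely additive content $\uhm_0^1$; hence $\phi(X\times E)\in\NNs(\uhm_0^1)$.

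It then remains to descend from $\uhm_0^1$-negligibility into $\EE$. As $\Cset$ is complete and $\phi(X\times E)\in\NNs(\uhm_0^1)$, Lemma~\ref{lem1}(iv) supplies a \si compact set $K\sups\phi(X\times E)$ with $\hm^1(K)=0$. A \si compact subset of $\Cset$ is an $F_\sigma$ set, and $\hm^1$ coincides with the Haar measure $\mu$ on Borel sets, so $K$ is a $\mu$-null $F_\sigma$ set, i.e.\ $K\in\EE$; since $\EE$ is an ideal, $\phi(X\times E)\subs K$ then yields $\phi(X\times E)\in\EE$. This last step is just the observation, already used silently in the proof of Theorem~\ref{mainME}, that every subset of $\Cset$ lying in $\NNs(\uhm_0^1)$ belongs to $\EE$.

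I do not expect a genuine obstacle: each of the three steps is a direct appeal to a result proved above, the corollary being essentially the case $\phi=\sigma$ of Theorem~\ref{mainME} with the sum map replaced by an arbitrary Lipschitz map. The one point that deserves care is keeping the three a priori distinct notions ``$\uhm^1(\cdot)=0$'', ``$\cdot\in\NNs(\uhm_0^1)$'' and ``$\cdot\in\EE$'' apart, and in particular running the Lipschitz-image step at the level of the content $\uhm_0^1$ and not of the merely \si additive measure $\uhm^1$, just as in the analogous passage of the proof of Theorem~\ref{mainME}.
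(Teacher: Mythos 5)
Your proof is correct and is exactly the argument the paper intends: the corollary is left unproved there because it is the implication \eqref{uhnull}$\Implies$\eqref{Eadd} of Theorem~\ref{mainME} with the sum map replaced by an arbitrary Lipschitz $\phi$, namely $X\times E\in\NNs(\uhm_0^1)$ via Theorem~\ref{prodUhnull}, push-forward by Lemma~\ref{lipschitz}, and then $\NNs(\uhm_0^1)\subs\EE$ via Lemma~\ref{lem1}(iv). Your remark about running the Lipschitz step at the level of the content $\uhm_0^1$ rather than of $\uhm^1$ is the right point of care and is precisely how the paper's own use of Lemma~\ref{lipschitz} must be read.
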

Recall that \emph{transitive additivity} of an ideal $\mc J$ is defined by
$$
  \add^\star\mc J=\min\{\abs{X}:\exists J\in\mc J\ X+J\notin\mc J\}.
$$
Transitive additivity and other transitive coefficients are
investigated in detail in \cite[2.7]{MR1350295}. The following is an obvious consequence
of the equivalence $\MM$-additive $\Leftrightarrow$ $\EE$-additive.
\begin{coro}
$\add^\star\EE=\add^\star\MM$.
\end{coro}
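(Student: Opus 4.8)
The plan is to read the equality off directly from Theorem~\ref{mainME}, since all the substantive work has already been done there. First I would unwind the definition: for an ideal $\mc J$ on $\Cset$ one has
$$
  \add^\star\mc J=\min\bigl\{\abs X:X\subseteq\Cset\ \text{is not}\ \mc J\text{-additive}\bigr\},
$$
because by definition $\add^\star\mc J=\min\{\abs X:\exists J\in\mc J\ X+J\notin\mc J\}$, and the existence of a $J\in\mc J$ with $X+J\notin\mc J$ is precisely the failure of $X$ to be $\mc J$-additive.

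Next I would invoke the equivalence of $\MM$-additivity and $\EE$-additivity for subsets of $\Cset$ furnished by Theorem~\ref{mainME} (conditions (ii) and (iii) there). Taking contrapositives, a set $X\subseteq\Cset$ fails to be $\MM$-additive exactly when it fails to be $\EE$-additive, so the two families
$$
  \bigl\{X\subseteq\Cset:X\ \text{not}\ \MM\text{-additive}\bigr\}
  \quad\text{and}\quad
  \bigl\{X\subseteq\Cset:X\ \text{not}\ \EE\text{-additive}\bigr\}
$$
are literally the same collection of sets. Consequently the two associated sets of cardinalities $\{\abs X:X\ \text{not}\ \MM\text{-additive}\}$ and $\{\abs X:X\ \text{not}\ \EE\text{-additive}\}$ coincide, and in particular so do their minima, which is the asserted identity $\add^\star\EE=\add^\star\MM$.

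There is essentially no obstacle here beyond matching the two definitions correctly; the entire content sits inside Theorem~\ref{mainME}, proved via Propositions~\ref{MeShelah} and~\ref{EisEsharp} together with the chain of implications given in its proof. The only point worth a brief remark is that both invariants are genuine cardinals $\leq\co$ (and, since $\MM,\EE$ are \si ideals closed under translation, in fact $\geq\aleph_1$), because $\Cset$ itself is a subset of $\Cset$ that is neither $\MM$-additive nor $\EE$-additive; but this is not needed for the equality, which holds regardless of any convention about the value in the degenerate case.
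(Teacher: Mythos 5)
Your argument is correct and is exactly the paper's intended justification: the paper states this corollary as an ``obvious consequence'' of the equivalence $\MM$-additive $\Leftrightarrow$ $\EE$-additive from Theorem~\ref{mainME}, which is precisely the unwinding of definitions you carry out. Nothing further is needed.
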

Recall that a set $X\subs\Cset$ is \emph{transitively meager}
(or \emph{meager in the transitive sense}, or an \emph{$\mc{AFC}'$-set})
if for every perfect set $P\subs\Cset$ there is an
$F_\sigma$-set $F\sups X$ such that $(F+t)\cap P$ is meager in $P$ for all $t\in\Cset$.
These sets are investigated e.g.~in~\cite{MR1905154}.
One can prove that if $X$ is $\MM$-additive and $Y$ is transitively meager, then
$X+Y$ is transitively meager, i.e.~that $\MM$-additive sets are $\mc{AFC}'$-additive, but
that requires a nontrivial proof.
Nowik, Scheepers and Weiss~\cite[Theorem 9]{MR1610427}
have that every strongly meager set $X\subs\Cset$
(i.e.~$X+N\neq\Cset$ for all $N\in\NN$) is transitively meager.
The following statement follows at once from Lemma~\ref{lemUM}.
\begin{coro}
Every $\MM$-additive set is universally meager and transitively meager.
\end{coro}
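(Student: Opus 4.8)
The plan is to reduce both assertions to facts already in hand. By Theorem~\ref{mainME} every $\MM$-additive subset of $\Cset$ is \uhnull, so the ``universally meager'' half is nothing but Theorem~\ref{umg} applied to such a set. Hence the only thing that really needs argument is transitive meagerness, and for that I would simply unwrap Lemma~\ref{lemUM} with a suitable choice of the family $\mc F$.

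Concretely, I would apply Lemma~\ref{lemUM} taking the source space $Z$ and the ambient target $X$ of the lemma both to be $\Cset$, and letting $\mc F=\{\sigma_t:t\in\Cset\}$ be the family of all translations $\sigma_t\colon x\mapsto x+t$ of $\Cset$. Each $\sigma_t$ is an isometry of $\Cset$ for the least-difference metric, hence uniformly continuous; and since every member of $\mc F$ is an isometry, $\mc F$ is (uniformly) equicontinuous, so the hypotheses of Lemma~\ref{lemUM} on $\mc F$ are met.

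Now fix an arbitrary perfect set $P\subs\Cset$. Apply Lemma~\ref{lemUM} with $Y=P$, viewed as a perfect Polish space in its subspace topology, and $\phi\colon P\hookrightarrow\Cset$ the inclusion, which is continuous and one--to--one. If $X\subs\Cset$ is $\MM$-additive, then $X$ is \uhnull, so the lemma yields a \si compact — hence $F_\sigma$ — set $F\sups X$ such that $\phi^{-1}(\sigma_t(F))=(F+t)\cap P$ is meager in $P$ for every $t\in\Cset$. Since $P$ was an arbitrary perfect subset of $\Cset$, this is precisely the statement that $X$ is transitively meager.

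The verifications needed are all routine and none is a genuine obstacle: that $\mc F$ is equicontinuous is immediate once one notes the translations are isometries, and that a \si compact subset of $\Cset$ is $F_\sigma$ is clear because $\Cset$ is compact metric. In short, the corollary is essentially a repackaging of Lemma~\ref{lemUM} and Theorem~\ref{umg} through the equivalence ``$\MM$-additive $\Leftrightarrow$ \uhnull'' of Theorem~\ref{mainME}.
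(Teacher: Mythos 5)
Your proposal is correct and follows essentially the same route as the paper: universal meagerness via Theorem~\ref{umg}, and transitive meagerness by applying Lemma~\ref{lemUM} with $Z=X=\Cset$, $Y=P$, $\phi$ the inclusion of $P$, and $\mc F$ the family of translations. The additional routine checks you spell out (translations are isometries, hence $\mc F$ is equicontinuous; a \si compact set is $F_\sigma$) are exactly the ones the paper leaves implicit.
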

\begin{proof}
Let $E\subs\Cset$ be $\MM$-additive, i.e.~\uhnull{}. It is universally meager by
Theorem~\ref{umg}.
To show it is transitively meager, let $P\subs\Cset$
be a perfect set and apply Lemma~\ref{lemUM}
with $Z=X=\Cset$, $Y=P$, $\phi=\id_P$ and $\mc F=\{x\mapsto x+t:t\in\Cset\}$.
\end{proof}

\section{\upnull{} sets vs.~$\NN$-additive sets}\label{sec:nadd}

The following theorem in particular shows that a set in $\Cset$ is \upnull{}
if and only if it is $\NN$-additive.

\begin{thm}\label{mainNN}
For any set $X\subs\Cset$, the following are equivalent.
\begin{enum}
\item $X$ is \upnull,
\item $X$ is $\NN$-additive,
\item $X$ is sharply $\NN$-additive,
\item $\hm^1(X\times N)=0$ for each $N\in\NN$.
\end{enum}
\end{thm}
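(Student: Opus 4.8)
The plan is to close a cycle of implications, in the spirit of the proof of Theorem~\ref{mainME}. Three of the legs are quick. The implication (i)$\Implies$(iv) is precisely Proposition~\ref{XN}(ii). For (iv)$\Implies$(ii), the group operation $a:\Cset\times\Cset\to\Cset$, $a(x,y)=x+y$, is $1$-Lipschitz for the least-difference metric, so Lemma~\ref{lipschitz} gives $\hm^1(X+N)=\hm^1\bigl(a(X\times N)\bigr)\leq\hm^1(X\times N)=0$; since $\hm^1$ agrees with the Haar measure $\mu$ on Borel sets and is $G_\del$-regular, $X+N$ is contained in a $\mu$-null set, i.e.~$X+N\in\NN$. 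The implication (iii)$\Implies$(ii) is immediate from the definition of sharp $\NN$-additivity. It therefore remains to prove (ii)$\Implies$(i) and (ii)$\Implies$(iii), which I would extract as two standalone propositions parallel to Propositions~\ref{MeShelah} and~\ref{EisEsharp}.

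For (ii)$\Implies$(i) I would invoke Shelah's combinatorial characterization of $\NN$-additive sets (\cite{MR1324470}; see also \cite{MR1350295}). Its crucial feature, not shared by Lemma~\ref{ShelahM} for the meager ideal, is that there is a fixed $b\in\Pset$ such that for every sufficiently fast $f\in\UPset$ one obtains a sequence $\seq{S_n}$ with $S_n\subseteq2^{[f(n),f(n+1))}$, $\abs{S_n}\leq b(n)$, and $\forall x\in X\,\fmany n\ x\rest[f(n),f(n+1))\in S_n$. Fix $h\in\HH$ and choose $f$ increasing fast enough that $h(2^{-f(m)})\leq 2^{-m}\bigl(\prod_{n\leq m}b(n)\bigr)^{-1}$ for all $m$. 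Put $X_k=\{x\in X:\forall n\geq k\ x\rest[f(n),f(n+1))\in S_n\}$; then $X_k\upto X$, and since the cylinders of length $f(m+1)$ meeting $X_k$ are exactly the ones consistent with $S_k,\dots,S_m$ and arbitrary on the first $f(k)$ coordinates, we get $N_{X_k}(2^{-f(m+1)})\leq 2^{f(k)}\prod_{k\leq n\leq m}\abs{S_n}$, which together with the choice of $f$ yields $\ubox_0^h(X_k)=0$. Hence $X\in\NNs(\ubox_0^h)$ for every $h\in\HH$, so $X$ is \upnull{} by Theorem~\ref{basicPnull}(iv). This is the scheme of Proposition~\ref{MeShelah}, but the width bound $b$ available in the null case upgrades the Hausdorff-measure estimate there to the box-content estimate needed here.

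For (ii)$\Implies$(iii) I would follow Proposition~\ref{EisEsharp} with the base of $\NN$ due to Bartoszynski and Shelah (\cite{MR1186905}; see also \cite{MR1350295}) in place of that of $\EE$: every null set lies in some $N(f,F)=\{z\in\Cset:\emany n\ z\rest[f(n),f(n+1))\in F_n\}$ with $\sum_n\abs{F_n}2^{-(f(n+1)-f(n))}<\infty$, and (analogue of Lemma~\ref{2.6.3}) one may take the underlying partition to be a coarsening of any prescribed one. The key preliminary is the analogue of Lemma~\ref{Einc3}, which I would prove by passing to complements: $N(f,F)\subseteq N(f{\circ}g,G)$ holds if and only if for all but finitely many $m$ one has $G_m\supseteq\{w\in2^{[f(g(m)),f(g(m+1)))}:\exists n\in[g(m),g(m+1))\ w\rest[f(n),f(n+1))\in F_n\}$, a finitary condition. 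Given $N\in\NN$, take $N\subseteq N(f,F)$, use $\NN$-additivity and the coarsening lemma to find $g,G$ with $x+N(f,F)\subseteq N(f{\circ}g,G)$ for all $x\in X$, and set $\widetilde X=\{x\in\Cset:x+N(f,F)\subseteq N(f{\circ}g,G)\}$. Then $X\subseteq\widetilde X$ and $\widetilde X+N\subseteq\widetilde X+N(f,F)\subseteq N(f{\circ}g,G)\in\NN$; and since $x+N(f,F)=N(f,F^x)$ with $F^x_n=\{s+x\rest[f(n),f(n+1)):s\in F_n\}$, the finitary criterion shows that ``$x\in\widetilde X$'' is of the form $\fmany m\ (x\in A_{m})$ with each $A_m$ clopen, so $\widetilde X$ is $F_\sigma$, hence \si compact.

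The step I expect to be the main obstacle is the quantitative bookkeeping in these two combinatorial arguments, and especially the second one: in (ii)$\Implies$(i) one must calibrate the growth of $f$ against a prescribed $h$ so that the (possibly near-factorial) products of the slalom widths are absorbed into $1/h$; and in (ii)$\Implies$(iii) one must establish the correct finitary reformulation of the containment $N(f,F)\subseteq N(f{\circ}g,G)$ between base sets of the null ideal, which are built with $\emany$ rather than the $\fmany$ of Lemma~\ref{Einc3} and so behave somewhat less transparently.
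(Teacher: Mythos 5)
Your proposal is correct, and three of its legs coincide with the paper's: (i)\Implies(iv) is quoted from Proposition~\ref{XN}, (iv)\Implies(ii) is the same Lipschitz-image argument, and (ii)\Implies(i) is essentially the paper's proof via Shelah's slalom characterization (Lemma~\ref{ShelahN}, where the width bound is $\abs{H_n}\leq n$, so your $b(n)=n$), with the same calibration of $f$ against $h$ so that the products of slalom widths are absorbed into $1/h$. Where you genuinely diverge is in how sharpness is obtained. The paper proves (i)\Implies(iii) directly and very cheaply: given $N\in\NN$, pick $h\prec1$ with $\hm^h(N)=0$ and $g$ with $gh\geq1$; since $X$ is \upnull{}, Theorem~\ref{basicPnull}(iv) combined with Lemma~\ref{lemP}(iii) already produces a \si compact $F\sups X$ with $\ubox^g(F)=0$, and then Lemma~\ref{howroyd}(ii) gives $\hm^1(F\times N)\leq\ubox^g(F)\,\hm^h(N)=0$, whence $F+N\in\NN$. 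In other words, the \si compact hull needed for sharpness is built into the box-measure machinery, so no analogue of Proposition~\ref{EisEsharp} is required; the same three lines also dispatch (iv)\Implies(ii). Your alternative (ii)\Implies(iii), modeled on Proposition~\ref{EisEsharp} with the Bartoszy\'nski--Shelah base of $\NN$, does work: the coarsening lemma holds for $\NN$ just as for $\EE$, and your finitary criterion for $N(f,F)\subs N(f{\circ}g,G)$ is the right one --- the ``only if'' direction goes through because $\sum_m\abs{G_m}\,2^{-(f(g(m+1))-f(g(m)))}<\infty$ forces $G_m\neq2^{[f(g(m)),f(g(m+1)))}$ for all large $m$, so the diagonal point can be steered off $G_m$ on every non-witnessing block while still hitting $F_{n_m}$ infinitely often. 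But this route costs two new combinatorial lemmas that the paper avoids entirely. What your version buys is a \si compact hull $\widetilde X$ defined purely from the combinatorial data, independent of the product inequalities for fractal measures; what the paper's version buys is brevity, by recycling machinery already in place for Theorems~\ref{prodPnull} and~\ref{mainME}.
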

We employ Shelah's~\cite{MR1324470} characterization of $\NN$-additive sets.
\begin{lem}[{\cite[Theorem 2.7.18]{MR1350295}}]\label{ShelahN}
$X\subs\Cset$ is $\NN$-additive if and only if
for each $f\in\UPset$
there is a sequence $\seq{H_n:n\in \Nset}$ such that
\begin{enum}
\item $\forall n\ H_n\subs 2^{[f(n),f(n+1))}$,
\item $\forall n\ \abs{H_n}\leq n$,
\item $X\subs\{x\in\Cset:\forall^\infty n\
       x\rest [f(n),f(n+1))\in H_n\}$.
\end{enum}
\end{lem}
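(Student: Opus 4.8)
The plan is to prove both directions through the classical Borel--Cantelli description of the null ideal: a set is null precisely when it is contained in a set of the form $\{z\in\Cset:\emany n\ z\rest I_n\in J_n\}$, where $\seq{I_n}$ is a partition of $\Nset$ into consecutive intervals and $J_n\subs2^{I_n}$ satisfies $\sum_n\abs{J_n}2^{-\abs{I_n}}<\infty$; conversely every such set is null. Writing $I_n=[f(n),f(n+1))$ identifies interval partitions of $\Nset$ with strictly increasing $f$, which are in particular in $\UPset$, and I pass between the two freely. For $H,J\subs2^{I_n}$ put $H+J=\{a+b:a\in H,\,b\in J\}$, so that $\abs{H+J}\le\abs H\,\abs J$. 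I will also use the quantitative form of the description: a null set admits Borel--Cantelli covers on arbitrarily coarse partitions with block measures $\mu_n=\abs{J_n}2^{-\abs{I_n}}$ decaying as fast as prescribed.

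For the direction from the combinatorial condition to additivity, let $N\in\NN$; I must show $X+N\in\NN$. First cover $N$ by a Borel--Cantelli set on a partition $I_n=[f(n),f(n+1))$ chosen coarse enough that $\sum_n n\,\mu_n<\infty$ (the strengthened summability, available by the quantitative description). Apply the hypothesis to this $f$ to obtain $\seq{H_n}$ with $H_n\subs2^{I_n}$, $\abs{H_n}\le n$, such that every $x\in X$ satisfies $x\rest I_n\in H_n$ for all but finitely many $n$. Given $x\in X$ and $y\in N$, for cofinitely many $n$ we have $x\rest I_n\in H_n$ and for infinitely many $n$ we have $y\rest I_n\in J_n$; on the infinitely many $n$ where both hold, $(x+y)\rest I_n=x\rest I_n+y\rest I_n\in H_n+J_n$. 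Hence $X+N\subs\{z:\emany n\ z\rest I_n\in H_n+J_n\}$, and since $\sum_n\abs{H_n+J_n}2^{-\abs{I_n}}\le\sum_n n\,\mu_n<\infty$, this cover is null. Thus $X+N\in\NN$.

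For the converse, assume $X$ is $\NN$-additive and fix $f\in\UPset$; I must manufacture the slalom. The strategy is to feed additivity a \emph{single} tailored null set $N_f$ and then read the slalom off a Borel--Cantelli cover of $X+N_f$. I would work on a coarsening $\seq{I'_m}$ whose blocks are unions of consecutive $f$-blocks, pick $J_m\subs2^{I'_m}$ with $\sum_m\abs{J_m}2^{-\abs{I'_m}}<\infty$, and set $N_f=\{y:\emany m\ y\rest I'_m\in J_m\}$, which is null. Unwinding the addition, $z\in X+N_f$ iff $\exists x\in X\ \emany m\ x\rest I'_m\in z\rest I'_m+J_m$. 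Since $X$ is $\NN$-additive, $X+N_f$ is null, so for almost every $z$ every $x\in X$ eventually avoids $z\rest I'_m+J_m$. The task is to convert this ``eventual avoidance, for a large set of $z$'' into a narrow slalom capturing $X$; a coarse slalom $\seq{H'_m}$ then transfers to the original blocks by $H_n=\{w\rest I_n:w\in H'_m\}$ for $n$ in the $m$-th group, the coarsening being arranged so that $\abs{H'_m}\le\min\{n:I_n\subs I'_m\}$, which forces $\abs{H_n}\le n$.

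The main obstacle is exactly this extraction, and the tension is visible already: reading a slalom off a single $z\notin X+N_f$ only yields the set $2^{I'_m}\setminus(z\rest I'_m+J_m)$, whose width is large unless $J_m$ is nearly full --- which is incompatible with $N_f$ being null, as $\sum_m\abs{J_m}2^{-\abs{I'_m}}<\infty$ forces $J_m$ small. The genuine Bartoszynski--Shelah core resolves this not by a single translate but by an averaging (Fubini) estimate over many sub-blocks of each $I'_m$ and many $z$: one chooses $J_m$ as a suitably spread-out union over sub-blocks and shows that if \emph{no} width-$n$ slalom captured $X$, then for a positive-outer-measure set of $z$ some $x\in X$ would meet $z\rest I'_m+J_m$ for infinitely many $m$, making $X+N_f$ non-null and contradicting additivity; the narrow slalom is then read off the complementary full-measure set of good $z$ by the same counting estimate. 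Carrying out this measure estimate with the correct bookkeeping of block lengths against slalom widths is where the real work lies, while the remaining steps --- the coarse-to-fine transfer and the summability arithmetic --- are routine.
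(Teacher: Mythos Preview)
The paper does not prove this lemma: it is quoted verbatim from Bartoszy\'nski--Judah \cite[Theorem~2.7.18]{MR1350295} and used as a black box in the proof of Theorem~\ref{mainNN}. So there is no ``paper's own proof'' to compare against; I can only assess your attempt on its merits.

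Your forward direction (slalom condition $\Rightarrow$ $\NN$-additive) is correct and complete: the Borel--Cantelli description of $\NN$ does allow covers with arbitrarily fast prescribed decay, and once $\sum_n n\mu_n<\infty$ the containment $X+N\subs\{z:\emany n\ z\rest I_n\in H_n+J_n\}$ together with $\abs{H_n+J_n}\le n\abs{J_n}$ finishes it.

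Your converse, however, is not a proof but an outline, and you say so yourself: ``Carrying out this measure estimate with the correct bookkeeping of block lengths against slalom widths is where the real work lies.'' That is precisely the nontrivial part of Shelah's argument, and you have not done it. You have correctly diagnosed why the na\"\i ve approach (read the slalom off a single $z\notin X+N_f$) fails --- the complement of a single translate is far too wide --- and you gesture at the right fix (a Fubini/averaging argument over many $z$ and many sub-blocks), but the actual construction of $N_f$ and the counting estimate that forces a narrow slalom are absent. As written this direction is a plan, not a proof; if you want a self-contained argument you must either reproduce the Bartoszy\'nski--Shelah combinatorics in full or, as the paper does, simply cite the result.
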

\begin{proof}[Proof of Theorem~\ref{mainNN}]
(i)\Implies(iii):
Suppose $X$ is \upnull{} and $N\in\NN$. By Lemma~\ref{lemHaus}(ii) there is $h\prec 1$
such that $\hm^h(N)=0$. Let $g\in\HH$ be such that $gh\geq1$.
Then Theorem~\ref{basicPnull}(iv) and Lemma~\ref{lemP}(iii) yield a \si compact set $F$ such that
$\ubox^g(F)=0$. Apply Lemma~\ref{howroyd}(ii) to get
$$
  \hm^1(F\times N)\leq\hm^{gh}(F\times N)\leq\ubox^g(F)\hm^h(N)=0.
$$
Since $(x,y)\mapsto x+y$ is clearly a Lipschitz mapping, Lemma~\ref{lipschitz} yields
$\hm^1(F+N)=0$, i.e.~$F+N\in\NN$, as required.

This argument also proves (iv)\Implies(ii). (i)\Implies(iv) is nothing but Lemma~\ref{XN}(ii) and
(iii)\Implies(ii) is trivial.

(ii)\Implies(i):
Suppose that $X\subs\Cset$ is $\NN$-additive.
Let $h\in\HH$. We verify that $\ubox^h(X)\leq 1$.
Choose $F\in\UPset$ to satisfy $F(n)\leq\frac{1}{h(2^{1-n})}$.
Define recursively $f\in\UPset$ subject to
$$
  2^{f(n)}\, (n+1)!\leq
  F(f(n+1)),\quad n\in\Nset.
$$
Obviously
\begin{equation}\label{fuj}
  \forall n\,\, \forall k>n\quad
  2^{f(n)}\, k!\leq
  F(f(k)).
\end{equation}
Let $\seq{H_n}$ be the sequence guaranteed by the lemma.
Set
$$
  X_n=\{x\in\Cset:\forall k\geq n\ x\rest[f(k),f(k+1))\in H_n\},\quad n\in\Nset.
$$
We verify that $N_{X_n}(2^{-i})\leq F(i)$
for each $n$ and all $i\geq f(n+1)$. Let $k$ be the unique integer satisfying
$f(k)\leq i<f(k+1)$. In particular, $k>n$. It is obvious that
$$
  N_{X_n}(2^{-i})\leq 2^{f(n)}
  \abs{H_n}\cdot\abs{H_{n+1}}\cdot\dots\cdot\abs{H_k}\leq
  2^{f(n)}n(n+1)\dots k.
$$
Therefore~\eqref{fuj} yields $N_{X_n}(2^{-i})\leq F(f(k))\leq F(i)$.
The definition of $F$ thus yields $N_{X_n}(2^{-i})h(2^{1-i})\leq 1$ and
therefore
$$
  \ubox_0^h(X_k)=\varlimsup_{r\to0}N_{X_k}(r)\cdot h(r)
  \leq\varlimsup_{i\to\infty}N_{X_k}(2^{-i})\cdot h(2^{1-i})
  \leq 1.
$$
Condition (iii) of Lemma~\ref{ShelahN} ensures that $X_n\upto X$.
Therefore $\ubox^h(X)\leq1$ by Lemma~\ref{IncreasingSetsLemma}.
\end{proof}

\begin{coro}
Let $X\subs\Cset$ and $f:\Cset\to\Cset$ a continuous mapping.
If $X$ is $\NN$-additive, then so is $f(X)$.
\end{coro}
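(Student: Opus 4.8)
The plan is to reduce the statement to the equivalence ``\upnull{} $\Leftrightarrow$ $\NN$-additive'' from Theorem~\ref{mainNN}, using the fact that \upnull{} is preserved by uniformly continuous (in particular continuous, since $\Cset$ is compact) images. Specifically, suppose $X\subs\Cset$ is $\NN$-additive. By Theorem~\ref{mainNN}, $X$ is \upnull{}. Since $f:\Cset\to\Cset$ is continuous on a compact space, it is uniformly continuous, so by the Proposition asserting that uniformly continuous images of \upnull{} sets are \upnull{}, the image $f(X)$ is \upnull{}. Applying Theorem~\ref{mainNN} once more, now to $f(X)\subs\Cset$, we conclude $f(X)$ is $\NN$-additive.

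So the proof is essentially a three-line chain: $\NN$-additive $\overset{\ref{mainNN}}{\Longrightarrow}$ \upnull{} $\overset{\text{unif.\ cont.\ image}}{\Longrightarrow}$ \upnull{} $\overset{\ref{mainNN}}{\Longrightarrow}$ $\NN$-additive. The only points to spell out are that continuity plus compactness of $\Cset$ gives uniform continuity, and that $f(X)$ is again a subset of $\Cset$ so Theorem~\ref{mainNN} applies to it. There is no real obstacle here; the substance was already done in Theorem~\ref{mainNN} and in establishing that \upnull{} is a uniform invariant. The role of this corollary is simply to record that $\NN$-additivity, which a priori is an algebraic property tied to the group structure, turns out to be preserved by arbitrary continuous self-maps of $\Cset$ — a fact that is not obvious from the combinatorial characterization but becomes transparent through the fractal-dimension reformulation.

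\begin{proof}
Since $\Cset$ is compact, $f$ is uniformly continuous. If $X$ is $\NN$-additive, then $X$ is \upnull{} by Theorem~\ref{mainNN}. As a uniformly continuous image of a \upnull{} set is \upnull{}, $f(X)$ is \upnull{}. Applying Theorem~\ref{mainNN} to $f(X)\subs\Cset$ we conclude that $f(X)$ is $\NN$-additive.
\end{proof}
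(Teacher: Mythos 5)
Your proof is correct and is exactly the argument the paper intends: the corollary is stated without proof as an immediate consequence of Theorem~\ref{mainNN} together with the preservation of \upnull{} under uniformly continuous images and the compactness of $\Cset$.
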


\section{\Dpnull{} sets vs.~\Tprime-sets}
\label{sec:TT}
Inspired by Shelah's theorem (cf.~Lemma~\ref{ShelahN}) Nowik and Weiss~\cite{MR1905154}
introduced and investigated the following notion.

\begin{defn}[{\cite{MR1905154}}]\label{defT'}
$X$ is called a \Tprime{}\emph{-set} if there exists $g\in\Pset$
such that for each $f\in\UPset$ there is
$I\in[\Nset]^\Nset$ and a sequence $\seq{H_n:n\in I}$ such that
\begin{enum}
\item $\forall n\in I\ H_n\subs 2^{[f(n),f(n+1))}$,
\item $\forall n\in I\ \abs{H_n}\leq g(n)$,
\item $X\subs\{x\in\Cset:\forall^\infty n\in I\
       x\rest [f(n),f(n+1))\in H_n\}$.
\end{enum}
\end{defn}
They proved a number of results on \Tprime{}-sets, e.g.~that \Tprime{}-sets
are Ramsey null. By proving that every $\gamma$-set is \Tprime{}
they showed that $\NN$-additive sets are consistently a proper subclass of \Tprime{}-sets.
They also proved that every \Tprime{}-set is $\MM$-additive and provided a CH example
of an $\MM$-additive set that is not \Tprime{}.

Nowik and Weiss ask at the end of~\cite{MR1905154} if \Tprime{}-sets coincide with
$\EE$-additive sets. In view of Theorem~\ref{mainME}, their example proves that it is
not so: Every \Tprime{}-set is $\EE$-additive, but under CH the converse fails.
To date it is not known if there is some natural ideal $\mc J$ such that \Tprime{}-sets
coincide with $\mc J$-additive sets.

In this section we prove that \Tprime{}-sets coincide with \dpnull{} sets.

\begin{thm}
Let $X\subs\Cset$. The following are equivalent.
\begin{enum}
\item $X$ is \dpnull,
\item $X$ is a \Tprime{}-set.
\end{enum}
\label{mainTT}
\end{thm}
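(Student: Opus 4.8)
The natural strategy is to match the combinatorial description of \dpnull{} sets from Theorem~\ref{combDnull}(iii) against Definition~\ref{defT'} of \Tprime{}-sets, working throughout in $\Cset$ with the metric $d(x,y)=2^{-n(x,y)}$. The key dictionary is: a cylinder $[p]$ with $p\in2^m$ has diameter $2^{-m}$, so ``sets of diameter $\le\eps_n$'' should be read as ``families of cylinders of some fixed length'', and a set of the form $\{x:x\rest[f(n),f(n+1))\in H_n\}$ is exactly $\bigcup\{[q]:q\in 2^{f(n+1)},\ q\rest[f(n),f(n+1))\in H_n\}$, a union of $|H_n|\cdot 2^{f(n)}$ cylinders of diameter $2^{-f(n+1)}$.

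\textbf{(ii)\Implies(i).} Suppose $X$ is \Tprime{}, witnessed by $g\in\Pset$. I will verify condition (iii) of Theorem~\ref{combDnull} with the single function $f_0\in\Pset$ given by, say, $f_0(n)=2^n\,g(n)$ (any $\Pset$ function dominating $n\mapsto 2^{f(n)}g(n)$ along the relevant scale will do; the exact choice is a routine bookkeeping matter). Given $\Seqeps$, one first converts it into an $f\in\UPset$ by choosing $f$ to grow fast enough that $2^{-f(n+1)}\le\eps_{\text{(appropriate index)}}$; feeding this $f$ into the \Tprime{} property yields $I\in[\Nset]^\Nset$ and $\seq{H_n:n\in I}$ as in Definition~\ref{defT'}. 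For $n\in I$ let $\mc F_n$ be the family of the $|H_n|\cdot 2^{f(n)}\le g(n)\,2^{f(n)}\le f_0(n)$ cylinders $[q]$, $q\in 2^{f(n+1)}$, with $q\rest[f(n),f(n+1))\in H_n$; each has diameter $2^{-f(n+1)}\le\eps_n$. Condition (iii) of the Definition says precisely that $\{\bigcup\mc F_n:n\in I\}$ is a $\gamma$-cover of $X$. This is exactly condition (iii) of Theorem~\ref{combDnull}, so $X$ is \dpnull{}. The only care needed is aligning the index set of $\seqeps$ with the blocks $[f(n),f(n+1))$ so that the diameter bound matches $\eps_n$ rather than $\eps_{f(n)}$; this is the standard reindexing already used in the proofs of Lemmas~\ref{seqep3} and~\ref{combDnull}.

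\textbf{(i)\Implies(ii).} Conversely, suppose $X$ is \dpnull{}, so by Theorem~\ref{combDnull}(iii) there is $f_0\in\Pset$ witnessing the combinatorial property. I will produce the $g\in\Pset$ required by Definition~\ref{defT'}. Given an arbitrary $f\in\UPset$, set $\eps_n=2^{-f(n+1)}$ (so $d\mc F\le\eps_n$ forces members of $\mc F$ to lie inside single cylinders of length $\ge f(n+1)$), apply Theorem~\ref{combDnull}(iii) to get $I\in[\Nset]^\Nset$ and families $\{\mc F_n:n\in\Nset\}$ with $d\mc F_n\le\eps_n$, $|\mc F_n|\le f_0(n)$, and $\{\bigcup\mc F_n:n\in I\}$ a $\gamma$-cover of $X$. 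For $n\in I$, each member of $\mc F_n$ is contained in a cylinder $[q]$ with $q\in 2^{f(n+1)}$; replacing each set by such a cylinder only enlarges $\bigcup\mc F_n$, so the $\gamma$-cover property is preserved, and now $\bigcup\mc F_n=\bigcup\{[q]:q\in Q_n\}$ for some $Q_n\subs 2^{f(n+1)}$ with $|Q_n|\le f_0(n)$. Put $H_n=\{q\rest[f(n),f(n+1)):q\in Q_n\}$; then $H_n\subs 2^{[f(n),f(n+1))}$, $|H_n|\le f_0(n)$, and $\bigcup\mc F_n\subs\{x:x\rest[f(n),f(n+1))\in H_n\}$, so the $\gamma$-cover property gives condition (iii) of Definition~\ref{defT'}. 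Taking $g=f_0$ completes the verification.

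\textbf{Main obstacle.} There is no deep difficulty here once Theorem~\ref{combDnull} is in hand — the content of the theorem is essentially that Definition~\ref{defT'} and Theorem~\ref{combDnull}(iii) are the same statement read through the cylinder/metric dictionary. The one point requiring genuine attention is the \emph{index alignment}: \dpnull{}'s combinatorial form quantifies over sequences $\seqeps$ indexed by all of $\Nset$ with the constraint $d\mc F_n\le\eps_n$, whereas \Tprime{} quantifies over $f\in\UPset$ with blocks $[f(n),f(n+1))$ indexed by $\Nset$ but whose lengths are governed by $f$. One must choose $\eps_n$ as a function of $f(n+1)$ (and conversely build $f$ from $\seqeps$) carefully enough that the bound $|\mc F_n|\le f_0(n)$ translates to $|H_n|\le g(n)$ with the \emph{same} $n$; a crude choice would land one with $|H_n|\le f_0(f(n))$, which is not of the form required. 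This is handled exactly as in Lemma~\ref{seqep3}, by absorbing the discrepancy into the (fixed, universally quantified) function $g$, which is allowed to be as large as we like.
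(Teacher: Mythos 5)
Your direction (i)\Implies(ii) is essentially the paper's argument: set $\eps_n=2^{-f(n+1)}$, invoke the combinatorial characterization of \dpnull{} to get cylinder families, and read off $H_n$ by restricting the generating strings to the block $[f(n),f(n+1))$; the indices align with no friction, and whether one ends with $g(n)=n$ (from Theorem~\ref{combDnull}(ii), as the paper does) or $g=f_0$ (from (iii), as you do) is immaterial.

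Direction (ii)\Implies(i) is where you genuinely diverge. The paper does not pass through Theorem~\ref{combDnull} at all: it fixes a Hausdorff function $h$, encodes it into a function $G\in\UPset$ with $G(n)\leq 1/h(2^{-n})$, chooses $f$ recursively so that $2^{f(n)}g(n)\leq G(f(n+1))$, and verifies $\lbox_0^h(X_k)\leq1$ for the tail sets $X_k$ directly, concluding via Theorem~\ref{basicDpnull}(v). The advantage of that route is that the troublesome factor $2^{f(n)}$ (the number of free coordinates below $f(n)$) is absorbed into the recursive choice of $f(n+1)$ relative to $G$, so no reindexing is ever needed. Your route through Theorem~\ref{combDnull}(iii) can be made to work, but be aware that your displayed inequality $\abs{H_n}\cdot2^{f(n)}\leq g(n)\,2^{f(n)}\leq f_0(n)$ with $f_0(n)=2^n g(n)$ is false as written: it would require $f(n)\leq n$, while the diameter constraint forces $f$ to grow as fast as $\seqeps$ decays, and $f_0$ must be fixed \emph{before} $\seqeps$ is given. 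The repair is exactly the reindexing you allude to, but it must be carried out, and in this direction the slack goes into $f_0$ (and into the placement of the families along the infinite index set), not into $g$, which here is given by the \Tprime{} hypothesis: recursively set $m(n)=\max(f(n),n,m(n-1)+1)$, then choose $f(n+1)$ so large that $2^{-f(n+1)}\leq\eps_{m(n)}$, place the family of $2^{f(n)}\abs{H_n}$ cylinders at index $m(n)$, and check $2^{f(n)}g(n)\leq 2^{m(n)}g(m(n))=f_0(m(n))$ for $g$ nondecreasing. This closes because $m(n)$ depends only on $f(n)$, which is available before $f(n+1)$ is chosen. With that bookkeeping done your proof is correct; without it, the central cardinality bound of the whole direction is unproved.
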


\begin{proof}
(i)\Implies(ii):
Let $f\in\UPset$ and set $\eps_n=2^{-f(n+1)}$.
Let $I$ and $\mc F_n$'s be as in Theorem~\ref{combDnull}(ii).
By the choice of $\eps_n$ we may assume that each set $F\in\mc F_n$ is a cylinder
generated by some $p_F\in2^{f(n+1)}$.
For $n\in I$ set
$$
  H_n=\{p_F\rest[f(n),f(n+1)):F\in\mc F_n\}.
$$
Clearly $\abs{H_n}\leq\abs{\mc F_n}\leq n$.
Condition (iii) of Definition~\ref{mainTT} follows from the fact that
$\{\bigcup\mc F_n:n\in I\}$ is a $\gamma$-cover of $X$.

(ii)\Implies(i):
Let $h\in\HH$. We verify that $\dbox^h(X)\leq 1$.
Choose $G\in\UPset$ to satisfy $G(n)\leq\frac{1}{h(2^{-n})}$.
Let $g\in\Pset$ be the function from the Definition~\ref{defT'} of \Tprime{}.
Define recursively $f\in\UPset$ to satisfy
$$
  2^{f(n)}\cdot g(n)\leq G(f(n+1)).
$$
Let $I\in[\Nset]^\Nset$ and $\seq{H_n:n\in I}$
be as in the definition of \Tprime{} . Set
\begin{align*}
  F_n&=\{x\in\Cset:x\rest[f(n),f(n+1))\in H_n\},\quad n\in I,\\
  X_k&= \bigcap_{n\geq k,\, n\in I} F_n,\quad k\in\Nset
\end{align*}
and for each $n\in I$ put $\eps_n=2^{-f(n+1)}$.
Fix $k$. It is obvious that
$$
  N_{X_k}(\eps_n)\leq N_{F_n}(\eps_n)\leq
  2^{f(n)}\cdot\abs{H_n}\leq
  2^{f(n)}\cdot g(n)\leq G(f(n+1))
$$
holds for each $n\geq k,\,n\in I$ and since
$G(f(n+1))\leq\frac{1}{h(2^{-f(n+1)})}=\frac{1}{h(\eps_n)}$,
we finally get
$$
  \lbox_0^h(X_k)\leq\varliminf_{n\in I}N_{X_k}(\eps_n)\cdot h(\eps_n)
  \leq 1.
$$
Condition~\ref{defT'}(iii) guarantees that
$X_k\upto X$. Hence $\dbox^h(X)\leq 1$,
as required.
\end{proof}
\begin{coro}
Let $X\subs\Cset$ and $f:\Cset\to\Cset$ a continuous mapping.
If $X$ is \Tprime{}, then so is $f(X)$.
\end{coro}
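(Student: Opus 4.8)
The plan is to route everything through Theorem~\ref{mainTT}, which identifies \Tprime{} sets with \dpnull{} sets for subsets of $\Cset$. The virtue of this identification is that the \dpnull{} property, unlike the combinatorial definition of \Tprime{}, is transparently functorial: as recorded in Section~\ref{sec:null}, a uniformly continuous image of a \dpnull{} set is again \dpnull{}. Since $\Cset$ is compact, the continuous map $f$ is automatically uniformly continuous, so this preservation property applies to $f$ without any further work.

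Concretely, I would argue as follows. Suppose $X\subs\Cset$ is \Tprime{}. By the implication (ii)$\Implies$(i) of Theorem~\ref{mainTT}, $X$ is \dpnull{}. Because $\Cset$ is compact, the continuous mapping $f\colon\Cset\to\Cset$ is uniformly continuous, hence so is its restriction to $X$, viewed as a uniformly continuous mapping on $X$. By the preservation of \dpnull{} under uniformly continuous mappings noted in Section~\ref{sec:null}, the image $f(X)$ is \dpnull{}. Finally, $f(X)$ is again a subset of $\Cset$, so the converse implication (i)$\Implies$(ii) of Theorem~\ref{mainTT} applies to $f(X)$ and yields that $f(X)$ is \Tprime{}, as claimed.

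There is essentially no obstacle here, since all the substance has been front-loaded into Theorem~\ref{mainTT} together with the elementary stability of the fractal-dimension notions of smallness under uniformly continuous maps. The single point worth flagging is the passage from continuity to uniform continuity, which is immediate from compactness of $\Cset$; it is precisely this observation that lets one bypass a direct combinatorial verification in the sense of Definition~\ref{defT'}, which would otherwise require manipulating the witnessing sequences $\seq{H_n}$ under the action of $f$ and would be considerably more delicate.
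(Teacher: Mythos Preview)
Your argument is correct and is exactly the intended one: the corollary is stated immediately after Theorem~\ref{mainTT} with no proof given, precisely because it follows from the equivalence \Tprime{}$\Leftrightarrow$\dpnull{} together with the preservation of \dpnull{} under uniformly continuous maps and the compactness of $\Cset$. There is nothing to add.
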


\bibliographystyle{amsplain}
\providecommand{\bysame}{\leavevmode\hbox to3em{\hrulefill}\thinspace}
\providecommand{\MR}{\relax\ifhmode\unskip\space\fi MR }
\providecommand{\MRhref}[2]{%
  \href{http://www.ams.org/mathscinet-getitem?mr=#1}{#2}
}
\providecommand{\href}[2]{#2}

\end{document}